\newtheorem{theorem}{Theorem}[section]
\newtheorem{lemma}[theorem]{Lemma}
\theoremstyle{definition}
\newtheorem{definition}[theorem]{Definition}
\newtheorem{example}[theorem]{Example}
\newtheorem{proposition}[theorem]{Proposition}
\newtheorem{remark}[theorem]{Remark}
\numberwithin{equation}{section}
\newcommand{\define}[1]{\emph{#1}}
\let\OldParagraph\paragraph
\renewcommand{\paragraph}[1]{\smallskip\OldParagraph{\textbf{#1}}}
\newcommand{\CommaSpace}{,\,}
\newcommand{\EndComma}{,}
\newcommand{\EndFullStop}{.}
\newcommand{\ComposedWith}{\circ}
\newcommand{\Inverse}{^{-1}}
\newcommand{\Prime}{^\prime}
\newcommand{\RestrictedTo}[1]{|_{ #1 }}
\renewcommand{\epsilon}{\varepsilon}
\renewcommand{\phi}{\varphi}
\newcommand{\EquivalenceClass}[1]{\left[ #1 \right]}
\newcommand{\Intersection}{\cap}
\newcommand{\Map}{\operatorname{Map}}
\newcommand{\Set}[1]{\left\{ #1 \right\}}
\newcommand{\SetCondition}[2]{\left\{ #1 \ : \  #2 \right\}}
\newcommand{\Union}{\cup}
\newcommand{\Isomorphism}{\cong}
\newcommand{\Conjugate}[1]{\overline{ #1 }}
\newcommand{\Complex}{\mathbb{C}}
\newcommand{\ComplexNumbers}{\mathbb{C}}
\newcommand{\ExponentialNumber}{e}
\newcommand{\Factorial}[1]{#1!}
\newcommand{\Floor}[1]{\left\lfloor #1 \right\rfloor}
\newcommand{\ImaginaryNumber}{i}
\newcommand{\Infinity}{\infty}
\newcommand{\Integers}{\mathbb{Z}}
\newcommand{\IntervalClosed}[2]{\left[ #1 , #2 \right]}
\newcommand{\IntervalClosedOpen}[2]{\left[ #1 , #2 \right)}
\newcommand{\IntervalOpen}[2]{\left( #1 , #2 \right)}
\newcommand{\IntervalOpenClosed}[2]{\left( #1 , #2 \right]}
\newcommand{\Modulus}[1]{\left| #1 \right|}
\newcommand{\NaturalNumbers}{\mathbb{N}}
\newcommand{\PiNumber}{\pi}
\newcommand{\RationalNumbers}{\mathbb{Q}}
\newcommand{\RealNumbers}{\mathbb{R}}
\newcommand{\SmallO}{o}
\newcommand{\LiebnitzDerivative}[2]{\frac{\mathrm{d} { #1 }}{\mathrm{d} { #2 }}}
\newcommand{\Norm}[1]{\left\| #1 \right\|}
\newcommand{\wrt}[1]{\, \mathrm{d} { #1 } }
\newcommand{\Commutator}[2]{\LieBracket{#1}{#2}}
\newcommand{\LieBracket}[2]{\left[ #1 , #2 \right]}
\newcommand{\ProjectiveSpecialLinearGroup}{\operatorname{PSL}}
\newcommand{\Automorphisms}{\operatorname{Aut}}
\newcommand{\Determinant}{\operatorname{det}}
\newcommand{\Dimension}{\dim}
\newcommand{\DirectSum}{\oplus}
\newcommand{\Identity}{\mathrm{Id}}
\newcommand{\Kernel}{\ker}
\newcommand{\Projective}{\mathbb{P}}
\newcommand{\ArbitraryIndex}{\ast}
\newcommand{\Argument}{\cdot}
\newcommand{\CochainComplex}{\operatorname{C}}
\newcommand{\Cohomology}{\operatorname{H}}
\newcommand{\FundamentalGroup}{\pi_1}
\newcommand{\Inclusion}{\hookrightarrow}
\newcommand{\PullBack}{^\ast}
\newcommand{\PushForward}{_\ast}
\newcommand{\Tensor}{\otimes}
\newcommand{\Units}{^{\times}}
\newcommand{\Boundary}{\partial}
\newcommand{\Circle}{{S^1}}
\newcommand{\ConnectedSum}{\#}
\newcommand{\ContractibleLoopSpace}{\mathcal{L}}
\newcommand{\CupProduct}{\smile}
\newcommand{\Disc}{D}
\newcommand{\FirstChernClass}{c_1}
\newcommand{\FundamentalClass}[1]{\left[#1\right]}
\newcommand{\HomotopyGroup}{\pi}
\newcommand{\InfiniteComplexProjectiveSpace}{\Complex \Projective ^\infty}
\newcommand{\InfiniteSphere}{S^\infty}
\newcommand{\CotangentSpace}{{T^{\ast}}}
\newcommand{\DifferentiableMaps}{C}
\newcommand{\Derivative}{D}
\newcommand{\Embedding}{\hookrightarrow}
\newcommand{\EvaluateAt}[1]{|_{ #1 }}
\newcommand{\ExteriorDerivative}{\mathrm{d}}
\newcommand{\InteriorDerivative}{\imath}
\newcommand{\SmoothFunctions}{C^\infty}
\newcommand{\SmoothMaps}{C^\infty}
\newcommand{\Sphere}{S^2}
\newcommand{\HighDimensionalSphere}[1]{S^{ #1 }}
\newcommand{\TangentSpace}{T}
\newcommand{\MorseIndex}{\operatorname{ind}}
\newcommand{\StableManifold}{W^s}
\newcommand{\UnstableManifold}{W^u}
\newcommand{\StandardLineBundle}{\mathcal{O}}
\newcommand{\ActionFunctional}{\mathcal{A}}
\newcommand{\Energy}{E}
\newcommand{\HamiltonianSymplectomorphismGroup}{\operatorname{Ham}}
\newcommand{\HamiltonianVectorField}[1]{X_{ #1 }}
\newcommand{\ContractibleLoopSpaceWithFillings}[1]{\widetilde{\mathcal{L}{ #1 }}}
\newcommand{\InterestingSpheres}{\Gamma}
\newcommand{\NovikovRing}{\Lambda}
\newcommand{\QuantumCohomology}{\operatorname{QH}}
\newcommand{\QuantumCochainComplex}{\operatorname{QC}}
\newcommand{\QuantumProduct}{\ast}
\newcommand{\ConleyZehnderIndex}{\mu}
\newcommand{\ModuliSpace}{\mathcal{M}}
\newcommand{\QuotientedModuliSpace}{\widetilde{\mathcal{M}}}
\newcommand{\WithFilling}[1]{{\widetilde{ #1 }}}
\newcommand{\VerticalTangentSpace}{T^{\text{vert}}}
\newcommand{\Manifold}{M}
\newcommand{\ManifoldElement}{m}
\newcommand{\SymplecticForm}{\omega}
\newcommand{\dimM}{n}
\newcommand{\ContactForm}{\alpha}
\newcommand{\ContactManifold}{\Sigma}
\newcommand{\Reebvf}{X_{\ContactForm}}
\newcommand{\ReebPeriods}{\mathcal{R}}
\newcommand{\AlmostComplexStructure}{J}
\newcommand{\ConvexCoordMap}{\psi}
\newcommand{\StandardACS}{j}
\newcommand{\AntilinearSection}{\overline{\partial}}
\newcommand{\FamilyACS}{\mathbf{\AlmostComplexStructure}}
\newcommand{\ChernBoundedSpheres}{V}
\newcommand{\Ham}{H}
\newcommand{\HamiltonianFlow}{\phi}
\newcommand{\slope}{\lambda}
\newcommand{\NovVariable}{q}
\newcommand{\HamOrbits}{\mathcal{P}}
\newcommand{\FilledHamOrbits}{\widetilde{\mathcal{P}}}
\newcommand{\Orientation}{\mathfrak{o}}
\newcommand{\ContinuationMap}{\varphi}
\newcommand{\CircleAction}{\sigma}
\newcommand{\CircleElement}{\theta}
\newcommand{\CircleHam}{K_\CircleAction}
\newcommand{\CircleHamSpecifyAction}[1]{K_{#1}}
\newcommand{\CircleSlope}{\kappa}
\newcommand{\CircleVectorField}[1]{X_{#1}}
\newcommand{\Lifted}[1]{\widetilde{#1}}
\renewcommand{\Cohomology}{H}
\newcommand{\ECochainComplex}{EC}
\newcommand{\ECohomology}{EH}
\newcommand{\FloerC}{FC}
\newcommand{\EFloerC}{EFC}
\newcommand{\FloerCohomology}{FH}
\newcommand{\EFloerCohomology}{EFH}
\newcommand{\SymplecticCohomology}{SH}
\newcommand{\ESymplecticCohomology}{ESH}
\newcommand{\FloerSeidel}{F\mathcal{S}}
\newcommand{\EFloerSeidel}{EF\mathcal{S}}
\renewcommand{\QuantumCohomology}{{QH}}
\newcommand{\EQuantumCohomology}{EQH}
\renewcommand{\QuantumCochainComplex}{{QC}}
\newcommand{\EQuantumCochainComplex}{{EQC}}
\newcommand{\QuantumSeidelMap}{Q\mathcal{S}}
\newcommand{\EQuantumSeidelMap}{EQ\mathcal{S}}
\newcommand{\PSSmap}{\text{PSS}}
\newcommand{\EPSSmap}{E \text{PSS}}
\newcommand{\MaslovIndex}{I}
\newcommand{\GradedCompletedTensorProduct}{\mathbin{\widehat{\otimes}}}
\newcommand{\sphereinclusion}{i}
\newcommand{\sphereshift}{U}
\newcommand{\spherecriticalpoint}{c}
\newcommand{\sphereprojection}{\pi}
\newcommand{\spheremorsefunction}{F}
\newcommand{\uformal}{\mathbf{u}}
\newcommand{\infsphereelement}{w}
\newcommand{\spheretrivialise}{\tau}
\newcommand{\MorseFunction}{f}
\newcommand{\CriticalPoints}{\operatorname{Crit}}
\newcommand{\EqntHam}{\Ham^\text{eq}}
\newcommand{\EqntACS}{\FamilyACS^\text{eq}}
\newcommand{\EqntMorseFunction}{f^\text{eq}}
\newcommand{\Eqnt}{^\text{eq}}
\newcommand{\eqnt}{\text{eq}}
\newcommand{\ClutchingBundle}{E}
\newcommand{\Hemisphere}{\mathbb{D}}
\newcommand{\ClutchingInclusions}{\iota}
\newcommand{\Pole}{z}
\newcommand{\ClutchingProjection}{\pi}
\newcommand{\ClutchingSymplecticBilinearForm}{\Omega}
\newcommand{\ClutchingTwoForm}{\widehat{\Omega}}
\newcommand{\SphereSymplecticForm}{\omega_{\Sphere}}
\newcommand{\ClutchingHamiltonian}{H^\ClutchingBundle}
\newcommand{\ClutchingBundleACS}{\widehat{\mathbf{J}}}
\newcommand{\ACSSpace}{\mathcal{J}}
\newcommand{\SectionClass}{S}
\newcommand{\LongitudeLine}{L}
\newcommand{\ClutchingAction}{\rho_\ClutchingBundle}
\newcommand{\FubiniStudy}{_{\text{FS}}}
\newcommand{\tautLB}[1]{\mathcal{O}_{\Projective^{#1}}(-1)}
\newcommand{\ZeroSection}{Z}
\renewcommand{\Check}{\vee}
\newcommand{\Connection}{\nabla}
\newcommand{\ShiftOperator}{\mathbb{S}}
\title[An intertwining relation for equivariant Seidel maps]{An intertwining relation for \\ equivariant Seidel maps}
\author{Todd Liebenschutz-Jones}
\date{\today}
\thanks{\textit{Correspondence}: \href{mailto:todd.liebenschutz-jones@maths.ox.ac.uk}{todd.liebenschutz-jones@maths.ox.ac.uk}}
\thanks{\textit{Institution}: University of Oxford, UK}
\begin{document}

\maketitle
\begin{abstract}
    The Seidel maps are two maps associated to a Hamiltonian circle action on a convex symplectic manifold, one on Floer cohomology and one on quantum cohomology.
    We extend their definitions to $S^1$-equivariant Floer cohomology and $S^1$-equivariant quantum cohomology based on a construction of Maulik and Okounkov.
    The $S^1$-action used to construct $S^1$-equivariant Floer cohomology changes after applying the equivariant Seidel map (a similar phenomenon occurs for $S^1$-equivariant quantum cohomology).
    We show the equivariant Seidel map on $S^1$-equivariant quantum cohomology does not commute with the $S^1$-equivariant quantum product, unlike the standard Seidel map.
    We prove an intertwining relation which completely describes the failure of this commutativity as a weighted version of the equivariant Seidel map.
    We will explore how this intertwining relationship may be interpreted using connections in an upcoming paper.
    We compute the equivariant Seidel map for rotation actions on the complex plane and on complex projective space, and for the action which rotates the fibres of the tautological line bundle over projective space.
    Through these examples, we demonstrate how equivariant Seidel maps may be used to compute the $S^1$-equivariant quantum product and $S^1$-equivariant symplectic cohomology.
\end{abstract}


\section{Introduction}

    \label{sec:introduction}
    
    For us, \define{equivariant} will always mean $\Circle$-equivariant.
    

    The Seidel maps on the quantum and Floer cohomology of a closed symplectic manifold $\Manifold$ are two maps associated to a Hamiltonian $\Circle$-action $\CircleAction$ on $\Manifold$ \cite{seidel_$_1997}.
    They are compatible with each other via the PSS isomorphisms, which are maps that identify quantum cohomology and Floer cohomology \cite[Theorem~8.2]{seidel_$_1997}.
    McDuff and Tolman used Seidel maps to recover the Batyrev presentation of the quantum cohomology of closed toric manifolds \cite{mcduff_topological_2006}.
    Ritter extended the definition of Seidel maps to (non-closed) convex symplectic manifolds \cite{ritter_floer_2014}.
    He used the Seidel maps to determine the quantum cohomology and the symplectic cohomology of convex toric manifolds \cite{ritter_circle-actions_2016}.


    Let $\rho$ be a Hamiltonian $\Circle$-action on a closed or convex symplectic manifold $\Manifold$.
    The equivariant quantum cohomology $\EQuantumCohomology_\rho^\ArbitraryIndex(\Manifold)$ has three important compatible algebraic structures:
    it is a ring equipped with the equivariant quantum product $\QuantumProduct_\rho$;
    it is a module over the Novikov ring $\NovikovRing$; and
    it has a geometric $\Integers [\uformal]$-module structure denoted $\CupProduct$.
    In this paper, we introduce an equivariant quantum Seidel map corresponding to an additional Hamiltonian $\Circle$-action $\CircleAction$ on $\Manifold$ which commutes with $\rho$.
    We could, for example, set $\rho = \Identity_\Manifold$, or we could let $\rho$ and $\CircleAction$ be two $\Circle$-actions which are part of a Hamiltonian torus action on $\Manifold$.
    
    \begin{theorem}
        There is an equivariant quantum Seidel map
        \begin{equation}
            \label{eqn:equivariant-quantum-seidel-map-introduction}
            \EQuantumSeidelMap_{\Lifted{\CircleAction}} : \EQuantumCohomology^\ArbitraryIndex_\rho (\Manifold) \to \EQuantumCohomology^{\ArbitraryIndex + 2 \MaslovIndex(\Lifted{\CircleAction})} _{\CircleAction \PullBack \rho} (\Manifold)
        \end{equation}
        which is a $\NovikovRing \Tensor \Integers [\uformal]$-module homorphism.
        The codomain of \eqref{eqn:equivariant-quantum-seidel-map-introduction} is the equivariant quantum cohomology of $\Manifold$ with the \define{pullback action} 
        \begin{equation}
        \label{eqn:pullback-action-introduction}
            \CircleAction \PullBack \rho = \CircleAction \Inverse \rho \EndFullStop
        \end{equation}
    \end{theorem}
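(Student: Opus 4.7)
The plan is to generalise the construction of the classical quantum Seidel map (Seidel; extended to convex symplectic manifolds by Ritter) to the $\Circle$-equivariant setting, using a Borel-style chain-level model for $\EQuantumCohomology_\rho(\Manifold)$ based on a Morse function on $\InfiniteSphere$, in the spirit of Maulik-Okounkov.

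First I recall the classical construction. The action $\CircleAction$ determines a \emph{clutching bundle} $\ClutchingBundle_\CircleAction \to \Sphere$, formed by gluing the trivial bundles $\Hemisphere^\pm \Product \Manifold$ along the equator via $\CircleAction$. A lift $\Lifted{\CircleAction}$ of $\CircleAction$ picks out a distinguished section class, and $\QuantumSeidelMap_{\Lifted{\CircleAction}}$ is defined by counting pseudoholomorphic sections of $\ClutchingBundle_\CircleAction$ in this class, weighted by Novikov variables recording additional spherical contributions. The key geometric point for the equivariant version is that although $\rho$ extends to a fibrewise Hamiltonian action on $\ClutchingBundle_\CircleAction$ (because $\rho$ and $\CircleAction$ commute), the identification of the fibre over the south pole with $\Manifold$ passes through the clutching and so twists the natural $\rho$-equivariant structure by $\CircleAction$. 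Concretely, when one imposes $\rho$-equivariant incidence at the north pole, the corresponding constraint at the south pole is for the pullback action $\CircleAction \PullBack \rho = \CircleAction \Inverse \rho$. This forces the codomain of \eqref{eqn:equivariant-quantum-seidel-map-introduction} to be $\EQuantumCohomology_{\CircleAction \PullBack \rho}(\Manifold)$.

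Next I would upgrade the construction to the equivariant chain level. Generators of $\EQuantumCochainComplex_\rho(\Manifold)$ are pairs consisting of a critical point of a Morse function on $\Manifold$ and a critical point of the chosen Morse function on $\InfiniteSphere$. Define $\EQuantumSeidelMap_{\Lifted{\CircleAction}}$ on generators by counting rigid configurations of a pseudoholomorphic section of $\ClutchingBundle_\CircleAction$ parametrised over $\InfiniteSphere$, together with gradient flow lines on $\InfiniteSphere$ connecting the input and output critical points, all subject to the equivariance conditions described above. The degree shift by $2\MaslovIndex(\Lifted{\CircleAction})$ is obtained from the index computation for the Cauchy-Riemann operator on sections of $\ClutchingBundle_\CircleAction$ in the chosen section class. $\NovikovRing$-linearity is built in, since the Novikov variable records the relative section class; $\Integers[\uformal]$-linearity follows from the compatibility of the parametrisation with the shift operator on $\InfiniteSphere$.

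The hard part will be establishing equivariant transversality and Gromov compactness for the parametrised moduli spaces of sections of $\ClutchingBundle_\CircleAction$, and showing that the count is independent of auxiliary choices (Hamiltonian perturbations, fibrewise almost complex structures, Morse data on $\InfiniteSphere$) so that a well-defined cohomology-level map emerges. In the convex case, the usual non-compactness must be controlled via a maximum principle for sections valued in the convex end, adapting Ritter's arguments. The chain-map property itself follows by analysing the codimension-one boundary strata of the one-dimensional parametrised moduli spaces; the breaking contributions must be matched precisely with the equivariant Floer differentials of the source (for the action $\rho$) and of the target (for the action $\CircleAction \PullBack \rho$), which is exactly where the asymmetry between the two actions enters the algebraic verification.
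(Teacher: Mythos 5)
Your overall route is the paper's route: a Borel-model chain-level count of pseudoholomorphic sections of the clutching bundle, parametrised over $\InfiniteSphere$ and coupled to Morse flowlines there, with the degree shift coming from the index of the section class, the convex end controlled by a maximum principle as in Ritter, and the chain-map property read off the codimension-one boundary. Two points, however, need repair, and the first concerns the crux of the statement. You justify the change of action by saying that $\rho$ extends to a \emph{fibrewise} Hamiltonian action on the clutching bundle and that the identification of a polar fibre with $\Manifold$ ``passes through the clutching''. That is not the right mechanism: since $\CircleAction$ and $\rho$ commute, the purely fibrewise extension of $\rho$ does exist, but it restricts to $\rho$ on \emph{both} polar fibres and would produce no twist at all (both polar fibres are canonically $\Manifold$; no clutching map enters their identification). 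What forces the pullback action is that the relevant $\Circle$-action on the clutching bundle must \emph{cover the rotation of the base sphere}, because in the Borel model the circle acts on loops by rotating the domain as well as by $\rho$; the paper writes this action explicitly, hemisphere by hemisphere, and it is only after imposing compatibility with the clutching identification along the equator that the restrictions to the two polar fibres come out as $\rho$ at one pole and $\CircleAction\PullBack\rho = \CircleAction\Inverse\rho$ at the other. Your argument as stated would not force the codomain to change, so this step must be replaced by the construction of that lifted action.

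The second point is the $\Integers[\uformal]$-linearity. You derive it from compatibility with the shift operator on $\InfiniteSphere$, i.e.\ from the algebraic module structure; but that structure requires shift-invariant equivariant data, which is a non-generic condition and sits in tension with the regularity you need for the parametrised section moduli spaces. The paper instead establishes compatibility with the \emph{geometric} $\Integers[\uformal]$-action (the one defined by attaching an extra `Y'-shaped negative gradient flowline in $\InfiniteSphere$), via a standard homotopy argument with generic data; this is also the module structure that feeds into the intertwining relation later. Either fix the transversality issue for shift-invariant data or switch to the geometric action as the paper does.
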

    
    Our main theorem describes the relationship between $\EQuantumSeidelMap_{\Lifted{\CircleAction}}$ and equivariant quantum multiplication.
    Unlike for the (non-equivariant) quantum Seidel map, this relationship is not simply that the map commutes with the quantum product.    
    We can nonetheless exploit the relationship to derive the equivariant quantum product, which we demonstrate for a few examples in Sections \ref{sec:example-computations-introduction-summary} and \ref{sec:example-computations}.

    \begin{theorem}
        [Intertwining relation]
        \label{thm:interwining-relation-introduction}
        The equation
        \begin{equation}
            \label{eqn:quantum-intertwining-seidel-equivariant-introduction}
            \EQuantumSeidelMap_{\Lifted{\CircleAction}} (x \underset{\rho}{\QuantumProduct} \alpha^+)
            -
            \EQuantumSeidelMap_{\Lifted{\CircleAction}} (x) \underset{\CircleAction \PullBack \rho}{\QuantumProduct} \alpha^-
            =
            \uformal \CupProduct \EQuantumSeidelMap_{\Lifted{\CircleAction}, \alpha}(x)
        \end{equation}
        holds for all $x \in \EQuantumCohomology^\ArbitraryIndex_\rho(\Manifold)$.
        Here, $\alpha^+ \in \ECohomology^\ArbitraryIndex_\rho(\Manifold)$ and $\alpha^- \in \ECohomology^\ArbitraryIndex_{\CircleAction \PullBack \rho} (\Manifold)$ are two equivariant cohomology classes which are related via the clutching bundle, and 
        \begin{equation}
            \label{eqn:error-term-description-introduction}
            \EQuantumSeidelMap_{\Lifted{\CircleAction}, \alpha} : \EQuantumCohomology^\ArbitraryIndex_\rho (\Manifold) \to \EQuantumCohomology^{\ArbitraryIndex + 2 \MaslovIndex(\Lifted{\CircleAction}) + \Modulus{\alpha^\pm} - 2} _{\CircleAction \PullBack \rho} (\Manifold)
        \end{equation}
        is a map defined in \autoref{sec:error-term-definition}.
    \end{theorem}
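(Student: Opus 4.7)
The plan is to realise the three terms in \eqref{eqn:quantum-intertwining-seidel-equivariant-introduction} as the boundary strata of a one-parameter family of moduli spaces, so that the identity reads as a cobordism relation. Fix the clutching bundle $\ClutchingBundle \to \Sphere$ built from $\Lifted{\CircleAction}$. I consider the moduli space $\ModuliSpace$ of configurations consisting of a section $u$ of $\ClutchingBundle$ in the class $\SectionClass$, together with an auxiliary marked point $p$ which is allowed to move freely along a chosen longitude line $\LongitudeLine \subset \Sphere$ running from the south pole $\Pole_-$ to the north pole $\Pole_+$. Near each pole one uses a local trivialisation of $\ClutchingBundle$: at $\Pole_-$ the fibre is identified with $\Manifold$ equipped with the action $\rho$, and at $\Pole_+$ with $\Manifold$ equipped with the pullback action $\CircleAction \PullBack \rho$. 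Parametrising everything by the Borel construction on $\InfiniteSphere$ in the usual Morse-theoretic model, evaluation at $p$ imposes the equivariant cycle $\alpha$ (pulled back from $\ClassifyingSpace{\Circle}$), and asymptotic evaluation at $\Pole_\pm$ produces orbits for the relevant equivariant Floer data.

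The two ends of the interval of admissible positions of $p$ will give the two terms on the left-hand side of \eqref{eqn:quantum-intertwining-seidel-equivariant-introduction}. When $p \to \Pole_-$, a small neighbourhood of $\Pole_-$ bubbles off a genuine $\Sphere$-bubble carrying the marked point, and the Gromov-type limit is a broken configuration whose first piece is a three-pointed sphere contributing a factor $(\cdot) \QuantumProduct_\rho \alpha^+$ and whose second piece is a clutching section defining $\EQuantumSeidelMap_{\Lifted{\CircleAction}}$; here $\alpha^+$ is the restriction of the equivariant class on $\ClutchingBundle$ to the fibre over $\Pole_-$. Symmetrically, as $p \to \Pole_+$ one obtains $\EQuantumSeidelMap_{\Lifted{\CircleAction}}(x) \QuantumProduct_{\CircleAction \PullBack \rho} \alpha^-$, with $\alpha^-$ the restriction over $\Pole_+$. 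The compatibility of $\alpha^+$ and $\alpha^-$ via the clutching bundle is exactly what ensures that a single ``bulk'' cycle $\alpha$ on $\ClutchingBundle$ produces both restrictions by the usual PSS/moving-marked-point argument.

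The main obstacle is identifying the third boundary stratum, which accounts for the fact that $\ModuliSpace$ lives over the equivariant base $\InfiniteSphere$ rather than just over $\LongitudeLine$. Besides degenerations where $p$ approaches a pole, the $1$-parameter family can degenerate in a way that is invisible in the non-equivariant case: the chosen representative of $\alpha$ on $\InfiniteSphere$ can undergo a Morse flow-line breaking at $p$, and this breaking is recorded by an extra factor of $\uformal$ arising from the shift operator on the $\Integers[\uformal]$-module structure of equivariant cohomology. The remaining piece of the broken configuration is a section of $\ClutchingBundle$ with a single interior insertion labelled by $\alpha$ along $\LongitudeLine$, which is exactly the defining count of the weighted Seidel map $\EQuantumSeidelMap_{\Lifted{\CircleAction}, \alpha}$ of \autoref{sec:error-term-definition}. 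Verifying that this stratum contributes precisely $\uformal \CupProduct \EQuantumSeidelMap_{\Lifted{\CircleAction}, \alpha}(x)$ requires a careful transversality argument and bookkeeping of orientations and grading shifts (accounting for $\Modulus{\alpha^\pm} - 2$), but it is formally parallel to the Morse-model derivation of the $\uformal$-action in equivariant Floer cohomology.

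With the three boundary strata identified, standard gluing and transversality show that the signed count of the boundary of the compactified $1$-dimensional moduli space vanishes, yielding the equation
\begin{equation*}
    \EQuantumSeidelMap_{\Lifted{\CircleAction}}(x \underset{\rho}{\QuantumProduct} \alpha^+)
    - \EQuantumSeidelMap_{\Lifted{\CircleAction}}(x) \underset{\CircleAction \PullBack \rho}{\QuantumProduct} \alpha^-
    - \uformal \CupProduct \EQuantumSeidelMap_{\Lifted{\CircleAction}, \alpha}(x) = 0
\end{equation*}
at the chain level. Passing to cohomology gives \eqref{eqn:quantum-intertwining-seidel-equivariant-introduction}. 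Independence from auxiliary choices (Morse function on $\InfiniteSphere$, almost complex structure, chain representative of $\alpha$) is a standard continuation argument.
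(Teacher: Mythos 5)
Your overall strategy is the same as the paper's: build a $1$-dimensional moduli space of clutching-bundle sections with a marked point constrained to lie on a longitude, read off the two product terms from degenerations at the poles, and attribute the $\uformal$-term to an extra, purely equivariant degeneration. However, there is a genuine gap at the heart of your third stratum. First, you fix a single longitude $\LongitudeLine \subset \Sphere$; but the incidence condition ``$u(p) \in \alpha^\Check$ with $p \in \LongitudeLine$'' is not preserved by the $\Circle$-action $\ClutchingAction$ used to form the Borel quotient (the action rotates the sphere and hence rotates longitudes), so with a fixed $\LongitudeLine$ your moduli space does not descend to equivalence classes and does not define a map on the equivariant complexes at all. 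The paper's resolution is precisely to let the longitude vary equivariantly: one adds an auxiliary flowline $v^0$ in $\InfiniteSphere$ converging to $\spherecriticalpoint_0 \cong \Circle$ and requires $z^\alpha \in \spheretrivialise_0(v^0(\Infinity))\Inverse \cdot \LongitudeLine$; a global equivariant choice of longitude is impossible since there is no equivariant map $\InfiniteSphere \to \Circle$, and this obstruction is exactly what generates the correction term.

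Second, your identification of where the $\uformal$ comes from is not right. You attribute it to Morse breaking of the flowline carrying the representative of $\alpha$; in the paper that breaking produces the term $K_{d\bm{\alpha}}(\bm{x})$, which vanishes when $\bm{\alpha}$ is closed, so it cannot account for the right-hand side. The actual source of $\uformal \CupProduct \EQuantumSeidelMap_{\Lifted{\CircleAction},\alpha}(x)$ is the breaking of the auxiliary flowline $v^0$ through the index-two critical point $\spherecriticalpoint_1$, i.e.\ the limit in which the equivariant choice of longitude fails; after a homotopy argument this splits as the geometric (``Y-shaped'') cup product by $\uformal$ followed by the weighted map. Relatedly, you invoke ``the shift operator on the $\Integers[\uformal]$-module structure,'' but that is the algebraic module action, which is a different operation from the geometric action $\uformal \CupProduct$ appearing in the statement (the paper does not even claim the two are chain homotopic), and the shift operator cannot arise as a boundary stratum of your moduli space in any case. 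Without the auxiliary-flowline device (or an equivalent mechanism making the longitude condition equivariant and exhibiting its failure as a codimension-one stratum), the proposed cobordism has only the two pole strata and would wrongly ``prove'' that the commutator vanishes.
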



    Maulik and Okounkov gave the first definition of equivariant quantum Seidel maps as part of their work on quiver varieties \cite[Section~8]{maulik_quantum_2012}.
    They also proved that the maps satisfied the intertwining relation \cite[Proposition~8.1]{maulik_quantum_2012}.
    Braverman, Maulik and Okounkov used equivariant quantum Seidel maps to derive the equivariant quantum product of the Springer resolution \cite{braverman_quantum_2011}.
    Iritani recovered Givental's mirror theorem using a new version of the equivariant quantum Seidel map which applied to big equivariant quantum cohomology \cite{iritani_shift_2017}.
    Givental's mirror theorem describes the equivariant genus-zero Gromov-Witten invariants of a toric variety.
    
    Maulik and Okounkov's work on the equivariant quantum Seidel map applies to smooth quasi-projective varieties $X$ with a holomorphic symplectic structure which are equipped with the action of a reductive group $G$ (such $X$ are (real) symplectic manifolds with $\FirstChernClass = 0$).
    Iritani's work applies to smooth toric varieties $X$ that admit a projective morphism to an affine variety and for which the action of the algebraic torus $G$ on $X$ satisfies some positivity condition.
    The definitions of the equivariant quantum Seidel map in this algebrogeometric context use \emph{equivariant virtual fundamental classes} to count stable maps.
    The proofs of the intertwining relation in this context make heavy use of an algebrogeometric technique called \emph{virtual localisation}, which reduces counting stable maps to counting only the $G$-fixed stable maps.
    
    In contrast, our results apply to closed or convex symplectic manifolds $\Manifold$ which satisfy a monotonicity condition.
    We use a Borel model for equivariant quantum cohomology, which combines the Morse cohomology of our manifold $\Manifold$ with the Morse cohomology of the classifying space of $\Circle$.
    This model is preferable in our context because it readily extends to equivariant Floer theory.
    In our Borel model, we perturb the data on $\Manifold$ using the classifying space of $\Circle$ to ensure that the moduli spaces are smooth manifolds.
    We therefore avoid using virtual fundamental classes to count stable maps, and instead just count the 0-dimensional moduli spaces.
    The $G$-fixed stable maps used by virtual localisation are not pseudoholomorphic curves for the perturbed data, however, so virtual localisation will not work in our context.
    To remedy this, we provide a new proof of the intertwining relation \autoref{thm:interwining-relation-introduction} which has a Morse-theoretic flavour: we construct an explicit 1-dimensional moduli space whose boundary gives the relation.


    We also introduce an equivariant Floer Seidel map, which is an isomorphism on equivariant Floer cohomology.
    We use a Borel model for equivariant Floer cohomology, which combines Morse theory on the classifying space of $\Circle$ with Floer theory on $\Manifold$.
    The $\Circle$-action $\rho : \Circle \times \Manifold \to \Manifold$ induces an $\Circle$-action on Hamiltonian orbits $x : \Circle \to \Manifold$, given by
    \begin{equation}
    \label{eqn:circle-action-induced-on-orbits}
        (\CircleElement \cdot x) (t) = \rho_\CircleElement ( x (t - \CircleElement) ),
    \end{equation}
    and it is with respect to this action that we define equivariant Floer cohomology.
    The equivariant Floer Seidel map is the identity map on the classifying space of $\Circle$ and it maps the Hamiltonian orbit $x : \Circle \to \Manifold$ to the Hamiltonian orbit $(\CircleAction \PullBack x)(t) = \CircleAction \Inverse _t (x(t))$.
    Much like the equivariant quantum Seidel map \eqref{eqn:equivariant-quantum-seidel-map-introduction}, the codomain of the equivariant Floer Seidel map is the equivariant Floer cohomology not for the action \eqref{eqn:circle-action-induced-on-orbits}, but instead for the corresponding action induced by the pullback action $\CircleAction \PullBack \rho$.
    
    The equivariant Floer Seidel map commutes with continuation maps, which means it induces a map on equivariant symplectic cohomology.
    Recall that the equivariant symplectic cohomology of $\Manifold$ is the direct limit of equivariant Floer cohomology as the slope of the Hamiltonian function increases (see \autoref{sec:equivariant-floer-cohomology-all}).

    \begin{theorem}
        There is an equivariant Floer Seidel map on equivariant symplectic cohomology
        \begin{equation}
            \EFloerSeidel_{\Lifted{\CircleAction}} : \ESymplecticCohomology^\ArbitraryIndex_\rho(\Manifold) \to \ESymplecticCohomology^{\ArbitraryIndex + 2 \MaslovIndex(\Lifted{\CircleAction})}_{\CircleAction \PullBack \rho} (\Manifold)
        \end{equation}
        which is a $\NovikovRing \GradedCompletedTensorProduct \Integers [\uformal]$-module isomorphism.
        The diagram
        \begin{equation}
            \label{eqn:commutative-diagram-equivariant-quantum-seidel-map-into-equivariant-symplectic-cohomology}
            \begin{tikzcd}[column sep=large]
                \EQuantumCohomology^\ArbitraryIndex_\rho (\Manifold) \arrow[r, "\EQuantumSeidelMap_{\Lifted{\CircleAction}}"] \arrow[d]
                & \EQuantumCohomology^{\ArbitraryIndex + 2 \MaslovIndex(\Lifted{\CircleAction})} _{\CircleAction \PullBack \rho} (\Manifold) \arrow[d]
                \\ \ESymplecticCohomology^\ArbitraryIndex_\rho(\Manifold) \arrow[r, "\EFloerSeidel_{\Lifted{\CircleAction}}"]
                & \ESymplecticCohomology^{\ArbitraryIndex + 2 \MaslovIndex(\Lifted{\CircleAction})}_{\CircleAction \PullBack \rho} (\Manifold) 
            \end{tikzcd}
        \end{equation}
        commutes, where the vertical arrows denote equivariant $c^\ast$ maps.
    \end{theorem}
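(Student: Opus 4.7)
The plan is to build $\EFloerSeidel_{\Lifted{\CircleAction}}$ on equivariant symplectic cohomology by taking a direct limit of the already-constructed Floer-level equivariant Seidel maps, and then to establish commutativity of the diagram via a parameterised moduli-space argument comparing the equivariant Seidel maps with the equivariant $c^\ast$ maps.

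First I would assemble the map. Since the Floer-level $\EFloerSeidel_{\Lifted{\CircleAction}}$ commutes with the continuation maps $\ContinuationMap$ that define the direct system (as stated in the paragraph above the theorem), the collection descends to a well-defined map
\[
    \EFloerSeidel_{\Lifted{\CircleAction}} : \ESymplecticCohomology^\ArbitraryIndex_\rho(\Manifold) \to \ESymplecticCohomology^{\ArbitraryIndex + 2\MaslovIndex(\Lifted{\CircleAction})}_{\CircleAction \PullBack \rho}(\Manifold)
\]
on the direct limits (completed in the Novikov direction), and the $\NovikovRing \GradedCompletedTensorProduct \Integers[\uformal]$-module structure transfers from the Floer level. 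To see the result is an isomorphism, note that at the chain level the Floer-level Seidel map is essentially a bijection of generators: it is the identity on the classifying-space factor and on Hamiltonian orbits it sends $x \mapsto \CircleAction \PullBack x$, a bijection with inverse $y \mapsto \CircleAction \cdot y$ (up to a Novikov twist determined by the section class). Concretely, the equivariant Floer Seidel map constructed from the opposite lift $\Lifted{\CircleAction}\Inverse$ provides a two-sided inverse at each slope, and invertibility passes to the direct limit.

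For commutativity of the diagram, the vertical $c^\ast$ maps are equivariant PSS-type maps landing in $\ESymplecticCohomology$. Both compositions $c^\ast \circ \EQuantumSeidelMap_{\Lifted{\CircleAction}}$ and $\EFloerSeidel_{\Lifted{\CircleAction}} \circ c^\ast$ count perturbed pseudoholomorphic curves valued in the clutching bundle $\ClutchingBundle$ capped with a PSS half-plane; they differ only in the order in which the clutching and the cap are performed. I would construct a one-parameter family of punctured Riemann surfaces interpolating between the two configurations (for instance, by sliding the location of the clutching neck towards the puncture of the cap), with Borel-perturbed moduli over this family living above a Morse function on the classifying space of $\Circle$. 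Counting the boundary of the resulting one-dimensional parameterised moduli space yields a chain-level homotopy equating the two compositions, and hence commutativity on cohomology.

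The main obstacle is this last step: one has to set up the parameterised equivariant moduli of sections of $\ClutchingBundle$ glued to a PSS cap so that transversality and compactness hold uniformly along both the classifying-space parameter and the interpolation parameter, and so that the boundary strata consist only of the two desired compositions together with cancelling broken configurations (Floer cylinders breaking off the cap end or pseudoholomorphic spheres bubbling off the clutching end). The argument is formally the non-equivariant PSS--Seidel compatibility performed inside the Borel model, but extending the perturbation scheme developed earlier in the paper to this combined moduli problem is the substantive technical content.
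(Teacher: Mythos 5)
Your proposal follows essentially the same route as the paper: the map on equivariant symplectic cohomology is obtained as the direct limit of the chain-level equivariant Floer Seidel maps (bijections on generators, compatible with the module structures and with equivariant continuation maps), so invertibility and the $\NovikovRing \GradedCompletedTensorProduct \Integers[\uformal]$-linearity pass to the limit. The commutativity of the square is established in the paper by the equivariant extension of Seidel's gluing argument, i.e.\ the triangle diagram through the equivariant PSS maps and a continuation map in \autoref{sec:equivariant-gluing}, which is precisely the capped-clutching-bundle degeneration and chain-homotopy argument you outline, so your approach coincides with the paper's.
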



    Equivariant symplectic cohomology has attracted attention in recent years because, while similar to (non-equivariant) symplectic cohomology, it possesses a number of different properties.
    Chiefly, it distinguishes Hamiltonian orbits with different stabilizer groups more readily than does its non-equivariant counterpart.
    This is useful because these different orbits have different geometric significance (for certain choices of Hamiltonians).
    For example, the constant orbits (with stabilizer group $\Circle$) capture the topology of $\Manifold$.
    Zhao restricted to the constant orbits by localising the ring $\Integers [\uformal]$, and obtained the isomorphism
    \begin{equation}
        \label{eqn:zhao-localisation-theorem}
        \RationalNumbers[\uformal, \uformal\Inverse] \Tensor_{\Integers [\uformal]} \ESymplecticCohomology^\ArbitraryIndex(\Manifold) \cong
        \RationalNumbers[\uformal, \uformal\Inverse] \Tensor_{\Integers} \Cohomology^\ArbitraryIndex(\Manifold)
    \end{equation}
    for completions of Liouville domains \cite[Theorem~1.1]{zhao_periodic_2014}.
    This is unlike (non-equivariant) symplectic cohomology, which vanishes for subcritical Stein manifolds (the vanishing follows from \cite[Theorem~1.1, part~1]{cieliebak_handle_2002} and $\SymplecticCohomology^\ArbitraryIndex(\ComplexNumbers^N) = 0$ \cite[Section~3]{oancea_survey_2004}).
    
    On the other hand, the nonconstant orbits (with finite stablizer groups) capture the Reeb dynamics.
    We can restrict to the nonconstant orbits by looking at the positive part of $\ESymplecticCohomology^\ArbitraryIndex(\Manifold)$.
    Bourgeois and Oancea showed that the positive part of equivariant symplectic homology is isomorphic to linearized contact homology \cite{bourgeois_s1-equivariant_2017} while Gutt used it to distinguish nonisomorphic contact structures on spheres \cite[Theorem~1.4]{gutt_positive_2017}.
    In fact, McLean and Ritter used $\ESymplecticCohomology^\ArbitraryIndex(\Manifold)$ to classify orbits with different finite stablizer groups in their new proof of the McKay correspondence \cite{mclean_mckay_2018}.
    
    One downside of equivariant symplectic cohomology is that it lacks an interesting algebraic structure.
    (Non-equivariant) symplectic cohomology has the pair-of-pants product, which equips it with a graded-commutative, associative and unital ring structure.
    This ring structure can in fact be upgraded to an entire TQFT structure on symplectic cohomology \cite[Section~(8a)]{ritter_topological_2013, seidel_biased_2007}.
    In contrast, equivariant symplectic cohomology only has the $\Integers [\uformal]$-module structure, which arises as the cohomology of the classifying space of $\Circle$.

    
    Seidel described a new algebraic structure on equivariant Floer cohomology, which he called the \define{$q$-connection} \cite{seidel_connections_2016}.
    This structure enhances the module structure, but is less rich than a full ring structure.
    Seidel's $q$-connection is based upon the \define{quantum connection}, which is a map $\Connection_\alpha :  \EQuantumCohomology^\ArbitraryIndex (\Manifold) \to \EQuantumCohomology^{\ArbitraryIndex + 2} (\Manifold)$ that combines multiplication by $\alpha$ with a differentiation-like operation applied to the Novikov variable.
    Throughout the algebrogeometric literature on equivariant quantum Seidel maps, the intertwining relation is often written as
        \begin{equation}
        \label{eqn:flatness-for-quantum-introduction}
            \Connection_\alpha \ComposedWith \ShiftOperator_\CircleAction = \ShiftOperator_\CircleAction \ComposedWith \Connection_\alpha,
        \end{equation}
    where $\ShiftOperator_\CircleAction$ is a variant of the equivariant quantum Seidel map \cite{maulik_quantum_2012, braverman_quantum_2011, iritani_shift_2017}.
    In our upcoming paper \cite{liebenschutz-jones_flatness_in-progress}, we will show that a variant of our equivariant Floer Seidel map commutes with the $q$-connection, just like \eqref{eqn:flatness-for-quantum-introduction}.
    This represents a further enhancement of the available algebraic structures on equivariant symplectic cohomology.
    The result is proved with a Floer theory version of our new proof of the intertwining relation \autoref{thm:interwining-relation-introduction}.

    \subsection{Outline}
    
        We give an overview of the background material for our work in the next section (\autoref{sec:overview}), as well as giving more information about and intuition for our constructions and results.
        \autoref{sec:example-computations-introduction-summary} contains an overview of our example calculations.
    
        In \autoref{sec:floer-theory}, we introduce the Floer Seidel maps in detail, clarifying our assumptions and conventions for symplectic cohomology.
        We introduce equivariant Floer theory and its associated module structures in \autoref{sec:equivariant-floer-theory} before defining the equivariant Floer Seidel map in \autoref{sec:equivariant-floer-seidel-map}.
        
        In \autoref{sec:quantum-theory}, we introduce the quantum Seidel map (\autoref{sec:quantum-seidel-map}) and equivariant quantum cohomology (\autoref{sec:equivariant-quantum-cohomology}).
        We define the equivariant quantum Seidel map in \autoref{sec:equivariant-quantum-seidel-map} and prove the intertwining relation in \autoref{sec:intertwining-relation-equivariant-quantum-seidel-map}.
        
        \autoref{sec:example-computations} contains the details of our three example calculations.
    
    \subsection{Acknowledgements}
        
        The author would like to thank his supervisor Alexander Ritter for his guidance, support and ideas.
        The author wishes to thank Paul Seidel for his ideas.
        The author thanks Nicholas Wilkins and Filip Živanović for many constructive discussions and their feedback.
        The author thanks Jack Smith for useful conversations.
        The author gratefully acknowledges support from the EPSRC grant EP/N509711/1.
        This work will form part of his DPhil thesis.
    
\section{Overview}

    \label{sec:overview}
    
    \subsection{Background}
    
        \subsubsection{Seidel maps on closed symplectic manifolds}
        
        \label{sec:seidel-maps-on-closed-manifolds}
    
        Let $\Manifold$ be a closed monotone symplectic manifold and let $\CircleAction$ be a Hamiltonian circle action on $\Manifold$.
        In \cite{seidel_$_1997}, Seidel defined a pair of automorphisms associated to $\CircleAction$, one on Floer cohomology and one on quantum cohomology.
        To distinguish between these maps, we call the former the \emph{Floer Seidel map} and the latter the \emph{quantum Seidel map}.
        
        \begin{definition}
            [Floer Seidel map]
            Recall that the Floer cochain complex $\FloerC^\ArbitraryIndex(\Manifold; \Ham)$ associated to the time-dependent Hamiltonian function $\Ham : \Circle \times \Manifold \to \RealNumbers$ is freely-generated by the 1-periodic Hamiltonian orbits of $\Ham$ over the Novikov ring $\NovikovRing$.
            Given a Hamiltonian orbit $x : \Circle \to \Manifold$ of $\Ham$, the pullback orbit $\CircleAction \PullBack x$ given by
            \begin{equation}
                \label{eqn:pullback-hamiltonian-orbit}
                (\CircleAction \PullBack x) (t) = \CircleAction\Inverse_t (x(t))
            \end{equation}
            is a Hamiltonian orbit of the pullback Hamiltonian $\CircleAction \PullBack \Ham$.
            Moreover, the assignment $x \mapsto \CircleAction \PullBack x$ is a bijection between the orbits of $\Ham$ and the orbits of $\CircleAction \PullBack \Ham$.
            This assignment can be upgraded to an isomorphism of Floer cochain complexes
            \begin{equation}
                \label{eqn:floer-seidel-map-on-floer-cochain-complex}
                \FloerSeidel_{\Lifted{\CircleAction}} : \FloerC^\ArbitraryIndex(\Manifold; \Ham) \to \FloerC^{\ArbitraryIndex + 2 \MaslovIndex(\Lifted{\CircleAction})} (\Manifold; \CircleAction \PullBack \Ham)
            \end{equation}
            by using a \emph{lift $\Lifted{\CircleAction}$} of the circle action $\CircleAction$ to keep track of the information recorded by the Novikov ring.
            The quantity $\MaslovIndex(\Lifted{\CircleAction})$ is a Maslov index associated to $\Lifted{\CircleAction}$ (see \autoref{sec:hamiltonian-circle-actions} for details).
            
            The \define{Floer Seidel map $\FloerSeidel_{\Lifted{\CircleAction}}$} is the map induced on Floer cohomology by \eqref{eqn:floer-seidel-map-on-floer-cochain-complex}.
            It satisfies $\FloerSeidel_{\Lifted{\CircleAction}}(a \ast b) = a \ast \FloerSeidel_{\Lifted{\CircleAction}}(b)$, where $\ast$ denotes the pair-of-pants product.
        \end{definition}
        
        \begin{definition}
            [Quantum Seidel map]
            Define a \define{clutching bundle $\ClutchingBundle$} over $\Sphere$ with fibre $\Manifold$ as follows.
            The sphere is the union of its northern hemisphere $\Hemisphere^-$ and its southern hemisphere $\Hemisphere^+$.
            Each hemisphere is isomorphic to a closed unit disc, and the two hemispheres are glued along the equator $\Circle = \Boundary \Hemisphere^- = \Boundary \Hemisphere^+$ to get the sphere.
            The clutching bundle is the union of the trivial bundles $\Hemisphere^- \times \Manifold$ and $\Hemisphere^+ \times \Manifold$, glued along the equator by the relation
            \begin{equation}
                \Boundary \Hemisphere^- \times \Manifold \ni (t, m) \leftrightarrow (t, \CircleAction_t(m)) \in \Boundary \Hemisphere^+ \times \Manifold \EndFullStop
            \end{equation}
            Thus we twist the bundle by $\CircleAction$ when passing from the northern hemisphere to the southern hemisphere.
            
            The \define{quantum Seidel map} is an isomorphism $\QuantumSeidelMap_{\Lifted{\CircleAction}} : \QuantumCohomology^\ArbitraryIndex(\Manifold) \to \QuantumCohomology^{\ArbitraryIndex + 2 \MaslovIndex(\Lifted{\CircleAction})} (\Manifold)$ which counts pseudoholomorphic sections of this clutching bundle.
            It satisfies
            \begin{equation}
                \label{eqn:non-equivariant-quantum-intertwining-relation}
                \QuantumSeidelMap_{\Lifted{\CircleAction}}(a \ast b) = a \ast \QuantumSeidelMap_{\Lifted{\CircleAction}}(b),
            \end{equation}
            where $\ast$ denotes the quantum product.
            It follows that the quantum Seidel map is given by quantum product multiplication by the invertible element $\QuantumSeidelMap_{\Lifted{\CircleAction}}(1)$.
            This element is the \define{Seidel element of $\Lifted{\CircleAction}$}.
        \end{definition}
        
        Using the PSS isomorphism to identify $\QuantumCohomology^\ArbitraryIndex(M)$ with Floer cohomology, the Floer Seidel map and the quantum Seidel map are identified.
        Roughly, each hemisphere in the clutching bundle corresponds to a PSS map, and the twisting along the equator corresponds to the Floer Seidel map.
        
        More generally, Seidel maps may be defined for loops in the Hamiltonian symplectomorphism group $\HamiltonianSymplectomorphismGroup(\Manifold)$ based at the identity $\Identity_\Manifold$.
        (Technically, we must use a cover $\widetilde{\HamiltonianSymplectomorphismGroup}(\Manifold)$, though we omit details here.)
        The Seidel maps are homotopy invariants, so that the assignment
        \begin{equation}
            \label{eqn:seidel-representation}
            \begin{aligned}
                \FundamentalGroup( \widetilde{\HamiltonianSymplectomorphismGroup}(\Manifold), \Identity_\Manifold) &\to \QuantumCohomology^\ArbitraryIndex(\Manifold) \Units \\
                \Lifted{\CircleAction} \mapsto \QuantumSeidelMap_{\Lifted{\CircleAction}}(1)
            \end{aligned}
        \end{equation}
        is a group homomorphism.
        The map \eqref{eqn:seidel-representation} is the \define{Seidel representation}.
        
        Here is a selection of results whose proofs use Seidel maps. It is by no means complete.
        
        \begin{itemize}
            \item
                An algorithmic and combinatorial computation of quantum cohomology from the moment polytope of a toric symplectic manifold \cite[Section~5]{mcduff_topological_2006}.
            \item
                An isomorphism $\QuantumCohomology^\ArbitraryIndex (\Manifold) \cong \QuantumCohomology^\ArbitraryIndex((\Projective^1)^n)$ whenever $\Manifold$ admits a semifree circle action with nonempty isolated fixed point set \cite{gonzalez_quantum_2006}.
            \item
                Computation of the Gromov width and Hofer-Zehnder capacity in terms of the values of the Hamiltonian of $\CircleAction$ on fixed components, under the assumption that $\CircleAction$ is semi-free (and its maximal fixed locus is a point) \cite{hwang_symplectic_2017}.
        \end{itemize}
        
        \subsubsection{Convex manifolds}
        
            \label{intro:convex-manifolds}
        
            The symplectic manifold $(\Manifold, \SymplecticForm)$ is \define{convex} (sometimes \define{conical} in the literature) if it is symplectomorphic to $\ContactManifold \times \IntervalClosedOpen{1}{\Infinity}$ with symplectic form $\SymplecticForm = \ExteriorDerivative( R \ContactForm )$ away from a relatively compact subset $\Manifold_0 \subseteq \Manifold$.
            Here, $\ContactManifold$ is a closed contact manifold with contact form $\ContactForm$ and $R$ is the coordinate of $\IntervalClosedOpen{1}{\Infinity}$.
            
            The cotangent bundle of a closed manifold is convex, and more generally so is the completion of a Liouville domain.
            In addition, there are many examples of convex symplectic manifolds whose symplectic forms are not globally exact, such as the total space of the line bundle $\StandardLineBundle_{\Projective^N}(-1)$.
            
            In the region $\ContactManifold \times \IntervalClosedOpen{1}{\Infinity}$, the \define{convex end}, the symplectic form is exact and it grows infinitely big as $R \to \Infinity$.
            Moreover the formula $\SymplecticForm = \ExteriorDerivative( R \ContactForm ) = \ExteriorDerivative R \wedge \ContactForm + R \ExteriorDerivative \ContactForm$ decomposes the tangent space into the two-dimensional subspace $\RealNumbers \partial_R + \RealNumbers \Reebvf$ and the contact structure $\Kernel \alpha$.
            (Here, $\Reebvf$ is the Reeb vector field, which is characterised by $\ContactForm(\Reebvf) = 1$ and $\ExteriorDerivative \ContactForm (\Reebvf, \Argument) = 0$.)
            These properties of $\SymplecticForm$ constrain the Hamiltonian dynamics of a Hamiltonian function $\Ham$ to a compact region of the form $\Manifold_0 \Union (\ContactManifold \times \IntervalClosed{1}{R_0})$ so long as $\Ham = \slope R + \mu$ is a linear function of $R$ outside this region (some further conditions are also required, see \autoref{sec:floer-cohomology-all}).
            
            Once the Hamiltonian dynamics are restricted to a compact region, the machinery of Floer cohomology applies, so we can define Floer cohomology for convex symplectic manifolds (which satisfy an appropriate monotonicity assumption) and Hamiltonian functions which are linear outside a compact region.
            Floer cohomology depends on the slope $\slope$ of the Hamiltonian, unlike for closed manifolds whose Floer cohomology is entirely independent of the Hamiltonian. 
            \define{Symplectic cohomology $\SymplecticCohomology^\ArbitraryIndex(\Manifold)$} is the direct limit of Floer cohomology as the slope $\slope$ tends to infinity.
            The PSS maps are isomorphisms between the quantum cohomology $\QuantumCohomology^\ArbitraryIndex(\Manifold)$ and the Floer cohomology of a Hamiltonian with sufficiently small (positive) slope.
        
        \subsubsection{Seidel maps on convex symplectic manifolds}
        
            \label{sec:seidel-maps-on-convex-manifolds}
            
            In \cite{ritter_floer_2014}, Ritter extended the construction of Seidel maps to Hamiltonian circle actions $\CircleAction$ on convex symplectic manifolds, so long as the Hamiltonian $\CircleHam$ of $\CircleAction$ is linear, with nonnegative slope, outside a compact region.
            The Floer Seidel map is an isomorphism between $\FloerCohomology^\ArbitraryIndex(\Manifold; \Ham)$ and $\FloerCohomology^{\ArbitraryIndex + 2 \MaslovIndex(\Lifted{\CircleAction})}(\Manifold; \CircleAction \PullBack \Ham)$ as for closed manifolds, however the pullback Hamiltonian $\CircleAction \PullBack \Ham$ may have a different slope to $\Ham$.
            The limit of the Floer Seidel maps as the slope $\slope$ of $\Ham$ tends to infinity induces an automorphism $\FloerSeidel_{\Lifted{\CircleAction}}$ of symplectic cohomology $\SymplecticCohomology^\ArbitraryIndex(\Manifold)$.
            
            The quantum Seidel map on a convex symplectic manifold is not necessarily an isomorphism, but instead is merely a module homomorphism.
            Ritter computed the quantum Seidel maps on the total spaces of the line bundles $\StandardLineBundle_{\Projective^N}(-1)$.
            His calculations demonstrate that, even in elementary examples, the quantum Seidel map may fail to be injective and surjective.
            
            The fact that the Floer Seidel map is an isomorphism even though the quantum Seidel map may fail to be injective and surjective is explained by the following commutative diagram.
            The non-bijectivity of the quantum Seidel map corresponds to the non-bijectivity of the continuation map.
            \begin{equation}
                \label{eqn:diagram-relating-quantum-and-floer-seidel-maps-introduction}
                \begin{tikzcd}[column sep=0, row sep=large]
                    \QuantumCohomology^\ArbitraryIndex (\Manifold)
                        \arrow[rr, "\QuantumSeidelMap_{\Lifted{\CircleAction}}"]
                        \arrow[d, "\text{PSS map}"', "\cong"] 
                        &
                        & \QuantumCohomology^{\ArbitraryIndex + 2\MaslovIndex(\Lifted{\CircleAction})} (\Manifold)
                    \\
                    \FloerCohomology^\ArbitraryIndex(\Manifold; \Ham^\text{small})
                        \arrow[rd, "\FloerSeidel_{\Lifted{\CircleAction}}"', "\cong"]
                        &
                        & \FloerCohomology^{\ArbitraryIndex + 2\MaslovIndex(\Lifted{\CircleAction})} (\Manifold; \Ham^\text{small})
                        \arrow[u, "\text{PSS map}"', "\cong"]
                    \\
                    & \FloerCohomology^{\ArbitraryIndex + 2\MaslovIndex(\Lifted{\CircleAction})} (\Manifold; \CircleAction\PullBack\Ham^\text{small})
                        \arrow[ru, "\text{continuation map}"']
                        &                  
                \end{tikzcd}
            \end{equation}
            The Hamiltonian $\Ham^\text{small}$ has small (positive) slope, but the Hamiltonian $\CircleAction \PullBack \Ham^\text{small}$ typically has negative slope.
            
            When the slope of $\CircleHam$ is positive, the quantum Seidel map may be used to compute symplectic cohomology \cite[Theorem~22]{ritter_floer_2014}.
            This approach explicitly computes symplectic cohomology for various line bundles of the form $\StandardLineBundle_{\Projective^N}(-k)$ \cite[Theorem~5]{ritter_floer_2014} and provides an algorithm to compute the symplectic cohomology of a Fano toric negative line bundle using its moment polytope \cite[Theorem~1.5]{ritter_circle-actions_2016}.
        
        \subsubsection{Equivariant Floer cohomology}
        
            Recall that Floer cohomology is inspired by the Morse cohomology of the loop space $\ContractibleLoopSpace \Manifold = \Set{\text{contractible $x : \Circle \to \Manifold$}}$.
            The loop space $\ContractibleLoopSpace \Manifold$ has a canonical circle action which rotates the loops.
            Equivariant Floer cohomology is analogously inspired by the equivariant Morse cohomology of $\ContractibleLoopSpace \Manifold$ with this rotation action.
            
            Viterbo introduced the first version of equivariant Floer cohomology \cite[Section~5]{viterbo_functors_1996}, and later Seidel introduced a second version \cite[Section~8b]{seidel_biased_2007}.
            Bourgeois and Oancea showed these different versions are equivalent \cite[Proposition~2.5]{bourgeois_s1-equivariant_2017}.
            We use Seidel's approach, since its analysis is far simpler.
            For more information on the history, see \cite[Section~2]{bourgeois_s1-equivariant_2017}.
            Our conventions differ from those in the literature in three important ways: we use a different relation for the Borel homotopy quotient (\autoref{rem:diagonal-action-in-literature-for-equivariant-floer-theory}), we use a geometric module structure (\autoref{sec:y-shape-module-structure-on-equivariant-floer}) and we incorporate actions on the manifold $\Manifold$.
            
            Denote by $\InfiniteSphere$ the limit of the inclusions $\Circle \Inclusion \HighDimensionalSphere{3} \Inclusion \HighDimensionalSphere{5} \cdots$, where $\HighDimensionalSphere{2k-1}$ is thought of as the unit sphere in $\ComplexNumbers^{k}$.
            It is a contractible space with a free circle action.
            Given any space $X$ with a circle action, its \define{Borel homotopy quotient}, denoted $\InfiniteSphere \times_\Circle X$, is the quotient of $\InfiniteSphere \times X$ by the relation $(\infsphereelement, \CircleElement \cdot x) \sim (\CircleElement \cdot \infsphereelement, x)$ for all $\CircleElement \in \Circle$.
            \define{Equivariant cohomology} is the cohomology of $\InfiniteSphere \times_\Circle X$.
            
            Informally, the equivariant Morse cohomology of the Borel homotopy quotient may be obtained by doing Morse theory on $\InfiniteSphere$ and on $X$, and quotienting the moduli spaces by the induced relation.
            Similarly, the equivariant Floer cohomology of $\Manifold$ is obtained by doing Morse theory on $\InfiniteSphere$ and Floer theory on $\Manifold$, and quotienting the moduli spaces by the induced relation.
            
            The \define{equivariant Floer cochain complex} is generated by $\sim$-equivalence classes $[(\infsphereelement, x)]$, where $\infsphereelement$ is a critical point in $\InfiniteSphere$ and $x$ is a 1-periodic Hamiltonian orbit of a time-dependent Hamiltonian $\EqntHam_{\infsphereelement}$.
            The function $\EqntHam : \InfiniteSphere \times \Circle \times \Manifold \to \RealNumbers$ must satisfy the identity 
            \begin{equation}
                \label{eqn:equivariant-condition-on-hamiltonian-no-action}
                \EqntHam_{\CircleElement \cdot \infsphereelement, t} (\ManifoldElement) = \EqntHam_{\infsphereelement, t + \CircleElement} (\ManifoldElement )
            \end{equation}        
            in order that the relation $\sim$ make sense on the pairs $(\infsphereelement, x) \in \InfiniteSphere \times \ContractibleLoopSpace{\Manifold}$.
            Such $\sim$-equivalence classes are \define{equivariant Hamiltonian orbits}.
            
            Similarly, consider $\sim$-equivalence classes $[(v, u)]$ where $v : \RealNumbers \to \InfiniteSphere$ is a Morse trajectory and $u : \RealNumbers \times \Circle \to \Manifold$ is a Floer solution of the $s$-dependent Hamiltonian $\EqntHam_{v(s)}$.
            The differential on the equivariant Floer cochain complex counts these equivalence classes modulo the free $\RealNumbers$-action of translation.
            
            The cohomology of $\InfiniteSphere / \Circle$ is $\Integers [\uformal]$, where $\uformal$ is a formal variable in degree 2.
            For certain choices of Floer data, the equivariant Floer cochain complex\footnote{
                We may need infinitely-many powers of $\uformal$ so we must use some kind of completed tensor product.
                Some authors use $\Integers \llbracket \uformal \rrbracket$, however we use a slightly smaller cochain complex \eqref{eqn:equivariant-floer-complex}.
                With our approach, $\EFloerCohomology^\ArbitraryIndex(\Manifold)$ is graded in the conventional sense.
            } is $\FloerC^\ArbitraryIndex(\Manifold) \GradedCompletedTensorProduct \Integers [\uformal]$ with differential $d\Eqnt = d + \SmallO(\uformal)$, where $d$ is the (non-equivariant) Floer differential.
            
            The resulting \define{equivariant Floer cohomology $\EFloerCohomology^\ArbitraryIndex(\Manifold; \EqntHam)$} is a graded module over the Novikov ring $\NovikovRing$.
            Moreover, it has a $\Integers [\uformal]$-module structure coming from the description of the cochain complex above.
            In fact, it has another $\Integers [\uformal]$-module structure given by an equivariant cup product type construction, which we call the \define{geometric module structure} and denote $\CupProduct$.
            
            Much like in the non-equivariant setup in \autoref{intro:convex-manifolds}, $\EFloerCohomology^\ArbitraryIndex(\Manifold; \EqntHam)$ only depends on the slope of $\EqntHam$, and the limit of $\EFloerCohomology^\ArbitraryIndex(\Manifold; \EqntHam)$ as the slopes increase to infinity is the \define{equivariant symplectic cohomology $\ESymplecticCohomology^\ArbitraryIndex(\Manifold)$}.
            Equivariant PSS maps identify $\EFloerCohomology^\ArbitraryIndex(\Manifold; \EqntHam)$ and $\EQuantumCohomology^\ArbitraryIndex(\Manifold)$ when $\EqntHam$ has sufficiently small (positive) slope.
            Here, $\EQuantumCohomology^\ArbitraryIndex(\Manifold)$ is the equivariant quantum cohomology of $\Manifold$ for the trivial identity circle action on $\Manifold$.
            
            For us, we have a Hamiltonian action $\CircleAction$ acting on $\Manifold$.
            This means that the loop space $\ContractibleLoopSpace(\Manifold)$ has another canonical circle action given by 
            \begin{equation}
                \label{eqn:circle-action-on-loop-space-introduction}
                \CircleElement \cdot (\ t \mapsto x(t)\ ) = (\ t \mapsto \CircleAction_\CircleElement (x(t - \CircleElement))\ ) \EndFullStop
            \end{equation}
            This action combines the rotation action on the domain of the loops with the action $\CircleAction$ on the target space $\Manifold$.
            All of the equivariant constructions above generalise to the new action, and we denote these versions with a subscript $\CircleAction$.
            For example, $\ESymplecticCohomology_\CircleAction^\ArbitraryIndex(\Manifold)$ is the equivariant symplectic cohomology corresponding to \eqref{eqn:circle-action-on-loop-space-introduction}.
    
    \subsection{Equivariant Seidel maps}
    
        In this paper, we define new variants of the Floer Seidel map and the quantum Seidel map on equivariant Floer cohomology and equivariant quantum cohomology respectively.
        We prove a number of initial properties of these maps, which for the most part are just like the non-equivariant maps.
        The main exception is that the equivariant quantum Seidel map and the equivariant quantum product do not commute (\autoref{thm:interwining-relation-introduction}).
        
        The extension of the Floer Seidel map to the equivariant setting is nontrivial because the equivariant Floer Seidel map pulls back the action \eqref{eqn:circle-action-on-loop-space-introduction}.
        A similar phenomenon occurs with the equivariant quantum Seidel map.
        No new analysis is required, however, since our constructions only use an $\InfiniteSphere$-parameterised version of the analysis used to define the Seidel maps in \cite{ritter_floer_2014}.
        
        \subsubsection{Definitions}
        
            Let $\Manifold$ be a convex\footnote{
                Our construction works for closed manifolds as well, but we only discuss the convex case here to simplify the discussion.
                We treat both cases in the rest of the paper (see \autoref{rem:closed-manifold-is-trivially-convex}).
            } symplectic manifold which is either monotone or whose first Chern class vanishes.
            Let $\CircleAction$ and $\rho$ be two commuting Hamiltonian circle actions on $\Manifold$ whose Hamiltonian functions are linear outside a compact subset of $\Manifold$.
            Assume the Hamiltonian $\CircleHam$ of $\CircleAction$ has nonnegative slope.
            
            Recall the cochain complex for equivariant Floer cohomology $\EFloerCohomology^\ArbitraryIndex_\rho (\Manifold; \EqntHam)$ is generated by equivariant Hamiltonian orbits, which are certain equivalence classes of pairs $(w, x) \in \InfiniteSphere \times \ContractibleLoopSpace \Manifold$ under the equivalence relation $(\CircleElement w, x(t)) \sim_\rho (w, \rho_\CircleElement(x(t - \CircleElement)))$.
            
            In order for the map $[(w, x)] \mapsto [(w, \CircleAction \PullBack x)]$ to be well-defined, the equivalence class $[(w, \CircleAction \PullBack x)]$ must be considered with the relation $\sim_{\CircleAction \PullBack \rho}$, which corresponds to the pullback circle action $\CircleAction \PullBack \rho = \CircleAction \Inverse \rho$.
            Once we account for the change in the action, the definition of the Floer Seidel map extends naturally.
        
            \begin{definition}
                [Equivariant Floer Seidel map]
                The \define{equivariant Floer Seidel map} is the map 
                \begin{equation}
                    \EFloerSeidel_{\Lifted{\CircleAction}} : \EFloerCohomology^\ArbitraryIndex_\rho (\Manifold; \EqntHam) \to \EFloerCohomology^{\ArbitraryIndex + 2 \MaslovIndex(\Lifted{\CircleAction})}_{\CircleAction \PullBack \rho} (\Manifold; \CircleAction \PullBack \EqntHam)
                \end{equation}
                given by $[(w, x)] \mapsto [(w, \CircleAction \PullBack x)]$ on equivariant Hamiltonian orbits.
                It is a $\NovikovRing$-module isomorphism on the cochain complex which is compatible with algebraic $\Integers [\uformal]$-module structure.
                On cohomology, it is compatible with the geometric $\Integers [\uformal]$-module structure and with continuation maps.
                Under the limit as the slope of the equivariant Hamiltonians tends to infinity, the maps induce a well-defined isomorphism
                \begin{equation}
                    \EFloerSeidel_{\Lifted{\CircleAction}} : \ESymplecticCohomology^\ArbitraryIndex_\rho(\Manifold) \to \ESymplecticCohomology^{\ArbitraryIndex + 2 \MaslovIndex(\Lifted{\CircleAction})}_{\CircleAction \PullBack \rho} (\Manifold) \EndFullStop
                \end{equation}
            \end{definition}
            
            We discuss how the equivariant Floer Seidel map is compatible with filtrations and the Gysin sequence in \autoref{sec:equivariant-floer-seidel-map-properties}.
            
            For the quantum Seidel map, we put an action on the clutching bundle which lifts the natural rotation action of the sphere and which restricts to the action $\rho$ on the fibre above the south pole.
            The ``twisting by $\CircleAction$'' across the equator forces the action on the fibre above the north pole to be $\CircleAction \PullBack \rho$.
            With this action on the clutching bundle, the quantum Seidel map extends naturally to the equivariant setup.
            
            \begin{definition}
                [Equivariant quantum Seidel map]
                \label{def:equivariant-quantum-seidel-map-introduction}
                The \define{equivariant quantum Seidel map} is the map
                \begin{equation}
                    \EQuantumSeidelMap_{\Lifted{\CircleAction}} : \EQuantumCohomology^\ArbitraryIndex_\rho (\Manifold) \to \EQuantumCohomology^{\ArbitraryIndex + 2 \MaslovIndex(\Lifted{\CircleAction})} _{\CircleAction \PullBack \rho} (\Manifold)
                \end{equation}
                which counts equivalence classes $[(w, u)]$, where  $w \in \InfiniteSphere$ and $u$ is a pseudoholomorphic section of the clutching bundle.
                It is a $\NovikovRing$-module homomorphism which is compatible with the algebraic and geometric $\Integers [\uformal]$-module structures.
            \end{definition}
            
        \subsubsection{Properties}
        
            Equivariant PSS maps identify equivariant quantum cohomology with the equivariant Floer cohomology of an equivariant Hamiltonian of sufficiently small (positive) slope.
            
            \newcommand{\SmallEqntHam}{\Ham^{\eqnt\text{, small}}}
            \begin{proposition}
                [Compatibility with PSS maps]
                An analogous commutative diagram to \eqref{eqn:diagram-relating-quantum-and-floer-seidel-maps-introduction} holds for the equivariant maps:
                \begin{equation}
                    \label{eqn:diagram-relating-quantum-and-floer-seidel-maps-equivariant-case-introduction}
                    \begin{tikzcd}[row sep=3.5em, column sep=-4em]
                        \EQuantumCohomology^\ArbitraryIndex_\rho (\Manifold)
                            \arrow[rr, "\EQuantumSeidelMap_{\Lifted{\CircleAction}}"]
                            \arrow[d, "\substack{\text{equivariant}\\\text{PSS map}}"', "\cong"] 
                            &
                            & \EQuantumCohomology^{\ArbitraryIndex + 2\MaslovIndex(\Lifted{\CircleAction})}_{\CircleAction \PullBack \rho} (\Manifold)
                        \\
                        \EFloerCohomology^\ArbitraryIndex_\rho(\Manifold; \SmallEqntHam_0)
                            \arrow[rd, "\EFloerSeidel_{\Lifted{\CircleAction}}"', "\cong"]
                            &
                            & \EFloerCohomology ^{\ArbitraryIndex + 2\MaslovIndex(\Lifted{\CircleAction})} _{\CircleAction \PullBack \rho} (\Manifold; \SmallEqntHam_1)
                            \arrow[u, "\substack{\text{equivariant}\\\text{PSS map}}"', "\cong"]
                        \\
                        & \EFloerCohomology ^{\ArbitraryIndex + 2\MaslovIndex(\Lifted{\CircleAction})} _{\CircleAction \PullBack \rho} (\Manifold; \CircleAction\PullBack\SmallEqntHam_0)
                            \arrow[ru, "\substack{\text{equivariant}\\\text{continuation map}}"', pos=0.6]
                            &                  
                    \end{tikzcd}
                \end{equation}
                Here, $\SmallEqntHam_0$ satisfies an equivariance condition analogous to \eqref{eqn:equivariant-condition-on-hamiltonian-no-action} which corresponds to the action $\rho$.
                In contrast, $\CircleAction\PullBack\SmallEqntHam_0$ and $\SmallEqntHam_1$ satisfy the equivariance condition which corresponds to the action $\CircleAction \PullBack \rho$.
                Both $\SmallEqntHam_0$ and $\SmallEqntHam_1$ have small positive slope, but $\CircleAction \PullBack \SmallEqntHam_0$ has negative slope in general.
            \end{proposition}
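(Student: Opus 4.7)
The plan is to run the standard neck-stretching argument from the non-equivariant diagram \eqref{eqn:diagram-relating-quantum-and-floer-seidel-maps-introduction} in $\InfiniteSphere$-parametrised families. Since each of the four maps in \eqref{eqn:diagram-relating-quantum-and-floer-seidel-maps-equivariant-case-introduction} is built by combining a Morse-theoretic count on $\InfiniteSphere$ with a pseudoholomorphic count on $\Manifold$, modulo the appropriate $\sim_\rho$- or $\sim_{\CircleAction\PullBack\rho}$-equivalence, it suffices to construct an $\InfiniteSphere$-parametrised cobordism on the combined moduli problem whose two ends give the two compositions we wish to identify.

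First I would set up the parametrised moduli space. For each $R \in [0, \infty]$ and each pair of critical points $w_-, w_+ \in \CriticalPoints(\MorseFunctionOnInfiniteSphere)$, consider configurations consisting of: a Morse trajectory in $\InfiniteSphere$ from $w_-$; a PSS half-cylinder at the south pole with Hamiltonian $\SmallEqntHam_0$; a pseudoholomorphic section of the clutching bundle; a cylindrical neck of length $R$ carrying an equivariant continuation map from $\CircleAction\PullBack\SmallEqntHam_0$ to $\SmallEqntHam_1$; an inverse PSS half-cylinder at the north pole; and a Morse trajectory into $w_+$. The equivalence relation must be $\sim_\rho$ on everything up to and including the south pole of the clutching bundle, and $\sim_{\CircleAction\PullBack\rho}$ on everything from the north pole onwards; these two relations glue consistently across the equator precisely because the bundle is twisted by $\CircleAction$, which is the same twist that converts $\rho$ into $\CircleAction\PullBack\rho$. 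Counting the $0$-dimensional components of this parametrised moduli space, quotiented by $\sim$, defines a chain map whose two boundary contributions, by the usual degeneration analysis, are:
\begin{equation}
    \EPSSmap \ComposedWith \ContinuationMap \ComposedWith \EFloerSeidel_{\Lifted{\CircleAction}} \ComposedWith \EPSSmap\Inverse \quad (R \to \infty)
    \qquad \text{and} \qquad
    \EQuantumSeidelMap_{\Lifted{\CircleAction}} \quad (R \to 0),
\end{equation}
where the $R \to 0$ limit identifies the configuration with a pseudoholomorphic section of the clutching bundle carrying two marked points for the PSS caps, which is the setup defining $\EQuantumSeidelMap_{\Lifted{\CircleAction}}$ in \autoref{def:equivariant-quantum-seidel-map-introduction}.

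The main obstacle I anticipate is bookkeeping rather than analysis: one must choose $\InfiniteSphere$-parametrised Hamiltonians and almost complex structures satisfying \emph{both} an equivariance condition relative to $\rho$ on the southern part of the configuration and one relative to $\CircleAction\PullBack\rho$ on the northern part, in a way that is compatible across the equator of the clutching bundle, and simultaneously achieves transversality after passing to the quotient. Compactness is inherited from the non-equivariant setting because the monotonicity (or $\FirstChernClass = 0$) assumption controls sphere bubbling just as in \cite{ritter_floer_2014}, and the small positive slopes of $\SmallEqntHam_0$ and $\SmallEqntHam_1$ prevent escape into the convex end — the single genuinely new input, the negativity of the slope of $\CircleAction\PullBack\SmallEqntHam_0$, is exactly what is absorbed by inserting the continuation map into the parametrised family, so no additional analytic ingredient beyond those already underlying the individual definitions is required.
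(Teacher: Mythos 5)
Your proposal is correct and follows essentially the same route as the paper: the paper proves this by extending Seidel's gluing argument for the non-equivariant diagram \eqref{eqn:diagram-relating-quantum-and-floer-seidel-maps} to an $\InfiniteSphere$-parametrised (equivariant) version in \autoref{sec:equivariant-gluing}, which is exactly your neck-stretching cobordism with the $\sim_\rho$/$\sim_{\CircleAction\PullBack\rho}$ bookkeeping across the equator and the continuation map absorbing the negative slope of $\CircleAction\PullBack\SmallEqntHam_0$. The only difference is presentational: the paper leaves the degeneration analysis implicit, citing that no new analysis beyond the $\InfiniteSphere$-parametrised version of \cite{ritter_floer_2014} and \cite[Section~8]{seidel_$_1997} is needed, whereas you spell out the parametrised moduli space explicitly.
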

            
            This proposition implies \eqref{eqn:commutative-diagram-equivariant-quantum-seidel-map-into-equivariant-symplectic-cohomology}.
            
            The maps for different actions compose exactly like the non-equivariant case as follows.
            
            \begin{proposition}
                [Composition of multiple actions]
                \label{prop:composition-of-multiple-actions-introduction}
                Let $\rho$, $\CircleAction_1$ and $\CircleAction_2$ be commuting Hamiltonian circle actions whose Hamiltonians are linear outside a compact subset of $\Manifold$.
                Suppose the Hamiltonians of $\CircleAction_1$ and $\CircleAction_2$ have nonnegative slope.
                The following diagrams commute. (We have omitted grading, the Hamiltonians and $\Manifold$ from the notation.)
                \begin{equation}
                    \begin{tikzcd}[column sep=1em]
                        & \EQuantumCohomology_{\CircleAction_1 \PullBack \rho} \arrow{dd}{\EQuantumSeidelMap_{\Lifted{\CircleAction_2}}} && & \EFloerCohomology_{\CircleAction_1 \PullBack \rho} \arrow{dd}{\EFloerSeidel_{\Lifted{\CircleAction_2}}} \\
                        \EQuantumCohomology_\rho \arrow{ur}{\EQuantumSeidelMap_{\Lifted{\CircleAction_1}}} \arrow[rd, "\EQuantumSeidelMap_{\Lifted{\CircleAction_2}\Lifted{\CircleAction_1}}"'] &&  & \EFloerCohomology_\rho \arrow{ur}{\EFloerSeidel_{\Lifted{\CircleAction_1}}} \arrow[rd, "\EFloerSeidel_{\Lifted{\CircleAction_2}\Lifted{\CircleAction_1}}"'] &  \\
                        & \EQuantumCohomology_{(\CircleAction_2 \CircleAction_1) \PullBack \rho} && & \EFloerCohomology_{(\CircleAction_2 \CircleAction_1) \PullBack \rho}
                    \end{tikzcd}
                \end{equation}
            \end{proposition}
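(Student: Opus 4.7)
The plan is to first establish the Floer half by a direct computation on the cochain complex, and then transfer the identity to the quantum half using the PSS compatibility diagram \eqref{eqn:diagram-relating-quantum-and-floer-seidel-maps-equivariant-case-introduction}. For the Floer case, the key observation is that since $\CircleAction_1$ and $\CircleAction_2$ commute, the pullback of loops composes as $(\CircleAction_2 \CircleAction_1) \PullBack x = \CircleAction_2 \PullBack (\CircleAction_1 \PullBack x)$ (the inverse of the product reverses the order, which is then undone by commutativity). Because the equivariant Floer Seidel map is defined on generators by $[(w, x)] \mapsto [(w, \CircleAction \PullBack x)]$, this identity immediately forces the composition to agree with $\EFloerSeidel_{\Lifted{\CircleAction_2}\Lifted{\CircleAction_1}}$ at the level of generators. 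I would then check that the equivalence relation propagates consistently, $\sim_\rho \leadsto \sim_{\CircleAction_1 \PullBack \rho} \leadsto \sim_{\CircleAction_2 \PullBack (\CircleAction_1 \PullBack \rho)} = \sim_{(\CircleAction_2 \CircleAction_1) \PullBack \rho}$, again using commutativity. The Floer differential is covariant under pullback by a Hamiltonian action, so the identity extends to the entire equivariant Floer cochain complex and descends to cohomology.

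For the quantum case, I would apply the PSS diagram twice, once for each of $\CircleAction_1$ and $\CircleAction_2$, and stack the resulting squares. Comparing with the single PSS diagram for $\CircleAction_2 \CircleAction_1$ and inverting the PSS and continuation isomorphisms transfers the Floer identity above into the claimed equality of equivariant quantum Seidel maps. The grading shifts match via the standard additivity $\MaslovIndex(\Lifted{\CircleAction_2}\Lifted{\CircleAction_1}) = \MaslovIndex(\Lifted{\CircleAction_2}) + \MaslovIndex(\Lifted{\CircleAction_1})$ of the Maslov index for loops of symplectomorphisms.

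The main obstacle is bookkeeping rather than genuinely hard analysis. The intermediate Floer cohomologies produced when the PSS diagrams are stacked involve Hamiltonians such as $\CircleAction_1 \PullBack \SmallEqntHam_0$ whose slope may be negative, so one must fix compatible chains of small-slope equivariant Hamiltonians for each of $\rho$, $\CircleAction_1 \PullBack \rho$ and $(\CircleAction_2 \CircleAction_1) \PullBack \rho$ together with their pullbacks, and verify that the continuation maps glue as needed. If a direct quantum proof avoiding PSS is preferred, one would instead construct a 1-parameter family of equivariant clutching bundles interpolating between the bundle for $\CircleAction_2 \CircleAction_1$ and a nodal degeneration obtained by concatenating the bundles for $\CircleAction_1$ and $\CircleAction_2$; the boundary of the associated 1-dimensional parameterised moduli space would then yield the composition relation via a standard gluing analysis.
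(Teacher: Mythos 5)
Your proposal is correct and follows the route the paper's definitions are set up for: the proposition is stated without a written-out proof, and the intended argument is exactly yours, namely the chain-level identity $\EFloerSeidel_{\Lifted{\CircleAction_2}} \ComposedWith \EFloerSeidel_{\Lifted{\CircleAction_1}} = \EFloerSeidel_{\Lifted{\CircleAction_2}\Lifted{\CircleAction_1}}$ coming from composition of pullbacks (with commutativity fixing the order of the inverses, and Maslov-index additivity fixing the grading), transferred to the quantum side by stacking the equivariant PSS diagrams, cancelling $\EPSSmap^+ \EPSSmap^-$, and using that the equivariant Floer Seidel map commutes with equivariant continuation maps and that compositions of continuation maps are continuation maps. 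The only point you gloss that deserves a line is that the pullback \emph{data} must also compose, $\CircleAction_2 \PullBack (\CircleAction_1 \PullBack \EqntHam) = (\CircleAction_2 \CircleAction_1)\PullBack \EqntHam$, which uses the invariance of $\CircleHamSpecifyAction{\CircleAction_1}$ along $\CircleAction_2$ established in \eqref{eqn:hamiltonian-unchanging-in-complementary-direction}; with that noted, your argument (and your alternative via gluing the two clutching bundles) goes through.
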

            
            The quantum Seidel map intertwines the quantum product.
            More precisely, given $\alpha \in \QuantumCohomology^\ArbitraryIndex(\Manifold)$, the operations $\QuantumSeidelMap_{\Lifted{\CircleAction}}$ and  $\QuantumProduct \alpha$ commute, where $\QuantumProduct \alpha$ is the operation given by right-multiplication by $\alpha$.
            Thus the relation $\Commutator{\QuantumSeidelMap_{\Lifted{\CircleAction}}}{\QuantumProduct \alpha} = 0$ holds.
            
            The first complication for extending this relation to the equivariant setup is that the domain and codomain of $\EQuantumSeidelMap_{\Lifted{\CircleAction}}$ are different.
            We solve this by using the clutching bundle to relate the domain and codomain.
            Let $\alpha \in \ECohomology^\ArbitraryIndex(\ClutchingBundle)$ be an equivariant cohomology class of the clutching bundle $\ClutchingBundle$ with respect to the action we put on it.
            The restriction of $\alpha$ to the fibre over the south pole is a class $\alpha^+ \in \ECohomology^\ArbitraryIndex_\rho(\Manifold)$, while the restriction to the fibre over the north pole is a class $\alpha^- \in \ECohomology^\ArbitraryIndex_{\CircleAction \PullBack \rho}(\Manifold)$.
            
            Having established this, we can ask whether the analogue of the intertwining relation $\Commutator{\QuantumSeidelMap_{\Lifted{\CircleAction}}}{\QuantumProduct \alpha} = 0$ holds in the equivariant setup.
            From the non-equivariant relation, we can immediately deduce any failure to commute is $\SmallO(\uformal)$.
            The following theorem characterises the failure precisely.
            
            \begin{theorem}
                [Intertwining relation, \autoref{thm:interwining-relation-introduction}]
                The equation
                \begin{equation}
                    \label{eqn:quantum-intertwining-seidel-equivariant-overview}
                    \EQuantumSeidelMap_{\Lifted{\CircleAction}} (x \underset{\rho}{\QuantumProduct} \alpha^+)
                    -
                    \EQuantumSeidelMap_{\Lifted{\CircleAction}} (x) \underset{\CircleAction \PullBack \rho}{\QuantumProduct} \alpha^-
                    =
                    \uformal \CupProduct \EQuantumSeidelMap_{\Lifted{\CircleAction}, \alpha}(x)
                \end{equation}
                holds for all $\alpha \in \ECohomology^\ArbitraryIndex(\ClutchingBundle)$ and $x \in \EQuantumCohomology^\ArbitraryIndex_\rho(\Manifold)$, where
                \begin{equation}
                    \EQuantumSeidelMap_{\Lifted{\CircleAction}, \alpha} : \EQuantumCohomology^\ArbitraryIndex_\rho (\Manifold) \to \EQuantumCohomology^{\ArbitraryIndex + 2 \MaslovIndex(\Lifted{\CircleAction}) + \Modulus{\alpha} - 2} _{\CircleAction \PullBack \rho} (\Manifold)
                \end{equation}
                is a map defined in \autoref{sec:error-term-definition}.
            \end{theorem}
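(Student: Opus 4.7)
The plan is to construct an explicit one-dimensional parametric moduli space whose boundary, counted with signs, gives the three terms of \eqref{eqn:quantum-intertwining-seidel-equivariant-overview}. The starting point is that the non-equivariant intertwining relation \eqref{eqn:non-equivariant-quantum-intertwining-relation} is proved by a boundary count on a one-parameter family of section moduli spaces, and the equivariant relation should follow from an $\InfiniteSphere$-parameterised analogue together with a new boundary stratum arising from the $\Circle$-action fixing the poles of $\Sphere$.

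Fix an equivariant cycle $A \subseteq \InfiniteSphere \times \ClutchingBundle$ representing $\alpha$ with respect to the diagonal $\Circle$-action (standard rotation on $\InfiniteSphere$, clutching action on $\ClutchingBundle$), and consider the moduli space $\mathcal{N}$ of equivalence classes $[(w, u, z)]$, where $w$ is a negative half-gradient Morse trajectory on $\InfiniteSphere$ starting at a critical point $c$, $z \in \Sphere$ is a varying marked point, and $u$ is a pseudoholomorphic section of $\ClutchingBundle$ with prescribed asymptotic orbits at the poles, subject to the constraint $(w(0), u(z)) \in A$ and the equivalence $(w, u, z) \sim (\theta w, \theta \cdot u, \theta z)$ for $\theta \in \Circle$. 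Choose the Morse index of $c$ and the section class so that $\mathcal{N}$ is one-dimensional.

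The intertwining relation will follow by counting the codimension-one boundary of $\mathcal{N}$, which decomposes into four kinds of strata. The usual Morse and Floer degenerations of $w$ and $u$ cancel upon passing to cohomology via the equivariant differentials. As $z$ approaches the south pole of $\Sphere$, the constraint forces a Floer cylinder carrying the marked point to bubble off at the input end of the section; since $A$ restricts to a cycle in the fibre above the south pole representing $\alpha^+$, this stratum is counted by $\EQuantumSeidelMap_{\Lifted{\CircleAction}}(x \QuantumProduct_\rho \alpha^+)$. Symmetrically, $z \to $ north pole yields $\EQuantumSeidelMap_{\Lifted{\CircleAction}}(x) \QuantumProduct_{\CircleAction \PullBack \rho} \alpha^-$. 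The fourth stratum, which is absent in the non-equivariant setting, arises from the fact that the rotation action on $\Sphere$ is not free at the two poles, so that after quotienting, the effective parameter space for $z$ is an interval $\Sphere / \Circle \cong [0,1]$ whose endpoints force the collapse of the $\Circle$-orbit direction and create additional boundary in the Borel model; matching conventions with \autoref{sec:equivariant-quantum-cohomology} identifies its count with $\uformal \CupProduct \EQuantumSeidelMap_{\Lifted{\CircleAction}, \alpha}(x)$, the $\uformal$-factor arising from the codimension drop induced by the collapsed $\Circle$-direction.

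The main obstacle will be proving this identification of the fourth stratum on the nose, with the correct sign and the correct $\uformal$-factor. It requires a careful comparison between the Borel homotopy-quotient topology near the poles of $\Sphere$ and the definition of $\EQuantumSeidelMap_{\Lifted{\CircleAction}, \alpha}$ given in \autoref{sec:error-term-definition}, as well as matching the sign conventions of the geometric module structure $\CupProduct$ against the natural orientation on $\mathcal{N}$. By contrast, transversality, compactness and gluing for $\mathcal{N}$ are $\InfiniteSphere$-parameterised versions of the arguments already used in \autoref{sec:equivariant-quantum-seidel-map} to construct $\EQuantumSeidelMap_{\Lifted{\CircleAction}}$, and should introduce no essentially new analytic difficulty.
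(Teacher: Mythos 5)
Your overall strategy (a $1$-dimensional moduli space with a marked point $z$ constrained to an equivariant cycle for $\alpha$, with the two quantum-product terms appearing when $z$ reaches the poles and an extra equivariant stratum producing the $\uformal$-term) is the same in spirit as the paper's, but the mechanism you propose for producing the $1$-dimensional space and, crucially, for the $\uformal\CupProduct\EQuantumSeidelMap_{\Lifted{\CircleAction},\alpha}$ term does not work. In your moduli space $\mathcal{N}$ the marked point $z$ ranges over all of $\Sphere$ with only the incidence constraint $(w(0),u(z))\in A$. With this $2$-dimensional freedom, the locus where $z$ collides with a pole (with or without a fibre bubble absorbing the constraint) has codimension two, not one; indeed the paper exploits exactly this fact to show that the weighted map $\EQuantumSeidelMap_{\Lifted{\CircleAction},\bm{\alpha}}$ of \autoref{sec:error-term-definition} is a chain map, imposing by regularity that $z^\alpha\notin\Set{\Pole^\pm}$ in $0$- and $1$-dimensional moduli spaces. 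To make the pole events appear in codimension one you must first cut the freedom of $z$ down to a $1$-dimensional family, i.e.\ constrain $z$ to a line of longitude, and the entire difficulty of the equivariant case is that no global equivariant choice of longitudes exists (there is no equivariant map $\InfiniteSphere\to\Circle$). The paper resolves this with an auxiliary flowline $v^0$ in $\InfiniteSphere$ converging to $\spherecriticalpoint_0$ and the constraint $z^\alpha\in\spheretrivialise_0(v^0(\Infinity))\Inverse\cdot\LongitudeLine$; your proposal contains no analogue of this ingredient.

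Your ``fourth stratum'' is also spurious as described. The diagonal $\Circle$-action on triples $(w,u,z)$ is free because it is free on the $\InfiniteSphere$-component $w$, regardless of whether $z$ sits at a pole of $\Sphere$; hence the quotient is a manifold and the non-freeness of the rotation action on $\Sphere$ at $\Pole^\pm$ creates no boundary, no orbifold points, and no $\uformal$-factor. The fact that $\Sphere/\Circle\cong[0,1]$ only says that the latitude of $z$ is a function on your quotient space, and its extreme level sets are not boundary points. In the paper's proof the $\uformal$-term has a completely different origin: it is the codimension-one stratum in which the auxiliary flowline $v^0$ breaks at the index-two critical point $\spherecriticalpoint_1$ (equivalently, the configuration leaves the dense invariant open set of $\InfiniteSphere$ on which the equivariant longitude assignment is defined); a homotopy argument then separates $v^0$ from $v^\alpha$ and identifies this stratum with $\uformal\CupProduct\EQuantumSeidelMap_{\Lifted{\CircleAction},\bm{\alpha}}$, the $\uformal$ coming from the geometric module structure, not from any collapse at the poles. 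Without the longitude constraint and the $v^0$-breaking analysis, your boundary count would only reprove a chain-map statement and could not produce the right-hand side of the intertwining relation.
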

            
            The product $\QuantumProduct_\rho$ is the equivariant quantum product for the action $\rho$, and the symbol $\CupProduct$ denotes the action of the geometric $\Integers [\uformal]$-module structure.
            The map $\EQuantumSeidelMap_{\Lifted{\CircleAction}, \alpha}$ counts (equivariant) pseudoholomorphic sections of the clutching bundle which are weighted by the (equivariant) class $\alpha$.
            This weighting is easiest to understand when $\alpha \in \ECohomology^2(\ClutchingBundle)$ is degree 2.
            In this case, the map $\EQuantumSeidelMap_{\Lifted{\CircleAction}, \alpha}$ counts exactly the same pseudoholomorphic sections $u$ as the map $\EQuantumSeidelMap_{\Lifted{\CircleAction}}$, but with the weight $\alpha(u\PushForward(\FundamentalClass{\Sphere}))$.
            For the full definition, see \autoref{sec:error-term-definition}.
            
            In the realm of algebraic geometry, Maulik and Okounkov proved an analogous\footnote{
                The sections of the bundle must intersect the input $x \in \EQuantumCohomology^\ArbitraryIndex_\rho(\Manifold)$ over the south pole.
                In Maulik and Okounkov's conventions, the sections intersect $x$ over the north pole, and this is why their intertwining relation has different signs to ours.
            } intertwining relation for quiver varieties \cite[Proposition~8.2.1]{maulik_quantum_2012}, and our formula resembles theirs when $\alpha$ has degree 2.
            They prove the relation using \emph{virtual localization} \cite[Chapter~27]{hori_mirror_2003}.
            This technique converts counting sections into counting only the sections which are invariant under the $\Circle$-action on the clutching bundle.
            Any invariant section must be a constant section at a fixed point of the action $\CircleAction$ on $\Manifold$, however invariant sections are allowed to bubble over the poles.
            The result is a decomposition of $\EQuantumSeidelMap$ into three maps: $B^- \ComposedWith F \ComposedWith B^+$.
            The map $B^-$ counts bubbles over the north pole and $B^+$ counts bubbles over the south pole, both appropriately modified according to the virtual localization.
            The map $F$ corresponds to the constant section at a fixed point of the action $\CircleAction$.
            
            The intertwining relation is then proven for each of the three maps.
            For $B^-$ and $B^+$, it is a consequence of standard relations corresponding to gravitational descendant invariants (namely, the divisor equation and the topological recursion relation \cite[Chapter~26]{hori_mirror_2003}).
            For $F$, the intertwining relation is a topological result which relates $\alpha^-$ and $\alpha^+$ when both classes are restricted to the fixed locus of $\CircleAction$ on $\Manifold$.
            
            In contrast, our proof has a Floer-theoretic flavour: we construct an explicit 1-dimensional moduli space whose boundary gives \eqref{eqn:quantum-intertwining-seidel-equivariant-overview} on cohomology.
            
            To motivate our proof, consider first the following proof of the intertwining of the non-equivariant quantum Seidel map.
            Define a 1-dimensional moduli space of pseudoholomorphic sections which intersect a fixed Poincar\'{e} dual $\alpha^\Check$ of $\alpha$ along a fixed line of longitude $\LongitudeLine \subset \Sphere$.
            The boundary of this moduli space occurs when the intersection point is at either pole.
            When it is at the south pole, we recover the term $\QuantumSeidelMap (x \QuantumProduct \alpha)$, while at the north pole we get the term $- \QuantumSeidelMap (x) \QuantumProduct \alpha$.
            Summing these boundary components gives the equation $\QuantumSeidelMap (x \QuantumProduct \alpha) - \QuantumSeidelMap (x) \QuantumProduct \alpha = 0$ as desired.
            
            In the equivariant case, we must allow the line of longitude to vary with $\infsphereelement \in \InfiniteSphere$.
            This is because the intersection condition must be preserved by the equivalence relation $\sim$.
            We fix an equivariant assignment of lines of longitude $\infsphereelement \mapsto \LongitudeLine_\infsphereelement$ for an invariant dense open set of $\infsphereelement \in \InfiniteSphere$.
            Note that a global assignment is not possible: the set of lines of longitude is isomorphic to $\Circle$, however there are no equivariant maps $\InfiniteSphere \to \Circle$.
            
            For the equivariant 1-moduli space, we ask that the equivariant section $[(w, u)]$ satisfies $u(z) \in \alpha^\Check$ for some $z \in \LongitudeLine_\infsphereelement$.
            As per the non-equivariant case, the two poles yield the two terms on the left-hand side of \eqref{eqn:quantum-intertwining-seidel-equivariant-overview}.
            The remaining term in \eqref{eqn:quantum-intertwining-seidel-equivariant-overview} comes from a limit in which $\infsphereelement$ exits the dense open set.
            
            The computation behind \autoref{eg:projective-space} verifies that the right-hand side of \eqref{eqn:quantum-intertwining-seidel-equivariant-overview} is nonzero even in straightforward cases.
    
    \subsection{Examples}
    \label{sec:example-computations-introduction-summary}
    
        We compute the equivariant quantum Seidel map for the following three spaces: the complex plane, complex projective space and the total space of the tautological line bundle $\StandardLineBundle_{\Projective^N}(-1)$.
        Through these examples, we demonstrate how the map may be used to compute equivariant quantum cohomology and equivariant symplectic cohomology.
        
        For the complex plane, we deduce the equivariant quantum Seidel map from the equivariant Floer complex which was computed in \cite[Section~8.1]{zhao_periodic_2014}.
        In both other examples, we find the map by directly computing some coefficients and deducing the rest by repeated application of the intertwining relation \eqref{eqn:quantum-intertwining-seidel-equivariant-introduction}.
        
        We use the parameterisation $\Circle = \RealNumbers / \Integers$ throughout.
        
        \begin{example}
            [Complex plane]
            \label{eg:complex-plane-introduction}
            The complex plane has a Hamiltonian circle action $\CircleAction_\CircleElement(z) = \ExponentialNumber^{2 \PiNumber \ImaginaryNumber \CircleElement} z$.
            The origin $0_\ComplexNumbers \in \ComplexNumbers$ is the unique fixed point of $\CircleAction$.
            Thus $\ComplexNumbers$ is equivariantly contractible and its symplectic form is globally exact, so for any nonnegative $r$, we have $\EQuantumCohomology^\ArbitraryIndex_{\CircleAction^{-r}}(\ComplexNumbers) \cong \Integers [\uformal]$.
            The equivariant quantum Seidel map is
            \begin{equation}
                \begin{aligned}
                    \EQuantumSeidelMap_{\CircleAction} : \EQuantumCohomology^\ArbitraryIndex_{\CircleAction^{-r}}(\ComplexNumbers) &\to \EQuantumCohomology^{\ArbitraryIndex + 2} _{\CircleAction^{-(r+1)}}(\ComplexNumbers)
                    \\
                    1 &\mapsto (r+1) \uformal\EndFullStop
                \end{aligned}
            \end{equation}
        \end{example}
        
        \begin{example}
            [Projective space]
            \label{eg:projective-space}
            The complex projective space $\Projective^\dimM$ with its Fubini-Study symplectic form has a Hamiltonian action $\CircleAction$ given by 
            \begin{equation}
                \CircleElement \cdot [z_0: z_1:\cdots:z_\dimM] = [z_0: \ExponentialNumber^{2 \PiNumber \ImaginaryNumber \CircleElement} z_1 : \cdots : \ExponentialNumber^{2 \PiNumber \ImaginaryNumber \CircleElement} z_\dimM]\EndComma
            \end{equation}
            for any $(z_0, z_1,\cdots,z_\dimM) \in \HighDimensionalSphere{2 \dimM + 1} \subset \ComplexNumbers^{\dimM + 1}$.
            The $\CircleAction$-invariant Morse function $\MorseFunction_{\Projective^\dimM} ([z_0:\cdots:z_\dimM]) = \sum_{k=0}^\dimM k \Modulus{z_k}^2$ has critical points $e_0, \ldots, e_\dimM$, where $e_k$ has Morse index $2k$.
            For any nonnegative integer $r$, we have
            \begin{equation}
                \EQuantumCohomology^\ArbitraryIndex_{\CircleAction^{-r}}(\Projective^\dimM) \cong \Integers[\NovVariable^{\pm 1}] \GradedCompletedTensorProduct \Integers [\uformal]\langle e_0, \ldots, e_\dimM \rangle \EndComma
            \end{equation}
            where the Novikov variable $\NovVariable$ is a formal variable of degree $2(\dimM + 1)$.
            The equivariant quantum Seidel map is the map
            \begin{equation}
                \EQuantumSeidelMap_{\Lifted{\CircleAction}} : \EQuantumCohomology^\ArbitraryIndex_{\CircleAction^{-r}}(\Projective^\dimM) \to \EQuantumCohomology^{\ArbitraryIndex + 2\dimM} _{\CircleAction^{-(r+1)}}(\Projective^\dimM)
            \end{equation}
            given by
            \begin{equation}
                \left\{
                \begin{aligned}
                e_0 & \mapsto \sum_{l=0}^\dimM (r+1)^{\dimM - l} \uformal^{\dimM - l} \ e_l, & \\
                e_k & \mapsto \sum_{l=0}^{k - 1} (r+1)^{k-1-l} \NovVariable \uformal^{k-1-l} \ e_l, & k = 1, \ldots, \dimM\EndFullStop
                \end{aligned}
                \right.
            \end{equation}
        \end{example}
        
        Computing the quantum Seidel map and using  \eqref{eqn:non-equivariant-quantum-intertwining-relation} is one way to compute the (non-equivariant) quantum product on $\Projective^\dimM$ \cite[Section~5]{mcduff_topological_2006}.
        In our computations behind \autoref{eg:projective-space}, we demonstrate this extends to the equivariant case: we use the equivariant quantum Seidel map and \autoref{thm:interwining-relation-introduction} to derive the equivariant quantum product on $\Projective^\dimM$.
        Equivariant quantum Seidel maps have already been used to compute the equivariant quantum product on $\Projective^\dimM$ \cite[Section~4.4]{iritani_shift_2017}, though our direct method is new.
        The product on $\Projective^\dimM$ was previously known; it had been derived using equivariant quantum Littlewood–Richardson coefficients \cite{mihalcea_equivariant_2006}.
        
        \begin{example}
            [Tautological line bundle]
            The total space $\tautLB{n}$ of the tautological line bundle over projective space $\Projective^n$ is a monotone convex symplectic manifold \cite[Section~7]{ritter_floer_2014}.
            The fibres are symplectic submanifolds, and the circle action $\CircleAction$ which rotates the fibres is a linear Hamiltonian circle action.
            The action $\CircleAction$ fixes the image of the zero section $\ZeroSection$, and, like \autoref{eg:complex-plane-introduction}, $\tautLB{n}$ equivariantly contracts onto $\ZeroSection$ with the trivial circle action.
            We use the same Morse function on $\ZeroSection \cong \Projective^n$ as for \autoref{eg:projective-space}, so we have
            \begin{equation}
                \EQuantumCohomology^\ArbitraryIndex _{\CircleAction^{-r}} (\tautLB{n}) \cong \Integers [\NovVariable^{\pm 1}] \GradedCompletedTensorProduct \Integers [\uformal] \langle e_0, \ldots, e_n \rangle \EndComma
            \end{equation}
            where the Novikov variable $\NovVariable$ now has degree $2n$ (so $\NovikovRing = \Integers [\NovVariable^{\pm 1}]$ is the Novikov ring).
            The equivariant quantum Seidel map is the map
            \begin{equation}
                \EQuantumSeidelMap_{\Lifted{\CircleAction}} : \EQuantumCohomology^\ArbitraryIndex_{\CircleAction^{-r}}(\tautLB{n}) \to \EQuantumCohomology^{\ArbitraryIndex + 2} _{\CircleAction^{-(r+1)}}(\tautLB{n})
            \end{equation}
            given by
            \begin{equation}
                \label{eqn:equivariant-quantum-seidel-map-tautological-line-bundle-introduction}
                e_k \mapsto \begin{cases}
                -e_{k+1} + (r+1) \uformal e_k & k < n \EndComma \\
                \NovVariable e_1 + (r+1) \uformal e_n - (r+1) \uformal \NovVariable e_0 & k = n \EndFullStop
            \end{cases}
            \end{equation}
            Unlike the non-equivariant quantum Seidel map on $\tautLB{n}$, this is an injective map.
            
            From \eqref{eqn:equivariant-quantum-seidel-map-tautological-line-bundle-introduction}, we derive the equivariant quantum product as given by
            \begin{equation}
                e_1 \underset{\CircleAction^{-r}}{\QuantumProduct} e_k = \begin{cases}
                e_{k+1} & k < n \EndComma \\
                - \NovVariable e_1 + r \uformal \NovVariable e_0 & k = n \EndFullStop
            \end{cases}
            \end{equation}
            This product has a term which is not detected by either quantum cohomology or equivariant cohomology: it exists only in equivariant quantum cohomology.
            
            We can use \eqref{eqn:equivariant-quantum-seidel-map-tautological-line-bundle-introduction} to find the equivariant symplectic cohomology of $\tautLB{n}$ using an argument of Ritter \cite[Theorem~22]{ritter_floer_2014}.
            We deduce that the equivariant symplectic cohomology $\ESymplecticCohomology^\ArbitraryIndex_{\CircleAction^{-r}} (\tautLB{n})$ is a $\NovikovRing \GradedCompletedTensorProduct \Integers [\uformal]$-module which is not finitely generated and which satisfies
            \begin{equation}
                \EQuantumCohomology^\ArbitraryIndex_{\CircleAction^{-r}} (\tautLB{n}) \subsetneq \ESymplecticCohomology^\ArbitraryIndex_{\CircleAction^{-r}} (\tautLB{n}) \subsetneq \big( \EQuantumCohomology^\ArbitraryIndex_{\CircleAction^{-r}} (\tautLB{n}) \big) _{\Integers [\uformal] \setminus \Set{0}} \EndComma
            \end{equation}
            where the left inclusion is the equivariant $c^\ArbitraryIndex$ map and the module on the right is the localisation by $\Integers [\uformal] \setminus \Set{0}$.
            If we perform this localisation to all three modules, we find that the localised equivariant symplectic cohomology is isomorphic to the localised equivariant quantum cohomology, which is equivalent to a version of Zhao's localisation theorem \eqref{eqn:zhao-localisation-theorem}.
            
            We express $\ESymplecticCohomology^\ArbitraryIndex_{\CircleAction^{-r}} (\tautLB{n})$ as a $\NovikovRing \GradedCompletedTensorProduct \Integers [\uformal]$-submodule of the localised equivariant quantum cohomology with an explicit set of generators.
            The generators are defined by a recurrence relation induced by \eqref{eqn:equivariant-quantum-seidel-map-tautological-line-bundle-introduction}.
        \end{example}

\section{Floer theory}

    \label{sec:floer-theory}

    In \autoref{sec:symplectic-manifold-definitions-and-assumptions}, we clarify the assumptions we place on our symplectic manifold.
    We proceed in \autoref{sec:floer-cohomology-all} by defining Floer cohomology and symplectic cohomology using the same conventions as \cite{seidel_$_1997, ritter_floer_2014}.
    A more complete explanation of the construction of Floer cohomology may be found in \cite{salamon_lectures_1997}.
    Next, we introduce the Hamiltonian circle actions which yield the Seidel map in \autoref{def:linear-ham-circle-action}, and define the Floer Seidel map in \autoref{sec:floer-seidel-map}.

    \subsection{Symplectic manifolds}
    
        \label{sec:symplectic-manifold-definitions-and-assumptions}
    
        Let $\Manifold$ be a $2 \dimM$-dimensional smooth manifold with a symplectic form $\SymplecticForm$.
        For convenience, we assume throughout that $\Manifold$ is nonempty and connected.
        There are two additional conditions that we will impose on $\Manifold$.
        The first establishes a relationship between the cohomology class of $\SymplecticForm$ and the first Chern class, and the second controls the behaviour of the symplectic form when the manifold is open.
        
        \begin{definition}
            \label{def:nonnegatively-monotone}
            Denote by $\FirstChernClass \in \Cohomology^2(\Manifold)$ the first Chern class of the symplectic vector bundle $(\TangentSpace \Manifold, \SymplecticForm)$.
            The symplectic manifold $\Manifold$ is \define{nonnegatively monotone} if either:
            \begin{itemize}
                \item there is $\lambda \ge 0$ such that $\SymplecticForm(A) = \lambda \FirstChernClass(A)$ for all $A \in \HomotopyGroup_2(\Manifold)$; or
                
                \item $\FirstChernClass(A) = 0$ for all $A \in \HomotopyGroup_2(\Manifold)$.
            \end{itemize}
        \end{definition}
        
        By an analogous argument to the proof of Lemma~1.1 in \cite{hofer_floer_1995}, a symplectic manifold is nonnegatively monotone if and only if the implication
        \begin{equation}
            \label{eqn:nonnegatively-monotone-negative-spheres}
            \FirstChernClass(A) < 0 \implies \SymplecticForm(A) \le 0
        \end{equation}
        holds for all $A \in \HomotopyGroup_2(M)$.
        Such a manifold has the property that, for any compatible almost complex structure, all pseudoholomorphic curves have nonnegative first Chern number.
        
        \begin{definition}
            \label{def:convex-symplectic-manifold}
            A \define{convex symplectic manifold} is a symplectic manifold $\Manifold$ that is equipped with a closed $(2\dimM - 1)$-dimensional manifold $\ContactManifold$, a contact form $\ContactForm$ on $\ContactManifold$ and a map
            \begin{equation}
                \label{eqn:convex-end-parameterisation}
                \ConvexCoordMap : \ContactManifold \times \IntervalClosedOpen{1}{\Infinity} \to \Manifold
            \end{equation}
            such that $\ConvexCoordMap$ is a diffeomorphism onto its image, the set $\Manifold \setminus \ConvexCoordMap(\ContactManifold \times \IntervalClosedOpen{1}{\Infinity})$ is relatively compact\footnote{
                A subset of a topological space is \define{relatively compact} if its closure is compact.
            } and
            \begin{equation}
                \label{eqn:symplectic-form-on-convex-end}
                \ConvexCoordMap\PullBack \SymplecticForm = \ExteriorDerivative(R \ContactForm)
            \end{equation}
            holds on $\ContactManifold \times \IntervalClosedOpen{1}{\Infinity}$.
            Here, $R \in \IntervalClosedOpen{1}{\Infinity}$ is the \define{radial coordinate}\footnote{
                The set $\Set{R \le R_0}$ is defined to be $\Manifold \setminus \ConvexCoordMap(\ContactManifold \times \IntervalOpen{R_0}{\Infinity})$ and is compact for all $R_0 \ge 1$.
            } and the image of $\ConvexCoordMap$ is the \define{convex end} of $\Manifold$.
        \end{definition}
        
        \remark{
            \label{rem:closed-manifold-is-trivially-convex}
            We emphasise three features of this definition.
            \begin{itemize}
                \item The manifold $\ContactManifold$, the contact form $\ContactForm$ and the diffeomorphism $\ConvexCoordMap$ are all part of the data of a convex symplectic manifold.
                Consequently, we can use the coordinates provided by $\ConvexCoordMap$ without worrying about whether our constructions are independent of this choice (c.f. \autoref{rem:dependence-of-symplectic-cohomology-on-convex-end}).
                
                \item A closed symplectic manifold is convex since we allow the manifold $\ContactManifold$ to be empty\footnote{
                    We adopt the convention that the \define{empty manifold} is a disconnected closed oriented manifold of every dimension.
                    This means that the dimension of a manifold $X$ is not well-defined; the statement $\Dimension X = k$ is to be interpreted as `$X$ is $k$-dimensional'.
                }.
                With this convention, we are able to prove all of our results for closed manifolds and (non-trivially) convex symplectic manifolds simultaneously.
                
                \item The symplectic form $\SymplecticForm$ does not have to be exact on all of $\Manifold$, since \eqref{eqn:symplectic-form-on-convex-end} applies only on the convex end of $\Manifold$.
                Indeed, symplectic forms on closed manifolds are never globally exact.
            \end{itemize} 
        }
        
        Henceforth, $\Manifold$ will be a nonnegatively monotone convex symplectic manifold.
        
        Finally, we assume that $\RealNumbers \setminus \ReebPeriods$ is unbounded so that symplectic cohomology is a meaningful direct limit (see \autoref{def:symplectic-cohomology}).
        Here $\ReebPeriods$ is the set of Reeb periods as defined in \eqref{eqn:reeb-periods-notation-with-zero}.
        
        \begin{remark}
            [Orientations]
            This paper uses integral coefficients, and thus we require orientations on all moduli spaces.
            For this purpose, let $\Orientation$ be a coherent orientation, defined as in \cite[Appendix~B]{ritter_topological_2013}.
            The proof of all orientation signs in this paper is omitted.
        \end{remark}
        
    \subsection{Floer cohomology}
    
        \label{sec:floer-cohomology-all}
            
    \subsubsection{Hamiltonian dynamics}
    
        Let $\Circle = \RealNumbers / \Integers$ and $\Disc = \SetCondition{z \in \ComplexNumbers}{\Modulus{z} \le 1}$.
        
        A \define{(time-dependent\footnote{
            Throughout, a \define{time-dependent} object is a smooth $\Circle$-family of objects, and a \define{$B$-dependent} object is a smooth $B$-family of objects for any manifold $B$.
        }) Hamiltonian (function)} $\Ham$ is a smooth function $\Circle \times \Manifold \to \RealNumbers$.
        Its \define{(time-dependent) Hamiltonian vector field $\HamiltonianVectorField{\Ham}$} is the unique $\Circle$-family of vector fields by $\SymplecticForm( \Argument \CommaSpace \HamiltonianVectorField{\Ham_t} ) = \ExteriorDerivative \Ham_t$.
        The \define{Hamiltonian flow $\HamiltonianFlow_\Ham$} is the flow along the vector field $\HamiltonianVectorField{\Ham}$, so $\HamiltonianFlow_\Ham ^t$ satisfies
        \begin{equation}
            \partial_t \left(\HamiltonianFlow_\Ham ^t (\ManifoldElement)\right) = \HamiltonianVectorField{\Ham_t}\left(\HamiltonianFlow_\Ham ^t (\ManifoldElement) \right) \EndFullStop
        \end{equation}
        A \define{Hamiltonian orbit} is a loop $x : \Circle \to \Manifold$ which satisfies $\partial_t x (t) = \HamiltonianVectorField{\Ham_t}(x(t))$ for all $t \in \Circle$.
        It is \define{nondegenerate} if the linear map $\Derivative \HamiltonianFlow_\Ham ^1 : \TangentSpace_{x(0)} \Manifold \to \TangentSpace_{x(1)} \Manifold$ has no eigenvalue equal to 1.
        Denote by $\HamOrbits(\Ham)$ the set of Hamiltonian orbits of $\Ham$.
        
        The Hamiltonian $\Ham$ is \define{linear of slope $\slope$} if the identity $\Ham_t(\ConvexCoordMap(y, R)) = \slope R + \mu$ holds at infinity\footnote{
            In the context of convex manifolds, a condition holds \define{at infinity} if there exists $R_0 \ge 1$ such that the condition holds on $\ConvexCoordMap(\ContactManifold \times \IntervalClosedOpen{R_0}{\Infinity})$.
            Notice that if finitely-many conditions each hold individually at infinity, then their conjunction holds at infinity (i.e., they all hold in a common region at infinity).
            All statements hold tautologically at infinity on a closed symplectic manifold (see \autoref{rem:closed-manifold-is-trivially-convex}).
        } for some constant $\mu$.
        The vector field of such a Hamiltonian is a multiple of the Reeb vector field\footnote{
            The \define{Reeb vector field} $\Reebvf$ associated to $\ContactForm$ is the vector field on $\ContactManifold$ uniquely defined by $\InteriorDerivative_{\Reebvf}(\ExteriorDerivative \ContactForm) = 0$ and $\InteriorDerivative_{\Reebvf}\ContactForm = 1$.
        } $\Reebvf$, so, at infinity, we have
        \begin{equation}
            \label{eqn:hamiltonian-vector-field-at-infinity-is-multiple-of-reeb-vector-field}
            \HamiltonianVectorField{\Ham_t} = \slope \Reebvf \oplus 0 \in \TangentSpace \ContactManifold \DirectSum \TangentSpace \IntervalClosedOpen{1}{\Infinity} \cong \TangentSpace \Manifold \EndFullStop
        \end{equation}
        As such, any Hamiltonian orbit in this region corresponds to a $\slope$-periodic flow along the Reeb vector field.
        
        A \define{Reeb period} is a nonzero number $\lambda \in \RealNumbers \setminus \Set{0}$ such that there exists a point $y \in \ContactManifold$ such that the flow of the Reeb vector field from $y$ is $\lambda$-periodic.
        If $\ContactManifold$ is empty, set $\ReebPeriods = \emptyset$, and otherwise set 
        \begin{equation}
            \label{eqn:reeb-periods-notation-with-zero}
            \ReebPeriods = \Set{\text{Reeb periods}} \Union \Set{0} \EndFullStop
        \end{equation}
        Notice $k \lambda \in \ReebPeriods$ for any $\lambda \in \ReebPeriods$ and any $k \in \Integers$.
        
        Thus, if $\Ham$ is linear of slope $\slope$ with $\slope \notin \ReebPeriods$, then all Hamiltonian orbits of $\Ham$ lie in a compact region of $\Manifold$.
        If moreover all Hamiltonian orbits are nondegenerate, then $\HamOrbits(\Ham)$ is finite.
            
    \subsubsection{Almost complex structures}
    
        Let $m$ be a point in the convex end of $\Manifold$.
        The $\SymplecticForm$-compatible almost complex structure $\AlmostComplexStructure$ is \define{convex at the point $\ManifoldElement$} if $- \ExteriorDerivative R \ComposedWith \AlmostComplexStructure = R \ContactForm$ holds at $\ManifoldElement$.
        This means that the almost complex structure respects the direct sum decomposition 
        \begin{equation}
            \label{eqn:decomposition-of-tangent-space-at-infinity}
            \TangentSpace_y \Manifold \Isomorphism (\RealNumbers \partial_R \DirectSum \RealNumbers \Reebvf) \DirectSum \Kernel \ContactForm
        \end{equation}
        and satisfies $\AlmostComplexStructure (R\partial_R) = \Reebvf$ at $\ManifoldElement$.
        
        Let $B$ be a manifold and let $\FamilyACS = (\FamilyACS_b)_{b \in B}$ be a smooth family of $\SymplecticForm$-compatible almost complex structures.
        The family $\FamilyACS$ is \define{convex} if, at infinity, the almost complex structure $\FamilyACS_b$ is convex for all $b \in B$.
        
        \definition{
            A choice of \define{Floer data} is a pair $(\Ham, \FamilyACS)$, where $\Ham$ is a linear time-dependent Hamiltonian with slope not in $\ReebPeriods$ and $\FamilyACS$ is a convex time-dependent $\SymplecticForm$-compatible almost complex structure.
        }
        
    \subsubsection{Pseudoholomorphic spheres}
        
        Let $\StandardACS$ be the standard almost complex structure on the sphere $\Projective^1$.
        Let
        \begin{equation}
            \label{eqn:interesting-spheres-definition}
            \InterestingSpheres = \frac{\HomotopyGroup_2(M)}{\Kernel \FirstChernClass \Intersection \Kernel \SymplecticForm} \EndFullStop
        \end{equation}
        A curve $u : \Projective^1 \to \Manifold$ is \define{$\AlmostComplexStructure$-holomorphic} if $(\Derivative u) \ComposedWith \StandardACS = \AlmostComplexStructure \ComposedWith (\Derivative u)$ and $u$ \define{represents the class $A \in \InterestingSpheres$} if $\EquivalenceClass{u} = A$.
        
        Let  $B$ be a manifold and $\FamilyACS$ be a convex $B$-dependent family of $\SymplecticForm$-compatible almost complex structures.
        The moduli space $\ModuliSpace(A, \FamilyACS)$ is the space of pairs $(b, u)$, where $u$ is a simple\footnote{
            The holomorphic curve $u : \Projective^1 \to \Manifold$ is \define{multiply-covered} if it is a composition of a holomorphic branched covering map $\Projective^1 \to \Projective^1$ with degree strictly greater than 1 and a second holomorphic curve.
            The curve $u$ is \define{simple} otherwise.
        } $\FamilyACS_b$-holomorphic curve representing $A \in \InterestingSpheres$.
        
        Denote by $\ChernBoundedSpheres_k(\FamilyACS)$ the set of pairs $(b, \ManifoldElement) \in B \times \Manifold$ such that $\ManifoldElement$ lies in the image of a nonconstant $\FamilyACS_b$-holomorphic sphere $u$ with $\FirstChernClass(u) \le k$.
            
    \subsubsection{Floer solutions}
        
        Denote by $\ContractibleLoopSpace \Manifold$ the space of contractible smooth maps $\Circle \to \Manifold$.
        Define a cover of this space $\ContractibleLoopSpaceWithFillings{\Manifold}$ as the space of pairs $(x \CommaSpace u)$ of loops $x \in \ContractibleLoopSpace \Manifold$ and smooth maps $u : \Disc \to \Manifold$ satisfying $x(t) = u(\ExponentialNumber^{2 \PiNumber \ImaginaryNumber t})$, considered up to the equivalence
        \begin{equation}
            \label{eqn:equivalence-on-contractible-loop-space-with-fillings}
            (x \CommaSpace u) \sim (x \CommaSpace u \Prime) \iff \int_\Disc u \PullBack \widetilde{\FirstChernClass} = \int_\Disc {u \Prime} \PullBack \widetilde{\FirstChernClass} \text{ and } \int_\Disc u \PullBack \SymplecticForm = \int_\Disc {u \Prime} \PullBack \SymplecticForm \EndComma
        \end{equation}
        where $\widetilde{\FirstChernClass}$ is a differential 2-form representing the first Chern class $\FirstChernClass$.
        The deck transformation group of $\ContractibleLoopSpaceWithFillings{\Manifold}$ is $\InterestingSpheres$, which acts on $\ContractibleLoopSpaceWithFillings{\Manifold}$ by `adding $A \in \InterestingSpheres$ to the filling $u$'.
        This action is described explicitly in \cite[Section~5]{hofer_floer_1995}.
        Let
        \begin{equation}
            \FilledHamOrbits(\Ham) = \SetCondition{(x \CommaSpace u) \in \ContractibleLoopSpaceWithFillings{\Manifold}}{x \in \HamOrbits(\Ham)} \EndFullStop
        \end{equation}
        
        Let $(\Ham, \FamilyACS)$ be a choice of Floer data.
        The \define{action functional} associated to $\Ham$ is the map $\ActionFunctional_\Ham : \ContractibleLoopSpaceWithFillings{\Manifold} \to \RealNumbers$ given by
        \begin{equation}
            \label{eqn:action-functional}
            \ActionFunctional_\Ham (x, u) = - \int_\Disc u \PullBack \SymplecticForm + \int_{t = 0}^1 \Ham_t (x (t)) \wrt{t} \EndFullStop
        \end{equation}
        The set of critical points of $\ActionFunctional_\Ham$ equals $\FilledHamOrbits(\Ham)$.
        A smooth map $u : \RealNumbers \times \Circle \to \Manifold$ is a \define{Floer solution} if it satisfies
        \begin{equation}
            \label{eqn:floer-equation}
            \partial_s u + \FamilyACS_t (\partial_t u - \HamiltonianVectorField{H \CommaSpace t}) = 0
        \end{equation}
        for all $(s, t) \in \RealNumbers \times \Circle$.
        The left side of \eqref{eqn:floer-equation} is abbreviated by $\AntilinearSection_{\Ham \CommaSpace \FamilyACS}(u)$.
        The \define{energy} $\Energy(u)$ of a map $u$ is
        \begin{equation}
            \label{eqn:energy-of-floer-solution}
            \Energy(u) = \int_{\RealNumbers \times \Circle} \Norm{\partial_s u}_{\FamilyACS_t}^2 \ExteriorDerivative s \wedge \ExteriorDerivative t \EndComma
        \end{equation}
        where $\Norm{\Argument}_\AlmostComplexStructure$ is the norm associated to the $\AlmostComplexStructure$-invariant metric $\SymplecticForm(\Argument, \AlmostComplexStructure \Argument)$.
        Suppose the Hamiltonian orbits of $\Ham$ are nondegenerate.
        If the energy of the Floer solution $u$ is finite, then there exist two Hamiltonian orbits $x^\pm$ such that
        \begin{align}
            \label{eqn:Floer-solution-convergence-to-limits}
            \lim_{s \to \pm \Infinity} u(s, t) &= x^\pm (t) & \lim_{s \to \pm \Infinity} \partial_s u(s, t) &= 0 \EndComma
        \end{align}
        where the limits denote uniform convergence\footnote{
            The uniform convergence is a priori with respect to the $t$-dependent metric $\Norm{\Argument}_{\FamilyACS_t}$, but the property holds for any $t$-dependent Riemannian metric.
        } in $t$, and moreover
        \begin{equation}
            \label{eqn:energy-is-difference-of-action-at-limits}
            \Energy(u) = \ActionFunctional_H (x^-, u^-) - \ActionFunctional_H (x^+, u^- \ConnectedSum u)
        \end{equation}
        for any filling $u^-$ of $x^-$.
        
        By a maximum principle \cite[Appendix~D]{ritter_topological_2013}, there is a compact region of $\Manifold$ such that all Floer solutions lie completely within the region.
        
        Let $\WithFilling{x}^\pm = (x^\pm, u^\pm) \in \FilledHamOrbits(\Ham)$.
        Denote by $\ModuliSpace(\WithFilling{x}^- \CommaSpace \WithFilling{x}^+)$ the moduli space of Floer solutions $u$ which satisfy \eqref{eqn:Floer-solution-convergence-to-limits} and $u^+ = u^- \ConnectedSum u$.
            
    \subsubsection{Moduli space of Floer trajectories}
    
        \label{sec:moduli-space-of-floer-trajectories-properties}
        
        In order to ensure the moduli space $\ModuliSpace(\WithFilling{x}^-,\allowbreak \WithFilling{x}^+)$ is a smooth finite-dimensional manifold and has other desired behaviour, we impose a number of \define{regularity conditions} on the Floer data.
        For \define{regular} Floer data, the Hamiltonian orbits are nondegenerate, the moduli space $\ModuliSpace(A; \FamilyACS)$ is a smooth canonically-oriented\footnote{
            In order to orient all other moduli spaces in this paper, we have had to choose orientation data, such as coherent orientations (or orientations of the unstable manifolds in the case of Morse theory).
            The orientation of $\ModuliSpace(A; \FamilyACS)$ is intrinsic, however it does rely on an orientation of the parameter space $\Circle$, which we fix.
        } manifold of dimension\footnote{
            The 1 in this formula corresponds to $\Dimension \Circle$, and will be $\Dimension B$ for $B$-dependent almost complex structures.
        } $2 \dimM + 2 \FirstChernClass(A) + 1$ and the moduli space $\ModuliSpace(\WithFilling{x}^- \CommaSpace \WithFilling{x}^+)$ of Floer solutions is a smooth oriented manifold of dimension 
        \begin{equation}
            \Dimension\ModuliSpace(\WithFilling{x}^- \CommaSpace \WithFilling{x}^+) = \ConleyZehnderIndex (\WithFilling{x}^-) - \ConleyZehnderIndex (\WithFilling{x}^+) \EndFullStop
        \end{equation}
        Here, we denote by $\ConleyZehnderIndex (\WithFilling{x})$ the Conley-Zehnder index for $\WithFilling{x} \in \FilledHamOrbits(\Ham)$.
        In addition, we have $(t, x(t)) \notin \ChernBoundedSpheres_1(\FamilyACS)$ and $(t, u(s, t)) \notin \ChernBoundedSpheres_0(\FamilyACS)$ for all Hamiltonian orbits $x$ and Floer solutions $u \in \ModuliSpace(\WithFilling{x}^- \CommaSpace \WithFilling{x}^+)$ when $\Dimension \ModuliSpace(\WithFilling{x}^- \CommaSpace \WithFilling{x}^+) \le 2$.
            
        \begin{remark}
            [Regularity]
            In this paper, we will not always list all of the regularity conditions, however we will mention those that are more uncommon\footnote{
                In order to find that moduli spaces are manifolds of a given dimension, the standard method is to find a Fredholm operator whose kernel describes the tangent space of the moduli space.
                The corresponding regularity condition is that the operator is onto, so that some version of the implicit function theorem may be applied.
                A version of the Sard-Smale theorem shows this to be generic \cite{floer_transversality_1995}.
                The nondegeneracy of the orbits follows by \cite[Remark~5.4.8]{audin_morse_2014}, and the avoidence of bubbling here is due to \cite{hofer_floer_1995, seidel_$_1997}.
            }.
            The conditions are always motivated by improving the behaviour of the moduli spaces.
            Here, we have used them to ensure the moduli spaces of Floer solutions have the structure of manifolds without bubbling in dimensions 1 and 2.
            In our notation, regularity conditions will always be satisfied by generic\footnote{
                In a topological space, a subset is \define{generic} or \define{of second category} if it is a countable intersection of open dense subsets.
                A condition on (Floer) data is \define{generic} if the set of data satisfying the condition forms a generic subset of all data.
            } data, and hence will exist.
        \end{remark}
        
        Assume $(\Ham, \FamilyACS)$ is regular.
        When $\WithFilling{x}^- \ne \WithFilling{x}^+$, the moduli space $\ModuliSpace(\WithFilling{x}^- \CommaSpace \WithFilling{x}^+)$ admits a smooth free $\RealNumbers$-action given by $s$-translation.
        The quotient $\QuotientedModuliSpace(\WithFilling{x}^- \CommaSpace \WithFilling{x}^+)$ is a smooth manifold.
        The 0-dimensional moduli spaces $\QuotientedModuliSpace(\WithFilling{x}^- \CommaSpace \WithFilling{x}^+)$ are all compact by the so-called compactification argument (for example, see \cite[Section~3.1]{salamon_lectures_1997}).
        
        The 1-dimensional moduli spaces $\QuotientedModuliSpace(\WithFilling{x}^- \CommaSpace \WithFilling{x}^+)$ may be given the structure of an oriented compact 1-manifold with boundary by attaching the endpoints
        \begin{equation}
            \label{eqn:boundary-of-1-dimensional-moduli-space-of-unparameterised-floer-trajectories}
            \bigcup_{
                \substack{
                    \WithFilling{x}^0 \in \FilledHamOrbits(\Ham) \\ \ConleyZehnderIndex(x^-) = \ConleyZehnderIndex(x^0) + 1
                }
            } \QuotientedModuliSpace(\WithFilling{x}^- \CommaSpace \WithFilling{x}^0) \times \QuotientedModuliSpace(\WithFilling{x}^0 \CommaSpace \WithFilling{x}^+)
        \end{equation}
        using the so-called gluing maps (for example, see \cite[Section~3.3]{salamon_lectures_1997}).
        Any element of \eqref{eqn:boundary-of-1-dimensional-moduli-space-of-unparameterised-floer-trajectories}, and more generally any chain of Floer trajectories, is a \define{broken} Floer trajectory.
            
    \subsubsection{Floer cochain complex}
        
        Let $(\Ham, \FamilyACS)$ be a regular choice of Floer data.
        
        Let $\NovVariable$ be a formal variable.
        A \define{formal power series with coefficients in $\Integers$ and exponents in $\InterestingSpheres$} is a formal sum $\sum_{A \in \InterestingSpheres} \alpha_A \NovVariable^A$.
        The monomial $\NovVariable^A$ is given the grading $2 \FirstChernClass(A) \in \Integers$.
        Set $\NovikovRing^k$ to be the group of formal power series which are supported by only monomials of grading $k$ and satisfy the condition  that the set $$\SetCondition{A \in \InterestingSpheres}{\SymplecticForm(A) \le c \CommaSpace \alpha_A \ne 0}$$ is finite for all $c \in \RealNumbers$.
        The \define{Novikov ring} $\NovikovRing$ is the $\Integers$-graded ring $\DirectSum_{k \in \Integers} \NovikovRing^k$.
        
        The \define{degree-$k$ Floer cochains} are the formal sums
        \begin{equation*}
            \sum_{\substack{
                \WithFilling{x} \in \FilledHamOrbits(\Ham) \\
                \ConleyZehnderIndex(\WithFilling{x}) = k
            }} \alpha_\WithFilling{x} \, \WithFilling{x}
        \end{equation*}
        with integer coefficients such that the set $
            \SetCondition{\WithFilling{x} \in \FilledHamOrbits(\Ham)}{\SymplecticForm(\WithFilling{x}) \le c \CommaSpace \alpha_\WithFilling{x} \ne 0}
        $ is finite for all $c \in \RealNumbers$.
        Denote by $\FloerC^k(\Manifold; \Ham, \FamilyACS)$ the set of degree-$k$ Floer cochains.
        The \define{Floer cochain complex $\FloerC^\ArbitraryIndex(\Manifold; \Ham, \FamilyACS)$ associated to $\FamilyACS$ and $\Ham$} is the finitely-generated free $\Integers$-graded $\NovikovRing$-module $\DirectSum_{k \in \Integers} \FloerC^k(\Manifold; \Ham, \FamilyACS)$.
        The $\NovikovRing$-module structure is induced by the inverse of the $\InterestingSpheres$-action on $\FilledHamOrbits(\Ham)$, giving $\NovVariable^A \cdot [x, u] = [x, (-A) \ConnectedSum u]$.
        
        The Floer cochain differential $d : \FloerC^\ArbitraryIndex(\Manifold; \Ham, \FamilyACS) \to \FloerC^{\ArbitraryIndex + 1}(\Manifold; \Ham, \FamilyACS)$ is the degree-1 $\NovikovRing$-module endomorphism given by\footnote{
            A map of the form \eqref{eqn:floer-differential} \define{counts} the moduli spaces $\QuotientedModuliSpace(\WithFilling{x}^-, \WithFilling{x}^+)$.
        }
        \begin{equation}
            \label{eqn:floer-differential}
            d(\WithFilling{x}^+)  = \sum_{\substack{
                \WithFilling{x}^- \in \FilledHamOrbits(\Ham) \\ 
                \ConleyZehnderIndex(\WithFilling{x}^-) - \ConleyZehnderIndex(\WithFilling{x}^+) = 1
            }} \sum_{
                [u] \in \QuotientedModuliSpace(\WithFilling{x}^-, \WithFilling{x}^+)
            } \Orientation([u]) \, \WithFilling{x}^- \EndComma
        \end{equation}
        where $\Orientation([u]) \in \Set{\pm 1}$ denotes the orientation\footnote{
            Recall an orientation of a 0-dimensional manifold is a choice of $\pm 1$ for each point of the manifold.
        } of the point $[u] \in \QuotientedModuliSpace(\WithFilling{x}^-, \WithFilling{x}^+)$ induced by the coherent orientation $\Orientation$.
        
        \begin{lemma}
            \label{lem:differential-squares-to-zero}
            The differential $d$ satisfies $d^2 = 0$.
        \end{lemma}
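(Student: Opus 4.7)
The plan is to derive $d^2 = 0$ from the classification of the boundary of the 1-dimensional moduli spaces $\QuotientedModuliSpace(\WithFilling{x}^-, \WithFilling{x}^+)$ described immediately before the lemma, namely identity \eqref{eqn:boundary-of-1-dimensional-moduli-space-of-unparameterised-floer-trajectories}. Concretely, I would first expand $d^2(\WithFilling{x}^+)$ using the defining formula \eqref{eqn:floer-differential} twice. The coefficient of each fixed generator $\WithFilling{x}^- \in \FilledHamOrbits(\Ham)$ with $\ConleyZehnderIndex(\WithFilling{x}^-) - \ConleyZehnderIndex(\WithFilling{x}^+) = 2$ becomes a signed count
\begin{equation*}
    \sum_{\substack{\WithFilling{x}^0 \in \FilledHamOrbits(\Ham) \\ \ConleyZehnderIndex(\WithFilling{x}^-) - \ConleyZehnderIndex(\WithFilling{x}^0) = 1}} \sum_{\substack{[u_1] \in \QuotientedModuliSpace(\WithFilling{x}^-, \WithFilling{x}^0) \\ [u_2] \in \QuotientedModuliSpace(\WithFilling{x}^0, \WithFilling{x}^+)}} \Orientation([u_1]) \, \Orientation([u_2]),
\end{equation*}
which is precisely a signed count of the broken trajectories appearing in \eqref{eqn:boundary-of-1-dimensional-moduli-space-of-unparameterised-floer-trajectories}.

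Second, I would invoke the fact recalled in \autoref{sec:moduli-space-of-floer-trajectories-properties} that $\QuotientedModuliSpace(\WithFilling{x}^-, \WithFilling{x}^+)$ is a compact oriented 1-manifold with boundary equal to \eqref{eqn:boundary-of-1-dimensional-moduli-space-of-unparameterised-floer-trajectories}. Since the signed count of boundary points of any compact oriented 1-manifold is zero, the inner double sum above vanishes for every fixed $\WithFilling{x}^-$. This is where the coherent orientation $\Orientation$ plays its role: it ensures that the orientation induced on each boundary pair $([u_1], [u_2])$ as a product of lower-dimensional moduli spaces agrees with the boundary orientation inherited from the 1-manifold $\QuotientedModuliSpace(\WithFilling{x}^-, \WithFilling{x}^+)$, so the two contributions coming from the two ends of each 1-dimensional component cancel.

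Third, I would check that $d^2(\WithFilling{x}^+)$ actually defines an element of $\FloerC^{\ArbitraryIndex + 2}(\Manifold; \Ham, \FamilyACS)$, i.e.\ that for every $c \in \RealNumbers$ only finitely many generators $\WithFilling{x}^-$ with $\SymplecticForm(\WithFilling{x}^-) \le c$ occur. This follows from applying \eqref{eqn:energy-is-difference-of-action-at-limits} twice: the intermediate orbit $\WithFilling{x}^0$ has action bounded between those of $\WithFilling{x}^\pm$ because each factor moduli space is nonempty only when the energy (hence the action difference) is nonnegative, so the Novikov finiteness condition for $d(\WithFilling{x}^+)$ propagates to $d^2(\WithFilling{x}^+)$.

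The main obstacle is the orientation bookkeeping in the second step: one has to be confident that the chosen coherent orientation $\Orientation$ induces the correct sign on the glued trajectories so that the two endpoints of each interval component of the 1-manifold contribute with opposite signs. Since this is already delegated to the framework of \cite[Appendix~B]{ritter_topological_2013}, I would simply cite that reference rather than reprove compatibility of gluing and coherent orientations, and the lemma then follows from $\partial^2 = 0$ for compact oriented 1-manifolds.
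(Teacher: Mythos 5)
Your proposal is correct and is exactly the argument the paper intends: the lemma is stated without an explicit proof precisely because the preceding discussion in \autoref{sec:moduli-space-of-floer-trajectories-properties} already establishes that the 1-dimensional spaces $\QuotientedModuliSpace(\WithFilling{x}^-, \WithFilling{x}^+)$ compactify to oriented 1-manifolds whose boundary is the set of broken trajectories \eqref{eqn:boundary-of-1-dimensional-moduli-space-of-unparameterised-floer-trajectories}, so $d^2=0$ follows from the signed count of boundary points vanishing, with signs handled by the coherent orientation of \cite[Appendix~B]{ritter_topological_2013}. Your additional check of the Novikov finiteness condition via the action/energy identity \eqref{eqn:energy-is-difference-of-action-at-limits} is a sensible (if routine) completeness point and does not diverge from the paper's approach.
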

        
        The \define{Floer cohomology $\FloerCohomology^\ArbitraryIndex(\Manifold; \Ham, \FamilyACS)$ of $\Manifold$ with Floer data $(\Ham, \FamilyACS)$} is the cohomology of $(\FloerC^\ArbitraryIndex(\Manifold; \Ham, \FamilyACS), d)$.
        It is a $\Integers$-graded $\NovikovRing$-module.
        
        \begin{remark}
            \label{rem:invariance-of-floer-cohomology}
            Floer cohomology only depends on the slope of the Hamiltonian in the Floer data, and is otherwise independent of the choice of Floer data.
            This follows from the fact that, given two choices of Floer data whose Hamiltonians have the same slope, any monotone homotopy between them induces a continuation map which is an isomorphism (continuation maps are defined in \autoref{sec:contination-maps}).
            Floer cohomology is also independent of the coherent orientation $\Orientation$.
            For closed manifolds $\Manifold$, there is a $\NovikovRing$-module isomorphism $\FloerCohomology^\ArbitraryIndex(\Manifold; \Ham, \FamilyACS) \cong \NovikovRing \Tensor \Cohomology^\ArbitraryIndex (\Manifold)$ (the PSS maps of \eqref{eqn:pss-map-domain-codomain} are isomorphisms).
        \end{remark}
            
    \subsubsection{Continuation maps}
    
        \label{sec:contination-maps}
        
        Given any two regular choices of Floer data $(\Ham^-, \FamilyACS^-)$ and $(\Ham^+, \FamilyACS^+)$, a \define{homotopy} between them is a pair $\left(\Ham_{s,t}, \FamilyACS_{s,t}\right)$, where $\Ham_{s,t}$ is a $\RealNumbers \times \Circle$-dependent Hamiltonian and $\FamilyACS_{s,t}$ is a convex $\RealNumbers \times \Circle$-dependent ($\SymplecticForm$-compatible) almost complex structure, such that both are each $s$-dependent only on a compact region of $\RealNumbers \times \Circle$ and, respectively, equal $\Ham^\pm$ and $\FamilyACS^\pm$ for $\pm s \gg 0$.
        The homotopy is \define{monotone} if, at infinity, $\Ham_{s, t} = h_s(R)$ and $\partial_s h_s^\prime (R) \le 0$.
        A monotone homotopy exists only if $\Ham^\pm$ have slopes $\slope^\pm$ that satisfy $\slope^- \ge \slope^+$.
        
        Given any monotone homotopy\footnote{
            The space of such regular monotone homotopies is nonempty whenever $\Ham^\pm$ have slopes $\slope^\pm$ that satisfy $\slope^- \ge \slope^+$.
        } $(\Ham_{s, t}, \FamilyACS_{s, t})$ which is suitably regular, the \define{continuation map $\ContinuationMap : \FloerCohomology^\ArbitraryIndex (\Manifold; \Ham^+, \FamilyACS^+) \to \FloerCohomology^\ArbitraryIndex (\Manifold; \Ham^-, \FamilyACS^-)$} is defined by
        \begin{equation}
            \label{eqn:continuation-map}
            \ContinuationMap(\WithFilling{x}^+)  = \sum_{\substack{
                \WithFilling{x}^- \in \FilledHamOrbits(\Ham^-) \\ 
                \ConleyZehnderIndex(\WithFilling{x}^-) = \ConleyZehnderIndex(\WithFilling{x}^+)
            }} \sum_{
                u \in \ModuliSpace(\WithFilling{x}^-, \WithFilling{x}^+)
            } \Orientation(u) \, \WithFilling{x}^- \EndComma
        \end{equation}
        where $\ModuliSpace(\WithFilling{x}^-, \WithFilling{x}^+)$ is the moduli space of solutions to a parameterised version of the Floer equation \eqref{eqn:floer-equation} for the homotopy.
        Continuation maps are independent of the choice of homotopy.
        Moreover, the composition of two continuation maps is itself a continuation map.
        
        \definition{
            \label{def:symplectic-cohomology}
            The \define{symplectic cohomology $\SymplecticCohomology^\ArbitraryIndex(\Manifold)$ of $\Manifold$} is the direct limit 
            \begin{equation}
                \lim_{\longrightarrow} \FloerCohomology^\ArbitraryIndex(\Manifold; \Ham, \FamilyACS)\EndComma
            \end{equation}
            where the limit is over all choices of regular Floer data $(\Ham, \FamilyACS)$ ordered by slope, and the maps between the Floer cohomologies are the continuation maps.
        }
        
        \begin{remark}
            \label{rem:dependence-of-symplectic-cohomology-on-convex-end}
            The symplectic cohomology $\SymplecticCohomology^\ArbitraryIndex(\Manifold)$ a priori depends on the parameterisation of the convex end of $\Manifold$.
            Perturbations of this parameterisation do not affect $\SymplecticCohomology^\ArbitraryIndex(\Manifold)$ (see \cite[Theorem~1.9]{benedetti_invariance_2019}).
            With our convention in \autoref{rem:closed-manifold-is-trivially-convex}, symplectic cohomology is isomorphic to Floer cohomology for closed manifolds.
        \end{remark}
            
    \subsection{Hamiltonian circle actions}
    
        \label{sec:hamiltonian-circle-actions}
        
        A \define{Hamiltonian circle action} on $\Manifold$ is a smooth circle action $\CircleAction : \Circle \times \Manifold \to \Manifold$ which flows along the Hamiltonian vector field of some Hamiltonian function $\CircleHam : \Manifold \to \RealNumbers$.
        Such a circle action automatically preserves the symplectic structure.
        The action is \define{linear} if $\CircleHam$ is linear.
        \label{def:linear-ham-circle-action}
        
        The \define{vector field $\CircleVectorField{\CircleAction}$} of a circle action $\CircleAction$ is the vector field along which the action flows; it is given by $\CircleVectorField{\CircleAction} = \partial_t \CircleAction_t \EvaluateAt{t=0}$.
        Thus for our \emph{Hamiltonian} circle action $\CircleAction$, the vector field $\CircleVectorField{\CircleAction}$ equals the Hamiltonian vector field $\HamiltonianVectorField{\CircleHam}$ of $\CircleHam$.
        
        \begin{lemma}
            \label{lem:reeb-periods-discrete}
            \label{thm:circle-action-has-fixed-point}
            Let $\CircleAction$ be a linear Hamiltonian circle action of nonzero slope $\CircleSlope$.
            The positive Reeb periods form a discrete subset of $\IntervalOpen{0}{\Infinity}$.
        \end{lemma}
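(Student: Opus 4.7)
The plan is to reduce the lemma to the finiteness of orbit types for smooth circle actions on compact manifolds. First I would assume without loss of generality that $\CircleSlope > 0$: replacing $\CircleAction$ with $\CircleAction\Inverse$ swaps the sign of the slope but leaves the set of positive Reeb periods unaffected.

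Next, because $\CircleHam$ is linear of slope $\CircleSlope$ at infinity, equation \eqref{eqn:hamiltonian-vector-field-at-infinity-is-multiple-of-reeb-vector-field} identifies its Hamiltonian vector field with $\CircleSlope \Reebvf$ on each sufficiently large level hypersurface $\{R = R_0\} \Isomorphism \ContactManifold$. The flow therefore preserves this hypersurface and restricts to the Reeb flow rescaled by a factor of $\CircleSlope$. Using that $\CircleAction$ is an action of $\Circle = \RealNumbers/\Integers$, so in particular $\CircleAction_1 = \Identity_\Manifold$, I would deduce that the time-$\CircleSlope$ Reeb flow on $\ContactManifold$ is the identity. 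Hence the Reeb flow descends to a smooth $\RealNumbers/\CircleSlope\Integers$-action on $\ContactManifold$.

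For each $y \in \ContactManifold$, the stabilizer of $y$ under this induced circle action is $T(y)\Integers/\CircleSlope\Integers$, where $T(y) > 0$ denotes the minimal Reeb period through $y$ (which exists since $\Reebvf$ is nowhere vanishing and every point is $\CircleSlope$-periodic). By the finiteness of orbit types for smooth circle actions on compact manifolds, only finitely many distinct stabilizers arise, so the set $\{T(y) : y \in \ContactManifold\}$ is some finite set $\{T_1, \ldots, T_m\}$. The full set of positive Reeb periods is then $\bigcup_{i=1}^{m} T_i \Integers_{>0}$, a finite union of arithmetic progressions and therefore a discrete subset of $\IntervalOpen{0}{\Infinity}$.

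The main obstacle I expect is the clean invocation of the orbit-type theorem: one must verify that the $\RealNumbers/\CircleSlope\Integers$-action on $\ContactManifold$ really is smooth (which follows from smoothness of $\Reebvf$ plus the fact that flowing to time $\CircleSlope$ gives the identity), and that the isotropy subgroup of $y$ is genuinely $T(y)\Integers/\CircleSlope\Integers$ rather than something larger. Once these bookkeeping steps are in place, the discreteness conclusion is immediate.
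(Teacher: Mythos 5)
Your argument is correct, and its skeleton matches the paper's: reduce to $\CircleSlope > 0$, use the identification of the Hamiltonian vector field with $\CircleSlope \Reebvf$ at infinity to conclude that the time-$\CircleSlope$ Reeb flow on $\ContactManifold$ is the identity, observe that each minimal period divides $\CircleSlope$, and then bound the possible minimal periods. Where you genuinely diverge is in that last, crucial step. The paper bounds the multiplicities $l_y = \CircleSlope / P_y$ directly, arguing via Arzel\`a--Ascoli together with the nonvanishing of $\Reebvf$ that no sequence $y_r$ can have $l_{y_r} \to \infty$, and then observes that all periods lie in $\frac{\CircleSlope}{(\max l_y)!} \cdot \Integers^{>0}$. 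You instead package the periodic Reeb flow as a smooth action of $\RealNumbers / \CircleSlope \Integers$ on the closed manifold $\ContactManifold$ (smoothness and the identification of the isotropy group of $y$ with $P_y \Integers / \CircleSlope \Integers$ are exactly the bookkeeping points you flag, and they check out: the return-time set of $y$ is a closed proper subgroup of $\RealNumbers$ since $\Reebvf(y) \neq 0$, hence discrete) and then invoke the classical finiteness of orbit types for smooth compact group actions on compact manifolds, which comes from the slice theorem. Both routes work; yours buys a cleaner citation to standard transformation-group theory at the price of importing the slice/orbit-type machinery, while the paper's Arzel\`a--Ascoli argument is more self-contained and elementary, using only compactness of $\ContactManifold$ and nonvanishing of the Reeb field. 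Your description of the period set as a finite union $\bigcup_i T_i \Integers_{>0}$ is also consistent with the paper's definition of Reeb periods (all periods, not just minimal ones), so the discreteness conclusion stands.
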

        
        \begin{proof}
            Without loss of generality, suppose $\CircleSlope > 0$ by using the action $\CircleAction_t\Inverse$ if necessary.
            Recall that, by \eqref{eqn:hamiltonian-vector-field-at-infinity-is-multiple-of-reeb-vector-field}, the vector field of a linear Hamiltonian of slope $\CircleSlope$ is $\CircleSlope \Reebvf$ at infinity, where $\Reebvf$ is the Reeb vector field on $(\ContactManifold, \ContactForm)$.
            The time-1 flow along this vector field is the identity map.
            Therefore every point of $\ContactManifold$ is on a closed Reeb orbit of period $\CircleSlope$.
            
            For each $y \in \ContactManifold$, set $P_y$ to be the minimal positive period of the flow along $\Reebvf$ starting at $y$.
            It is immediate that $P_y$ divides $\CircleSlope$.
            Set $l_y = \CircleSlope / P_y$.
            There is no sequence $y_r \in \ContactManifold$ such that $l_{y_r} \to \Infinity$ as $r \to \Infinity$.
            This follows by an application of the Arzel\`{a}-Ascoli theorem and the nonvanishing of the Reeb vector field.
            Thus, the positive Reeb periods form a subset of
            \begin{equation*}
                \frac{\CircleSlope}{\Factorial{\left(\max l_y\right)}} \cdot \Integers^{>0} \EndComma
            \end{equation*}
            which is a discrete set as required.
        \end{proof}
        
        \begin{remark}
            A contact manifold is \define{Besse} if every point is on a closed Reeb orbit (see \cite{cristofaro-gardiner_action_2019}).
            The above proof shows that $\ContactManifold$ is Besse for any convex symplectic manifold which admits a linear Hamiltonian circle action of nonzero slope.
        \end{remark}
        
        A Hamiltonian circle action $\CircleAction$ induces a diffeomorphism on the free loop space of $\Manifold$ which is given by
        \begin{equation}
            \label{eqn:circle-action-induces-map-on-curves}
            ( \CircleAction (x) ) (t) = \CircleAction_t \cdot x(t)
        \end{equation}
        for any loop $x : \Circle \to \Manifold$.
        
        \begin{lemma}
            \label{lem:action-preserves-contractibility-of-loops}
            Let $\CircleAction$ be a linear Hamiltonian circle action on a convex symplectic manifold $\Manifold$.
            The action $\CircleAction$ has a fixed point, and hence the map \eqref{eqn:circle-action-induces-map-on-curves} takes contractible loops to contractible loops.
        \end{lemma}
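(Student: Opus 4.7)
The plan is to split the lemma into its two parts: first I would produce a fixed point of $\CircleAction$ on $\Manifold$, and second I would use that fixed point to promote a nullhomotopy of $x$ into a nullhomotopy of $\CircleAction(x)$.

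For the fixed point, I would use that a point is fixed by $\CircleAction$ if and only if it is a critical point of $\CircleHam$, since the vector field $\CircleVectorField{\CircleAction}$ equals $\HamiltonianVectorField{\CircleHam}$. I would then split into cases on the slope $\CircleSlope$. If $\CircleSlope > 0$, then $\CircleHam \to +\Infinity$ on the convex end, so $\CircleHam$ attains a global minimum on $\Manifold$; this minimum is a critical point. If $\CircleSlope < 0$, the symmetric argument produces a global maximum in the interior. If $\CircleSlope = 0$, then $\CircleHam$ is constant at infinity, so $\ExteriorDerivative \CircleHam$ and hence $\HamiltonianVectorField{\CircleHam}$ vanish on the entire convex end beyond some radius, and every such point is fixed.

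For the second claim, I would fix a fixed point $p$ of $\CircleAction$ produced above and let $x : \Circle \to \Manifold$ be a contractible loop. Since $\Manifold$ is path-connected, I can pick a nullhomotopy $H : \IntervalClosed{0}{1} \times \Circle \to \Manifold$ with $H(0, t) = x(t)$ and $H(1, t) = p$ (contract $x$ to some constant loop, then slide that constant loop through a path in $\Manifold$ to $p$). Defining $\Lifted{H}(s, t) = \CircleAction_t \cdot H(s, t)$, I obtain a smooth homotopy with $\Lifted{H}(0, t) = \CircleAction_t \cdot x(t) = (\CircleAction(x))(t)$ and $\Lifted{H}(1, t) = \CircleAction_t \cdot p = p$, where the last equality uses that $p$ is fixed. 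Thus $\Lifted{H}$ is a nullhomotopy of $\CircleAction(x)$.

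There is no serious obstacle here; the only mildly delicate point is handling the slope-zero case, where one must observe that ``linear of slope zero'' forces $\CircleHam$ to be locally constant at infinity, so the Hamiltonian vector field vanishes there and fixed points exist trivially on the convex end rather than as extrema of $\CircleHam$.
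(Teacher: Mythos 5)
Your proposal is correct and takes essentially the same route as the paper: the paper also produces the fixed point as an extremum of the linear Hamiltonian $\CircleHam$ (a minimum for $\CircleSlope \ge 0$, a maximum for $\CircleSlope < 0$), and then contracts the loop to that fixed point before twisting by $\CircleAction_t$ — the paper phrases this as a disc filling that is constant equal to the fixed point near the centre, which is exactly your cylinder homotopy ending at the constant loop at $p$. Your separate slope-zero case is harmless but not needed, since for $\CircleSlope = 0$ the Hamiltonian is constant at infinity and still attains a minimum.
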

        
        \begin{proof}
            Let $\CircleHam$ have slope $\CircleSlope$.
            If $\CircleSlope \ge 0$, then the function $\CircleHam : \Manifold \to \RealNumbers$ has a minumum, so that it has at least one critical point.
            If $\CircleSlope < 0$, then $\CircleHam$ has a maximum and hence a critical point.
            Critical points of $\CircleHam$ are fixed points of $\CircleAction$, so $\CircleAction$ has a fixed point.
            
            Let $\ManifoldElement_0 \in \Manifold$ be a fixed point of $\CircleAction$.
            Let $x : \Circle \to \Manifold$ be any contractible loop.
            Let $u : \Disc \to \Manifold$ be a smooth filling of $x$ such that $u(z) = \ManifoldElement_0$ for all $\Modulus{z} < \frac{1}{2}$.
            Such a filling exists because $\Manifold$ is connected.
            Define $\CircleAction \cdot u : \Disc \to \Manifold$ by
            \begin{equation}
                \label{eqn:action-on-special-fillings}
                (\CircleAction \cdot u) (r \ExponentialNumber^{2 \PiNumber \ImaginaryNumber t}) = \CircleAction_t \cdot u(r \ExponentialNumber^{2 \PiNumber \ImaginaryNumber t})
            \end{equation}
            for all $t \in \Circle$ and $r \in \IntervalClosed{0}{1}$.
            Since $u$ is constantly a fixed point of $\CircleAction$ in a neighbourhood of $0 \in \Disc$, the map $\CircleAction \cdot u$ is well-defined.
            The map $\CircleAction \cdot u$ is a filling of $\CircleAction \cdot x$.
        \end{proof}
        
        \remark{
            For convex manifolds, the linearity hypothesis is vital.
            The Hamiltonian action on $\CotangentSpace \Circle$ induced by rotation of $\Circle$ is not linear, and the induced action on loops does not preserve contractibility.
            Seidel proved a more general result for closed manifolds which applies to loops in $\HamiltonianSymplectomorphismGroup(\Manifold)$ based at $\Identity_\Manifold$ \cite[Lemma~2.2]{seidel_$_1997}.
            His proof used the Arnold conjecture for closed manifolds.
            For convex manifolds, Ritter observed (in a Technical Remark \cite[page~14]{ritter_circle-actions_2016}) that if the map \eqref{eqn:circle-action-induces-map-on-curves} did not preserve contractibility, then symplectic cohomology vanished as did the quantum Seidel map of \autoref{sec:quantum-seidel-map-definition}, which renders this case uninteresting from the point of view of the Seidel map, so it was discarded.
        }
        
        \autoref{lem:action-preserves-contractibility-of-loops} means \eqref{eqn:circle-action-induces-map-on-curves} restricts to a map $\CircleAction : \ContractibleLoopSpace \Manifold \to \ContractibleLoopSpace \Manifold$.
        This map may be lifted to the cover $\ContractibleLoopSpaceWithFillings{\Manifold}$ by the argument of \cite[Lemma~2.4]{seidel_$_1997}.
        Denote the choice of a lift of $\CircleAction$ by $\Lifted{\CircleAction}$.
        
        \begin{definition}
            [Maslov index of $\Lifted{\CircleAction}$]
            Given a lift $\Lifted{\CircleAction}$, and a point $(x, u) \in \ContractibleLoopSpaceWithFillings{\Manifold}$, let $\Lifted{\CircleAction} (x, u) = (\CircleAction x, v)$.
            Let $\tau_x : (x \PullBack \TangentSpace \Manifold, x \PullBack \SymplecticForm) \to (\RealNumbers^{2n}, \Omega)$ be the restriction of a trivialisation of $(\TangentSpace\Manifold, \SymplecticForm)$ on $u$ and let $\tau_{\CircleAction x}$ be the restriction of a trivialisation on $v$.
            Here, $\Omega$ is the standard symplectic bilinear form on $\RealNumbers^{2n}$.
            Define the loop of symplectic matrices $l(t)$ by
            \begin{equation}
                \label{eqn:loop-of-symplectic-matrices-associated-to-lift-of-hamiltonian-circle-action}
                l(t) = \tau_{\CircleAction x} (t) \Derivative \CircleAction_t (x(t)) \tau_{x} (t) \Inverse \EndFullStop
            \end{equation}
            The Maslov index $\MaslovIndex(\Lifted{\CircleAction})$ associated to this loop does not depend on the choice of the point $(x, u)$ or on the choice of trivialisations, but it does depend on the choice of lift $\Lifted{\CircleAction}$ of $\CircleAction$.
        \end{definition}
        
    \subsection{Floer Seidel map}
        \label{sec:floer-seidel-map}
        
        Let $\Lifted{\CircleAction}$ be a lift of a linear Hamiltonian circle action of slope $\CircleSlope$.
        In \cite{seidel_$_1997}, Seidel defined a natural automorphism on Floer cohomology associated to $\Lifted{\CircleAction}$ for closed symplectic manifolds, which was extended to convex symplectic manifolds in \cite{ritter_floer_2014}.
        
        Let $(\Ham, \FamilyACS)$ be a regular choice of Floer data.
        The \define{pullback\footnote{
            This definition conforms to the conventions for pullbacks of tensor fields.
        } $\CircleAction \PullBack \FamilyACS$} of $\FamilyACS$ by $\CircleAction$ is
        \begin{equation}
            \label{eqn:pullback-almost-complex-structure}
            (\CircleAction \PullBack \FamilyACS)_t = \left( \Derivative \CircleAction_t \right)\Inverse \FamilyACS_t \Derivative \CircleAction_t
        \end{equation}
        and the \define{pullback $\CircleAction \PullBack \Ham$} of $\Ham$ by $\CircleAction$ is given by
        \begin{equation}
            \label{eqn:pullback-hamiltonian}
            (\CircleAction \PullBack \Ham)_t (\ManifoldElement) = \Ham_t(\CircleAction_t(\ManifoldElement)) - \CircleHam(\CircleAction_t(\ManifoldElement))
        \end{equation}
        for all $\ManifoldElement \in \Manifold$.
        By the elementary calculations in \cite[Section~1.4]{polterovich_geometry_2001}, the Hamiltonian flows satisfy $\HamiltonianFlow_{\CircleAction \PullBack \Ham}^t = \CircleAction_t \Inverse \HamiltonianFlow_{\Ham} ^t$.
        The \define{pullback Floer data} $(\CircleAction \PullBack \Ham, \CircleAction \PullBack \FamilyACS)$ is a regular choice of Floer data, with $\CircleAction \PullBack \Ham$ of slope $\slope - \CircleSlope$ if $\Ham$ is of slope $\slope$.
        
        The map \eqref{eqn:circle-action-induces-map-on-curves} induces isomorphisms between the moduli spaces of Floer solutions of the pullback Floer data $(\CircleAction \PullBack \Ham, \CircleAction \PullBack \FamilyACS)$ and of $(\Ham, \FamilyACS)$.
        This isomorphism is orientation-preserving with respect to the orientations induced by $\Orientation$.
        
        These pullback constructions yield a degree-$2 \MaslovIndex(\Lifted{\CircleAction})$ $\NovikovRing$-module isomorphism $\FloerSeidel_{\Lifted{\CircleAction}}$ on Floer cohomology.
        This map
        \begin{align}
            \label{eqn:floer-seidel-map}
            \FloerSeidel_{\Lifted{\CircleAction}} : \FloerC^\ArbitraryIndex(M; \FamilyACS, \Ham,) \to
            \FloerC^{\ArbitraryIndex + 2 \MaslovIndex(\Lifted{\CircleAction})}(M; \CircleAction \PullBack \FamilyACS, \CircleAction \PullBack \Ham)
        \end{align}
        is given by the $\NovikovRing$-linear extension of $\WithFilling{x} \mapsto \Lifted{\CircleAction}\PullBack \cdot \WithFilling{x}$, where $\Lifted{\CircleAction}\PullBack \cdot \WithFilling{x}$ is the preimage of $\WithFilling{x}$ under the map $\Lifted{\CircleAction}$.
        By pulling back regular monotone homotopies, it is possible to establish that $\FloerSeidel_{\Lifted{\CircleAction}}$ commutes with continuation maps, so that
        \begin{equation}
            \label{eqn:seidel-map-commutes-with-continuation}
            \begin{tikzcd}
            \FloerCohomology^\ArbitraryIndex(M; \FamilyACS^+, \Ham^+)
            \arrow[d, "\ContinuationMap"'] 
            \arrow[r, "\FloerSeidel_{\Lifted{\CircleAction}}"] 
            & \FloerCohomology^{\ArbitraryIndex + 2 \MaslovIndex(\Lifted{\CircleAction})}(M; \CircleAction \PullBack \FamilyACS^+, \CircleAction \PullBack \Ham^+)
            \arrow[d, "\Lifted{\CircleAction} \PullBack \ContinuationMap"] 
            \\
            \FloerCohomology^\ArbitraryIndex(M; \FamilyACS^-, \Ham^-) 
            \arrow[r, "\FloerSeidel_{\Lifted{\CircleAction}}"']           
            & \FloerCohomology^{\ArbitraryIndex + 2 \MaslovIndex(\Lifted{\CircleAction})}(M; \CircleAction \PullBack \FamilyACS^-, \CircleAction \PullBack \Ham^-)
            \end{tikzcd}
        \end{equation}
        commutes.
        Hence $\FloerSeidel_{\Lifted{\CircleAction}}$ induces a degree-$2 \MaslovIndex(\Lifted{\CircleAction})$ automorphism of symplectic cohomology.
        
        Seidel showed that $\FloerSeidel_{\Lifted{\CircleAction}}$ respects the module structure induced by the pair-of-pants product for closed manifolds \cite[Proposition~6.3]{seidel_$_1997}, and this was extended to convex manifolds by Ritter \cite[Theorem~23]{ritter_floer_2014}.
        The pair-of-pants product does not extend to the equivariant setup, so there is no analogue for this result.
            
\section{Equivariant Floer theory}

    \label{sec:equivariant-floer-theory}

    In this section, we define equivariant Floer cohomology.
    It should be considered in analogy to the Borel homotopy-quotient model for the equivariant cohomology of a topological space (see \autoref{sec:equivariant-cohomology}).
    We outline in \autoref{sec:infinite-sphere-conventions} our conventions for the universal bundle of $\Circle$, and proceed to explain our definition in \autoref{sec:equivariant-floer-cohomology-all}.
    
    In the literature, the $\Circle$-action used in the definition of $\Circle$-equivariant Floer cohomology is the action which rotates the domains of loops.
    In this paper, we incorporate an additional $\Circle$-action on $\Manifold$ into the definition.
    Using the trivial action on $\Manifold$ in our definition recovers the usual definition, except we use a non-standard and more geometric construction for the $\Integers [\uformal]$-module action (see \autoref{sec:y-shape-module-structure-on-equivariant-floer}).
    For completeness, we give the standard construction of the  $\Integers [\uformal]$-module action in \autoref{sec:u-multiplication-shift-invariance}, though we do not use it in this paper.
    
    Our definition is based on the standard definition of the Borel homotopy-quotient rather than the variant which is common in the equivariant symplectic cohomology literature (see \autoref{rem:diagonal-action-in-literature-for-equivariant-floer-theory}).
    
    Aside from these three key differences, our construction strongly resembles those already in the literature.
    In the following remark, we compare our conventions to those in other papers, however all these remaining differences are cosmetic.
    
    \begin{remark}
        [Conventions]
        Our definition is close to those of \cite[Sections~2.2--2.3]{bourgeois_s1-equivariant_2017} and \cite[Section~2.3]{gutt_lecture_2018}, except we use cohomological conventions and a Novikov ring.
        Our conventions are almost identical to Zhao's \emph{periodic symplectic cohomology} in \cite{zhao_periodic_2014, zhao_periodic_2016} and Seidel's definition in \cite{seidel_connections_2016}, except that we use a direct sum convention for cohomology rather than a direct product (and we use a Novikov ring).
        Importantly, we do not use $\Integers [\uformal, \uformal \Inverse] / (\uformal \Integers [\uformal] )$ in our coefficient ring unlike \cite[Section~8b]{seidel_biased_2007} and \cite[Appendix~B]{mclean_mckay_2018}; indeed this is not possible with our module operation.
    \end{remark}
        
    \subsection{Infinite sphere}
    
        \label{sec:infinite-sphere-conventions}
        
        The group $\Circle$ acts freely on the odd-dimensional sphere $\HighDimensionalSphere{2k-1}$, which we consider as the subset of $\ComplexNumbers^k$ of norm 1, by multiplication 
        \begin{equation}
            \label{eqn:circle-action-on-infinite-sphere}
            \CircleElement \cdot \infsphereelement = \ExponentialNumber^{2 \PiNumber \ImaginaryNumber \CircleElement} \infsphereelement\EndFullStop
        \end{equation}
        With this action, the manifold $\HighDimensionalSphere{2k-1}$ is a principal $\Circle$-bundle over $\ComplexNumbers \Projective^{k-1}$.
        These spheres are equipped with canonical inclusion maps $\sphereinclusion_k : \HighDimensionalSphere{2k-1} \xhookrightarrow{} \HighDimensionalSphere{2k+1}$.
        Denote by $\InfiniteSphere$ the direct limit of the odd-dimensional spheres under these inclusion maps.
        
        \begin{remark}
            \label{rem:use-of-direct-limit-and-infinite-sphere-notation}
            The topological space $\InfiniteSphere$ is actually not very nice.
            It is not compact, it is not first countable\footnote{
                Recall a topological space $X$ is \define{first countable} if, for every point $x \in X$, there is a sequence $(U_k)_{k \in \NaturalNumbers}$ of open subsets of $X$ which contain $x$ such that, for every open subset $U \subseteq X$ with $x \in U$, there is an inclusion $U_k \subseteq U$ for some $k \in \NaturalNumbers$.
                All metric spaces are first countable.
            } and in particular it is not metrisable.
            We use the notation of $\InfiniteSphere$ to simplify otherwise cumbersome statements which involve the limit of spheres.
            For example, by a \define{smooth} map $f : \InfiniteSphere \to \RealNumbers$, we mean a sequence of smooth maps $f_k : \HighDimensionalSphere{2k-1} \to \RealNumbers$, which are compatible with the inclusions in the sense that $f_{k+1} \ComposedWith \sphereinclusion_k = f_k$ and $\lim f_k = f$.
            In this way, the space $\InfiniteSphere$ is a principal $\Circle$-bundle, with projection map $\sphereprojection : \InfiniteSphere \to \InfiniteComplexProjectiveSpace$.
        \end{remark}
        
        Define the smooth function $\spheremorsefunction : \InfiniteSphere \to \RealNumbers$ by $(\infsphereelement_0, \ldots) \mapsto \sum_k k \Modulus{\infsphereelement_k}^2 $.
        The function $\spheremorsefunction$ descends to a Morse-Smale function on $\InfiniteComplexProjectiveSpace$ whose unique critical point of index $2k$ is the standard basis vector $\spherecriticalpoint_k = [0:\cdots:0:1:0:\cdots]$ for all $k \ge 0$.
        Recall that the cohomology of $\InfiniteComplexProjectiveSpace$ is isomorphic to $\Integers[\uformal]$, with $\uformal$ a formal variable of degree 2, where the critical point $\spherecriticalpoint_k$ corresponds to $\uformal^k$.
        
        The space $\InfiniteSphere$ is equipped with the round metric.
        For each $k$, fix an identification $\Circle \leftrightarrow \spherecriticalpoint_k \subset \InfiniteSphere$.
        Extend this identification to the unstable and stable manifolds to get an equivariant map 
        \begin{equation}
            \label{eqn:trivialisations-of-unstable-and-stable-manifolds-in-infinite-sphere}
            \spheretrivialise_k : \UnstableManifold(\spherecriticalpoint_k) \Union \StableManifold(\spherecriticalpoint_k) \to \Circle \EndComma
        \end{equation}
        defined by where the negative gradient flowline of $\spheremorsefunction$ converges to along $\spherecriticalpoint_k$.
        
        The right-shift map $\ComplexNumbers^k \to \ComplexNumbers^{k+1}$ given by $(\infsphereelement_0, \ldots , \infsphereelement_{k-1}) \mapsto (0, \infsphereelement_0, \ldots , \infsphereelement_{k-1})$ induces an injective smooth map $\sphereshift : \InfiniteSphere \to \InfiniteSphere$.
        The gradient vector field of $\spheremorsefunction$ is $\sphereshift$-invariant.
        
    \subsection{Equivariant cohomology}
    
        \label{sec:equivariant-cohomology}
        
        Let $X$ be a topological space with a continuous circle action $\rho : \Circle \times X \to X$.
        The \define{Borel homotopy quotient}, denoted $\InfiniteSphere \times_{\Circle} X$, is the quotient of the product $\InfiniteSphere \times X$ by the relation $(\CircleElement \cdot \infsphereelement, x) \sim (\infsphereelement, \CircleElement \cdot x)$.
        Equivalently, it is the quotient of the product $\InfiniteSphere \times X$ by the free circle action
        \begin{equation}
            \label{eqn:circle-action-for-borel-homotopy-quotient}
            \CircleElement \cdot (\infsphereelement, x) = (\CircleElement \Inverse \cdot \infsphereelement, \rho_\CircleElement (x)) \EndFullStop
        \end{equation}
        The \define{equivariant cohomology} of the pair $(X, \rho)$ is $\ECohomology^\ArbitraryIndex_\rho (X) = \Cohomology^\ArbitraryIndex (\InfiniteSphere \times_{\Circle} X)$.
        The projection $\InfiniteSphere \times_{\Circle} X \to \InfiniteSphere / \Circle = \InfiniteComplexProjectiveSpace$ induces a map $\Integers [\uformal] \cong \Cohomology^\ArbitraryIndex (\InfiniteComplexProjectiveSpace) \to \ECohomology^\ArbitraryIndex_\rho (X)$.
        Together with the cup product, this map gives equivariant cohomology the structure of a unital, associative and graded-commutative $\Integers [\uformal]$-algebra.
        
        \begin{remark}
            [Diagonal action in literature]
            \label{rem:diagonal-action-in-literature-for-equivariant-floer-theory}
            The literature for equivariant symplectic cohomology uses an alternative convention for the Borel homotopy quotient whereby the free diagonal action on $\InfiniteSphere \times X$ is used.
            The automorphism of $\InfiniteSphere$ given by complex conjugation takes this diagonal action back to the standard action \eqref{eqn:circle-action-for-borel-homotopy-quotient}, however it is \emph{not orientation-preserving} on the quotient $\InfiniteComplexProjectiveSpace$, so this different convention results in a different $\Integers [\uformal]$-module structure.
            To correct for this, the transformation $\uformal \mapsto - \uformal$ must be used when changing convention.
        \end{remark}
        
        Let $(X, \rho_X)$ and $(Y, \rho_Y)$ be two topological spaces with circle actions as above.
        The continuous map $f : X \to Y$ is \define{equivariant} if it intertwines the two actions, that is the identity $\rho_{Y,\CircleElement}(f(x)) = f(\rho_{X,\CircleElement}(x))$ holds.
        Such an equivariant map induces a well-defined map on the Borel homotopy quotients, and hence induces a natural \define{pullback map} $f\PullBack : \ECohomology^\ArbitraryIndex_{\rho_Y} (Y) \to \ECohomology^\ArbitraryIndex_{\rho_X} (X)$.
        
    \subsection{Equivariant Floer cohomology}
    
        \label{sec:equivariant-floer-cohomology-all}
            
    \subsubsection{Equivariant Hamiltonian orbits}
        \label{sec:linear-symplectic-circle-action}
        Let $\rho$ be a symplectic circle action on $\Manifold$, and assume that the action flows along the Hamiltonian vector field of some linear Hamiltonian $\CircleHamSpecifyAction{\rho}$ at infinity.
        Such actions are \define{linear at infinity}.
        Notice that $\rho$ preserves the radial coordinate $R$ at infinity.
        The linear Hamiltonian circle actions of \autoref{def:linear-ham-circle-action} satisfy this assumption by definition; the difference is that we do not assume the action is Hamiltonian on the entire manifold here.
        
        The loop space $\ContractibleLoopSpace \Manifold$, being a space of maps $\Circle \to \Manifold$, naturally inherits\footnote{
            \label{footnote:action-conventions-on-function-spaces}
            Let the group $G$ act on sets $X$ and $Y$ on the left.
            Let $\Map(X, Y)$ denote the space of maps from $X$ to $Y$.
            The action on $Y$ induces an action on $\Map(X, Y)$ by post-composition.
            The action on $\Map(X, Y)$ given by $g \cdot f = f \ComposedWith g$ is a right action, instead of a left action.
            The action given by $g \cdot f = f \ComposedWith g \Inverse$ is still a left action, so this is the action \define{naturally inherited} by $\Map(X, Y)$.
            Here, $g \in G$ and $f \in \Map(X, Y)$.
        } two circle actions, one from the rotation action on the domain $\Circle$ and the other from the circle action on the codomain $\Manifold$.
        We combine these to get the circle action on $\ContractibleLoopSpace \Manifold$ which is given, for all $t, \CircleElement \in \Circle$ and all $x \in \ContractibleLoopSpace \Manifold$, by
        \begin{equation}
            \label{eqn:circle-action-on-loop-space}
            \CircleElement \cdot (\ t \mapsto x(t)\ ) = (\ t \mapsto \rho_\CircleElement (x(t - \CircleElement))\ ) \EndFullStop
        \end{equation}
        
        An \define{equivariant Hamiltonian} is a smooth function $\EqntHam : \InfiniteSphere \times (\Circle \times \Manifold) \to \RealNumbers$ which satisfies
        \begin{equation}
            \label{eqn:equivariant-condition-on-hamiltonian}
            \EqntHam_{\infsphereelement, t} (\ManifoldElement) = \EqntHam_{\CircleElement \Inverse \cdot \infsphereelement, t + \CircleElement} (\rho_\CircleElement ( \ManifoldElement ))
        \end{equation}
        for all $t, \CircleElement \in \Circle$, all $\infsphereelement \in \InfiniteSphere$ and all $\ManifoldElement \in \Manifold$.
        Notice that a function satisfying \eqref{eqn:equivariant-condition-on-hamiltonian} is equivariant, in the sense of \autoref{sec:equivariant-cohomology}, with respect to the natural action on the domain, according to our convention in \eqref{eqn:circle-action-for-borel-homotopy-quotient}, and the trivial action on the codomain $\RealNumbers$.
        
        The equivariant Hamiltonian $\EqntHam$ is \define{linear of slope $\slope$} if there is $R_0$ such that the equation $\EqntHam_{\infsphereelement, t} (\ConvexCoordMap(y, R)) = \slope R$ holds when $R \ge R_0$.
        
        An \define{equivariant Hamiltonian orbit} is an equivalence class $[\infsphereelement, x] \in \InfiniteSphere \times_\Circle \ContractibleLoopSpace \Manifold$ such that $\infsphereelement$ is a critical point of $\spheremorsefunction : \InfiniteSphere \to \RealNumbers$ and $x$ is a Hamiltonian orbit of $\EqntHam_{\infsphereelement, t} (\Argument)$.
        The equivariance of the Hamiltonian $\EqntHam$ guarantees this definition is independent of the choice of representative $(\infsphereelement, x)$.
        
        The action \eqref{eqn:circle-action-on-loop-space} on $\ContractibleLoopSpace \Manifold$ lifts canonically to an action on $\ContractibleLoopSpaceWithFillings{\Manifold}$.
        The action functional $\ActionFunctional_{\EqntHam} : \InfiniteSphere \times \ContractibleLoopSpaceWithFillings{\Manifold} \to \RealNumbers$ given by
        \begin{equation}
            \label{eqn:equivariant-action-functional}
            \ActionFunctional_{\EqntHam} (\infsphereelement, (x, u)) = - \int_\Disc u \PullBack \SymplecticForm + \int_{t = 0}^1 \EqntHam_{\infsphereelement, t} (x (t)) \wrt{t}
        \end{equation}
        is invariant under the action combining the lift of \eqref{eqn:circle-action-on-loop-space} and \eqref{eqn:circle-action-for-borel-homotopy-quotient}, much like the equivariant Morse functions of \autoref{sec:equivariant-morse-cohomology}.
            
    \subsubsection{Equivariant Floer data}
    
        An \define{equivariant\footnote{
            An almost-complex structure is a section of the bundle $\Automorphisms(\TangentSpace \Manifold) \to \Manifold$.
            Let $p : \InfiniteSphere \times \Circle \times \Manifold \to \Manifold$ be the natural projection map.
            An equivariant almost-complex structure is a map $\InfiniteSphere \times \Circle \times \Manifold \to p \PullBack \Automorphisms(\TangentSpace \Manifold)$ which is equivariant in the usual sense.
        } almost-complex structure $\EqntACS$} is a $\InfiniteSphere \times \Circle$-family of almost-complex structures $\FamilyACS\Eqnt_{\infsphereelement, t}$ which makes the diagram
        \begin{equation}
            \label{eqn:equivariant-condition-on-almost-complex-structure}
            \begin{tikzcd}[column sep=huge]
                \TangentSpace_\ManifoldElement \Manifold \arrow[d, "\Derivative \rho_\CircleElement"] \arrow[r, "\FamilyACS\Eqnt_{\infsphereelement, t}"] & \TangentSpace_\ManifoldElement\Manifold \arrow[d, "\Derivative \rho_\CircleElement"] \\
                \TangentSpace_{\rho_\CircleElement(\ManifoldElement)}\Manifold \arrow[r, "\FamilyACS\Eqnt_{ \CircleElement\Inverse \cdot \infsphereelement, t + \CircleElement}"]        & \TangentSpace_{\rho_\CircleElement (\ManifoldElement)} \Manifold     
            \end{tikzcd}
        \end{equation}
        commute for all $\ManifoldElement \in \Manifold$.
        
        \begin{definition}
            \label{def:extending-floer-data-equivariantly}
            The equivariant Hamiltonian $\EqntHam$ \define{extends the sequence of (non-equivariant) Hamiltonians $\Ham^k$} if
            \begin{equation}
                \label{eqn:local-behaviour-of-equivariant-hamiltonians}
                \Ham\Eqnt_{\infsphereelement, t} (\ManifoldElement) = \Ham^k_{t + \spheretrivialise_k(\infsphereelement)}( \rho_{\spheretrivialise_k(\infsphereelement)} (\ManifoldElement) )
            \end{equation}
            in a neighbourhood of $\spherecriticalpoint_k$ in $\UnstableManifold(\spherecriticalpoint_k) \Union \StableManifold(\spherecriticalpoint_k)$ for all $k \ge 0$.
            Likewise, the equivariant almost complex structure $\FamilyACS\Eqnt$ \define{extends the sequence of almost complex structures $\FamilyACS^k$} if
            \begin{equation}
                \label{eqn:local-behaviour-of-equivariant-acs}
                \FamilyACS\Eqnt_{t, \infsphereelement} = \Derivative \rho_{\spheretrivialise_k(\infsphereelement)} \Inverse\ \FamilyACS^k_{t + \spheretrivialise_k(\infsphereelement)} \ \Derivative \rho_{\spheretrivialise_k(\infsphereelement)}
            \end{equation}
            in a neighbourhood of $\spherecriticalpoint_k$ in $\UnstableManifold(\spherecriticalpoint_k) \Union \StableManifold(\spherecriticalpoint_k)$ for all $k \ge 0$.
        \end{definition}
        
        For such equivariant data, the equivariant Hamiltonian orbits are equivalence classes $[\infsphereelement, x]$ where $\infsphereelement \in \spherecriticalpoint_k$ satisfies $\spheretrivialise_k(\infsphereelement) = 0$ and $x \in \HamOrbits(\Ham^k)$.
        We use the shorthand $(\spherecriticalpoint_k, x)$ for such equivariant Hamiltonian orbits.
        
        \begin{definition}
            A choice of \define{equivariant Floer data} is a pair $(\EqntHam, \EqntACS)$ consisting of a linear equivariant Hamiltonian function $\EqntHam$ and a convex equivariant $\SymplecticForm$-compatible almost complex structure $\EqntACS$ which together extend a sequence of Floer data.
        \end{definition}
        
        \remark{
            Typically, one chooses the equivariant Floer data so that it is the same at the critical points, however we relax this requirement here, allowing any sequence of Floer data.
            We need \eqref{eqn:local-behaviour-of-equivariant-hamiltonians} and \eqref{eqn:local-behaviour-of-equivariant-acs} to hold for a sequence of non-equivariant data $(\Ham^k, \FamilyACS^k)_{k \ge 0}$ in order to apply the standard continuation map techniques that guarantee the desired moduli space behaviour.
        }
        
        \begin{proposition}
            [Existence of data]
            Let $(\Ham^k, \FamilyACS^k)$ be any sequence of Floer data whose Hamiltonians are all linear of the same slope $\slope$ and whose \emph{at infinity conditions\footnote{That is, the linearity of the Hamiltonians and the convexity of the almost complex structures.}} are all satisfied in a common region at infinity.
            There is equivariant Floer data which extends this sequence, and moreover the space of such data is contractible.
        \end{proposition}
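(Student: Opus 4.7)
The first step is to reformulate the problem in a way that makes the equivariance constraint easier to handle. The equivariance condition \eqref{eqn:equivariant-condition-on-hamiltonian} says precisely that $\EqntHam$ descends to a smooth function on the associated bundle $\InfiniteSphere \times_\Circle (\Circle \times \Manifold)$, formed using the diagonal action $\CircleElement \cdot (\infsphereelement, t, \ManifoldElement) = (\CircleElement\Inverse \infsphereelement, t + \CircleElement, \rho_\CircleElement(\ManifoldElement))$; likewise $\EqntACS$ becomes a section of an associated bundle of almost complex structures on $\Manifold$. So the goal is reduced to extending a prescribed smooth section (of a fiber bundle over $\InfiniteComplexProjectiveSpace$) from a specified closed subset to the whole total space, while maintaining the linear/convex conditions at infinity.

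The construction proceeds by induction on the finite skeleta $\HighDimensionalSphere{2k-1} \subset \HighDimensionalSphere{2k+1}$. For the base step, near the critical orbit $\Circle \cdot \spherecriticalpoint_0$, the equivariance \eqref{eqn:equivariant-condition-on-hamiltonian} together with the extension condition \eqref{eqn:local-behaviour-of-equivariant-hamiltonians} (evaluated using $\spheretrivialise_0$) already fixes the data; the same holds for the almost complex structure via \eqref{eqn:local-behaviour-of-equivariant-acs}. For the inductive step, I would assume equivariant data satisfying all the conditions has been built on an equivariant neighbourhood of $\HighDimensionalSphere{2k-1}$, and on an equivariant neighbourhood of the critical orbit $\Circle \cdot \spherecriticalpoint_k$ (using \eqref{eqn:local-behaviour-of-equivariant-hamiltonians}--\eqref{eqn:local-behaviour-of-equivariant-acs}). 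I then pick a $\Circle$-invariant partition of unity on $\HighDimensionalSphere{2k+1}$ subordinate to an equivariant open cover that refines these two neighbourhoods plus further equivariant open sets whose closures avoid $\spherecriticalpoint_k$. On each such set I may freely pick any equivariant data (e.g. pulled back from the already-constructed data on $\HighDimensionalSphere{2k-1}$ via a gradient-flow retraction away from $\spherecriticalpoint_k$), and then I interpolate via the partition of unity in the Hamiltonian case, and via a standard contractibility argument for $\SymplecticForm$-compatible almost complex structures in the other case.

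The linearity-at-infinity and convexity-at-infinity conditions are preserved throughout because they are convex conditions on the relevant spaces: a convex combination of Hamiltonians of slope $\slope$ (in a common region $R \ge R_0$) is again of slope $\slope$ in that region, and the space of convex $\SymplecticForm$-compatible almost complex structures is contractible fibrewise (standard argument using the Cayley transform on a Darboux chart on $\Automorphisms(\TangentSpace \Manifold, \SymplecticForm)$). Contractibility of the full space of extensions then follows by the same inductive reasoning: at each skeletal step, the space of equivariant extensions satisfying the prescribed boundary data is a convex subset of a Fr\'{e}chet space (modulo the contractible choice of almost complex structure), hence contractible; an inverse limit of contractible spaces with smooth compatibility remains contractible.

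The main obstacle will be carrying out the inductive extension near the critical orbit $\Circle \cdot \spherecriticalpoint_k$ in a way that is truly compatible with the already-specified data on the closure of $\UnstableManifold(\spherecriticalpoint_k) \cup \StableManifold(\spherecriticalpoint_k)$ outside the prescribed neighbourhood and with the inductively-defined data on $\HighDimensionalSphere{2k-1}$; concretely, one must verify that the local form \eqref{eqn:local-behaviour-of-equivariant-hamiltonians} imposed by $\spheretrivialise_k$ matches the inductive data up to a globally defined smooth interpolation, which requires only that $\spheretrivialise_k$ is an equivariant local diffeomorphism near the critical orbit (which it is by construction in \eqref{eqn:trivialisations-of-unstable-and-stable-manifolds-in-infinite-sphere}). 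Once this matching is checked, the rest of the argument is an equivariant version of the standard parametrised-Floer transversality-data construction.
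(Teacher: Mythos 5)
Your induction over skeleta can probably be pushed through for the existence statement, but it is considerably more roundabout than what is needed, and two steps in your sketch are genuine gaps rather than omitted routine details. First, the contractibility argument. You reduce to "the space of equivariant extensions at each skeletal step is convex (modulo the contractible choice of almost complex structure), hence contractible" and then invoke "an inverse limit of contractible spaces with smooth compatibility remains contractible." That last assertion is not a theorem: inverse limits of contractible spaces need not be contractible, and even weak contractibility requires an argument (a tower of fibrations plus a Milnor ${\lim}^1$ sequence, say), together with care about what contractibility of these Fr\'echet-type spaces means. Moreover the parenthetical "modulo the contractible choice of almost complex structure" hides exactly the non-convex part of the problem. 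The paper avoids all of this with a global argument: for the Hamiltonian, fix one $\rho$-invariant function $\Ham^\slope$ of slope $\slope$ and patch in each $\Ham^k$ by cut-off functions near the disjoint critical orbits $\spherecriticalpoint_k$; the space of all equivariant Hamiltonians extending the sequence is itself convex (equivariance, the local condition \eqref{eqn:local-behaviour-of-equivariant-hamiltonians} and linearity of slope $\slope$ in a common region at infinity are all preserved by convex combination), so contractibility is immediate, with no skeleton-by-skeleton limit needed.

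Second, the almost complex structures. A partition of unity cannot interpolate compatible almost complex structures, and you defer to "a standard contractibility argument"; but the statement you need is a \emph{relative} one -- extend $\FamilyACS^k$-prescribed values near each $\spherecriticalpoint_k$ while keeping convexity at infinity -- and fibrewise contractibility (Cayley transform/S\'evennec) alone does not give that. The paper's mechanism is to work with the symplectic vector bundle \eqref{eqn:equivariant-cotangent-space-as-symplectic-vector-bundle} over the Borel-type quotient, and to restrict the standard retraction from inner products to compatible almost complex structures to the convex, nonempty subspace of inner products that are fixed near each $\spherecriticalpoint_k$ (matching $\FamilyACS^k$) and satisfy the explicit conditions \eqref{eqn:conditions-on-inner-products-to-force-convexity} at infinity; a retract of a convex set settles both existence and contractibility at once. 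If you want to salvage your outline, replace the partition-of-unity step and the inverse-limit step by this convexity/retraction argument. One further bookkeeping point in your induction: the extension condition at a critical orbit $\spherecriticalpoint_j$ constrains points of $\StableManifold(\spherecriticalpoint_j)$ arbitrarily close to $\spherecriticalpoint_j$ that do not lie in $\HighDimensionalSphere{2k-1}$ for any $k>j$, so at the stage $\HighDimensionalSphere{2k-1}\subset\HighDimensionalSphere{2k+1}$ the prescribed region is not just "the previous skeleton plus a neighbourhood of $\spherecriticalpoint_k$"; this is fixable, but your inductive hypothesis as stated does not account for it, whereas the global cut-off construction never meets the issue.
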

        \begin{proof}
            We can construct an equivariant Hamiltonian by an analogous argument to \cite[Example~2.4]{bourgeois_s1-equivariant_2017}.
            First, we fix an invariant time-independent Hamiltonian $\Ham^\slope$ on $\Manifold$ which is linear at infinity of slope $\slope$.
            To do this, simply set $\Ham^\slope = 0$ on $\Set{R < R_0}$ and $\Ham_\slope(\ConvexCoordMap(y, R)) = h(R)$ on $\Set{R \ge R_0}$ for an appropriate function $h$.
            Next, use a cut-off function near each $\spherecriticalpoint_k$ to interpolate between each $\Ham^k$ (appropriately interpreted via \eqref{eqn:local-behaviour-of-equivariant-hamiltonians}) and the fixed $\Ham^\slope$.
            The result is an equivariant Hamiltonian of slope $\slope$ which extends the sequence as desired.
            This shows the desired existence.
            The space of all such equivariant Hamiltonians is convex and hence contractible.
            
            For the equivariant almost complex structure, consider the symplectic vector bundle
            \begin{equation}
                \label{eqn:equivariant-cotangent-space-as-symplectic-vector-bundle}
                \begin{tikzcd}
                    E \arrow[d]
                    = \nicefrac{\InfiniteSphere \times \Circle \times \CotangentSpace \Manifold}{\Circle}
                    \\
                    B = \nicefrac{\InfiniteSphere \times \Circle \times \Manifold}{\Circle} \EndFullStop
                \end{tikzcd}
            \end{equation}
            The space of (compatible) almost complex structures on $E$ is nonempty and contractible \cite[Proposition~2.63]{mcduff_introduction_1998}.
            The proof of this constructs a retraction $r$ from the space of inner products on $E$ (which is convex and hence contractible) to the space of compatible almost complex structures on $E$.
            By restricting this map $r$ to an appropriate subspace of inner products, we will get a retraction to the space of convex almost complex structures which extend the given sequence $\FamilyACS^k$.
            It is sufficient for the subspace to be nonempty and contractible to complete the proof.
            
            To guarantee convexity, we restrict to inner products $g$ which satisfy
            \begin{equation}
                \label{eqn:conditions-on-inner-products-to-force-convexity}
                \left\{
                        \begin{array}{l}
                            g(\partial_R, R \partial_R) = 1 \\
                            g(\partial_R, \Reebvf) = 0 \\
                            g(\Reebvf, \Reebvf) = R \\
                            \text{$\Kernel(\ContactForm)$ is $g$-orthogonal to $\partial_R$ and $\Reebvf$}
                        \end{array}
                    \right.
            \end{equation}
            at infinity.
            To ensure the almost complex structures will extend $\FamilyACS^k$, we further restrict to inner products which take appropriate fixed values over neighbourhoods of $\spherecriticalpoint_k \times \Circle \times \Manifold / \Circle$ for each $k$.
            The space of inner products which satisfy these two conditions remains convex and nonempty, as desired.
        \end{proof}
            
    \subsubsection{Equivariant Floer solutions}
    
        The map $(v, u) : \RealNumbers \to \InfiniteSphere \times \ContractibleLoopSpace \Manifold$ \define{satisfies the equivariant Floer equation} if $v$ is a negative gradient flowline of $\spheremorsefunction : \InfiniteSphere \to \RealNumbers$ and $u$ satisfies the equation
        \begin{equation}
            \label{eqn:equivariant-floer-equation}
            \partial_s u + \FamilyACS\Eqnt_{v(s), t} \left(\partial_t u - \HamiltonianVectorField{H_{v(s), t}(\Argument)}\right) = 0 \EndFullStop
        \end{equation}
        An \define{equivariant Floer solution} is an equivalence class $[v, u] \in \nicefrac{\SmoothMaps(\RealNumbers, \InfiniteSphere \times \ContractibleLoopSpace \Manifold)}{\Circle}$.
        The equivariant Floer solution $[v, u]$ \define{converges to equivariant Hamiltonian orbits $(\spherecriticalpoint_{k^\pm}, x^\pm)$} if the limits $\lim_{s \to \pm \Infinity} v(s) = \infsphereelement ^ \pm$ and \eqref{eqn:Floer-solution-convergence-to-limits} hold for some choices of representatives.
        
        For regular equivariant Floer data, the moduli space of equivariant Floer solutions $\ModuliSpace((\spherecriticalpoint_{k^-}, \WithFilling{x}^-), (\spherecriticalpoint_{k^+}, \WithFilling{x}^+))$ is a smooth oriented manifold of dimension $2k^- - 2k^+ + \ConleyZehnderIndex(\WithFilling{x}^-) - \ConleyZehnderIndex(\WithFilling{x}^-)$.
        The moduli space is empty unless $2k^- - 2k^+ \ge 0$.
        It admits a smooth $\RealNumbers$-action via $s$-translation that is free if the equivariant Hamiltonian orbits are distinct.
        (The case when $k^- = k^+$ is canonically identical to the non-equivariant case using the maps $\spheretrivialise_{k^+}$.)
        The dimension-0 moduli spaces are compact and the dimension-1 moduli spaces admit a compactification via gluing broken trajectories as per \autoref{sec:moduli-space-of-floer-trajectories-properties}.
        The phenomenon of bubbling is avoided using regularity conditions\footnote{
            Explicitly, the regularity conditions ensure $((w, t), x(t)) \notin \ChernBoundedSpheres_1(\EqntACS)$ for all equivariant Hamiltonian orbits $(\infsphereelement, x)$ and $((v(s), t), u(s, t)) \notin \ChernBoundedSpheres_0(\EqntACS)$ for all equivariant Floer solutions $(v, u)$ occuring in moduli spaces of dimension 1.
            These conditions are the equivariant analogues of those used by \cite{hofer_floer_1995}.
        }.
        
        \begin{remark}
            [Weak\textsuperscript{+} monotonicity]
            A symplectic manifold satisfies \define{weak\textsuperscript{+} monotonicity} if the implication
            \begin{equation}
                2-\dimM \le \FirstChernClass(A) < 0 \implies \SymplecticForm(A) \le 0
            \end{equation}
            holds for all $A \in \HomotopyGroup_2(\Manifold)$.
            Weak\textsuperscript{+} monotonicity is insufficient to exclude bubbling by standard arguments for the equivariant definitions, even though it is sufficient for the non-equivariant definitions.
            For regular time-dependent almost complex structures $\FamilyACS$, the moduli space of $\FamilyACS$-holomorphic spheres of class $A$ has dimension $2\dimM + 2 \FirstChernClass(A) + 1$.
            Therefore the moduli space is empty when $\FirstChernClass(A) \ll 0$ is large and negative, because the dimension is negative.
            In the equivariant setup, however, the moduli space of $\EqntACS$-holomorphic spheres has `dimension' $2\dimM + 2 \FirstChernClass(A) + 1 + \Dimension(\InfiniteSphere/\Circle)$.
            Since this `dimension' is positive for any $\FirstChernClass(A)$, the same argument does not apply.
            Instead, we require nonnegative monotonicity which prohibits any $\EqntACS$-holomorphic spheres with negative first Chern class via \eqref{eqn:nonnegatively-monotone-negative-spheres}.
        \end{remark}
        
    \subsubsection{Equivariant Floer cochain complex}
    
        \label{sec:equivariant-floer-cohomology-definition}
    
        Let $(\EqntHam, \EqntACS)$ be a regular choice of equivariant Floer data.
        The \define{degree-$l$ equivariant Floer cochains} are the formal sums 
        \begin{equation}
            \label{eqn:equivariant-floer-complex}
            \sum_{k \ge 0}
            \sum_{\substack{
                \WithFilling{x} \in \FilledHamOrbits(\Ham^k) \\
                \ConleyZehnderIndex(\WithFilling{x}) = l - 2k
            }} \alpha_{k, \WithFilling{x}} \, (\spherecriticalpoint_k, \WithFilling{x})
        \end{equation}
        with integer coefficients such that the set $$
            \SetCondition{\WithFilling{x} \in \FilledHamOrbits(\Ham^k)}{\SymplecticForm(\WithFilling{x}) \le c \CommaSpace \alpha_{k ,\WithFilling{x}} \ne 0}
        $$ is finite for all $c \in \RealNumbers$ and $k \ge 0$.
        Denote by $\EFloerC_{\rho}^l(\Manifold; \EqntHam)$ the $\Integers$-module of such degree-$l$ equivariant Floer cochains.
        The \define{equivariant Floer cochain complex} is the $\Integers$-graded $\NovikovRing$-module $\DirectSum_{k \in \Integers} \EFloerC_{\rho}^k (\Manifold; \EqntHam)$.
        
        The \define{equivariant Floer cochain differential} is the $\NovikovRing$-module degree-1 endomorphism $d : \EFloerC_{\rho}^\ArbitraryIndex (\Manifold; \EqntHam) \to \EFloerC_{\rho}^{\ArbitraryIndex + 1} (\Manifold; \EqntHam)$ given by
        \begin{equation}
            \label{eqn:equivariant-floer-differential}
            d(\spherecriticalpoint_{k^+}, \WithFilling{x}^+) = 
            \sum_{\substack{
                {k^-} \ge {k^+} \\
                \WithFilling{x}^- \in \FilledHamOrbits(\Ham^{k^-}) \\ 
                2{k^-} - 2{k^+} + \ConleyZehnderIndex(\WithFilling{x}^-) - \ConleyZehnderIndex(\WithFilling{x}^+) = 1
            }} \sum_{
                [v, u] \in \QuotientedModuliSpace((\spherecriticalpoint_{k^\pm}, \WithFilling{x}^\pm))
            } \Orientation([v, u]) \, (\spherecriticalpoint_{k^-}, \WithFilling{x}^-) \EndFullStop
        \end{equation}
        The differential indeed satisfies $d^2 = 0$, and the \define{equivariant Floer cohomology}, denoted $\EFloerCohomology_{\rho}^\ArbitraryIndex (\Manifold; \EqntHam)$, is the cohomology of $(\EFloerC_{\rho}^\ArbitraryIndex (\Manifold; \EqntHam), d)$. It is a $\Integers$-graded $\NovikovRing$-module.
        
        Standard homotopy techniques ensure $\EFloerCohomology_{\rho}^\ArbitraryIndex (\Manifold; \EqntHam)$ is dependent only on the slope of the Hamiltonian.
        Moreover, equivariant monotone homotopies induce equivariant continuation maps.
        The \define{equivariant symplectic cohomology $\ESymplecticCohomology_{\rho}^\ArbitraryIndex (\Manifold)$} is the direct limit of the resulting system, just as in the non-equivariant case.
        
    \subsection{Module structures}
        
    \subsubsection{Algebraic module structure}
        \label{sec:u-multiplication-shift-invariance}
        
        This section describes the $\Integers [\uformal]$-module action used in the literature.
    
        Recall the right-shift operator on $\InfiniteSphere$ denoted $\sphereshift$.
        Assume that the maps $\spheretrivialise_k$ are $\sphereshift$-compatible and that the equivariant Hamiltonian $\Ham\Eqnt$ and the equivariant almost-complex structure $\FamilyACS\Eqnt$ are $\sphereshift$-invariant.
        These assumptions yield isomorphisms
        \begin{equation*}
            \ModuliSpace((\spherecriticalpoint_{k^-}, \WithFilling{x}^-), (\spherecriticalpoint_{k^+}, \WithFilling{x}^+)) \cong \ModuliSpace((\spherecriticalpoint_{k^- + r}, \WithFilling{x}^-), (\spherecriticalpoint_{k^+ + r}, \WithFilling{x}^+))
        \end{equation*}
        for all $r \ge 0$.
        With respect to the $\Integers [\uformal]$-module structure on $\EFloerC^\ArbitraryIndex_\rho (\Manifold; \EqntHam)$ given by $\uformal \cdot (\spherecriticalpoint_{k}, \WithFilling{x}) = (\spherecriticalpoint_{k + 1}, \WithFilling{x})$, the differential $d$ is a $\NovikovRing [\uformal]$-module endomorphism.
        Hence under this assumption, the equivariant Floer cohomology is a $\NovikovRing [\uformal]$-module.
            
    \subsubsection{Geometric module structure}
        \label{sec:y-shape-module-structure-on-equivariant-floer}

        The Morse cup product counts `Y'-shaped flowlines.
        We adapt this product to get a $\Integers [\uformal]$-module structure on equivariant Floer cohomology which reflects the geometric behaviour of equivariant flowlines in $\InfiniteSphere$.
        The additional conditions placed on data for this construction are generic, in contrast to the invariance assumptions of \autoref{sec:u-multiplication-shift-invariance} which are not generic.
        
        Take an $\Circle$-invariant $s$-dependent perturbation\footnote{
            Throughout, any $s$-dependent perturbation is $s$-dependent only on a bounded interval.
        } of the function $\spheremorsefunction : \InfiniteSphere \to \RealNumbers$.
        This is a smooth function $\IntervalClosedOpen{0}{\Infinity} \to \SmoothFunctions(\InfiniteSphere)$, $s \mapsto \spheremorsefunction_s$.
        
        Given two equivariant Hamiltonian orbits $(\spherecriticalpoint_{k^-}, \WithFilling{x}^-)$ and $(\spherecriticalpoint_{k^+}, \WithFilling{x}^+)$, consider the moduli space of $\Circle$-equivalence classes of triples $(v, u, v^0)$, where $[v, u]$ is an equivariant Floer solution converging to the two equivariant Hamiltonian orbits and $v^0 : \IntervalClosedOpen{0}{\Infinity} \to \InfiniteSphere$ is a negative gradient flowline of $\spheremorsefunction_s$ which converges to a point on $\spherecriticalpoint_k$ and satisfies $v^0(0) = v(0)$.
        
        For a regular\footnote{
            In order to avoid bubbling, we moreover assume that the map
            \begin{equation}
                \label{eqn:intersection-of-spheres-and-flowlines-equivariant}
                \nicefrac{
                    \ModuliSpace(A, \FamilyACS\Eqnt \RestrictedTo{F \times \Circle})
                }{\Circle} \times_{\ProjectiveSpecialLinearGroup} \Projective^1 \times \ModuliSpace \left( (\spherecriticalpoint_{k^-}, \WithFilling{x}^-), (\spherecriticalpoint_{k^+}, \WithFilling{x}^+) \right)  \to \left( F \times_\Circle \Manifold \right) \times \left( F \times_\Circle \Manifold \right)
            \end{equation}
            given by $([(\infsphereelement, t, u_{\Projective^1}), p], (v, u)) \mapsto (\infsphereelement, u_{\Projective^1}(p), v(0), u(0, t))$ is transversal to the diagonal for all $A \in \InterestingSpheres$.
            Here, $F$ denotes the intersection $\UnstableManifold(\uformal^{k^-}) \Intersection \StableManifold(\uformal^{k^+}) \subset \InfiniteSphere$.
            The domain of the map has dimension $2k^- - 2k^+ + 2 \dimM + 2 \FirstChernClass(A) - 3 + (2k^- + \ConleyZehnderIndex(\WithFilling{x}^-) - 2k^+ - \ConleyZehnderIndex(\WithFilling{x}^+))$.
            When the moduli space of equivariant Floer solutions has dimension 1 or 2 and $\FirstChernClass(A) = 0$, the map \eqref{eqn:intersection-of-spheres-and-flowlines-equivariant} is transversal to the diagonal only if the intersection of the image with the diagonal is empty.
            This recovers one of the conditions for regular equivariant data.
            Data will generically satisfy this condition by an argument analogous to the proof of \cite[Theorem~3.2]{hofer_floer_1995}.
        } choice of the perturbed Morse function $\spheremorsefunction_s$, this moduli space is an oriented smooth manifold of dimension
        \begin{equation}
            2k^- - 2k^+ - 2k + \ConleyZehnderIndex(\WithFilling{x}^-) - \ConleyZehnderIndex(\WithFilling{x}^-) \EndFullStop
        \end{equation}
        The 0-dimensional moduli spaces are compact and the 1-dimensional moduli spaces admit a compactification by broken solutions.
        The map 
        \begin{equation}
            \uformal^k : \EFloerC_\rho^\ArbitraryIndex (\Manifold; \EqntHam) \to \EFloerC_\rho^{\ArbitraryIndex + 2k} (\Manifold; \EqntHam)    
        \end{equation}
        which counts the 0-dimensional moduli spaces commutes with the equivariant differential.
        Moreover, a standard argument yields a chain homotopy between $\uformal^k \ComposedWith \uformal^{k \Prime}$ and $\uformal^{k + k \Prime}$.
        This gives equivariant Floer cohomology the structure of a $\Integers [\uformal]$-module.
        To distinguish this module structure from the one in \autoref{sec:u-multiplication-shift-invariance}, we denote this new action by $\uformal \CupProduct \Argument$.
        
        \begin{remark}[Comparison of $\Integers \lbrack \uformal \rbrack$-module structures] 
            \label{rem:comparison-of-algebraic-and-geometric-module-structures}
            Suppose the equivariant Floer data satisfies the conditions for the module structures in both \autoref{sec:u-multiplication-shift-invariance} and this section.
            Let us compare the algebraic product $\uformal^k \cdot (\spherecriticalpoint_l, \WithFilling{x}) = (\spherecriticalpoint_{l + k}, \WithFilling{x})$ with the geometric product  $\uformal^k \CupProduct (\spherecriticalpoint_l, \WithFilling{x})$.
            Suppose $[(v, u, v^0)]$ is a solution to the geometric product $\uformal^k \CupProduct (\spherecriticalpoint_l, \WithFilling{x})$ with end point $(\spherecriticalpoint_{k^-}, \WithFilling{x}^-)$.
            The `Y'-shaped flowline $[v, v^0]$ is a solution to the cup product on $\InfiniteSphere / \Circle$, though it may not be isolated.
            Therefore we have $k^- \ge k + l$ since there are no solutions to the cup product otherwise.
            Moreover for $k^- = k + l$, the `Y'-shaped flowline is isolated, so $u$ is a continuation map between $\WithFilling{x}$ and $\WithFilling{x}^-$ (and this continuation map is an isomorphism because the Hamiltonian has not changed).
            As such, we can informally say that the two products agree on the $\spherecriticalpoint_{k + l}$ term.
            That said, the geometric product may have other terms on the $\spherecriticalpoint_{k^-}$ terms with $k^- > k + l$ unlike the algebraic product.
            The author has not determined whether the two products are chain homotopic, however anticipates that the moduli spaces in \cite[Sections~3 and 5]{seidel_connections_2016} can be extended to get a chain homotopy.
        \end{remark}
        
\section{Equivariant Floer Seidel map}

    \label{sec:equivariant-floer-seidel-map}
    
    In this section, we extend the definition of the Floer Seidel map of \autoref{sec:floer-seidel-map} to the equivariant setup introduced in \autoref{sec:equivariant-floer-theory}.
    
    Let $\Lifted{\CircleAction}$ be a lift of a linear Hamiltonian circle action and let $\rho$ be a symplectic circle action which is linear at infinity, as per \autoref{sec:linear-symplectic-circle-action}.
    Assume that the $\CircleAction$ and $\rho$ commute.

    \subsection{Equivariant Floer Seidel map definition}
    
        \label{sec:equivariant-floer-seidel-map-definition}
        
        Let $(\EqntHam, \EqntACS)$ be a regular choice of equivariant Floer data for the action $\rho$.
        The \define{pullback equivariant Floer data $(\CircleAction \PullBack \EqntHam, \allowbreak \CircleAction \PullBack \EqntACS)$} are given by the same formulae as in the non-equivariant case (equations \eqref{eqn:pullback-almost-complex-structure} and \eqref{eqn:pullback-hamiltonian}), so we have
        \begin{equation}
            \label{eqn:pullback-equivariant almost-complex-structure}
            (\CircleAction \PullBack \EqntACS)_{\infsphereelement, t} = \left( \Derivative \CircleAction_t \right)\Inverse \EqntACS_{\infsphereelement, t} \Derivative \CircleAction_t
        \end{equation}
        and
        \begin{equation}
            \label{eqn:pullback-equivariant-hamiltonian}
            (\CircleAction \PullBack \EqntHam)_{\infsphereelement, t} (\ManifoldElement) = \EqntHam_{\infsphereelement, t} (\CircleAction_t(\ManifoldElement)) - \CircleHam(\CircleAction_t(\ManifoldElement))
        \end{equation}
        for all $\infsphereelement \in \InfiniteSphere$ and $\ManifoldElement \in \Manifold$.
        
        The pullback equivariant Floer data are regular equivariant Floer data for the pullback action $\CircleAction \PullBack \rho$.
        We show this for the pullback Hamiltonian as follows.
        
        \begin{proof}[Proof of equivariance of \eqref{eqn:pullback-equivariant-hamiltonian}]
            Since $\CircleAction$ and $\rho$ commute, the Lie bracket of their vector fields vanishes.
            The function $\SymplecticForm(\CircleVectorField{\rho}, \CircleVectorField{\CircleAction}) : \Manifold \to \RealNumbers$ has Hamiltonian vector field equal to this bracket $\LieBracket{\CircleVectorField{\CircleAction}}{ \CircleVectorField{\rho}}$ \cite[Proposition~18.3]{silva_lectures_2001}, and is therefore constant.
            The value of the constant is 0 because $\CircleAction$ has a fixed point by \autoref{thm:circle-action-has-fixed-point}.
            This yields
            \begin{equation}
                \label{eqn:hamiltonian-unchanging-in-complementary-direction}
                \LiebnitzDerivative{}{\CircleElement} (\CircleHam(\rho_\CircleElement(\ManifoldElement))) = (\ExteriorDerivative \CircleHam)_{\rho_\CircleElement(\ManifoldElement)} ((\CircleVectorField{\rho})_{\rho_\CircleElement (\ManifoldElement)} ) = \SymplecticForm (\CircleVectorField{\rho}, \CircleVectorField{\CircleAction}) \EvaluateAt{\rho_\CircleElement(\ManifoldElement)} = 0 \EndComma
            \end{equation}
            from which we deduce that $\CircleHam$ is constant along $\rho$.
            We use this in line \eqref{eqn:using-hamiltonian-constant-along-commuting-flow} to deduce the desired equivariance condition:
            \begin{subequations}
                \begin{align}
                    &(\CircleAction \PullBack \EqntHam)_{\CircleElement \Inverse \cdot \infsphereelement, t + \CircleElement} \big((\CircleAction \PullBack \rho)_\CircleElement(\ManifoldElement) \big) \nonumber\\
                    & \qquad = \EqntHam_{\CircleElement \Inverse \cdot \infsphereelement, t + \CircleElement} \big(\CircleAction_{t + \CircleElement} \big((\CircleAction \PullBack \rho)_\CircleElement (\ManifoldElement) \big)\big) - \CircleHam \big(\CircleAction_{t + \CircleElement} \big((\CircleAction \PullBack \rho)_\CircleElement(\ManifoldElement) \big) \big) \\
                    & \qquad = \EqntHam_{\CircleElement \Inverse \cdot \infsphereelement, t + \CircleElement} \big( \rho_{\CircleElement} (\CircleAction _t (\ManifoldElement))\big) - \CircleHam \big( \rho_{\CircleElement} (\CircleAction _t (\ManifoldElement))\big)
                    \label{eqn:using-hamiltonian-constant-along-commuting-flow}\\
                    & \qquad = \EqntHam_{\infsphereelement, t} \big(\CircleAction_t(\ManifoldElement) \big) - \CircleHam \big( \CircleAction_t(\ManifoldElement) \big)\\
                    & \qquad = (\CircleAction \PullBack \EqntHam)_{\infsphereelement, t} (\ManifoldElement) \EndFullStop
                \end{align}
            \end{subequations}
        \end{proof}
        
        If $\EqntHam$ has slope $\slope$ and the Hamiltonian $\CircleHam$ of $\CircleAction$ has slope $\CircleSlope$, then the pullback $\CircleAction \PullBack \EqntHam$ has slope $\slope - \CircleSlope$.
        
        Recall that, by definition, $\Lifted{\CircleAction}$ is a lift of the automorphism $\CircleAction : \ContractibleLoopSpace \Manifold \to \ContractibleLoopSpace \Manifold$, given by $(\CircleAction(x))(t) = \CircleAction_t \cdot x(t)$, to an automorphism of $\ContractibleLoopSpaceWithFillings{\Manifold}$.
        We have a commutative diagram of automorphisms on $\InfiniteSphere \times \ContractibleLoopSpace \Manifold$
        \begin{equation}
            \label{eqn:diagram-of-equivariant-action-on-orbits-commuting-with-seidel-action}
            \begin{tikzcd}[column sep=huge]
                (\infsphereelement, t \mapsto x(t) \,) \arrow[r, "\Identity_{\InfiniteSphere} \times \Lifted{\CircleAction}"] \arrow[d, "\cdot \CircleElement \ \text{for $\CircleAction \PullBack \rho$}"] & (\infsphereelement, t \mapsto \CircleAction_t (x(t)) \,) \arrow[d, "\cdot \CircleElement\ \text{for $\rho$}"] \\
                (\CircleElement \Inverse \cdot \infsphereelement, t \mapsto (\CircleAction \PullBack \rho)_\CircleElement (x(t - \CircleElement)) \,) \arrow[r, "\Identity_{\InfiniteSphere} \times \Lifted{\CircleAction}"]                 & (\CircleElement \Inverse \cdot \infsphereelement, t \mapsto \rho_\CircleElement ( \CircleAction_{t-\CircleElement} ( x(t - \CircleElement)))\,)            
            \end{tikzcd}
        \end{equation}
        which lifts to $\InfiniteSphere \times \ContractibleLoopSpaceWithFillings{\Manifold}$.
        Just like the non-equivariant case, the map $(\Identity_{\InfiniteSphere} \times \Lifted{\CircleAction}) \Inverse$ takes equivariant Hamiltonian orbits of $\EqntHam$ to equivariant orbits of $\CircleAction \PullBack \EqntHam$, and induces similar isomorphisms on the moduli spaces of equivariant Floer solutions.
        As such, we get an isomorphism of cochain complexes 
        \begin{equation}
            \label{eqn:equivariant-floer-seidel-map-definition}
            \begin{aligned}
                \EFloerSeidel_{\Lifted{\CircleAction}} : \EFloerC^\ArbitraryIndex_\rho(\Manifold; \EqntACS, \EqntHam) &\to \EFloerC^{\ArbitraryIndex + 2 \MaslovIndex(\Lifted{\CircleAction})}_{\CircleAction \PullBack \rho}(\Manifold; \CircleAction \PullBack \EqntACS, \CircleAction \PullBack \EqntHam) \\
                (\spherecriticalpoint_k, \WithFilling{x}) &\mapsto (\spherecriticalpoint_k, \Lifted{\CircleAction} \PullBack \WithFilling{x}) \EndFullStop
            \end{aligned}
        \end{equation}
        This \define{equivariant Floer Seidel map} preserves both the geometric and algebraic module structures on cohomology because it induces isomorphisms between the relevant moduli spaces.
        
        \begin{remark}
            \label{rem:requirement-for-circle-action-in-equivariant-case}
            The diagram \eqref{eqn:diagram-of-equivariant-action-on-orbits-commuting-with-seidel-action} commutes if and only if $\CircleAction$ is indeed an action, and fails to commute if $\CircleAction$ is merely a based loop in $\HamiltonianSymplectomorphismGroup(\Manifold, \SymplecticForm)$.
            This diagram is the reason that our equivariant construction requires this stronger assumption, unlike the non-equivariant case.
        \end{remark}
        
        As per the non-equivariant case, we can pullback regular equivariant monotone homotopies.
        Thus for any equivariant continuation map $\ContinuationMap$ we get the following commutative diagram.
        \begin{equation}
            \label{eqn:equivariant-floer-seidel-map-commutes-with-equivariant-continuation-map}
            \begin{tikzcd}
                \EFloerC^\ArbitraryIndex_{\rho}(M; \FamilyACS^{\eqnt,+}, \Ham^{\eqnt,+})
                \arrow[d, "\ContinuationMap"'] 
                \arrow[r, "\EFloerSeidel_{\Lifted{\CircleAction}}"] 
                & \EFloerC^{\ArbitraryIndex + 2 \MaslovIndex(\Lifted{\CircleAction})}_{\CircleAction \PullBack \rho}(M; \CircleAction \PullBack \FamilyACS^{\eqnt,+}, \CircleAction \PullBack \Ham^{\eqnt,+})
                \arrow[d, "\CircleAction \PullBack \ContinuationMap"] 
                \\
                \EFloerC^\ArbitraryIndex_{\rho}(M; \FamilyACS^{\eqnt,-}, \Ham^{\eqnt,-}) 
                \arrow[r, "\EFloerSeidel_{\Lifted{\CircleAction}}"']
                & \EFloerC^{\ArbitraryIndex + 2 \MaslovIndex(\Lifted{\CircleAction})}_{\CircleAction \PullBack \rho}(M; \CircleAction \PullBack \FamilyACS^{\eqnt,-}, \CircleAction \PullBack \Ham^{\eqnt,-})
            \end{tikzcd}
        \end{equation}

        When $\CircleAction$ is linear of strictly positive slope $\CircleSlope$, this offers a way to compute equivariant symplectic cohomology (this construction is the equivariant version of \cite[Theorem~22]{ritter_floer_2014}).
        Let $\epsilon > 0$ be smaller than any positive Reeb period.
        For each $r \in \Integers_{\ge 0}$, choose equivariant Floer data $(\Ham^{\eqnt}_r, \EqntACS_r)$ for the action $\CircleAction^{-r} \rho$ of slope $\epsilon$.
        The pullback data $((\CircleAction^{-r}) \PullBack \Ham^{\eqnt}_r, (\CircleAction^{-r}) \PullBack \EqntACS_r)$ is equivariant for the action $\rho$ and has slope $\epsilon + r \CircleSlope$.
        Consider the following commutative diagram, where $\ContinuationMap_r$ are the continuation maps between the relevant Floer data.
        We have omitted $M$, degrees and the almost complex structure from the notation for clarity.
        \begin{equation}
            \label{eqn:commutative-diagram-for-computing-symplectic-cohomology-using-floer-seidel-map-limit}
            \begin{tikzcd}[cramped]
                \EFloerCohomology_{\rho}(\Ham^{\eqnt}_0) \arrow[r, "\ContinuationMap_0"]] \arrow[dr, dashed, to path=|- (\tikztotarget)]
                & \EFloerCohomology_{\rho}((\CircleAction^{-1}) \PullBack \Ham^{\eqnt}_1) \arrow[r, "(\CircleAction^{-1}) \PullBack \ContinuationMap_1"] \arrow[d, "\EFloerSeidel_{\Lifted{\CircleAction}}", "\cong"']
                & \EFloerCohomology_{\rho}((\CircleAction^{-2}) \PullBack \Ham^{\eqnt}_2) \arrow[r, "(\CircleAction^{-2}) \PullBack \ContinuationMap_2"] \arrow[d, "\EFloerSeidel_{\Lifted{\CircleAction}}", "\cong"'] & {\ldots} 
                \\
                & \EFloerCohomology_{\CircleAction^{-1}\rho}(\Ham^{\eqnt}_1) \arrow[r, "\ContinuationMap_1"] \arrow[dr, dashed, to path=|- (\tikztotarget)]
                & \EFloerCohomology_{\CircleAction^{-1}\rho}((\CircleAction^{-1}) \PullBack \Ham^{\eqnt}_2) \arrow[d, "\EFloerSeidel_{\Lifted{\CircleAction}}", "\cong"'] \arrow[r, "(\CircleAction^{-1}) \PullBack \ContinuationMap_2"]
                & {\ldots} 
                \\
                &
                & \EFloerCohomology_{\CircleAction^{-2}\rho}(\Ham^{\eqnt}_2) \arrow[r, "\ContinuationMap_2"] \arrow[dr, dashed, to path=|- (\tikztotarget)]
                & {\ldots}
                \\
                &
                &
                & \ldots
            \end{tikzcd}
        \end{equation}
        The direct limit of the top line of \eqref{eqn:commutative-diagram-for-computing-symplectic-cohomology-using-floer-seidel-map-limit} is, by definition, the equivariant symplectic cohomology of $\Manifold$ for the action $\rho$.
        The diagram shows this is isomorphic to the direct limit of the dashed maps $\EFloerSeidel_{\Lifted{\CircleAction}} \ComposedWith \ContinuationMap_r$.
        In \autoref{sec:equivariant-gluing}, we find another way to characterise these dashed maps using equivariant quantum cohomology.
        This diagram is crucial for our calculations in \autoref{sec:example-of-complex-plane} and \autoref{sec:example-of-tautological-line-bundle-on-projective-space}.
        
    \subsection{Equivariant Floer Seidel map properties}
    
        \label{sec:equivariant-floer-seidel-map-properties}
        
        The equivariant Floer Seidel map is a very natural operation, and as such it preserves many of the structures associated to equivariant Floer cohomology.
        It is a module map for both the geometric and algebraic module structures, and is compatible with continuation maps as described above.
        As such, after taking a direct limit, we get an isomorphism of equivariant symplectic cohomologies $\EFloerSeidel_{\Lifted{\CircleAction}} : \ESymplecticCohomology^\ArbitraryIndex_{\rho}(M) \overset{\cong}{\to} \ESymplecticCohomology^{\ArbitraryIndex + 2 \MaslovIndex(\Lifted{\CircleAction})}_{\CircleAction \PullBack \rho} (\Manifold)$.
        In the remainder of this section, we describe two further structures on equivariant symplectic cohomology which are compatible with the equivariant Floer Seidel map.
        
        \subsubsection{Gysin sequence}
        
            \label{sec:gysin-sequence-and-equivariant-floer-seidel-map}
            
            Analogously to \cite{bourgeois_s1-equivariant_2017}, there is a long exact sequence on equivariant symplectic cohomology
            \begin{equation}
                \label{eqn:gysin-sequence-on-equivariant-symplectic-cohomology}
                \cdots \to \ESymplecticCohomology^\ArbitraryIndex_\rho(\Manifold) \overset{\cdot \uformal}{\to} \ESymplecticCohomology^{\ArbitraryIndex + 2}_\rho(\Manifold) \to \SymplecticCohomology^{\ArbitraryIndex + 2}(\Manifold) \to \ESymplecticCohomology^{\ArbitraryIndex+1}_\rho(\Manifold) \to \cdots \EndFullStop
            \end{equation}
            This is an immediate algebraic consequence of our definitions because there is a short exact sequence of cochain complexes, where $\cdot \uformal$ denotes the algebraic $\Integers [\uformal]$-module operation.
            The second map $\ESymplecticCohomology^\ArbitraryIndex_\rho (\Manifold) \to \SymplecticCohomology^\ArbitraryIndex(\Manifold)$ is the map induced by $(\spherecriticalpoint_0, \WithFilling{x}) \mapsto \WithFilling{x}$ and $(\spherecriticalpoint_k, \WithFilling{x}) \mapsto 0$ for $k>0$ on equivariant Floer cohomology.
            This long exact sequence is the \define{Gysin exact sequence} of equivariant symplectic cohomology.
            
            The equivariant Floer Seidel maps are isomorphisms on the cochain complexes which are compatible with the maps in \eqref{eqn:gysin-sequence-on-equivariant-symplectic-cohomology}, and therefore fit into the following commutative diagram.
            \begin{equation}
                \label{eqn:commutative-diagram-for-gysin-sequence-under-floer-seidel-map}
                \begin{tikzcd}[column sep=small]
                    \cdots \arrow[r]
                    & \ESymplecticCohomology^\ArbitraryIndex_\rho(\Manifold) \arrow[r, "\cdot \uformal"] \arrow[d, "\EFloerSeidel_{\Lifted{\CircleAction}}"]
                    & \ESymplecticCohomology^{\ArbitraryIndex + 2}_\rho(\Manifold) \arrow[r] \arrow[d, "\EFloerSeidel_{\Lifted{\CircleAction}}"]
                    & \SymplecticCohomology^{\ArbitraryIndex + 2}(\Manifold) \arrow[r] \arrow[d, "\FloerSeidel_{\Lifted{\CircleAction}}"]
                    & \cdots
                    \\ \cdots \arrow[r]
                    & \ESymplecticCohomology^{\ArbitraryIndex + 2 \MaslovIndex(\Lifted{\CircleAction})}_{\CircleAction \PullBack \rho}(\Manifold) \arrow[r, "\cdot \uformal"] 
                    & \ESymplecticCohomology^{\ArbitraryIndex + 2 + 2 \MaslovIndex(\Lifted{\CircleAction})}_{\CircleAction \PullBack \rho}(\Manifold) \arrow[r]
                    & \SymplecticCohomology^{\ArbitraryIndex + 2 + 2 \MaslovIndex(\Lifted{\CircleAction})}(\Manifold) \arrow[r]
                    & \cdots
                \end{tikzcd}
            \end{equation}
        
        \subsubsection{Filtration and positive equivariant symplectic cohomology}
        
            \label{sec:filtration-and-positive-equivariant-symplectic-cohomology}
            
            For exact symplectic manifolds, the Floer cochain complexes may be equipped with an action filtration which distinguishes between different orbits \cite{viterbo_functors_1999}.
            Roughly, the orbits of a Hamiltonian with small\footnote{
                The slope $\slope$ of a Hamiltonian is \define{small} if it is smaller than any positive Reeb period, i.e. it satisfies $\slope < \min (\ReebPeriods \Intersection \IntervalOpen{0}{\Infinity})$.
            } positive slope correspond, via the PSS map of \autoref{sec:gluing-construction}, to the quantum cohomology of $\Manifold$.
            Conversely, orbits which occur only for Hamiltonians with larger slopes correspond to Reeb orbits on $\ContactManifold$, with the period of the Reeb orbit corresponding to the slope of the Hamiltonian at the radius of the orbit, as per \eqref{eqn:hamiltonian-vector-field-at-infinity-is-multiple-of-reeb-vector-field}.
            
            In our setup, however, the action functional \eqref{eqn:action-functional} fails to provide a filtration for two reasons.
            First, since we have not assumed $\Manifold$ is exact, the value of $\ActionFunctional_\Ham(x)$ depends on the lift of $x$ to $\ContractibleLoopSpaceWithFillings{\Manifold}$.
            Second, the action functional may not decrease along equivariant Floer trajectories that are not constant in $\InfiniteSphere$ (see \cite[page~3867]{bourgeois_s1-equivariant_2017}).
            These issues may be resolved by choosing special equivariant Floer data and adding a cut-off function to the action filtration, an approach taken in \cite[Appendix~D]{mclean_mckay_2018}.
            We briefly outline this construction.
            
            Fix two numbers\footnote{
                Choose $R_0$ large enough so that $\CircleAction$ is linear on $\Set{R > R_0}$.
            } $1 < R_0 < R_1 < \Infinity$.
            Our equivariant Hamiltonian $\EqntHam$ will have positive small slope at $R= R_0$, be quadratic and increasing on $R_0 < R < R_1$, and it will be linear on $R > R_1$; in particular, it will only depend on $R$ in the region $R > R_0$, modulo a small perturbation we will ignore.
            We write $\EqntHam_{\infsphereelement, t}(\ConvexCoordMap(y, R)) = h(R)$ to emphasize this.
            Fix a \define{cut-off function} $\beta : \IntervalClosedOpen{1}{\Infinity} \to \RealNumbers$; this is a smooth increasing function which is 0 on $\IntervalClosedOpen{1}{R_0}$ and has increasing gradient on the open interval $\IntervalOpen{R_0}{R_1}$.
            Define $f_h : \RealNumbers \to \RealNumbers$ to be
            \begin{equation}
                \label{eqn:small-f-for-filtration}
                f_h(R) = \int_0^R \beta \Prime (r) h\Prime (r) \wrt{r}
            \end{equation}
            and define the function $F_h : \ContractibleLoopSpace \Manifold \to \RealNumbers$ by
            \begin{equation}
                \label{eqn:definition-of-filtration}
                F_h(x) = - \int_{\Circle} x \PullBack (\beta(R) \ContactForm) + \int_{t=0}^1 f_h(R \ComposedWith x) \wrt{t} \EndFullStop
            \end{equation}
            Notice how \eqref{eqn:definition-of-filtration} resembles the action functional \eqref{eqn:action-functional} in the exact setup if we were to change $\beta$ to $\Identity_R$ (of course this choice doesn't satisfy the requirements for $\beta$).
            The function $F_h$ decreases along any equivariant Floer trajectory.
            Therefore the inclusion
            \begin{equation}
                \label{eqn:inclusion-of-cochain-complex-using-filtration}
                \EFloerC^\ArbitraryIndex_\rho(\Manifold; \EqntHam; F_h \ge 0) \Embedding \EFloerC^\ArbitraryIndex_\rho(\Manifold; \EqntHam) \EndComma
            \end{equation}
            of the subcomplex generated by the orbits which satisfy $F_h \ge 0$ is a chain map.
            
            The cohomology of the quotient cochain complex of \eqref{eqn:inclusion-of-cochain-complex-using-filtration} is the \define{positive equivariant Floer cohomology}, denoted $\EFloerCohomology^\ArbitraryIndex_{\rho,+}(\Manifold; \EqntHam)$.
            This construction is compatible with continuation maps, so we can take a direct limit of $\EFloerCohomology^\ArbitraryIndex_{\rho,+}(\Manifold; \EqntHam)$ under continuation maps with the slopes increasing.
            This direct limit is the \define{positive equivariant symplectic cohomology} of $\Manifold$, and is written $\ESymplecticCohomology^\ArbitraryIndex_{\rho,+}(\Manifold)$.
            The equivariant PSS maps of \autoref{sec:equivariant-gluing} give an isomorphism between the cohomology of the subcomplex with equivariant quantum cohomology $\EFloerCohomology^\ArbitraryIndex_\rho(\Manifold; \EqntHam; {F_h \ge 0}) \cong \EQuantumCohomology^\ArbitraryIndex_\rho(\Manifold)$.
            Associated to the short exact sequence induced by the inclusion \eqref{eqn:inclusion-of-cochain-complex-using-filtration}, there is a long exact sequence
            \begin{equation}
                \label{eqn:long-exact-sequence-equivariant-positive-symplectic-cohomology}
                \cdots \to \EQuantumCohomology^\ArbitraryIndex_\rho(\Manifold) \to \ESymplecticCohomology^\ArbitraryIndex_{\rho}(\Manifold) \to \ESymplecticCohomology^\ArbitraryIndex_{\rho,+}(\Manifold) \to \EQuantumCohomology^{\ArbitraryIndex + 1} _\rho (\Manifold) \to \cdots \EndFullStop
            \end{equation}
            
            We have the following compatibility result between the filtration and the equivariant Floer Seidel map.
            \begin{theorem}
                \label{thm:filtration-preserved-by-seidel-map}
                For any equivariant Hamiltonian orbit $(\spherecriticalpoint_k, x)$, the filtration satisfies
                \begin{equation}
                    \label{eqn:filtration-preserved-by-seidel-map}
                    F_{\CircleAction \PullBack h} (\CircleAction \PullBack x) = F_{h}(x) \EndFullStop
                \end{equation}
            \end{theorem}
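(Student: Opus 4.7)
The plan is to split $F_{\CircleAction \PullBack h}(\CircleAction \PullBack x)$ into its two defining summands and show that each one picks up a correction term involving $\CircleSlope \beta(R \ComposedWith x)$; these corrections cancel to leave $F_h(x)$. Throughout, it suffices to analyse the parts of an orbit lying in the convex end $\Set{R \ge R_0}$, since $\beta$ and $f_h'$ vanish on $\Set{R < R_0}$ and so both sides of \eqref{eqn:filtration-preserved-by-seidel-map} are unaffected by the interior portion.

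First I would unpack the behaviour of $\CircleAction$ at infinity. Because $\CircleAction$ is linear of slope $\CircleSlope$, its vector field is $\CircleSlope \Reebvf$ on $\Set{R \ge R_0}$ by \eqref{eqn:hamiltonian-vector-field-at-infinity-is-multiple-of-reeb-vector-field}, so $\CircleAction_t$ preserves $R$, and $\mathcal{L}_{\Reebvf} \ContactForm = 0$ gives $\CircleAction_t\PullBack \ContactForm = \ContactForm$ at infinity. Combined with $\CircleHam \ComposedWith \CircleAction_t = \CircleHam$ and the formula $\EqntHam_{\infsphereelement, t}(\ConvexCoordMap(y, R)) = h(R)$, the pullback Hamiltonian \eqref{eqn:pullback-equivariant-hamiltonian} satisfies $(\CircleAction \PullBack h)(R) = h(R) - \CircleSlope R$ at infinity. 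Differentiating and integrating against $\beta \Prime$ then gives, using $\beta(R_0) = 0$,
\begin{equation*}
    f_{\CircleAction \PullBack h}(R) = f_h(R) - \CircleSlope \beta(R)\EndFullStop
\end{equation*}

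Next I would compute the effect of the pullback on the 1-form term. Writing $(\CircleAction \PullBack x)(t) = \CircleAction_t\Inverse(x(t))$ and differentiating, one obtains
\begin{equation*}
    \partial_t (\CircleAction \PullBack x)(t) = - \CircleVectorField{\CircleAction}\big|_{(\CircleAction \PullBack x)(t)} + \Derivative \CircleAction_t\Inverse|_{x(t)} \cdot \partial_t x(t)\EndFullStop
\end{equation*}
Pairing with $\ContactForm$ at infinity and using $\ContactForm(\CircleVectorField{\CircleAction}) = \CircleSlope \ContactForm(\Reebvf) = \CircleSlope$ together with $\CircleAction_t\Inverse\PullBack \ContactForm = \ContactForm$, this yields
\begin{equation*}
    (\CircleAction \PullBack x)\PullBack \ContactForm\big|_t = x\PullBack \ContactForm\big|_t - \CircleSlope \wrt{t} \EndFullStop
\end{equation*}
Multiplying by $\beta(R \ComposedWith (\CircleAction \PullBack x)) = \beta(R \ComposedWith x)$ and integrating over $\Circle$ gives
\begin{equation*}
    -\int_\Circle (\CircleAction \PullBack x)\PullBack(\beta(R)\ContactForm) = -\int_\Circle x\PullBack(\beta(R)\ContactForm) + \CircleSlope \int_0^1 \beta(R(x(t))) \wrt{t} \EndFullStop
\end{equation*}

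Finally I would add the $f_{\CircleAction \PullBack h}$ contribution. Since $R \ComposedWith (\CircleAction \PullBack x) = R \ComposedWith x$, substituting the formula from the second paragraph shows
\begin{equation*}
    \int_0^1 f_{\CircleAction \PullBack h}(R(x(t))) \wrt{t} = \int_0^1 f_h(R(x(t)))\wrt{t} - \CircleSlope \int_0^1 \beta(R(x(t))) \wrt{t}\EndComma
\end{equation*}
and the $\CircleSlope \beta$ terms cancel between the two summands, yielding $F_h(x)$. The only real obstacle is bookkeeping: one must verify that the identities $\CircleAction_t\PullBack R = R$ and $\CircleAction_t\PullBack \ContactForm = \ContactForm$ are actually used only where they hold (namely on $\Set{R \ge R_0}$, which contains the support of $\beta$ and $\beta \Prime$), and that the constant of integration in passing from $(\CircleAction \PullBack h)\Prime$ to $f_{\CircleAction \PullBack h}$ is killed by $\beta(R_0) = 0$.
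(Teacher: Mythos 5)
Your proposal is correct and takes essentially the same route as the paper's proof: both reduce to the identity $f_{\CircleAction \PullBack h} = f_h - \CircleSlope \beta$ and to showing that the contact-form term acquires the compensating $+\,\CircleSlope \int_0^1 \beta(R \ComposedWith x) \wrt{t}$, so the two corrections cancel. The only cosmetic difference is that you obtain the latter from the pointwise identity $(\CircleAction \PullBack x)\PullBack \ContactForm = x\PullBack \ContactForm - \CircleSlope \wrt{t}$, whereas the paper factors out the constant $\beta(R \ComposedWith x)$ along the orbit and compares the periods $l$ and $l - \CircleSlope$ of $x$ and $\CircleAction \PullBack x$, which is just the integrated form of your identity.
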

            
            \begin{proof}
                By definition, we have $(\CircleAction \PullBack h)(R) = h(R) - \CircleSlope R$, so that $f_{\CircleAction \PullBack h} (R) = f_h (R) - \CircleSlope \beta(R)$, where $\CircleSlope$ is the slope of $\CircleAction$.
                Without loss of generality, let $x$ occur in the region $\Set{R_0 < R < R_1}$ since otherwise both sides of \eqref{eqn:filtration-preserved-by-seidel-map} are 0.
                Suppose that $x$ has period $l$.
                The period of $\CircleAction \PullBack x$ is $l - \CircleSlope$.
                We have
                \begin{equation*}
                    \begin{aligned}
                        F_{\CircleAction \PullBack h} (\CircleAction \PullBack x)
                        &= - \int_{\Circle} (\CircleAction \PullBack x) \PullBack (\beta(R) \ContactForm) + \int_{t=0}^1 f_{\CircleAction \PullBack h}(R \ComposedWith (\CircleAction \PullBack x)) \wrt{t} \\
                        &= - \beta(R \ComposedWith (\CircleAction \PullBack x)) \, (l - \CircleSlope) + \int_{t=0}^1 f_{\CircleAction \PullBack h}(R \ComposedWith (\CircleAction \PullBack x)) \wrt{t} \\
                        &= - \beta(R \ComposedWith x) \, (l - \CircleSlope) + \int_{t=0}^1 f_{\CircleAction \PullBack h}(R \ComposedWith x) \wrt{t} \\
                        &= - \beta(R \ComposedWith x) \, (l - \CircleSlope) + \int_{t=0}^1 f_{h}(R \ComposedWith x) - \CircleSlope \beta(R \ComposedWith x) \wrt{t} \\
                        &= - l \beta(R \ComposedWith x) + \int_{t=0}^1 f_{h}(R \ComposedWith x) \wrt{t} \\
                        &= - \int_{\Circle} x \PullBack (\beta(R) \ContactForm) + \int_{t=0}^1 f_{h}(R \ComposedWith x) \wrt{t} \\
                        &= F_h (x) \EndFullStop
                    \end{aligned}
                \end{equation*}
            \end{proof}
            
            Unfortunately, the pullback Hamiltonian $\CircleAction \PullBack \EqntHam$ does not satisfy the condition that its slope at $R_0$ is small; instead its slope at $R_0$ will be $\epsilon - \CircleSlope$ if $\EqntHam$ had slope $\epsilon$ at $R_0$.
            To rectify the situation for positive equivariant symplectic cohomology, we have to consider the subcomplex generated by orbits with $F_{\CircleAction \PullBack h} \ge f_{\CircleAction \PullBack h} (R_0 \Prime)$, where $R_0 \Prime$ is the radius at which $h\Prime - \CircleSlope = 0$.
            The \define{positive equivariant Floer Seidel map} is the composition of the equivariant Floer Seidel map with the map induced on the quotient complexes corresponding to the inclusions of these subcomplexes.
            More explicitly, abusing notation, we have the following diagram.
            \begin{equation}
                \begin{tikzcd}
                    \ESymplecticCohomology^\ArbitraryIndex_{\rho,+} \arrow[r, equal] \arrow[dd, "\EFloerSeidel_{\Lifted{\CircleAction}, +}"]
                    & \ESymplecticCohomology^\ArbitraryIndex_{\rho} (F_h < 0) \arrow[d, "\EFloerSeidel_{\Lifted{\CircleAction}}"]
                    \\ & \ESymplecticCohomology^{\ArbitraryIndex + 2 \MaslovIndex(\Lifted{\CircleAction})}_{\CircleAction \PullBack \rho} (F_{\CircleAction \PullBack h} < 0) \arrow[d]
                    \\ \ESymplecticCohomology^{\ArbitraryIndex + 2 \MaslovIndex(\Lifted{\CircleAction})}_{\CircleAction \PullBack \rho, +} \arrow[r, equal]
                    & \ESymplecticCohomology^{\ArbitraryIndex + 2 \MaslovIndex(\Lifted{\CircleAction})}_{\CircleAction \PullBack \rho} (F_{\CircleAction \PullBack h} < f_{\CircleAction \PullBack h} (R_0 \Prime))
                \end{tikzcd}
            \end{equation}
            This gives us the following morphism of long exact sequences.
            \begin{equation}
                \label{eqn:seidel-maps-interaction-with-long-exact-sequence-for-positive-equivariant-symplectic-cohomology}
                \begin{tikzcd}[cramped, column sep=tiny]
                    \cdots \arrow[r] & \EQuantumCohomology^\ArbitraryIndex_\rho \arrow[d, "\EQuantumSeidelMap_{\Lifted{\CircleAction}}"]\arrow[r] & \ESymplecticCohomology^\ArbitraryIndex_{\rho} \arrow[d, "\EFloerSeidel_{\Lifted{\CircleAction}}", "\cong"']\arrow[r] & \ESymplecticCohomology^\ArbitraryIndex_{\rho,+} \arrow[d, "\EFloerSeidel_{\Lifted{\CircleAction}, +}"]\arrow[r] & \EQuantumCohomology^{\ArbitraryIndex + 1} _\rho  \arrow[d, "\EQuantumSeidelMap_{\Lifted{\CircleAction}}"]\arrow[r] & \cdots
                    \\
                    \cdots \arrow[r] & \EQuantumCohomology^{\ArbitraryIndex + 2 \MaslovIndex(\Lifted{\CircleAction})}_{\CircleAction \PullBack\rho} \arrow[r] & \ESymplecticCohomology^{\ArbitraryIndex + 2 \MaslovIndex(\Lifted{\CircleAction})}_{\CircleAction \PullBack\rho} \arrow[r] & \ESymplecticCohomology^{\ArbitraryIndex + 2 \MaslovIndex(\Lifted{\CircleAction})}_{\CircleAction \PullBack\rho,+} \arrow[r] & \EQuantumCohomology^{\ArbitraryIndex + 2 \MaslovIndex(\Lifted{\CircleAction}) + 1} _{\CircleAction \PullBack\rho}  \arrow[r] & \cdots
                \end{tikzcd}
            \end{equation}
            The five lemma applied to the diagram \eqref{eqn:seidel-maps-interaction-with-long-exact-sequence-for-positive-equivariant-symplectic-cohomology} implies that $\EFloerSeidel_{\Lifted{\CircleAction}, +}$ is an isomorphism if and only if $\EQuantumSeidelMap_{\Lifted{\CircleAction}}$ is an isomorphism.
            
\section{Quantum theory}

    \label{sec:quantum-theory}
    
    \subsection{Quantum cohomology}
    
        \label{sec:quantum-cohomology}
    
        \subsubsection{Morse cohomology}
        
            \label{sec:morse-cohomology}
            
            Fix a Riemannian metric on $\Manifold$.
            Let $\MorseFunction : \Manifold \to \RealNumbers$ be a Morse-Smale function which increases in the radial coordinate direction at infinity.
            Thus the inequality $\partial_R (\MorseFunction(\ConvexCoordMap(y, R))) > 0$ holds at infinity.
            Denote by $\CriticalPoints(\MorseFunction)$ the finite set of critical points of $\MorseFunction$.
            The \define{Morse index $\MorseIndex(x)$} of a critical point $x$ is the dimension of the maximal subspace of the tangent space at $x$ on which the Hessian of $\MorseFunction$ is negative definite.
            The \define{Morse cohomology of $\Manifold$} is the cohomology of the cochain complex freely generated by $\CriticalPoints(\MorseFunction)$ whose differential counts negative gradient trajectories between critical points.
            It is isomorphic to the (singular) cohomology of $\Manifold$.
            Through the choice of an orientation of each unstable manifold, the count of trajectories is signed, so that Morse cohomology is a $\Integers$-graded Abelian group.
            
        \subsubsection{Quantum product}
            
            Let $\AlmostComplexStructure$ be a regular convex $\SymplecticForm$-compatible almost complex structure.
            Fix distinct points $p^-, p_1^+, p_2^+ \in \Projective^1$.
            The quantum product counts quadruples $(u, \gamma^-, \gamma_1^+, \gamma_2^+)$, where $u : \Projective^1 \to \Manifold$ is a simple (or constant) $\AlmostComplexStructure$-holomorphic sphere and $\gamma^- : \IntervalOpenClosed{-\Infinity}{0} \to \Manifold$ and $\gamma_i^+ : \IntervalClosedOpen{0}{\Infinity} \to \Manifold$ are negative gradient flowlines, with the intersection conditions $\gamma_i^+(0) = u(p_i^+)$ and $\gamma^-(0) = u(p^-)$.
            Denote by $\ModuliSpace(x^-, x_1^+, x_2^+; A)$ the space of such quadruples where $u$ represents $A \in \InterestingSpheres$ and the limits $\gamma_i^+(s) \to x_i^+$ and $\gamma^-(s) \to x^-$ hold as $s \to \pm \Infinity$.
            
            With a regular choice of three $s$-dependent perturbations $\MorseFunction_1^+, \MorseFunction_2^+, \MorseFunction^-$ of the Morse-Smale function $\MorseFunction$, the moduli spaces will all be smooth oriented manifolds with
            \begin{equation}
                \label{eqn:dimension-of-quantum-product-moduli-spaces}
                \Dimension\ModuliSpace(x^-, x_1^+, x_2^+; A) = \MorseIndex(x^-) -  \MorseIndex(x_1^+) - \MorseIndex(x_2^+) + 2 \FirstChernClass(A) \EndFullStop
            \end{equation}
            Via standard compactification and gluing arguments\footnote{
                In the region at infinity where $\AlmostComplexStructure$ is convex, any $\AlmostComplexStructure$-holomorphic sphere cannot achieve a maximal value of $R$ by a maximum principle.
                As such, all the holomorphic spheres lie in a compact region and standard compactification results apply.
            }, the 0-dimensional moduli space is compact and the 1-dimensional moduli space may be compactified to a manifold whose boundary is made up of the broken trajectories (the sphere will not bubble by regularity).
            As such, the map $\CochainComplex^\ArbitraryIndex(\Manifold; \MorseFunction; \NovikovRing) ^{\otimes 2} \to \CochainComplex^\ArbitraryIndex(\Manifold; \MorseFunction; \NovikovRing)$ given by
            \begin{equation}
                x_1^+ \QuantumProduct x_2^+ = 
                \sum_{
                    \substack{
                        A \in \InterestingSpheres \\
                        x^- \in \CriticalPoints(\MorseFunction) \\
                        \Dimension\ModuliSpace(x^-, x_1^+, x_2^+; A) = 0
                    }
                }
                \sum_{
                    (u, \gamma^\pm_\bullet) \in \ModuliSpace(x^-, x_1^+, x_2^+; A)
                } \Orientation(u, \gamma^\pm_\bullet) \ \NovVariable^A x^-
            \end{equation}
            is a chain map, and hence it induces a product structure on the Morse cohomology of $\Manifold$ with coefficients in the Novikov ring $\NovikovRing$.
            This product is unital, skew-commutative and associative, and induces the structure of a $\Integers$-graded $\NovikovRing$-algebra on $\Cohomology^\ArbitraryIndex (\Manifold; \MorseFunction; \NovikovRing)$.
            This is the \define{quantum cohomology $\QuantumCohomology^\ArbitraryIndex (\Manifold)$} of the manifold $\Manifold$, and the product is the \define{quantum product}.
            
            \remark{
                The quantum cohomology a priori depends on the Riemannian metric, the Morse-Smale function $\MorseFunction$ and its three perturbations, the chosen orientations of the unstable manifolds and the almost-complex structure.
                However the dependence on all of this data may be removed up to canonical isomorphism via standard homotopy arguments.
                Moreover, quantum cohomology is independent of the parameterisation of the convex end because this information is used only to constrain all flowlines and spheres to a compact region.
            }
            
    \subsection{Quantum Seidel map}
    
        \label{sec:quantum-seidel-map}
            
        Let $\Lifted{\CircleAction}$ be a lifted linear Hamiltonian circle action on $\Manifold$ with nonnegative slope.
        
        \subsubsection{Clutching construction}
        
            \label{sec:clutching-bundle-construction}
            
            In this section, we define a symplectic $\Manifold$-bundle $\ClutchingBundle$ over the sphere associated to the action $\CircleAction$.
            
            \paragraph{Base space}
            
                The sphere $\Sphere$ is the union of its upper hemisphere $\Hemisphere^-$ and lower hemisphere $\Hemisphere^+$.
                Each hemisphere is a copy of the closed unit disc in the complex plane.
                The equator of the sphere is the circle $\Circle \cong \RealNumbers / \Integers$.
                We identify the boundaries of the hemispheres with the equator via
                \begin{equation}
                    t \in \Circle \leftrightarrow \ExponentialNumber^{2 \PiNumber \ImaginaryNumber t} \in \Boundary \Hemisphere^- \leftrightarrow \ExponentialNumber^{- 2 \PiNumber \ImaginaryNumber t} \in \Boundary \Hemisphere^+ \EndFullStop
                \end{equation}
                The poles of the sphere are the points $\Pole^\pm = 0 \in \Hemisphere^\pm$.
                The complement of the poles in the sphere is isomorphic to a cylinder via the map
                \begin{equation}
                    \label{eqn:cylindrical-coordinates-on-the-sphere}
                    \RealNumbers \times \Circle \ni (s, t) \mapsto \left\{ 
                        \begin{array}{lll}
                            \ExponentialNumber^{2 \PiNumber (s + \ImaginaryNumber t)} &\in \Hemisphere^- & \text{if $s \le 0$,} \\
                            \ExponentialNumber^{- 2 \PiNumber (s + \ImaginaryNumber t)} &\in \Hemisphere^+ & \text{if $s \ge 0$.}
                        \end{array}
                    \right.
                \end{equation}
                
            \remark{
                Our notation is opposite to that of Seidel \cite{seidel_$_1997} and Ritter \cite{ritter_floer_2014}, so our $\Hemisphere^\pm$ correspond to their $D^\mp$.
            }
            
            \paragraph{Total space}
            
                The smooth manifold $\ClutchingBundle$ is the union of the manifolds $\Hemisphere^\pm \times \Manifold$ glued along the boundaries via
                \begin{equation}
                    \Boundary \Hemisphere^- \times \Manifold \ni (\ExponentialNumber^{2 \PiNumber \ImaginaryNumber t}, \ManifoldElement) \leftrightarrow (\ExponentialNumber^{- 2 \PiNumber \ImaginaryNumber t}, \CircleAction_t \ManifoldElement) \in \Boundary \Hemisphere^+ \times \Manifold \EndFullStop
                \end{equation}
                The projection map $\ClutchingProjection : \ClutchingBundle \to \Sphere$ is the union of the projection maps $\Hemisphere^\pm \times \Manifold \to \Hemisphere^\pm$.
                Let $\ClutchingInclusions^\pm : \Manifold \to \Set{\Pole^\pm} \times \Manifold \subset \ClutchingBundle$ be the fibre inclusion maps over the poles.
            
            \paragraph{Symplectic bilinear form}
            
                Denote by $\VerticalTangentSpace \ClutchingBundle$ the kernel of $\Derivative \ClutchingProjection$.
                With $\pi_\Manifold^\pm : \Hemisphere^\pm \times \Manifold \to \Manifold$ the projection map, the vector space $\VerticalTangentSpace_{(\infsphereelement, \ManifoldElement)} \ClutchingBundle$ is equipped with a symplectic bilinear form $\ClutchingSymplecticBilinearForm_{(\infsphereelement, \ManifoldElement)} = (\pi_\Manifold^\pm)_{(\infsphereelement, \ManifoldElement)}\PullBack \SymplecticForm_m$.
                Since the circle action $\CircleAction$ is symplectic, these symplectic bilinear forms agree along the equator, so that $\VerticalTangentSpace \ClutchingBundle \to \ClutchingBundle$ is a symplectic vector bundle with symplectic bilinear form $\ClutchingSymplecticBilinearForm$.
            
            \paragraph{Global 2-form}
            
                There is a closed 2-form $\ClutchingTwoForm$ on $\ClutchingBundle$ which restricts to $\ClutchingSymplecticBilinearForm$ on $\VerticalTangentSpace \ClutchingBundle$.
                The construction of $\ClutchingTwoForm$ for convex symplectic manifolds, due to Ritter \cite[Section~5]{ritter_floer_2014}, uses a special\footnote{
                    The functions $\ClutchingHamiltonian{}^{,\pm} : \Hemisphere^\pm \times \Manifold \to \RealNumbers$ must vanish near the poles, be independent of the $s$-coordinate near the equator, and glue according to $\ClutchingHamiltonian_t{}^{,+} = \CircleAction \PullBack \ClutchingHamiltonian_t{}^{,-}$.
                    The gluing condition ensures the Hamiltonian vector field in $\VerticalTangentSpace \ClutchingBundle$ is well-defined along the equator.
                    Moreover, $\ClutchingHamiltonian{}^{,\pm}$ must both be monotone, by which we mean that in a region at infinity, the functions are dependent only on the radial coordinate $R$ and the $s$-coordinate of \eqref{eqn:cylindrical-coordinates-on-the-sphere}, and satisfy $\partial_s \ClutchingHamiltonian{}^{,\pm} \le 0$.
                    We assume $\CircleAction$ is linear of nonnegative slope precisely so that such Hamiltonian functions exist.
                } pair of Hamiltonians $\ClutchingHamiltonian{}^{,\pm} : \Hemisphere^\pm \times \Manifold \to \RealNumbers$ to modify the fibrewise symplectic form $\SymplecticForm$ so that it becomes a well-defined closed 2-form.
            
            \paragraph{Almost complex structures}
            
                The sphere has an (almost) complex structure $\StandardACS$ given by $\mp \ImaginaryNumber$ on $\Hemisphere^\pm$.
                Denote by $\ACSSpace(\ClutchingBundle)$ the space of almost complex structures $\ClutchingBundleACS$ on $\ClutchingBundle$ which satisfy the following properties:
                \begin{itemize}
                    \item
                        $\Derivative \ClutchingProjection$ is $(\ClutchingBundleACS, \StandardACS)$-holomorphic.
                    \item
                        $\ClutchingBundleACS \RestrictedTo{\VerticalTangentSpace \ClutchingBundle}$ is a convex $\ClutchingSymplecticBilinearForm$-compatible almost complex structure on $\VerticalTangentSpace \ClutchingBundle$.
                    \item
                        At infinity, $\ClutchingBundleACS$ has the form
                        \begin{equation}
                            \label{eqn:clutching-acs-at-infinity}
                            \ClutchingBundleACS_{(z, m)} = \left(\begin{matrix}
                                \StandardACS & 0\\
                                \ExteriorDerivative s \otimes \HamiltonianVectorField{\ClutchingHamiltonian_z{}^{,\pm}} - \ExteriorDerivative t \otimes \FamilyACS_z^\pm \HamiltonianVectorField{\ClutchingHamiltonian_z{}^{,\pm}} & \FamilyACS_z^\pm
                            \end{matrix}\right)
                        \end{equation}
                        with respect to the decomposition $\TangentSpace_{(z, m)} \ClutchingBundle = \TangentSpace_z \Hemisphere^\pm \oplus \TangentSpace_m \Manifold$ and the coordinates $(s, t)$ on the sphere from \eqref{eqn:cylindrical-coordinates-on-the-sphere}, denoting by $\FamilyACS_z^\pm$ the fibrewise restriction $\ClutchingBundleACS \RestrictedTo{\VerticalTangentSpace \ClutchingBundle}$ on each hemisphere.
                \end{itemize}
                Given any $\ClutchingBundleACS \in \ACSSpace(\ClutchingBundle)$, the 2-form $\ClutchingTwoForm + c \ClutchingProjection \PullBack \SphereSymplecticForm$ is symplectic and $\ClutchingBundleACS$ is $(\ClutchingTwoForm + c \ClutchingProjection \PullBack \SphereSymplecticForm)$-compatible for large enough $c > 0$.
                Here, we denote by $\SphereSymplecticForm$ the standard symplectic form on $\Sphere$.
                
            \remark{
                The motivation for \eqref{eqn:clutching-acs-at-infinity} is that any $\ClutchingBundleACS$-holomorphic section is locally a Floer solution for $(\ClutchingHamiltonian, \FamilyACS)$ whenever \eqref{eqn:clutching-acs-at-infinity} applies, and hence a maximum principle forbids any (non-fixed) $\ClutchingBundleACS$-holomorphic sections outside a compact region.
            }
             
            \paragraph{Sections}
                
                Two sections $s_1, s_2 : \Sphere \to \ClutchingBundle$ are \define{$\InterestingSpheres$-equivalent} if the conditions $\ClutchingTwoForm(s_1) = \ClutchingTwoForm(s_2)$ and $\FirstChernClass(\VerticalTangentSpace \ClutchingBundle)(s_1) = \FirstChernClass(\VerticalTangentSpace \ClutchingBundle)(s_2)$ hold, where $\FirstChernClass(\VerticalTangentSpace \ClutchingBundle)$ is the first Chern class of the symplectic vector bundle $\VerticalTangentSpace \ClutchingBundle \to \ClutchingBundle$.
                The property of $\InterestingSpheres$-equivalence is independent of the choice of global 2-form $\ClutchingTwoForm$ which restricts to $\ClutchingSymplecticBilinearForm$.
                Moreover, the group $\InterestingSpheres$ acts freely and transitively on $\InterestingSpheres$-equivalence classes of sections.
                
                Given any lift $\Lifted{\CircleAction}$ and any $\WithFilling{x} \in \ContractibleLoopSpaceWithFillings{\Manifold}$, we can produce a section by setting\footnote{
                    More precisely, by $\WithFilling{x}(z)$, we mean $u(z)$ for a choice of filling $u$ of $x$, and similarly for $\Lifted{\CircleAction} \WithFilling{x} (\Conjugate{z})$.
                    The resulting $\InterestingSpheres$-equivalence class is independent of the choice.
                } $z \mapsto (z, \WithFilling{x}(z))$ on $\Hemisphere^-$ and $z \mapsto (z, \Lifted{\CircleAction}(\WithFilling{x})(\Conjugate{z}))$ on $\Hemisphere^+$.
                The $\InterestingSpheres$-equivalence class of this section is independent of the choice of $\WithFilling{x}$.
                We denote it by $\SectionClass_{\Lifted{\CircleAction}}$.
                It satisfies $\MaslovIndex(\WithFilling{\CircleAction}) = - \FirstChernClass(\VerticalTangentSpace \ClutchingBundle)(\SectionClass_{\Lifted{\CircleAction}})$.
                Every $\InterestingSpheres$-equivalence class is $\SectionClass_{\Lifted{\CircleAction}} + A$ for a unique $A \in \InterestingSpheres$.
                
            \paragraph{Fixed sections}
                
                For every fixed point $\ManifoldElement \in \Manifold$ of the circle action $\CircleAction$, there is a constant section $s_\ManifoldElement : z \mapsto (z, \ManifoldElement)$.
                The section $s_\ManifoldElement$ is the \define{fixed section at $\ManifoldElement$}.
                For any fixed section $s_\ManifoldElement$, we have that $- \FirstChernClass(\VerticalTangentSpace \ClutchingBundle)(s_\ManifoldElement)$ equals the sum of the weights of the action around $\ManifoldElement$ \cite[Lemma~2.2]{mcduff_topological_2006}.
            
            \begin{remark}
                [Minimal fixed sections]
                \label{rem:minimal-fixed-sections-of-clutching-bundle}
                A \define{minimal} fixed section is a fixed section $s_\ManifoldElement$ at a point $\ManifoldElement$ in the minimal locus of the Hamiltonian $\CircleHam$, i.e. $\CircleHam(\ManifoldElement) = \min(\CircleHam)$.
                Minimal fixed sections are $(\StandardACS, \ClutchingBundleACS)$-holomorphic for a restricted class of almost complex structures $\ClutchingBundleACS$ (see \cite[Definition~2.3]{mcduff_topological_2006} and the preceding text).
                In this setting, we moreover have $\ClutchingTwoForm(u) > \ClutchingTwoForm(s_\ManifoldElement)$ for any minimal fixed section $s_\ManifoldElement$ and any $(\StandardACS, \ClutchingBundleACS)$-holomorphic section $u$ which is not a minimal fixed section \cite[Lemma~3.1]{mcduff_topological_2006}.
            \end{remark}

        \subsubsection{Quantum Seidel map definition}
            \label{sec:quantum-seidel-map-definition}
            
            The objects of focus are $(\StandardACS, \ClutchingBundleACS)$-holomorphic sections of $\ClutchingBundle$, for a suitably regular $\ClutchingBundleACS \in \ACSSpace(\ClutchingBundle)$ which we use throughout this section.
            The moduli space $\ModuliSpace(\SectionClass)$ of $(\StandardACS, \ClutchingBundleACS)$-holomorphic sections which are in $\InterestingSpheres$-equivalence class $\SectionClass$ is a smooth manifold of dimension $2 \dimM + 2 \FirstChernClass(\VerticalTangentSpace \ClutchingBundle)(\SectionClass)$.
            A sequence of such sections $u_r \in \ModuliSpace(\SectionClass)$ with $\ClutchingTwoForm(u_r)$ bounded will have a subsequence which converges to a section with bubbles in the fibres.
            
            For critical points $x^\pm \in \CriticalPoints(\MorseFunction)$, denote by $\ModuliSpace(x^-, x^+; \SectionClass)$ the moduli space of triples $(\gamma^-, \gamma^+, u)$ where $\gamma^- : \IntervalOpenClosed{-\Infinity}{0} \to \Manifold$ and $\gamma^+ : \IntervalClosedOpen{0}{\Infinity} \to \Manifold$ are negative gradient trajectories of $\MorseFunction^\pm$ and $u \in \ModuliSpace(\SectionClass)$ is a section which satisfies $u(\Pole^\pm) = \gamma^\pm(0)$ at the poles.
            Here, the functions $\MorseFunction^\pm : \RealNumbers \times \Manifold \to \RealNumbers$ are $s$-dependent perturbations of $\MorseFunction$, chosen such that the data $(\MorseFunction^\pm, \ClutchingBundleACS)$ satisfies some regularity conditions.
            These regularity conditions ensure that $\ModuliSpace(x^-, x^+; \SectionClass)$ is a smooth manifold of dimension $\MorseIndex(x^-) - \MorseIndex(x^+) + 2 \FirstChernClass(\VerticalTangentSpace \ClutchingBundle)(\SectionClass)$.
            
            Define a degree-$2 \MaslovIndex(\Lifted{\CircleAction})$ chain map $\QuantumSeidelMap_{\Lifted{\CircleAction}} : \CochainComplex^\ArbitraryIndex (\Manifold; \MorseFunction; \NovikovRing) \to \CochainComplex^{\ArbitraryIndex + 2 \MaslovIndex(\Lifted{\CircleAction})} (\Manifold; \MorseFunction; \NovikovRing)$ by
            \begin{equation}
                x^+ \mapsto \sum_{\substack{
                    x^- \in \CriticalPoints(\MorseFunction) \\
                    A \in \InterestingSpheres \\
                    \Dimension \ModuliSpace(x^-, x^+; \SectionClass_{\Lifted{\CircleAction}} + A) = 0
                }} \sum_{(\gamma^\pm, u) \in \ModuliSpace(x^-, x^+; \SectionClass_{\Lifted{\CircleAction}} + A)} \Orientation((\gamma^\pm, u)) \ \NovVariable^A x^- \EndFullStop
            \end{equation}
            The regularity conditions we impose ensure that $\QuantumSeidelMap$ is a chain map so it induces a map on quantum cohomology.
            Moreover, $\QuantumSeidelMap$ intertwines quantum multiplication in $\QuantumCohomology^\ArbitraryIndex(\Manifold)$, giving
            \begin{equation}
                \label{eqn:quantum-intertwining-seidel}
                \QuantumSeidelMap_{\Lifted{\CircleAction}}(x \QuantumProduct y) = x \QuantumProduct \QuantumSeidelMap_{\Lifted{\CircleAction}}(y)
            \end{equation}
            for all $x, y \in \Cohomology^\ArbitraryIndex (\Manifold; \MorseFunction; \NovikovRing)$.
            
            \remark{
                There are two more-or-less equivalent methods to proving this intertwining relation \eqref{eqn:quantum-intertwining-seidel}.
                One approach is to prove the intertwining of the Floer Seidel map with the pair-of-pants product \cite[Proposition~6.3]{seidel_$_1997}, and apply the ring isomorphisms $\PSSmap^\pm$ from $\eqref{eqn:pss-map-domain-codomain}$ to deduce the desired result.
                This is the approach taken by Seidel.
                This method will not extend to the equivariant setup for two reasons: the pair-of-pants product has no equivariant version and the homotopy Seidel constructs to prove the intertwining with the pair-of-pants product involves reparameterising the path $\CircleAction : \Circle \to \HamiltonianSymplectomorphismGroup(\Manifold)$, which cannot be done while maintaining \eqref{eqn:circle-action-on-loop-space} (see \autoref{rem:requirement-for-circle-action-in-equivariant-case}).
                The second approach directly constructs a chain homotopy either side of \eqref{eqn:quantum-intertwining-seidel}.
                While a standard argument, I believe it has not appeared in the literature.
                It is this second approach we extend in \autoref{sec:intertwining-relation-equivariant-quantum-seidel-map}.
                The non-equivariant argument may be derived from the equivariant argument by removing the flowlines in $\InfiniteSphere$.
            }
            
        \subsubsection{Gluing construction}
        
            \label{sec:gluing-construction}
            
            The PSS isomorphism is a ring isomorphism between the Floer cohomology and the quantum cohomology of a weakly monotone closed symplectic manifold constructed in \cite{piunikhin_symplectic_1996}.
            As a map, the PSS isomorphism counts \define{spiked discs}.
            These are maps from the disc to $\Manifold$ which near the boundary act like Floer solutions and which near the centre act like a pseudoholomorphic sphere, together with half-flowlines\footnote{
            i.e. a flowline with domain either $\IntervalClosedOpen{0}{\Infinity}$ or $\IntervalOpenClosed{-\Infinity}{0}$.
        } between a critical point and the centre of the disc, the \define{spikes}.
            To extend the definition of these maps to convex symplectic manifolds, we make the following definition.
            
            \begin{definition}
                The (time-dependent) Hamiltonian $\Ham^0 : \Circle \to \SmoothFunctions(\Manifold)$ has \define{slope zero} if $\Ham^0$ is $\DifferentiableMaps^2$-bounded and, at infinity, $\Ham^0_t(\ConvexCoordMap(y, R)) = h(R)$ for a smooth function $h : \IntervalOpen{R_0}{\Infinity} \to \RealNumbers$ which satisfies $0 < h^\prime < T_\text{min}$, $h^{\prime \prime} < 0$ and $h^\prime \to 0$, where $T_\text{min} \in \IntervalOpenClosed{0}{\Infinity}$ is the minimal Reeb period.
            \end{definition}
            
            While such a Hamiltonian is not linear, it still satisfies \eqref{eqn:hamiltonian-vector-field-at-infinity-is-multiple-of-reeb-vector-field} and a maximum principle at infinity.
            Thus we can define Floer cohomology for a regular choice of Floer data $(\Ham^0, \FamilyACS)$.
            The PSS construction yields a pair of ring isomorphisms between Floer cohomology and quantum cohomology
            \begin{align}
                \label{eqn:pss-map-domain-codomain}
                \PSSmap^- : \FloerCohomology^\ArbitraryIndex(\Manifold; \Ham^0) &\to \QuantumCohomology^\ArbitraryIndex(\Manifold) \EndComma &
                \PSSmap^+ :  \QuantumCohomology^\ArbitraryIndex(\Manifold) &\to \FloerCohomology^\ArbitraryIndex(\Manifold; \Ham^0) 
            \end{align}
            which are mutual inverses \cite[Theorem~37]{ritter_floer_2014}.
            
            Seidel's gluing argument \cite[Section~8]{seidel_$_1997} proves the following diagram is commutative.
            \begin{equation}
                \label{eqn:diagram-relating-quantum-and-floer-seidel-maps}
                \begin{tikzcd}[column sep=0, row sep=large]
                    \QuantumCohomology^\ArbitraryIndex (\Manifold)
                        \arrow[rr, "\QuantumSeidelMap_{\Lifted{\CircleAction}}"]
                        \arrow[d, "\PSSmap^+"', "\cong"] 
                        &
                        & \QuantumCohomology^{\ArbitraryIndex + 2\MaslovIndex(\Lifted{\CircleAction})} (\Manifold)
                    \\
                    \FloerCohomology^\ArbitraryIndex(\Manifold; \Ham^0)
                        \arrow[rd, "\FloerSeidel_{\Lifted{\CircleAction}}"', "\cong"]
                        &
                        & \FloerCohomology^{\ArbitraryIndex + 2\MaslovIndex(\Lifted{\CircleAction})} (\Manifold; \Ham^0)
                        \arrow[u, "\PSSmap^-"', "\cong"]
                    \\
                    & \FloerCohomology^{\ArbitraryIndex + 2\MaslovIndex(\Lifted{\CircleAction})} (\Manifold; \CircleAction\PullBack\Ham^0)
                        \arrow[ru, "\text{continuation map}"']
                        &                  
                \end{tikzcd}
            \end{equation}
            In \cite{ritter_floer_2014}, Ritter used a non-equivariant version of \eqref{eqn:commutative-diagram-for-computing-symplectic-cohomology-using-floer-seidel-map-limit} together with \eqref{eqn:diagram-relating-quantum-and-floer-seidel-maps} to show that if $\CircleAction$ has positive slope, then the direct limit of the direct system
            \begin{equation}
                \QuantumCohomology^\ArbitraryIndex (\Manifold) \xrightarrow{\QuantumSeidelMap_{\Lifted{\CircleAction}}}
                \QuantumCohomology^{\ArbitraryIndex + 2\MaslovIndex(\Lifted{\CircleAction})}  (\Manifold) \xrightarrow{\QuantumSeidelMap_{\Lifted{\CircleAction}}}
                \QuantumCohomology^{\ArbitraryIndex + 4\MaslovIndex(\Lifted{\CircleAction})}  (\Manifold) \xrightarrow{\QuantumSeidelMap_{\Lifted{\CircleAction}}}\cdots
            \end{equation}
            is isomorphic to symplectic cohomology.
            This offers a method to calculate symplectic cohomology because the quantum Seidel map is quantum multiplication by the element $\QuantumSeidelMap_{\Lifted{\CircleAction}} (1)$.
            
    \subsection{Equivariant Quantum cohomology}
    
        \label{sec:equivariant-quantum-cohomology}
    
        \subsubsection{Equivariant Morse cohomology}
        
            \label{sec:equivariant-morse-cohomology}
        
            Let $\rho$ be a smooth circle action on $\Manifold$, and fix a $\rho$-invariant Riemannian metric on $\Manifold$.
            An \define{equivariant Morse function} is a function $\EqntMorseFunction : \InfiniteSphere \times \Manifold \to \RealNumbers$ which is invariant under the free action \eqref{eqn:circle-action-for-borel-homotopy-quotient}, and which extends Morse functions $\MorseFunction^k : \Manifold \to \RealNumbers$ analogously to \autoref{def:extending-floer-data-equivariantly}.
            We assume that, at infinity, the function $\MorseFunction\Eqnt_\infsphereelement (\Argument)$ is increasing in the radial coordinate direction for all $\infsphereelement \in \InfiniteSphere$.
            
            Analogously to the equivariant Floer cohomology construction of \autoref{sec:equivariant-floer-theory}, an \define{equivariant critical point} is an equivalence class $\EquivalenceClass{\infsphereelement, x} \in \InfiniteSphere \times_\Circle \Manifold$ such that $\infsphereelement$ is a critical point of $\spheremorsefunction$ and $x$ is a critical point of $\MorseFunction\Eqnt_\infsphereelement (\Argument)$.
            The index of such a critical point is $\MorseIndex(\infsphereelement; \spheremorsefunction) + \MorseIndex(x; \MorseFunction\Eqnt)$.
            We use the notation $(\spherecriticalpoint_k, x) \in \CriticalPoints(\MorseFunction\Eqnt)$ for equivariant critical points and $\Modulus{c_k, x}$ for their indices.
            
            For a suitably regular equivariant Morse function, the moduli spaces of \define{equivariant negative gradient trajectories} are smooth oriented manifolds.
            \define{Equivariant (Morse) cohomology $\ECohomology^\ArbitraryIndex_\rho (\Manifold)$} is the cohomology of the cochain complex generated over $\Integers$ by the equivariant critical points whose differential counts the equivariant negative gradient trajectories.
            As in \autoref{sec:morse-cohomology}, an orientation of the unstable manifolds must be chosen in order that the count is signed.
            We omit the details.
            The $\Integers [\uformal]$-module actions of \autoref{sec:u-multiplication-shift-invariance} and \autoref{sec:y-shape-module-structure-on-equivariant-floer} may be defined on equivariant Morse cohomology, and we opt for the geometric action $\uformal \CupProduct$ which counts `Y'-shaped graphs.
            
        \subsubsection{Equivariant quantum product}
        
            \label{sec:equivariant-quantum-product}
        
            Let $\rho$ be a symplectic circle action which is linear at infinity.
            We use a `Y'-shaped flowline in $\InfiniteSphere$ for the equivariant quantum product so that it resembles a deformed equivariant cup product.
            In order to construct the moduli space, we need three $s$-dependent perturbations $\spheremorsefunction^-, \spheremorsefunction_1^+, \spheremorsefunction_2^+$ of the standard function on $\InfiniteSphere$ and three $s$-dependent perturbations ${\EqntMorseFunction}^{, -}, {\EqntMorseFunction_1}^{, +}, {\EqntMorseFunction_2}^{, +}$ of the equivariant Morse function on $\Manifold$.
            We also need a regular convex $\InfiniteSphere$-dependent $\SymplecticForm$-compatible almost complex structure $\AlmostComplexStructure\Eqnt$, which is equivariant in the sense that $\AlmostComplexStructure\Eqnt_\infsphereelement = \left(\Derivative \CircleAction_\CircleElement \right) \Inverse \AlmostComplexStructure\Eqnt_{\CircleElement \Inverse \cdot \infsphereelement} \ \Derivative \CircleAction_\CircleElement$ for all $\CircleElement \in \Circle$ and $\infsphereelement \in \InfiniteSphere$.
            
            We consider septuples $(v^-, v_1^+, v_2^+, \gamma^-, \gamma_1^+, \gamma_2^+, u)$, where $v_\ast^\pm$ are negative gradient flowlines of $\spheremorsefunction_\ast^\pm$ satisfying $v_1^+(0) = v_2^+(0) = v^-(0)$ and $\gamma_\ast^\pm$ are negative gradient flowlines of $({\EqntMorseFunction_\ast}^{,\pm})_{v_\ast^\pm} (\Argument)$ which intersect at 0 with a simple (or constant) $\AlmostComplexStructure\Eqnt_{v^-(0)}$-holomorphic sphere $u$ at the points $p_\ast^\pm \in \Projective^1$ (see \autoref{fig:equivariant-quantum-product}).
            When the data is sufficiently regular, the moduli space of ($\Circle$-equivalence classes of) such septuples with limits $(\spherecriticalpoint_{k_\ast^\pm}, x_\ast^\pm) \in \CriticalPoints(\EqntMorseFunction)$ and with $u$ representing $A \in \InterestingSpheres$ is a smooth oriented manifold of dimension
            \begin{equation}
                \Modulus{\spherecriticalpoint_{k^-}, x^-} + 2 \FirstChernClass(A) - \Modulus{\spherecriticalpoint_{k_1^+}, x_1^+} - \Modulus{\spherecriticalpoint_{k_2^+}, x_2^+} \EndFullStop
            \end{equation}
            The \define{equivariant quantum product $\QuantumProduct_\rho$} counts the 0-dimensional moduli spaces.
            
\begin{figure}
	\begin{center}
		\begin{tikzpicture}
			\node[inner sep=0] at (0,0) {\includegraphics[width=10 cm]{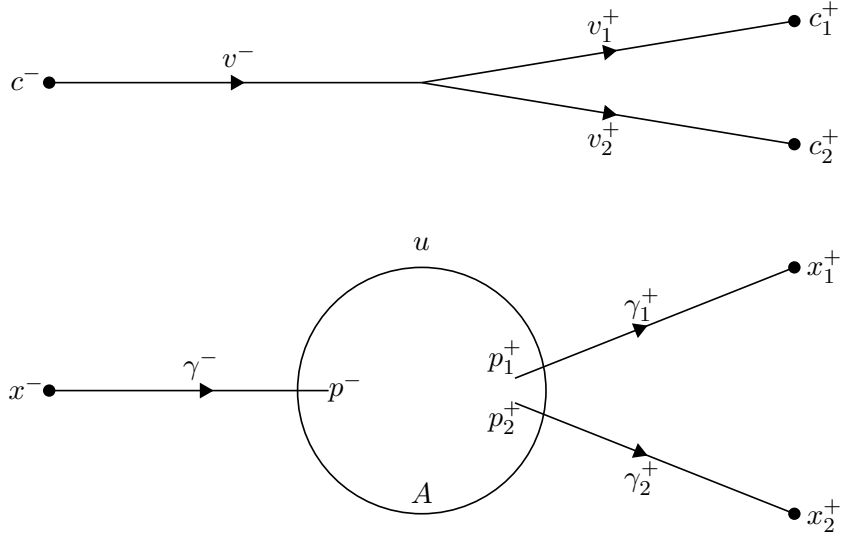}};
			\node at (-2.4,2.8){$v^-$};
			\node at (2.4,3.2){$v_1^+$};
			\node at (2.4,1.7){$v_2^+$};
			\node at (-2.9,-1.3){$\gamma^-$};
			\node at (2.9,-0.5){$\gamma_1^+$};
			\node at (2.9,-2.8){$\gamma_2^+$};
			\node at (-1,-1.6){$p^-$};
			\node at (1.1,-1.2){$p_1^+$};
			\node at (1.1,-2){$p_2^+$};
			\node at (-5.2,2.5){$\spherecriticalpoint^-$};
			\node at (5.3,3.3){$\spherecriticalpoint_1^+$};
			\node at (5.3,1.6){$\spherecriticalpoint_2^+$};
			\node at (-5.2,-1.6){$x^-$};
			\node at (5.3,0){$x_1^+$};
			\node at (5.3,-3.3){$x_2^+$};
			\node at (0,0.3){$u$};
			\node at (0,-3){$A$};
		\end{tikzpicture}
	\end{center}
	\caption{
	    The equivariant quantum product counts equivalence classes of septuples $(v^-, v_1^+, v_2^+, \gamma^-, \gamma_1^+, \gamma_2^+, u)$.
	    The `Y'-shaped graph above maps to $\InfiniteSphere$ while the configuration below the graph maps to $\Manifold$.
    }
	\label{fig:equivariant-quantum-product}
\end{figure}
            
            The \define{equivariant quantum cohomology $\EQuantumCohomology ^\ArbitraryIndex _\rho (\Manifold)$} is the cohomology of the cochain complex
            \begin{equation}
                \label{eqn:equivariant-quantum-cochain-complex}
                \EQuantumCochainComplex^l(\Manifold) = \prod _{k=0} ^\Infinity \ \bigoplus_{                    (\spherecriticalpoint_k, x) \in \CriticalPoints(\MorseFunction\Eqnt)} \NovikovRing^{l - |\spherecriticalpoint_k, x|} \langle (\spherecriticalpoint_k, x) \rangle
            \end{equation}
            with the equivariant Morse differential.
            It is a graded $\NovikovRing [\uformal]$-module for both the algebraic and the geometric $\uformal$-actions.
            The product $\QuantumProduct_\rho$ is a unital, graded-commutative, associative product on $\EQuantumCohomology^\ArbitraryIndex _\rho (\Manifold)$ compatible with the $\NovikovRing$-module structure and the geometric\footnote{
                It is difficult to see any relationship between the algebraic $\Integers [\uformal]$-module structure and the equivariant product, hence our decision to use the geometric module structure in this paper.
            } $\Integers [\uformal]$-module structure.
            All of these properties follow from standard homotopy proofs, as does the independence from all the data chosen.
            
            Via the algebraic $\Integers [\uformal]$-module structure, the equivariant quantum cochain complex \eqref{eqn:equivariant-quantum-cochain-complex} is the graded completed tensor product $\QuantumCochainComplex^\ArbitraryIndex(\Manifold) \GradedCompletedTensorProduct \Integers [\uformal]$.
            By the \define{graded completed tensor product} $A \GradedCompletedTensorProduct \Integers [\uformal]$, where $A$ is a graded $\Integers$-module, we mean the graded $\Integers$-module whose grading-$l$ subgroup is
            \begin{equation}
                \label{eqn:homogeneous-elements-of-graded-completed-tensor-product}
                \left(A \GradedCompletedTensorProduct \Integers [\uformal] \right) ^l = \prod _{k=0} ^\Infinity A^{l - 2k} \Tensor \Integers \cdot \uformal^k \EndFullStop
            \end{equation}
            This is a variant of the completed tensor product used by Zhao \cite[Section~2]{zhao_periodic_2014}, but which is a graded module in the conventional sense.
            
            Since $\Manifold$ is nonnegatively monotone, the equivariant quantum product $\QuantumProduct_\rho$ would be well-defined with the graded completed tensor product replaced by an ordinary tensor product, however the resulting equivariant quantum cohomology would fail to be isomorphic with our equivariant Floer cohomology (c.f. \eqref{eqn:equivariant-floer-complex}).
            
\section{Equivariant quantum Seidel map}

    \label{sec:equivariant-quantum-seidel-map}
        
    Let $\Lifted{\CircleAction}$ be a lift of a linear Hamiltonian circle action of nonnegative slope and $\rho$ a symplectic circle action linear at infinity.
    Assume $\CircleAction$ and $\rho$ commute.
            
    \subsection{Clutching bundle action}
    
        \label{sec:clutching-bundle-circle-action}
        
        The sphere $\Sphere$ has a natural rotation action given by $\CircleElement \cdot (s, t) = (s, t + \CircleElement)$ away from the poles, using the parameterisation \eqref{eqn:cylindrical-coordinates-on-the-sphere}.
        
        The clutching bundle $\ClutchingBundle$ from \autoref{sec:clutching-bundle-construction} admits a smooth circle action given by
        \begin{equation}
            \label{eqn:circle-action-on-clutching-bundle-total-space}
            \left\{
            \begin{array}{lllllll}
                \Hemisphere^- \times \Manifold &\ni & (z, \ManifoldElement) &\mapsto & (\ExponentialNumber^{2 \PiNumber \ImaginaryNumber \CircleElement} z, (\CircleAction \PullBack \rho)_\CircleElement (\ManifoldElement)) &\in &\Hemisphere^- \times \Manifold
                \\
                \Hemisphere^+ \times \Manifold &\ni & (z, \ManifoldElement) &\mapsto & (\ExponentialNumber^{- 2 \PiNumber \ImaginaryNumber \CircleElement} z, \rho_\CircleElement(\ManifoldElement)) &\in &\Hemisphere^+ \times \Manifold
            \end{array}
            \right. \EndComma
        \end{equation}
        which glues correctly along the equator.
        We denote this action by $\ClutchingAction$.
        The projection map is equivariant with respect to the action $\ClutchingAction$ on $\ClutchingBundle$ and to the rotation action on $\Sphere$.
        The poles $\Pole^\pm$ are fixed points of the rotation of $\Sphere$, so $\ClutchingAction$ restricts to a circle action in each of the fibres $\ClutchingBundle_{\Pole^\pm}$: the action on $\ClutchingBundle_{\Pole^+}$ is the action $\rho$ and the action on $\ClutchingBundle_{\Pole^-}$ is the action $\CircleAction \PullBack \rho$.
        Here, we have identified these fibres $\ClutchingBundle_{\Pole^\pm}$ with $\Manifold$ via the inclusion maps $\ClutchingInclusions^\pm$.
            
    \subsection{Equivariant quantum Seidel map definition}
    
        \label{sec:equivariant-quantum-seidel-map-definition}
        
\begin{figure}
	\begin{center}
		\begin{tikzpicture}
			\node[inner sep=0] at (0,0) {\includegraphics[width=10 cm]{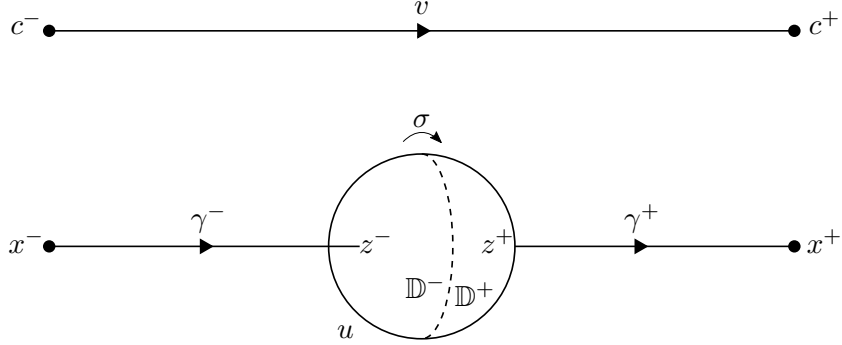}};
			\node at (0,2.3){$v$};
			\node at (-1,-2){$u$};
			\node at (-2.8,-0.5){$\gamma^-$};
			\node at (2.9,-0.5){$\gamma^+$};
			\node at (0,0.8){$\CircleAction$};
			\node at (0.05,-1.4){$\Hemisphere^-$};
			\node at (0.7,-1.5){$\Hemisphere^+$};
			\node at (1,-0.8){$\Pole^+$};
			\node at (-0.6,-0.8){$\Pole^-$};
			\node at (-5.2,2.1){$\spherecriticalpoint^-$};
			\node at (5.3,2.1){$\spherecriticalpoint^+$};
			\node at (-5.2,-0.8){$x^-$};
			\node at (5.3,-0.8){$x^+$};
		\end{tikzpicture}
	\end{center}
	\caption{
	    The equivariant quantum Seidel map counts equivalence classes of quadruples $(v, \gamma^-, \gamma^+, u)$.
	    The map $u$ is a section of the clutching bundle which twists the fibres by $\CircleAction$ when passing from the upper hemisphere $\Hemisphere^-$ to the lower hemisphere $\Hemisphere^+$.
	    The flowlines $\gamma^\pm$ map to the manifold $\Manifold$, which is identified with the fibres of the clutching bundle over the poles $\Pole^\pm$.
	}
	\label{fig:equivariant-quantum-seidel-map}
\end{figure}
        
        The equivariant quantum Seidel map is a version of the quantum Seidel map $\QuantumSeidelMap$ which is equivariant with respect to the circle action $\ClutchingAction$ of \autoref{sec:clutching-bundle-circle-action}.
        Since the action $\ClutchingAction$ restricts to different actions on the two fibres over the poles, the equivariant quantum Seidel map will map between the equivariant quantum cohomology for these two different circle actions.

        Let $\EqntMorseFunction{}^{,\pm}$ be equivariant Morse-Smale functions for the circle actions in the corresponding to the fibre $\ClutchingBundle_{\Pole^\pm}$, so $\EqntMorseFunction{}^{,+}$ is equivariant with respect to the action $\rho$ and $\EqntMorseFunction{}^{,-}$ is equivariant with respect to the action $\CircleAction \PullBack \rho$.
        
        We require $s$-dependent perturbations $\EqntMorseFunction_s{}^{,\pm}$ of the equivariant Morse data and an $\InfiniteSphere$-dependent almost complex structure $\ClutchingBundleACS\Eqnt$ which is equivariant in the sense of \autoref{sec:equivariant-quantum-product} with respect to $\ClutchingAction$.
        This almost complex structure should have similar properties to the non-equivariant $\ClutchingBundleACS$, so that $\Derivative \ClutchingProjection$ is holomorphic and the fibrewise restriction $\ClutchingBundleACS\Eqnt \RestrictedTo{\VerticalTangentSpace \ClutchingBundle}$ is a $\ClutchingSymplecticBilinearForm$-compatible almost complex structure for all $\infsphereelement \in \InfiniteSphere$, and $\ClutchingBundleACS\Eqnt \RestrictedTo{\VerticalTangentSpace \ClutchingBundle}$ is convex and $\ClutchingBundleACS\Eqnt$ has the form \eqref{eqn:clutching-acs-at-infinity} at all points in a region at infinity.
        
        The regularity conditions will guarantee the moduli spaces below are smooth manifolds which in dimensions 0 and 1 compactify without bubbling.
        
        The equivariant quantum Seidel map counts ($\Circle$-equivalence classes of) quadruples $(v, \gamma^-, \gamma^+, u)$, where $v$ is a flowline in $\InfiniteSphere$, the curves $\gamma^+ : \IntervalClosedOpen{0}{\Infinity} \to \Manifold$ and $\gamma^- : \IntervalOpenClosed{-\Infinity}{0} \to \Manifold$ are equivariant $\MorseFunction_{ s}^{\eqnt,\pm}(v(s),\Argument)$-flowlines and $u$ is a $\ClutchingBundleACS\Eqnt_{v(0)}$-holomorphic section satisfying $u(\Pole^\pm) = \gamma^\pm (0)$ (see \autoref{fig:equivariant-quantum-seidel-map}).
        It is a degree-$2 \MaslovIndex(\Lifted{\CircleAction})$ $\NovikovRing$-module homomorphism $\EQuantumSeidelMap_{\Lifted{\CircleAction}} : \EQuantumCohomology_\rho^\ArbitraryIndex (\Manifold) \to \EQuantumCohomology_{\CircleAction \PullBack \rho}^{\ArbitraryIndex + 2 \MaslovIndex(\Lifted{\CircleAction})} (\Manifold)$.
        A standard homotopy argument shows that $\EQuantumSeidelMap_{\Lifted{\CircleAction}}$ commutes with the geometric $\Integers [\uformal]$-module structure.
            
    \subsection{Equivariant gluing}
    
        \label{sec:equivariant-gluing}
        
        The results of \autoref{sec:gluing-construction} extend to the equivariant setup.
        The equivariant PSS maps are the $\NovikovRing [\uformal]$-module isomorphisms
        \begin{equation}
            \begin{aligned}
                \EPSSmap^- &: \EFloerCohomology^\ArbitraryIndex_\rho(\Manifold; \Ham^{\eqnt,0}) \to \EQuantumCohomology^\ArbitraryIndex_\rho (\Manifold) \\
                \EPSSmap^+  &: \EQuantumCohomology^\ArbitraryIndex_\rho (\Manifold) \to \EFloerCohomology^\ArbitraryIndex_\rho(\Manifold; \Ham^{\eqnt,0})
            \end{aligned}
        \end{equation}
        which count equivariant spiked discs.
        Here, $\Ham^{\eqnt,0}$ is an equivariant Hamiltonian of slope zero with respect to the action $\rho$.
        The equivariant version of \eqref{eqn:diagram-relating-quantum-and-floer-seidel-maps} is the following commutative diagram.
        \begin{equation}
            \label{eqn:diagram-relating-quantum-and-floer-seidel-maps-equivariant-case}
            \begin{tikzcd}[row sep=2em, column sep=-1em]
                \EQuantumCohomology^\ArbitraryIndex_\rho (\Manifold)
                    \arrow[rr, "\EQuantumSeidelMap_{\Lifted{\CircleAction}}"]
                    \arrow[d, "\EPSSmap^+"', "\cong"] 
                    &
                    & \EQuantumCohomology^{\ArbitraryIndex + 2\MaslovIndex(\Lifted{\CircleAction})}_{\CircleAction \PullBack \rho} (\Manifold)
                \\
                \EFloerCohomology^\ArbitraryIndex_\rho(\Manifold; \EqntHam{}^{,0})
                    \arrow[rd, "\EFloerSeidel_{\Lifted{\CircleAction}}"', "\cong"]
                    &
                    & \EFloerCohomology ^{\ArbitraryIndex + 2\MaslovIndex(\Lifted{\CircleAction})} _{\CircleAction \PullBack \rho} (\Manifold; \EqntHam{}^{,0})
                    \arrow[u, "\EPSSmap^-"', "\cong"]
                \\
                & \EFloerCohomology ^{\ArbitraryIndex + 2\MaslovIndex(\Lifted{\CircleAction})} _{\CircleAction \PullBack \rho} (\Manifold; \CircleAction\PullBack\EqntHam{}^{,0})
                    \arrow[ru, "\text{continuation map}"']
                    &                  
            \end{tikzcd}
        \end{equation}
        If $\CircleAction$ has positive slope, then Ritter's argument \eqref{eqn:commutative-diagram-for-computing-symplectic-cohomology-using-floer-seidel-map-limit} combined with \eqref{eqn:diagram-relating-quantum-and-floer-seidel-maps-equivariant-case} implies that the direct limit of the direct system
        \begin{equation}
            \EQuantumCohomology^\ArbitraryIndex_{\rho} (\Manifold) \xrightarrow{\EQuantumSeidelMap_{\Lifted{\CircleAction}}}
            \EQuantumCohomology^{\ArbitraryIndex + 2\MaslovIndex(\Lifted{\CircleAction})} _{\CircleAction \PullBack \rho} (\Manifold) \xrightarrow{\EQuantumSeidelMap_{\Lifted{\CircleAction}}}
            \EQuantumCohomology^{\ArbitraryIndex + 4\MaslovIndex(\Lifted{\CircleAction})} _{(\CircleAction^2) \PullBack \rho} (\Manifold) \xrightarrow{\EQuantumSeidelMap_{\Lifted{\CircleAction}}}\cdots
        \end{equation}
        is isomorphic to equivariant symplectic cohomology $\ESymplecticCohomology_\rho^\ArbitraryIndex(\Manifold)$.
            
    \subsection{Intertwining relation}
    
        \label{sec:intertwining-relation-equivariant-quantum-seidel-map}
        
        The intertwining result \eqref{eqn:quantum-intertwining-seidel} does not hold in our equivariant setup.
        Instead, we have the formula 
        \begin{equation}
            \label{eqn:quantum-intertwining-seidel-equivariant}
            \EQuantumSeidelMap_{\Lifted{\CircleAction}} (\bm{x} \underset{\rho}{\QuantumProduct} ((\ClutchingInclusions^+)\PullBack \bm{\alpha}))
            -
            \EQuantumSeidelMap_{\Lifted{\CircleAction}} (\bm{x}) \underset{\CircleAction \PullBack \rho}{\QuantumProduct} ((\ClutchingInclusions^-)\PullBack \bm{\alpha})
            =
            \uformal \CupProduct \EQuantumSeidelMap_{\Lifted{\CircleAction}, \bm{\alpha}}(\bm{x}) \EndFullStop
        \end{equation}
        In this equation, $\bm{\alpha} \in \ECohomology^\ArbitraryIndex_{\ClutchingAction}(\ClutchingBundle)$ is an equivariant cohomology class of the clutching bundle, and the map $\EQuantumSeidelMap_{\Lifted{\CircleAction}, \bm{\alpha}}$ is the $\bm{\alpha}$-weighted equivariant quantum Seidel map defined in the following section.
        The proof of \eqref{eqn:quantum-intertwining-seidel-equivariant} takes up the rest of this section.
        
        \subsubsection{Weighted equivariant quantum Seidel map}
        
            \label{sec:error-term-definition}
            
\begin{figure}
	\begin{center}
		\begin{tikzpicture}
			\node[inner sep=0] at (0,0) {\includegraphics[width=10 cm]{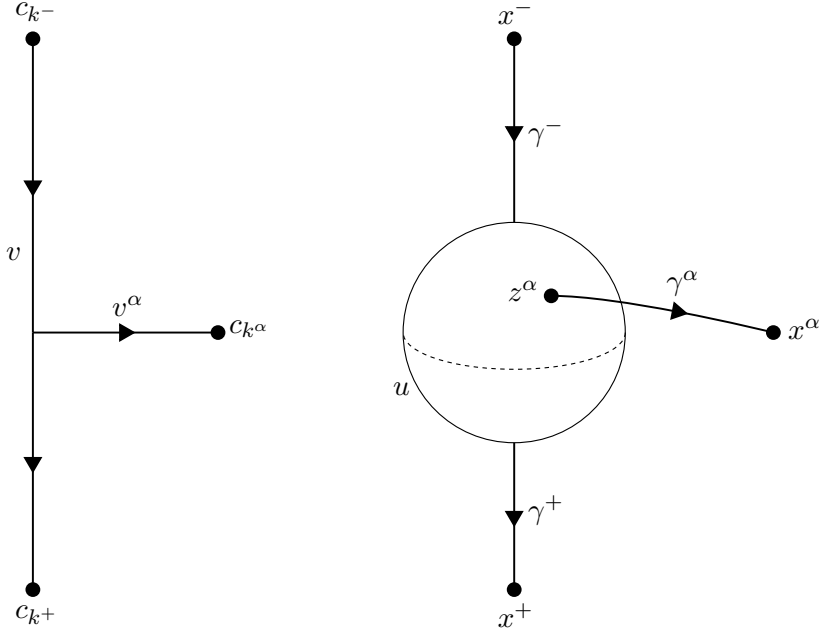}};
			\node at (-5.1,0.8){$v$};
			\node at (-3.6,0.1){$v^\alpha$};
			\node at (1.9,-2.6){$\gamma^+$};
			\node at (1.9,2.4){$\gamma^-$};
			\node at (3.7,0.4){$\gamma^\alpha$};
			\node at (1.6,0.3){$z^\alpha$};
			\node at (0,-1){$u$};
			\node at (-2,-0.2){$\spherecriticalpoint_{k^\alpha}$};
			\node at (5.3,-0.2){$x^\alpha$};
			\node at (-4.8,-4){$\spherecriticalpoint_{k^+}$};
			\node at (1.5,-4){$x^+$};
			\node at (-4.8,4){$\spherecriticalpoint_{k^-}$};
			\node at (1.5,4){$x^-$};
		\end{tikzpicture}
	\end{center}
	\caption{
	    The weighted equivariant quantum Seidel map counts equivalence classes of septuples $(v, \gamma^+, \gamma^-, u; v^\alpha, \gamma^\alpha, z^\alpha)$.
	    The section $u$ of the clutching bundle intersects the flowline $\gamma^\alpha$ over the point $z^\alpha \in \Sphere$.
	}
	\label{fig:weighted-equivariant-quantum-seidel-map}
\end{figure}

            Fix an invariant Riemannian metric on the clutching bundle $\ClutchingBundle$ for the action $\ClutchingAction$, and let $\MorseFunction\Eqnt_\ClutchingBundle$ be an equivariant Morse-Smale function for this action.
            
            Take a regular choice of the data $\MorseFunction^{\eqnt , \pm}_s$ and $\ClutchingBundleACS\Eqnt$ from \autoref{sec:equivariant-quantum-seidel-map-definition}.
            We use an $s$-dependent perturbation $\MorseFunction^{\eqnt, \alpha}_{\ClutchingBundle, s}$ of $\MorseFunction\Eqnt_\ClutchingBundle$ on $\IntervalClosedOpen{0}{\Infinity}$.
            We will consider septuples $(v,\allowbreak \gamma^+,\allowbreak \gamma^-,\allowbreak u;\allowbreak v^\alpha,\allowbreak \gamma^\alpha,\allowbreak z^\alpha)$ where $(v, \gamma^+, \gamma^-, u)$ is a quadruple from \autoref{sec:equivariant-quantum-seidel-map-definition}, $z^\alpha \in \Sphere$ is a point in the sphere and $(v^\alpha, \gamma^\alpha) : \IntervalClosedOpen{0}{\Infinity} \to \InfiniteSphere \times \ClutchingBundle$ is an equivariant $\MorseFunction^{\eqnt, \alpha}_{\ClutchingBundle, s}$ flowline satisfying $v^\alpha(0) = v(0)$ and $\gamma^\alpha(0) = u(z^\alpha)$ (see \autoref{fig:weighted-equivariant-quantum-seidel-map}).
            Given any equivariant critical point $\bm{\alpha} = (\spherecriticalpoint_{k^\alpha}, x^\alpha)$ of $\MorseFunction\Eqnt_\ClutchingBundle$, together with equivariant critical points $\bm{x}^\pm = (\spherecriticalpoint_{k^\pm}, x^\pm) \in \CriticalPoints (\MorseFunction\Eqnt)$ and a class $A \in \InterestingSpheres$, denote by $\ModuliSpace(\bm{x}^-, \bm{x}^+, A; \bm{\alpha})$ the moduli space of $\Circle$-equivalence classes of septuples as above with the obvious limits and with $u$ of class $\SectionClass_{\Lifted{\CircleAction}} + A$.
            For regular perturbations, these moduli spaces are smooth oriented manifolds of dimension
            \begin{equation}
                \Dimension \ModuliSpace(\bm{x}^-, \bm{x}^+, A; \bm{\alpha}) = \Modulus{\bm{x}^-} - \Modulus{\bm{x}^+} - \Modulus{\bm{\alpha}} + 2 \FirstChernClass(A) + 2 \EndFullStop
            \end{equation}
            The $+2$ comes from the $2$-dimensional freedom of the point $z^\alpha \in \Sphere$.
            Moreover, we can assume $z^\alpha \notin \Set{\Pole^\pm}$ for moduli spaces of dimension 0 and 1 by imposing further regularity conditions on $\MorseFunction^{\eqnt, \alpha}_{\ClutchingBundle, s}$.
            
            Counting this moduli space yields a map \begin{equation}
                \EQuantumSeidelMap_{\Lifted{\CircleAction}, \bm{\alpha}} : \QuantumCochainComplex _\rho ^\ArbitraryIndex (\Manifold) \to \QuantumCochainComplex _{\CircleAction \PullBack \rho} ^{\ArbitraryIndex + 2 \MaslovIndex (\Lifted{\CircleAction}) + \Modulus{\bm{\alpha}} - 2} (\Manifold) \EndFullStop
            \end{equation}
            Our definition may be immediately extended linearly to any $\bm{\alpha} \in \ECochainComplex^\ArbitraryIndex_{\ClutchingAction} (\ClutchingBundle)$.
            The 1-dimensional moduli spaces are compactified by any of the flowlines breaking since there is no bubbling by regularity.
            This yields the equation
            \begin{equation}
                d \  \EQuantumSeidelMap_{\Lifted{\CircleAction}, \bm{\alpha}} (\bm{x}) = \EQuantumSeidelMap_{\Lifted{\CircleAction}, \bm{\alpha}} \  d(\bm{x}) + (-1)^{\Modulus{\bm{x}}} \EQuantumSeidelMap_{\Lifted{\CircleAction}, d \bm{\alpha}}(\bm{x}) \EndFullStop
            \end{equation}
            Thus for closed Morse cochains $\bm{\alpha}$, the map $\EQuantumSeidelMap_{\Lifted{\CircleAction}, \bm{\alpha}}$ is a chain map.
            
            \begin{remark}
                [Interpretation]
                \label{remark:degree-two-weighted-map}
                Let $k^\alpha = 0$ and $|x^\alpha| = 2$.
                For any quadruple $(v,\allowbreak \gamma^+,\allowbreak \gamma^-,\allowbreak u)$ from \autoref{sec:equivariant-quantum-seidel-map-definition}, the flowline $v^\alpha$ generically flows to the minimum $\spherecriticalpoint_0 = \spherecriticalpoint_{k^\alpha}$.
                Therefore, the count of flowlines $\gamma^\alpha$ with $\gamma^\alpha(0) = u(z^\alpha)$ for some $z^\alpha \in \Sphere$ and $\gamma^\alpha(\Infinity) = x^\alpha$ recovers the number $\EquivalenceClass{x^\alpha} (u \PushForward \FundamentalClass{\Sphere})$.
                This is the evaluation of the degree-2 cohomology class $\EquivalenceClass{x^\alpha}$ on the degree-2 homology class $(u \PushForward \FundamentalClass{\Sphere})$.
                As such, the map $\EQuantumSeidelMap_{\Lifted{\CircleAction}, \bm{\alpha}}$ is a weighted version of $\EQuantumSeidelMap_{\Lifted{\CircleAction}}$ under which any section $u$ has weight $\EquivalenceClass{x^\alpha} (u \PushForward \FundamentalClass{\Sphere})$.
            \end{remark}
            
        \subsubsection{1-dimensional moduli space}
        \label{sec:one-dimensional-moduli-space-for-intertwining-relation}
        
\begin{figure}
	\begin{center}
		\begin{tikzpicture}
			\node[inner sep=0] at (0,0) {\includegraphics[width=10 cm]{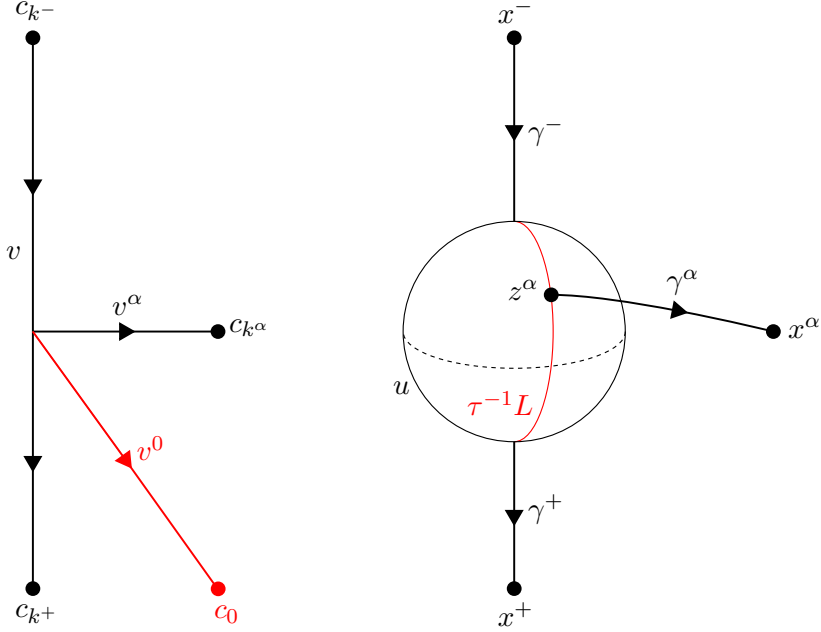}};
			\node at (-5.1,0.8){$v$};
			\node at (-3.6,0.1){$v^\alpha$};
			\node at (1.9,-2.6){$\gamma^+$};
			\node at (1.9,2.4){$\gamma^-$};
			\node at (3.7,0.4){$\gamma^\alpha$};
			\node at (1.6,0.3){$z^\alpha$};
			\node at (0,-1){$u$};
			\node at (-2,-0.2){$\spherecriticalpoint_{k^\alpha}$};
			\node at (5.3,-0.2){$x^\alpha$};
			\node at (-4.8,-4){$\spherecriticalpoint_{k^+}$};
			\node at (1.5,-4){$x^+$};
			\node at (-4.8,4){$\spherecriticalpoint_{k^-}$};
			\node at (1.5,4){$x^-$};
			\node at (-2.3,-4){$\textcolor{red}{\spherecriticalpoint_0}$};
			\node at (1.3,-1.2){$\textcolor{red}{\tau \Inverse \LongitudeLine}$};
			\node at (-3.3,-1.8){$\textcolor{red}{v^0}$};
		\end{tikzpicture}
	\end{center}
	\caption{
	    The map $K_{\bm{\alpha}}$ counts equivalence classes of tuples like the weighted equivariant quantum Seidel map in \autoref{fig:weighted-equivariant-quantum-seidel-map}, however with an additional flowline $v^0$ in $\InfiniteSphere$.
	    Moreover, we restrict to tuples which satisfy $z^\alpha \in \textcolor{red}{\tau \Inverse \LongitudeLine}$, where $\textcolor{red}{\tau = \spheretrivialise_0 (v^0(\Infinity))}$ is determined by the additional flowline in $\InfiniteSphere$.
	}
	\label{fig:one-dimensional-moduli-space-for-equivariant-quantum-seidel-map-intertwining-formula}
\end{figure}
            
            Fix the line of longitude $\LongitudeLine = \RealNumbers_{> 0} \Intersection \Hemisphere^\pm \subset \Sphere$, which does not include the poles.
            One way to derive the commutativity of the quantum product $\QuantumProduct$ and the quantum Seidel map $\QuantumSeidelMap_{\Lifted{\CircleAction}}$ is to take a non-equivariant version of the moduli space from \autoref{sec:error-term-definition} in which $z^\alpha \in \LongitudeLine$.
            The 1-dimensional moduli space is compactified by breaking one of the flowlines, or allowing a bubble over the pole when $z^\alpha \to \Pole^\pm$.
            If $x^\alpha$ is closed, this yields a chain homotopy between $(\ClutchingInclusions^-)\PullBack x^\alpha \QuantumProduct \QuantumSeidelMap _{\Lifted{\CircleAction}} (\Argument)$ and $\QuantumSeidelMap_{\Lifted{\CircleAction}}  ((\ClutchingInclusions^+)\PullBack x^\alpha \QuantumProduct \Argument)$.
            
            The intersection condition $z^\alpha \in \LongitudeLine$ does not transform correctly under the $\Circle$-action on the equivariant moduli space, however.
            To rectify this, take a further $s$-dependent perturbation $\spheremorsefunction^0_s$ of the Morse function $\spheremorsefunction$ on $\InfiniteSphere$.
            We consider octuples $(v, \gamma^+, \gamma^-, u; v^\alpha, \gamma^\alpha, z^\alpha; v^0)$ which extend the septuples in the construction of \autoref{sec:error-term-definition} (see \autoref{fig:one-dimensional-moduli-space-for-equivariant-quantum-seidel-map-intertwining-formula}).
            The map $v^0 : \IntervalClosedOpen{0}{\Infinity} \to \InfiniteSphere$ is a $\spheremorsefunction^0_s$-flowline which satisfies the intersection $v^0(0) = v(0)$ and the limit $v^0(\Infinity) \in \spherecriticalpoint_0$.
            We moreover impose $z^\alpha \in \spheretrivialise_0 (v^0(\Infinity)) \Inverse \cdot \LongitudeLine$, where the action $\cdot$ on $\Sphere$ is the rotation action defined in \autoref{sec:clutching-bundle-circle-action}.
            Notice that this condition is preserved for all $(v^0(\Infinity), z^\alpha) \in \spherecriticalpoint_0 \times \Sphere$ by the natural circle action on $\spherecriticalpoint_0 \times \Sphere$ because $\spherecriticalpoint_0 \subset \InfiniteSphere$ has the inverse action in accordance with \eqref{eqn:circle-action-for-borel-homotopy-quotient}.
            The effect of this construction is that the 2-dimensional freedom of the point $z^\alpha$ has been reduced to a 1-dimensional freedom `along $\LongitudeLine$'; the second dimension of freedom has been absorbed into the $\Circle$-action.
            
            \begin{remark}
                [Regularity]
                We impose regularity conditions on the data so that the moduli spaces of the above octuples are smooth manifolds, as well as the moduli spaces of the above octuples without the condition $z^\alpha \in \spheretrivialise_0 (v^0(\Infinity)) \Inverse \cdot \LongitudeLine$ and with $v^0(\Infinity) \in \spherecriticalpoint_k$ for any $k$.
                Moreover, we use regularity conditions to avoid unnecessary intersections over the poles by asking that the projection $[v(0), z^\alpha] : \ModuliSpace \to \InfiniteSphere \times_\Circle \Sphere$ intersects $\InfiniteComplexProjectiveSpace \times \Set{\Pole^\pm}$ transversally for all the above moduli spaces.
                We impose further regularity conditions to ensure that we control the behaviour of configurations with bubbles just as in Seidel's argument \cite[Section~7]{seidel_$_1997}.
            \end{remark}
            
            We quotient by the free $\Circle$-action to get the moduli space of $\Circle$-equivalence classes of above octuples with the obvious constraints, which we denote by $\ModuliSpace_\tau (\bm{x}^-, \bm{x}^+, A; \bm{\alpha})$.
            It is a smooth oriented manifold of dimension
            \begin{equation}
                \Dimension \ModuliSpace_\tau(\bm{x}^-, \bm{x}^+, A; \bm{\alpha}) = \Modulus{\bm{x}^-} - \Modulus{\bm{x}^+} - \Modulus{\bm{\alpha}} + 2 \FirstChernClass(A) + 1 \EndFullStop
            \end{equation}
            Denote by $K_{\bm{\alpha}}$ the map $\EQuantumCochainComplex _\rho ^\ArbitraryIndex (\Manifold) \to \EQuantumCochainComplex _{\CircleAction \PullBack \rho} ^{\ArbitraryIndex + 2 \MaslovIndex (\Lifted{\CircleAction}) + \Modulus{\bm{\alpha}} - 1} (\Manifold)$ which counts these moduli spaces.
            
        \subsubsection{Boundary of the moduli space}
            
            By standard compactification and gluing arguments, the 1-dimensional moduli spaces $\ModuliSpace_\tau (\bm{x}^-, \bm{x}^+, A; \bm{\alpha})$ will have a boundary composed of broken flowlines and bubbled spheres.
            The sum of these boundary components will be zero.
            Subject to further homotopies, this sum yields the desired relation \eqref{eqn:quantum-intertwining-seidel-equivariant}.
            We list the various components of the boundary below, and detail how they contribute to the sum.
            
            \begin{description}
                \item
                    [$(v\RestrictedTo{\IntervalOpenClosed{-\Infinity}{0}}, \gamma^-)$ breaking]
                        We get a contribution $d \ K_{\bm{\alpha}}(\bm{x}^+)$.

                \item  
                    [$(v\RestrictedTo{\IntervalClosedOpen{0}{\Infinity}}, \gamma^+)$ breaking]
                        We get a contribution $K_{\bm{\alpha}} (d\bm{x}^+)$.
                        
                \item
                    [$(v^\alpha, \gamma^\alpha)$ breaking]
                        We get a contribution $(-1)^{|\bm{x}^+|} K_{d \bm{\alpha}} (\bm{x}^+)$.
                        If $\bm{\alpha}$ is a closed cochain, then this contribution vanishes.
                        
                \item
                    [$v^0$ breaking]
                        As a consequence of the regularity conditions, the only possible breaking of the flowline $v^0$ will be to the critical point $\spherecriticalpoint_1 \in \CriticalPoints(\spheremorsefunction)$.
                        Consider such a breaking, but before we have taken the quotient by the $\Circle$-action.
                        The broken flowline consists of a flowline $v^0$ whose limit is $v^0(\Infinity) = y \in \spherecriticalpoint_1$ and a second unparameterised $\spheremorsefunction$-flowline from $y$ to $\tau \in \Circle \cong \spherecriticalpoint_0$, the identification $\Circle \cong \spherecriticalpoint_0$ given by the map $\spheretrivialise_0$.
                        In such a configuration, the point $z^\alpha$ cannot be either of the poles because of the regularity conditions.
                        As a consequence, the point $z^\alpha$ uniquely determines $\tau \in \Circle$ via $z^\alpha \in \tau \Inverse \cdot \LongitudeLine$.
                        It may be explicitly shown that there is a unique $\spheremorsefunction$-flowline whose limits are any specified points of $\spherecriticalpoint_0$ and $\spherecriticalpoint_1$.
                        It follows that the flowline from $y$ to $\tau$ may be omitted without loss of generality.
                        
                        A standard homotopy argument will separate the flowlines $v^0$ and $v^\alpha$ in $\InfiniteSphere$ (see \autoref{fig:flowline-breaking-to-c1}).
                        Therefore when $v^0$ breaks, we get a contribution of $- \uformal \CupProduct \EQuantumSeidelMap_{\Lifted{\CircleAction}, \bm{\alpha}} (\bm{x}^+)$ to the sum, up to chain homotopy.
                        
\begin{figure}
	\begin{center}
		\begin{tikzpicture}
			\node[inner sep=0] at (0,0) {\includegraphics[width=10 cm]{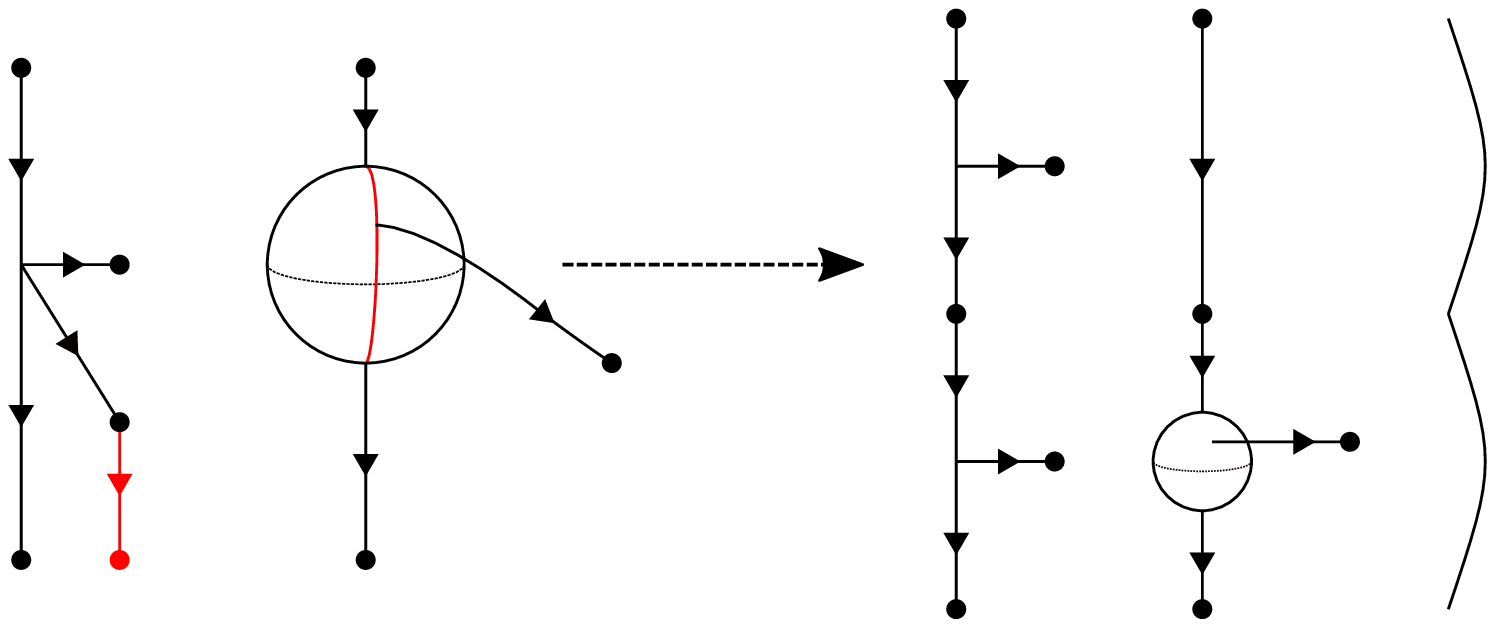}};
			\node at (-3.8,0.3){$\spherecriticalpoint_{k^\alpha}$};
			\node at (-0.6,-0.3){$x^\alpha$};
			\node at (-4.9,-1.9){$\spherecriticalpoint_{k^+}$};
			\node at (-2.6,1.9){$x^-$};
			\node at (-4.9,1.9){$\spherecriticalpoint_{k^-}$};
			\node at (-2.6,-1.9){$x^+$};
			\node at (-4.2,-1.9){$\textcolor{red}{\spherecriticalpoint_0}$};
			\node at (-3.9,-0.7){$\spherecriticalpoint_1$};
			
			\node at (-0.3,0.6){\tiny homotopy argument};
			
			\node at (2.4,-1){$\spherecriticalpoint_{k^\alpha}$};
			\node at (4.4,-0.8){$x^\alpha$};
			\node at (1.4,-2.2){$\spherecriticalpoint_{k^+}$};
			\node at (3.1,-2.2){$x^+$};
			\node at (1.4,2.2){$\spherecriticalpoint_{k^-}$};
			\node at (3.1,2.2){$x^-$};
			\node at (2.3,1){$\spherecriticalpoint_1$};
			
			\node at (5.4,1){$\uformal \CupProduct$};
			\node at (5.7,-1.1){$\EQuantumSeidelMap_{\Lifted{\CircleAction}, \bm{\alpha}}$};
		\end{tikzpicture}
	\end{center}
	\caption{
	    When the flowline $v^0$ breaks, we can use a homotopy argument to split the moduli space into multiplication by $\uformal$ and the weighted equivariant Seidel map.
	    The \textcolor{red}{flowline from $\spherecriticalpoint_1$ to $\spherecriticalpoint_0$} contains redundant information so we omit it.
	}
	\label{fig:flowline-breaking-to-c1}
\end{figure}
                        
                \item
                    [$z^\alpha \to \Pole^\pm$ with bubbling]
                        Due to the regularity conditions, the only possible bubbling configuration is a single bubble in the fibre over one of the two poles $\Pole^\pm$ with the flowline to $x^\alpha$ starting within the fibre.
                        In this configuration, we get $z^\alpha \in \Set{ \Pole^\pm }$, so the condition $z^\alpha \in \spheretrivialise_0 (v^0(\Infinity)) \Inverse \cdot \LongitudeLine$ is automatically satisfied (for any value of $v^0(\Infinity))$.
                        As such, the flowline $v^0$ may be omitted without losing any information because it no longer constrains the point $z^\alpha$.
                        
                        We will treat the case of a bubble over the pole $\Pole^+$.
                        If the bubble is of class $B \in \InterestingSpheres$, then the section $u$ is of class $\SectionClass_{\Lifted{\CircleAction}} + A - B$.
                        We use a standard homotopy argument to insert a broken flowline between the section and the bubble, and also to extend the flowline to $\bm{\alpha}$ so it breaks into a half-flowline to $(\ClutchingInclusions^+)\PullBack \bm{\alpha}$ and an equivariant functorial flowline\footnote{
                            Our construction of the pullback maps is a variant of \cite[Section~1.3]{rot_functoriality_2014}, in which we allow $s$-dependent perturbations of the Morse data instead of perturbing the function.
                            Let $\Manifold^\pm$ be manifolds with Morse-Smale functions $\MorseFunction^\pm : \Manifold^\pm \to \RealNumbers$ (and metrics and orientation data).
                            Given any $\phi : \Manifold^- \to \Manifold^+$, a \define{functorial flowline} is a pair of half-flowlines $(\gamma^- : \IntervalOpenClosed{-\Infinity}{0} \to \Manifold^-, \allowbreak \gamma^+ : \IntervalClosedOpen{0}{\Infinity} \to \Manifold^+)$ of $s$-dependent perturbations of the functions $\MorseFunction^\pm$ which satisfy $\phi (\gamma^-(0)) = \gamma^+(0)$.
                            When $\phi$ is an immersion or a submersion, it follows from standard arguments that the moduli spaces of functorial flowlines between critical points $x^\pm$ is a smooth manifold of dimension $\MorseIndex(x^-; \MorseFunction^-) - \MorseIndex(x^+; \MorseFunction^+)$.
                            The map which counts these moduli spaces is a chain map and is denoted $\phi \PullBack$.
                            The equivariant pullback maps are defined analogously.
                        } from there to $\bm{\alpha}$ (see \autoref{fig:bubbling-over-south-pole}).
                        The result is a contribution of $\EQuantumSeidelMap_{\Lifted{\CircleAction}} (\bm{x}^+ \QuantumProduct_\rho ((\ClutchingInclusions^+)\PullBack \bm{\alpha}))$ to the sum, up to chain homotopy.
                        
                        Analogously, a bubble over the pole $\Pole^-$ yields a contribution
                        \begin{equation}
                           -\EQuantumSeidelMap_{\Lifted{\CircleAction}} (\bm{x}^+) \QuantumProduct_{\CircleAction \PullBack \rho} ((\ClutchingInclusions^-)\PullBack \bm{\alpha})
                        \end{equation}
                        to the sum, up to chain homotopy.
                        
\begin{figure}
	\begin{center}
		\begin{tikzpicture}
			\node[inner sep=0] at (0,0) {\includegraphics[width=10 cm]{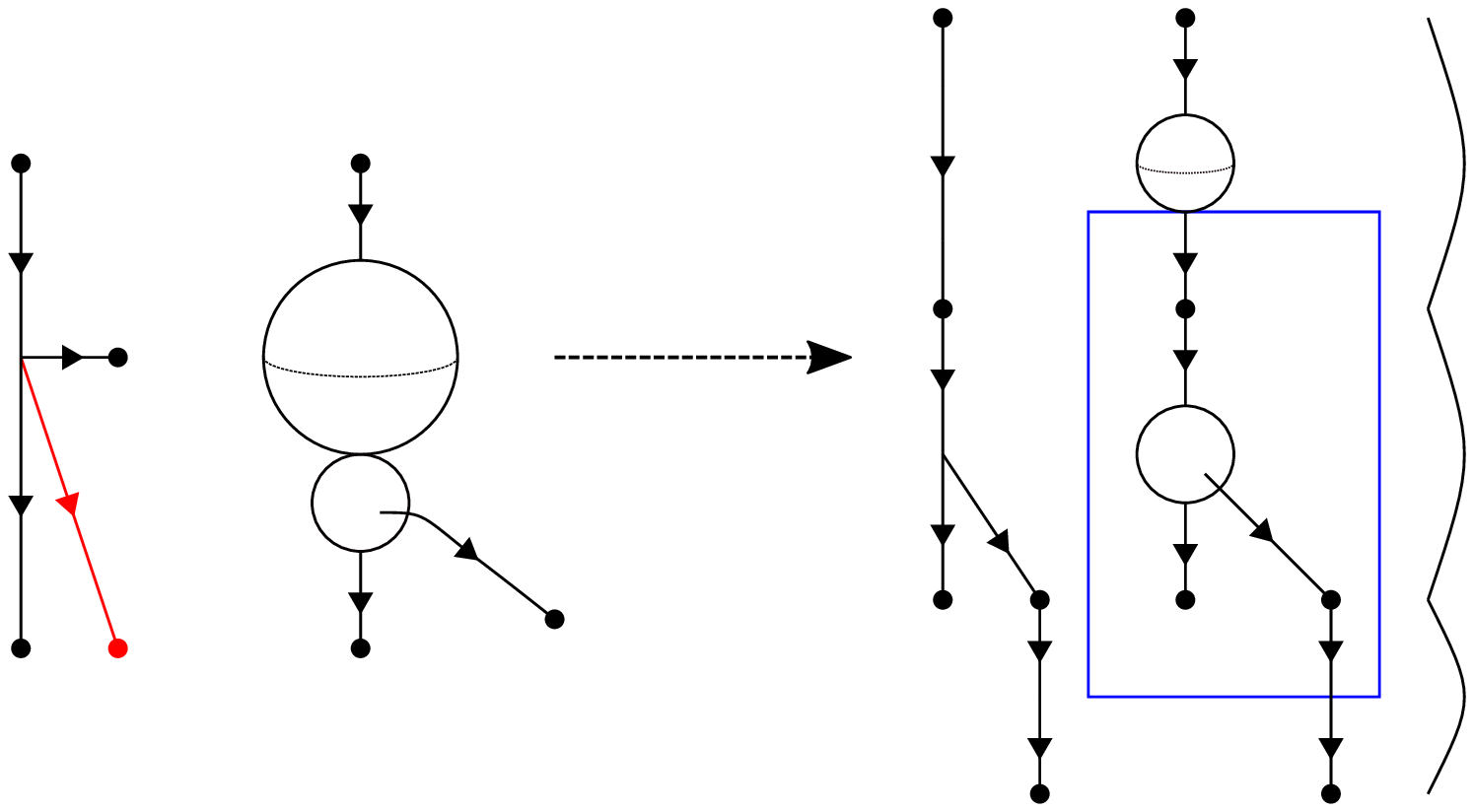}};
			\node at (-3.8,0.3){$\spherecriticalpoint_{k^\alpha}$};
			\node at (-0.9,-1.4){$x^\alpha$};
			\node at (-4.9,-1.9){$\spherecriticalpoint_{k^+}$};
			\node at (-2.6,1.9){$x^-$};
			\node at (-4.9,1.9){$\spherecriticalpoint_{k^-}$};
			\node at (-2.6,-1.9){$x^+$};
			\node at (-4.2,-1.9){$\textcolor{red}{\spherecriticalpoint_0}$};
			
			\node at (-0.3,0.6){\tiny homotopy argument};
			
			\node at (2.1,-2.9){$\spherecriticalpoint_{k^\alpha}$};
			\node at (4.1,-2.9){$x^\alpha$};
			\node at (1.5,-1.6){$\spherecriticalpoint_{k^+}$};
			\node at (3.1,-1.6){$x^+$};
			\node at (1.5,2.9){$\spherecriticalpoint_{k^-}$};
			\node at (3.1,2.9){$x^-$};
			
			\node at (4.1,1.2){\tiny \textcolor{blue}{fibre}};
			
			\node at (5.5,1.7){$\EQuantumSeidelMap_{\Lifted{\CircleAction}}$};
			\node at (5.2,-0.5){$\underset{\rho}{\QuantumProduct}$};
			\node at (5.6,-2){$(\ClutchingInclusions^+) \PullBack \bm{\alpha}$};
		\end{tikzpicture}
	\end{center}
	\caption{
	    When the section bubbles over the south pole, we can use a homotopy argument to get an equivariant quantum product in the \textcolor{blue}{fibre above the south pole} followed by the map $\EQuantumSeidelMap_{\Lifted{\CircleAction}}$.
	    The \textcolor{red}{flowline to $\spherecriticalpoint_0$} is redundant, so we remove it.
	}
	\label{fig:bubbling-over-south-pole}
\end{figure}
            \end{description}
            
            Summing these contributions, we get the equation
            \begin{equation}
                \label{eqn:intertwining-formula-seen-as-from-homotopy-directly}
                -
                \uformal \CupProduct \EQuantumSeidelMap_{\Lifted{\CircleAction}, \bm{\alpha}}(\bm{x})
                +
                \EQuantumSeidelMap_{\Lifted{\CircleAction}} (\bm{x} \underset{\rho}{\QuantumProduct} ((\ClutchingInclusions^+)\PullBack \bm{\alpha}))
                -
                \EQuantumSeidelMap_{\Lifted{\CircleAction}} (\bm{x}) \underset{\CircleAction \PullBack \rho}{\QuantumProduct} ((\ClutchingInclusions^-)\PullBack \bm{\alpha})
                = 0
            \end{equation}
            for $\bm{x} \in \EQuantumCohomology^\ArbitraryIndex_\rho(\Manifold)$, which rearranges to give \eqref{eqn:quantum-intertwining-seidel-equivariant} as desired.
            
\section{Examples}

    \label{sec:example-computations}

    \subsection{Complex plane}
    
        \label{sec:example-of-complex-plane}
    
        The complex plane $\ComplexNumbers$ is an exact open symplectic manifold whose symplectic form is given by $\SymplecticForm = \ExteriorDerivative x \wedge \ExteriorDerivative y$ at points $z = x + \ImaginaryNumber y \in \ComplexNumbers$ .
        The contact form $\ContactForm = \PiNumber \ExteriorDerivative t$ on $\Circle$ and the isomorphism $\ConvexCoordMap : \Circle \times \IntervalClosedOpen{1}{\Infinity} \to \ComplexNumbers$ given by $(t, R) \mapsto \sqrt{R} \ExponentialNumber^{2 \PiNumber \ImaginaryNumber t}$ gives $\ComplexNumbers$ the structure of a convex symplectic manifold.
        The set of Reeb periods is $\ReebPeriods = \PiNumber \Integers$.
        The smooth circle action $\CircleAction : \Circle \times \ComplexNumbers \to \ComplexNumbers$ given by $\CircleElement \cdot z = \ExponentialNumber^{2 \PiNumber \ImaginaryNumber \CircleElement} z$ has Hamiltonian $\CircleHam (z) = \PiNumber \Modulus{z}^2$ and is linear of slope $\PiNumber$.
        The action has a unique fixed point $0_\ComplexNumbers \in \ComplexNumbers$.
        The lift of $\CircleAction$ to $\ContractibleLoopSpaceWithFillings{\ComplexNumbers}$ is unique because $\ComplexNumbers$ is contractible.
        This lift $\Lifted{\CircleAction}$ fixes the point $(\Circle \mapsto 0_\ComplexNumbers, \Disc \mapsto 0_\ComplexNumbers)$ and has Maslov index $\MaslovIndex(\Lifted{\CircleAction}) = 2$.
        
        \newcommand{\LinearHam}[1]{\Ham_{\text{linear}}^{#1}}
        Let $\LinearHam{\lambda} : \ComplexNumbers \to \RealNumbers$ be the autonomous linear Hamiltonian $z \mapsto \lambda \Modulus{z}^2$.
        If $\lambda \notin \ReebPeriods$, then the unique Hamiltonian orbit of $\LinearHam{\lambda}$ is the constant loop at $0_\ComplexNumbers$ and has Conley-Zehnder index $-2 \Floor{\frac{\lambda}{\PiNumber}}$ \cite[Section~3.2]{oancea_survey_2004}.
        Therefore, for $\lambda \notin \ReebPeriods$, the Floer cochain complex is
        \begin{equation}
            \FloerC^\ArbitraryIndex(\ComplexNumbers; \LinearHam{\lambda}) \cong \left\{
                \begin{array}{ll}
                    \Integers   & \text{if $\ArbitraryIndex = -2 \Floor{\frac{\slope}{\PiNumber}}$,} \\
                    0           & \text{otherwise.}
                \end{array}
            \right.
        \end{equation}
        The symplectic cohomology of $\ComplexNumbers$ thus vanishes.
        In contrast, the equivariant cohomology $\ESymplecticCohomology^\ArbitraryIndex_{\Identity_\ComplexNumbers}(\ComplexNumbers)$ is isomorphic to $\RationalNumbers [\uformal, \uformal\Inverse]$ as a $\Integers [\uformal]$-module for the geometric and the algebraic module structures.
        We derive this isomorphism in the course of our discussion below (see \eqref{eqn:equivariant-symplectic-cohomology-of-complex-plane}).
        With the equivariant Seidel map, which is an isomorphism, we get further $\Integers [\uformal]$-module isomorphisms $\ESymplecticCohomology^\ArbitraryIndex_{\CircleAction^r}(\ComplexNumbers) \cong \RationalNumbers [\uformal, \uformal\Inverse]$ for $r \in \Integers$.
        
        Give $\ComplexNumbers$ the standard Riemannian metric, which is $\CircleAction$-invariant.
        The Hamiltonian function $\CircleHam$ is Morse-Smale with respect to this metric and has exactly one critical point: the fixed point\footnote{Give $\UnstableManifold(0_\ComplexNumbers) = \Set{0_\ComplexNumbers}$ the orientation $+1$.} $0_\ComplexNumbers$ of Morse index 0.
        Since $\CircleHam$ is invariant, the function $\CircleHam\Eqnt : \InfiniteSphere \times \ComplexNumbers \to \RealNumbers$ given by $(\infsphereelement, z) \mapsto \CircleHam(z)$ is a regular equivariant Morse function on $\ComplexNumbers$.
        Thus the Morse cohomology of $\ComplexNumbers$ is $\Cohomology^\ArbitraryIndex(\ComplexNumbers; \CircleHam) \cong \Integers \langle 0_\ComplexNumbers \rangle$ and the equivariant Morse cohomology is $\ECohomology^\ArbitraryIndex_{\CircleAction^r}(\ComplexNumbers; \CircleHam\Eqnt) \cong \Integers [\uformal] \langle 0_\ComplexNumbers \rangle$ for any $r \in \Integers$.
        Since the Novikov ring is $\NovikovRing = \Integers$, the quantum cohomology and the equivariant quantum cohomology are simply the Morse cohomology and the equivariant Morse cohomology respectively.
        
        We compute the equivariant quantum Seidel map for the lifted action $\Lifted{\CircleAction}$.
        We can do this for the underlying actions $\rho = (\CircleAction^r) \PullBack \Identity_\ComplexNumbers = \CircleAction^{-r}$, where $r$ is nonnegative and  $\Identity_\ComplexNumbers$ is the identity action on $\ComplexNumbers$.
        
        \begin{theorem}
            \label{thm:equivariant-quantum-seidel-map-for-complex-plane}
            Let $r \ge 0$.
            The equivariant quantum Seidel map 
            \begin{equation}
                \EQuantumSeidelMap_{\Lifted{\CircleAction}} : \ECohomology^\ArbitraryIndex_{\CircleAction^{-r}}(\ComplexNumbers; \CircleHam\Eqnt) \to \ECohomology^{\ArbitraryIndex + 2}_{\CircleAction^{-(r+1)}}(\ComplexNumbers; \CircleHam\Eqnt)
            \end{equation}
            is the $\Integers [\uformal]$-linear extension of $0_\ComplexNumbers \mapsto (r + 1) \uformal \ 0_\ComplexNumbers$.
        \end{theorem}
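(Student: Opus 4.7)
The plan is to reduce the problem to an equivariant continuation-map computation on $\ComplexNumbers$ and then appeal to Zhao's explicit calculation. Applying the equivariant analogue of the PSS diagram \eqref{eqn:diagram-relating-quantum-and-floer-seidel-maps-equivariant-case} with $\rho = \CircleAction^{-r}$, I would identify $\EQuantumSeidelMap_{\Lifted{\CircleAction}}$ with the composition $\EFloerSeidel_{\Lifted{\CircleAction}} \ComposedWith \ContinuationMap$, where $\ContinuationMap$ is the equivariant continuation map from the equivariant Floer cohomology of $\CircleAction \PullBack \SmallEqntHam_0$ (slope $\epsilon - \PiNumber$, equivariant for $\CircleAction^{-(r+1)}$) to that of $\SmallEqntHam_1$ (slope $\epsilon$, also equivariant for $\CircleAction^{-(r+1)}$), for any sufficiently small $\epsilon \in \IntervalOpen{0}{\PiNumber}$.

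Next, I would describe the relevant equivariant Floer cochain complexes explicitly. For any linear Hamiltonian on $\ComplexNumbers$ of slope $\lambda \notin \PiNumber \Integers$, the unique Hamiltonian orbit is the constant loop $\text{ct}_0$ at the fixed point $0_\ComplexNumbers$, of Conley-Zehnder index $-2 \Floor{\lambda / \PiNumber}$. Since $0_\ComplexNumbers$ is fixed by $\CircleAction$, the orbit $\text{ct}_0$ is a fixed point of the combined loop-space action for every $\CircleAction^{-s}$, so the equivariant Floer cochain complex is $\Integers[\uformal]$-free of rank one on $(\spherecriticalpoint_0, \text{ct}_0)$, placed in degree $-2 \Floor{\lambda / \PiNumber}$, with vanishing differential by a parity check on the dimension formula. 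Under this identification, the equivariant Floer Seidel map is the $\Integers$-module isomorphism sending generator to generator, so $\EQuantumSeidelMap_{\Lifted{\CircleAction}}(0_\ComplexNumbers) = n_r \uformal \cdot 0_\ComplexNumbers$, where $n_r \in \Integers$ is the unique integer with $\ContinuationMap(\spherecriticalpoint_0, \text{ct}_0) = n_r (\spherecriticalpoint_1, \text{ct}_0)$.

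To identify $n_r = r+1$, I would invoke Zhao's explicit computation of the equivariant Floer cochain complex and its continuation maps for $\ComplexNumbers$ in \cite[Section~8.1]{zhao_periodic_2014}. The main anticipated obstacle is translating Zhao's conventions (in which the loop-space action is induced by the trivial action $\rho = \Identity_\ComplexNumbers$, so his continuation maps run between positive slopes $\epsilon + k \PiNumber$ for $k \ge 0$) into ours (in which $\rho = \CircleAction^{-(r+1)}$ and $\ContinuationMap$ runs from slope $\epsilon - \PiNumber$ to slope $\epsilon$). This translation is effected by the equivariant Floer Seidel map $\EFloerSeidel_{\Lifted{\CircleAction^{r+1}}}$: by \eqref{eqn:equivariant-floer-seidel-map-commutes-with-equivariant-continuation-map} it commutes with continuation maps, it is a cochain-level isomorphism, and it converts $(\Identity, \lambda)$ into $(\CircleAction^{-(r+1)}, \lambda - (r+1)\PiNumber)$. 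Thus it identifies $\ContinuationMap$ with Zhao's continuation map from slope $\epsilon + r \PiNumber$ to slope $\epsilon + (r+1) \PiNumber$, and reading off Zhao's formula yields $n_r = r+1$.
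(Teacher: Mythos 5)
Your route is essentially the paper's: both arguments push the computation through the equivariant PSS/Seidel/continuation diagram \eqref{eqn:diagram-relating-quantum-and-floer-seidel-maps-equivariant-case}, use the compatibility \eqref{eqn:equivariant-floer-seidel-map-commutes-with-equivariant-continuation-map} of $\EFloerSeidel_{\Lifted{\CircleAction}^{r+1}}$ with continuation maps to convert the trivial-action problem at slopes $\epsilon + r\PiNumber \to \epsilon + (r+1)\PiNumber$ into the $\CircleAction^{-(r+1)}$-equivariant continuation from slope $\epsilon - \PiNumber$ to slope $\epsilon$, and then quote Zhao's computation of that continuation map. Your degree bookkeeping with rank-one complexes for linear Hamiltonians is fine and correctly produces the magnitude $(r+1)$ of the coefficient $n_r$.

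There is, however, a concrete gap in the final step ``reading off Zhao's formula yields $n_r = r+1$'', and it concerns exactly the part of the statement that is a specific signed integer. First, Zhao's conventions differ from this paper's not only in the circle action and the slopes (which your Seidel-map translation handles) but also in the Borel homotopy quotient model: she uses the diagonal action, and passing to the conventions here requires the substitution $\uformal \mapsto -\uformal$ (\autoref{rem:diagonal-action-in-literature-for-equivariant-floer-theory} and the footnote to \autoref{lemma:zhao-chain-complex-maps-for-complex-plane}). Her published differential is $d(\spherecriticalpoint_k, x_{2j-1}) = (\spherecriticalpoint_k, x_{2j-2}) + j(\spherecriticalpoint_{k+1}, x_{2j})$, and reading off the induced continuation map without this substitution gives $-(r+1)\uformal$ rather than $(r+1)\uformal$ for the geometric module structure used in the theorem. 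Second, your identification ``generator to generator'' at the two ends of the diagram is made by two different maps, $\EPSSmap^+$ for the action $\CircleAction^{-r}$ and $\EPSSmap^-$ for $\CircleAction^{-(r+1)}$ (plus, in your variant, same-slope continuation isomorphisms between your rank-one complexes and Zhao's quadratic ones); each such identification of a rank-one $\Integers[\uformal]$-module is canonical only up to sign, so as stated your argument pins down $\EQuantumSeidelMap_{\Lifted{\CircleAction}}(0_\ComplexNumbers) = \pm (r+1)\uformal\, 0_\ComplexNumbers$. The paper closes this by checking that $\EPSSmap^+_{\CircleAction^{-s}}(0_\ComplexNumbers) = \epsilon\, (\CircleAction^{s})\PullBack x_{2s}$ with one and the same sign $\epsilon \in \{\pm 1\}$ for every $s \ge 0$ (the asymptotics of the spiked discs, hence the coherent-orientation sign, are identical for all these maps), so that the two PSS signs cancel; you need an analogous uniform-sign argument to land on the positive coefficient claimed in the theorem.
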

        
        Rather than finding the equivariant quantum Seidel map directly, we will appeal to \eqref{eqn:diagram-relating-quantum-and-floer-seidel-maps-equivariant-case} and opt to find the continuation map instead.
        We will use the sequence of Hamiltonians defined by Zhao in \cite[Section~8.1]{zhao_periodic_2014}, which we outline below.
        
        \newcommand{\QuadraticHam}[1]{H_{\operatorname{quadratic}}^{#1}}
        For each nonnegative integer $s \in \Integers_{\ge 0}$, let $\QuadraticHam{s \PiNumber + 1} : \Circle \times \ComplexNumbers \to \RealNumbers$ be a time-dependent Hamiltonian function such that:
        \begin{itemize}
            \item On $\Modulus{z}^2 < 1$, the function is negative, achieves its minimum at $0_\ComplexNumbers$ and is Morse with exactly one critical point at $0_\ComplexNumbers$;
            \item On $1 < \Modulus{z}^2 < s \PiNumber + 2$, the function $\QuadraticHam{s \PiNumber + 1}$ equals $\frac{1}{2}(\Modulus{z}^2 - 1)^2$ plus a small time-dependent perturbation around $\Modulus{z}^2 = j \PiNumber + 1$ for $j = 1, \ldots, s$; and
            \item On $\Modulus{z}^2 > s \PiNumber + 2$, the function $\QuadraticHam{s \PiNumber + 1}$ is linear\footnote{
                We slightly change $\QuadraticHam{s \PiNumber + 1}$ near $\Modulus{z}^2 = s \PiNumber + 2$ so that the function is smooth.
            } of slope $s \PiNumber + 1$.
        \end{itemize}
        The Hamiltonian orbits of $\QuadraticHam{s \PiNumber + 1}$ occur only at $0_\ComplexNumbers$ and at $R = \Modulus{z}^2 = j \PiNumber + 1$.
        Modulo the perturbations, the slope of $\QuadraticHam{s \PiNumber + 1}$ when $R = j \PiNumber + 1$ is
        \begin{equation}
            \LiebnitzDerivative{\left(\QuadraticHam{s \PiNumber + 1}\right)}{R} = \LiebnitzDerivative{}{R} {\frac{1}{2}(\Modulus{z}^2 - 1)^2} =  R - 1 = j \PiNumber \EndFullStop
        \end{equation}
        Thus the slope at $R = j \PiNumber + 1$ is the Reeb period $j \PiNumber \in \ReebPeriods$.
        If we hadn't perturbed $\QuadraticHam{s \PiNumber + 1}$ in this region, we would get a $\Circle$-family of Hamiltonian orbits corresponding to the Reeb orbit of period $j \PiNumber$, as per \eqref{eqn:hamiltonian-vector-field-at-infinity-is-multiple-of-reeb-vector-field}.
        Instead, as a result of Zhao's perturbation, we get two Hamiltonian orbits, which we denote by $x_{2j-1}$ and $x_{2j}$.
        Denote by $x_0$ the constant Hamiltonian orbit at $0_\ComplexNumbers$.
        The orbit $x_l$ has degree $-l$ for all $0 \le l \le 2s$.
        
        With this choice of Hamiltonian, the equivariant Floer cochain complex (with respect to the trivial action $\Identity_\ComplexNumbers$) is given by
        \begin{equation}
            \label{eqn:equivariant-floer-cochain-complex-for-complex-plane-zhao-hamiltonians}
            \EFloerC^\ArbitraryIndex_{\Identity_\ComplexNumbers}(\ComplexNumbers; \QuadraticHam{s \PiNumber + 1}) = \bigoplus_{k \ge 0} \bigoplus_{j = 0}^{2s} \Integers \langle (\spherecriticalpoint_k, x_j) \rangle \EndFullStop
        \end{equation}
        
        \begin{lemma}
            [{\cite[Section~8.1]{zhao_periodic_2014}}]
            \label{lemma:zhao-chain-complex-maps-for-complex-plane}
            There exist choices of all remaining data such that the differential of \eqref{eqn:equivariant-floer-cochain-complex-for-complex-plane-zhao-hamiltonians} is given by\footnote{
                Zhao derived the equation $d (\spherecriticalpoint_k, x_{2j-1}) = (\spherecriticalpoint_k, x_{2j-2}) + j (\spherecriticalpoint_{k+1}, x_{2j})$, which has different signs to \eqref{eqn:differential-on-equivariant-floer-cohomology-complex-plane}.
                Her result changes to \eqref{eqn:differential-on-equivariant-floer-cohomology-complex-plane} once we apply the rule $\uformal \mapsto - \uformal$ to account for our different conventions, as in \autoref{rem:diagonal-action-in-literature-for-equivariant-floer-theory}.
            }
            \begin{equation}
                \label{eqn:differential-on-equivariant-floer-cohomology-complex-plane}
                d (\spherecriticalpoint_k, x_{2j-1}) = (\spherecriticalpoint_k, x_{2j-2}) - j (\spherecriticalpoint_{k+1}, x_{2j}), \qquad d (\spherecriticalpoint_k, x_{2j}) = 0
            \end{equation}
            and the continuation map $\kappa_s : \EFloerC^\ArbitraryIndex_{\Identity_\ComplexNumbers}(\ComplexNumbers; \QuadraticHam{s \PiNumber + 1}) \to \EFloerC^\ArbitraryIndex_{\Identity_\ComplexNumbers}(\ComplexNumbers; \QuadraticHam{(s+1) \PiNumber + 1})$ is the inclusion map on the cochain complex.
        \end{lemma}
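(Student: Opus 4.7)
The plan is to adapt Zhao's computation in \cite[Section~8.1]{zhao_periodic_2014} using the convention translation recorded in \autoref{rem:diagonal-action-in-literature-for-equivariant-floer-theory}. Since Zhao uses the diagonal Borel convention, the change to our convention is $\uformal \mapsto - \uformal$, which turns her coefficient $+j$ into our $-j$ while leaving the remaining coefficient $+1$ untouched. The main obstacle is not the overall shape of the differential but reproducing the factor $j$ within our Borel model and pinning down the signs.

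For the orbit structure, at each radius $R = j\PiNumber + 1$ the unperturbed $\QuadraticHam{s \PiNumber + 1}$ carries a $\Circle$-family of Hamiltonian orbits corresponding to the $j$-fold iterate of the simple Reeb orbit on $\Circle$. Zhao's time-dependent perturbation splits each such family into two nondegenerate orbits $x_{2j-1}, x_{2j}$ of Conley--Zehnder indices $-(2j-1)$ and $-2j$. I would prove the non-equivariant component $d x_{2j-1} = x_{2j-2}$ via a standard Morse--Bott-to-Morse degeneration: rigid Floer cylinders between consecutive radii are in bijection with the unique minimum-to-maximum gradient trajectory of the perturbing Morse functions on each family, and automatic transversality on $\ComplexNumbers$ forces the signed count to be $\pm 1$. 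The equation $d x_{2j} = 0$ is immediate from the Conley--Zehnder indices of the available targets, combined with the fact that $\ComplexNumbers$ carries no nontrivial spheres to spoil the index count.

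The equivariant correction is the delicate step. An equivariant Floer trajectory $[v, u]$ contributing $-j(\spherecriticalpoint_{k+1}, x_{2j})$ to $d(\spherecriticalpoint_k, x_{2j-1})$ consists of a nonconstant $\spheremorsefunction$-flowline $v : \RealNumbers \to \InfiniteSphere$ from $\spherecriticalpoint_{k+1}$ at $-\Infinity$ to $\spherecriticalpoint_k$ at $+\Infinity$, together with a $v$-dependent parameterised Floer cylinder $u$ from $x_{2j}$ to $x_{2j-1}$ for the $\InfiniteSphere$-dependent equivariant Hamiltonian. Before quotienting by the $\Circle$-action, the moduli space of such $(v, u)$ is governed by the $j$-fold winding of $x_{2j}$ around its underlying simple Reeb orbit: the asymptotic angular parameterisations of $v$ at the critical spheres combined with the winding data yield a principal $\Circle$-bundle of degree $j$ over a circle, whose $\Circle$-quotient consists of exactly $j$ rigid points. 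I would verify this either by quoting Zhao directly and applying the $\uformal \mapsto -\uformal$ translation, or, to keep the argument self-contained, by constructing an explicit $\InfiniteSphere$-equivariant model in a neighbourhood of the orbit family and solving the linearised Floer equation there. The signs are fixed by the coherent orientation \cite[Appendix~B]{ritter_topological_2013}, with any remaining ambiguity absorbed into the lemma's clause ``there exist choices of all remaining data''.

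For the continuation map, I would choose an equivariant monotone homotopy $\EqntHam^{s, s+1}$ which is $\InfiniteSphere$-independent and coincides with $\QuadraticHam{s \PiNumber + 1}$ on the region where $R \le s \PiNumber + 2$, before transitioning linearly out to $\QuadraticHam{(s+1) \PiNumber + 1}$. Every orbit of $\QuadraticHam{s \PiNumber + 1}$ and every orbit $x_l$ with $l \le 2s$ of $\QuadraticHam{(s+1) \PiNumber + 1}$ coincide as loops in $\ComplexNumbers$ and live in the region where the two Hamiltonians agree, so the constant cylinders at these orbits solve the parameterised equivariant Floer equation. Combined with the maximum principle and an action comparison to rule out further rigid solutions between these orbits, this identifies $\kappa_s$ as the inclusion map on the cochain complex.
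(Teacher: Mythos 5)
Your first route --- quote Zhao's computation and convert conventions via $\uformal \mapsto -\uformal$ --- is exactly what the paper does: the lemma is stated as a citation of \cite[Section~8.1]{zhao_periodic_2014}, with the footnote recording the sign translation of \autoref{rem:diagonal-action-in-literature-for-equivariant-floer-theory}, and no independent proof is given. Your self-contained reconstruction therefore goes beyond the paper, and its overall shape (Morse--Bott splitting of the $\Circle$-families at $R = j\PiNumber + 1$, the multiplicity $j$ of the $j$-fold covered Reeb orbit producing the coefficient of $(\spherecriticalpoint_{k+1}, x_{2j})$, signs absorbed into ``there exist choices'' via the coherent orientation) is a fair outline of Zhao's argument. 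Two steps are glossed, however. First, $d(\spherecriticalpoint_k, x_{2j}) = 0$ is \emph{not} ``immediate from the Conley--Zehnder indices of the available targets'': $x_{2j-1}$ has exactly the right degree, as do the equivariant generators $(\spherecriticalpoint_{k+p}, x_{2j+2p-1})$, so excluding them needs the Morse--Bott cancellation on the orbit circle (or $d^2=0$ combined with the already-computed terms for the first of these) and, for the higher-$p$ terms, genuine input from Zhao's explicit complex. Second, and more seriously, for the continuation map: since $(\spherecriticalpoint_k, x_l)$ has degree $2k - l$, the map $\kappa_s$ may a priori contain cross terms $(\spherecriticalpoint_{k+p}, x_{l+2p})$ with $p>0$, and your proposed ``action comparison'' is precisely the kind of argument that breaks equivariantly, because the action functional need not decrease along equivariant solutions that are nonconstant in $\InfiniteSphere$ (cf. \autoref{sec:filtration-and-positive-equivariant-symplectic-cohomology}); to rule those terms out you would need either Zhao's explicit choices of data or a cut-off filtration of the type $F_h$ (following \cite[Appendix~D]{mclean_mckay_2018}) rather than a naive action estimate. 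If you simply lean on the citation, as the paper itself does, none of this extra work is required.
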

        
        Using this explicit cochain complex, we deduce that the inclusion\footnote{
            The equation $\uformal \CupProduct (\spherecriticalpoint_k, x_{2s}) = (\spherecriticalpoint_{k + 1}, x_{2s})$ holds in $\EFloerC^\ArbitraryIndex _{\Identity_\ComplexNumbers} (\ComplexNumbers; \QuadraticHam{s \PiNumber + 1})$ for degree reasons, so the inclusion is a $\Integers [\uformal]$-module map for the geometric module structure.
        } map $\Integers [\uformal] \langle x_{2s} \rangle \cong \bigoplus_k \Integers \langle (\spherecriticalpoint_k, x_{2s}) \rangle \Inclusion \EFloerC^\ArbitraryIndex_{\Identity_\ComplexNumbers}(\ComplexNumbers; \QuadraticHam{s \PiNumber + 1})$ induces an isomorphism on cohomology.
        Moreover, with respect to this isomorphism, the continuation map $\kappa_s$ is the map $x_{2s} \mapsto (s+1)\uformal \ x_{2(s+1)}$.
        Explicitly, we have
        \begin{equation}
            \begin{tikzcd}
                \EFloerCohomology^\ArbitraryIndex_{\Identity_\ComplexNumbers}(\ComplexNumbers; \QuadraticHam{s \PiNumber + 1})
                \arrow[r, "\kappa_s"]
                & \EFloerCohomology^\ArbitraryIndex_{\Identity_\ComplexNumbers}(\ComplexNumbers; \QuadraticHam{(s+1) \PiNumber + 1})
                \\
                \Integers [\uformal] \langle x_{2s} \rangle
                \arrow[u, "\cong"]
                \arrow[r, "x_{2s} \mapsto (s+1)\uformal \ x_{2(s+1)}"']
                & \Integers [\uformal] \langle x_{2(s+1)} \rangle
                \arrow[u, "\cong"]
            \end{tikzcd}
        \end{equation}
        so that the map $\kappa_s$ is really multiplication by $(s+1)\uformal$.
        
        To compute the equivariant symplectic cohomology of $\ComplexNumbers$, it is enough to consider the direct limits of the continuation maps $\kappa_s$ because the slopes of $\QuadraticHam{s \PiNumber + 1}$ are arbitrarily large.
        The map $\kappa_s$ is multiplication by $(s+1)\uformal$, so it contributes to ``allowing division'' by $(s+1)\uformal$ in the direct limit.
        Thus we get the isomorphism
        \begin{equation}
            \label{eqn:equivariant-symplectic-cohomology-of-complex-plane}
            \ESymplecticCohomology^\ArbitraryIndex_{\Identity_\ComplexNumbers}(\ComplexNumbers) \cong \RationalNumbers [\uformal, \uformal\Inverse] \EndFullStop
        \end{equation}
        
        \begin{proof}[Proof of \autoref{thm:equivariant-quantum-seidel-map-for-complex-plane}]
            Fix $r \ge 0$.
            Consider the following diagram, which is a combination of \eqref{eqn:equivariant-floer-seidel-map-commutes-with-equivariant-continuation-map} and \eqref{eqn:diagram-relating-quantum-and-floer-seidel-maps-equivariant-case}.
            The top square commutes on the cochain complexes whereas the bottom square commutes on cohomology.
            \begin{equation}
                \label{eqn:commutative-diagram-for-computation-of-equivariant-quantum-seidel-map-for-complex-plane}
                \begin{tikzcd}
                    \EFloerC^{\ArbitraryIndex - 2r}_{\CircleAction^0}(\ComplexNumbers; \QuadraticHam{r \PiNumber + 1}) \arrow[d, "\cong"', "\EFloerSeidel_{\Lifted{\CircleAction}^{r+1}}"] \arrow[r, "\kappa_r"] & \EFloerC^{\ArbitraryIndex - 2r}_{\CircleAction^0}(\ComplexNumbers; \QuadraticHam{(r + 1) \PiNumber + 1}) \arrow[d, "\cong"', "\EFloerSeidel_{\Lifted{\CircleAction}^{r+1}}"]  \\
                    \EFloerC^{\ArbitraryIndex + 2}_{ \CircleAction^{-(r+1)}} (\ComplexNumbers;(\CircleAction^{r+1} )\PullBack\QuadraticHam{r \PiNumber + 1}) \arrow[r, "(\CircleAction^{r+1})\PullBack \kappa_r"]                & \EFloerC^{\ArbitraryIndex + 2} _{ \CircleAction^{-(r+1)}} (\ComplexNumbers;(\CircleAction^{r+1} )\PullBack\QuadraticHam{(r+1) \PiNumber + 1}) \arrow[dd, "\EPSSmap^-_{\CircleAction^{-(r+1)}}"] \\
                    \EFloerC^{\ArbitraryIndex} _{\CircleAction^{-r}} (\ComplexNumbers; (\CircleAction^r )\PullBack\QuadraticHam{r \PiNumber + 1}) \arrow[u, "\EFloerSeidel_{\Lifted{\CircleAction}}"', "\cong"]                  &                     \\
                    \EQuantumCochainComplex^\ArbitraryIndex_{\CircleAction^{-r}}(\ComplexNumbers) \arrow[r, "\EQuantumSeidelMap_{\Lifted{\CircleAction}}"] \arrow[u, "\EPSSmap^+_{\CircleAction^{-r}}"'] & \EQuantumCochainComplex^{\ArbitraryIndex+2}_{\CircleAction^{-(r+1)}}(\ComplexNumbers)                 
                \end{tikzcd}
            \end{equation}
            Here, we use the subscript on $\EPSSmap^\pm$ to record the underlying circle action so we can distinguish the different maps.
            Moreover, we use the identity $(\CircleAction^r) \PullBack \Identity_\ComplexNumbers = \CircleAction^{-r}$ to simplify notation.
            
            An inspection of Zhao's explicit perturbation finds that there is\footnote{This $\epsilon$ will depend on the choice of coherent orientation that was made in \autoref{lemma:zhao-chain-complex-maps-for-complex-plane}.} $\epsilon \in \Set{\pm 1}$ such that $\EPSSmap^+_{\CircleAction^{-s}}(0_\ComplexNumbers) = \epsilon \ (\CircleAction^s) \PullBack x_{2s}$ for all $s \ge 0$ because the asymptotic behaviour of a spiked disc is the same for all these maps (see \cite[Appendix~B]{ritter_topological_2013} for a characterisation of the coherent orientation).
            
            Thus, in cohomology, the element $0_\ComplexNumbers$ is mapped in \eqref{eqn:commutative-diagram-for-computation-of-equivariant-quantum-seidel-map-for-complex-plane} as below.
            \begin{equation}
                \label{eqn:mapping-element-according-to-commutative-diagram-for-complex-plane-computation}
                \begin{tikzcd}
                \epsilon \ x_{2r} \arrow[r, maps to]                     & \epsilon \ (r+1) \uformal \ x_{2(r+1)} \arrow[d, maps to]         \\
                \epsilon \ (\CircleAction^{r+1})\PullBack x_{2r} \arrow[r, maps to] \arrow[u, maps to] & \epsilon \ (r+1) \uformal \ (\CircleAction^{r+1})\PullBack x_{2(r+1)} \arrow[dd, maps to] \\
                \epsilon \ (\CircleAction^r)\PullBack x_{2r} \arrow[u, maps to]                    &                              \\
                0_\ComplexNumbers \arrow[u, maps to] \arrow[r, maps to]  & \epsilon^2 \ (r+1) \uformal \ 0_\ComplexNumbers                          
                \end{tikzcd}
            \end{equation}
            Thus we have $0_\ComplexNumbers \mapsto (r+1) \uformal \ 0_\ComplexNumbers$ as desired.
        \end{proof}
        
        This result generalises to $\ComplexNumbers^\dimM$ and the action $\CircleElement \cdot (z_1, \ldots, z_n) = (\ExponentialNumber^{2 \PiNumber \ImaginaryNumber \CircleElement} z_1, \ldots, \ExponentialNumber^{2 \PiNumber \ImaginaryNumber \CircleElement} z_n)$.
        This action $\CircleAction$ also has exactly one fixed point, $0_{\ComplexNumbers^\dimM}$.
        There exist analogous data to describe the equivariant cohomology with $0_{\ComplexNumbers^\dimM}$ the unique minimal critical point.
        Since $\ComplexNumbers^\dimM$ is exact and equivariantly contractible, the equivariant quantum product is trivial.
        
        \begin{theorem}
            \label{thm:equivariant-quantum-seidel-map-complex-vector-space}
            Let $r \ge 0$.
            The equivariant quantum Seidel map $\EQuantumSeidelMap_{\Lifted{\CircleAction}} : \ECohomology^\ArbitraryIndex_{\CircleAction^{-r}}(\ComplexNumbers^\dimM) \to \ECohomology^{\ArbitraryIndex + 2n}_{\CircleAction^{-(r+1)}}(\ComplexNumbers^\dimM)$ is the $\Integers [\uformal]$-linear extension of the assignment $0_{\ComplexNumbers^\dimM} \mapsto ((r + 1) \uformal)^\dimM \ 0_{\ComplexNumbers^\dimM}$.
        \end{theorem}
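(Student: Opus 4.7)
The plan is to extend the argument of \autoref{thm:equivariant-quantum-seidel-map-for-complex-plane} using a product Hamiltonian construction on $\ComplexNumbers^\dimM$. Since $\ComplexNumbers^\dimM$ is exact and equivariantly contractible and the Novikov ring is $\NovikovRing = \Integers$, we have $\EQuantumCohomology^\ArbitraryIndex_{\CircleAction^{-r}}(\ComplexNumbers^\dimM) \cong \Integers [\uformal] \langle 0_{\ComplexNumbers^\dimM} \rangle$ for every $r \ge 0$. The diagonal action satisfies $\MaslovIndex(\Lifted{\CircleAction}) = \dimM$ (by additivity from the $1$-dimensional case, one unit per coordinate), so the equivariant quantum Seidel map has degree $2\dimM$ and, by $\Integers [\uformal]$-linearity, is determined by a single integer $c_r$ via $0_{\ComplexNumbers^\dimM} \mapsto c_r \uformal^\dimM \cdot 0_{\ComplexNumbers^\dimM}$; it remains to show $c_r = (r+1)^\dimM$.

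Mimicking Zhao's $1$-dimensional construction, introduce the $\CircleAction$-invariant Hamiltonians
\begin{equation*}
    \widetilde{\Ham}^{s \PiNumber + 1}(z_1, \ldots, z_\dimM) = \sum_{i=1}^\dimM \QuadraticHam{s \PiNumber + 1}(z_i),
\end{equation*}
which are linear of slope $s \PiNumber + 1$ at infinity since $\sum_i |z_i|^2 = R$ on $\ComplexNumbers^\dimM$. The Hamiltonian vector field decouples across the factors, so Hamiltonian orbits are products $(x_{l_1}, \ldots, x_{l_\dimM})$ of Zhao's $1$-dimensional orbits, with Conley-Zehnder index $-\sum_i l_i$. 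Extend to equivariant Floer data of product form $\widetilde{\Ham}^{s\PiNumber+1, \eqnt}_{w,t}(z) = \sum_i \Ham^{s\PiNumber+1, \eqnt}_{w,t}(z_i)$ and similarly for the equivariant almost complex structure, so that the equivariant Floer equation decouples into $\dimM$ copies of the $1$-dimensional equation coupled only by a shared gradient flowline $v$ in $\InfiniteSphere$.

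A dimension count shows that the equivariant continuation map from slope $s\PiNumber+1$ to slope $(s+1)\PiNumber+1$ applied to the product generator $(\spherecriticalpoint_0, (x_{2s}, \ldots, x_{2s}))$ must produce contributions at $(\spherecriticalpoint_\dimM, (x_{2(s+1)}, \ldots, x_{2(s+1)}))$: the degree $-2\dimM s$ of the source is matched by the target only if the $\InfiniteSphere$-flowline $v$ ascends exactly $\dimM$ levels. A product/Künneth-type analysis of the decoupled moduli spaces then identifies the signed count as $(s+1)^\dimM$ --- the $\dimM$-fold product of Zhao's $1$-dimensional count for a single ascent --- so the continuation map acts on this generator as multiplication by $((s+1)\uformal)^\dimM$, generalising \autoref{lemma:zhao-chain-complex-maps-for-complex-plane}. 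Substituting this into the analogue of \eqref{eqn:commutative-diagram-for-computation-of-equivariant-quantum-seidel-map-for-complex-plane} and repeating the chase of \eqref{eqn:mapping-element-according-to-commutative-diagram-for-complex-plane-computation} yields $0_{\ComplexNumbers^\dimM} \mapsto (r+1)^\dimM \uformal^\dimM \cdot 0_{\ComplexNumbers^\dimM}$.

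The main technical obstacle is justifying the product formula for the equivariant continuation count. The diagonal $\Circle$-action on $\ComplexNumbers^\dimM$ is \emph{not} a product of $\dimM$ independent $\Circle$-actions (it is the restriction of the $(\Circle)^\dimM$-action to the diagonal subgroup), so equivariant Floer cohomology does not admit an automatic Künneth splitting; instead, the $\dimM$ copies of Zhao's $1$-dimensional solutions are genuinely linked through the common flowline $v$. Verifying that the signed count factors exactly as $(s+1)^\dimM$ without interaction terms requires careful analysis of orientation signs together with a transversality argument --- most likely a homotopy from the non-generic product data to generic equivariant data, along which the counts are controlled --- to confirm that the decoupled product structure survives on cohomology.
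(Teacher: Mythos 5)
Your route is genuinely different from the paper's, and its central step is exactly the one you defer. Everything rests on the claim that the signed count of equivariant continuation solutions for the product data factors as $(s+1)^{\dimM}$ with no cross terms and the correct overall sign, and you postpone precisely this to ``careful analysis of orientation signs together with a transversality argument.'' That is not a technical footnote but the whole content of the theorem: the paper itself notes, at the start of its proof, that running the $\ComplexNumbers$-style computation in higher dimensions (deducing the differential, e.g.\ via the McLean--Ritter spectral sequences) determines the formula \emph{only up to sign}, and in the one-dimensional case the sign was only fixed by the $\epsilon^2$ cancellation through the PSS maps in \eqref{eqn:mapping-element-according-to-commutative-diagram-for-complex-plane-computation}; you offer no substitute for that mechanism. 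Moreover, because the $\dimM$ coordinates share a single flowline $v$ in $\InfiniteSphere$, the equivariant complex for the diagonal action is not the tensor product over $\Integers[\uformal]$ of Zhao's complexes, so the analogue of \autoref{lemma:zhao-chain-complex-maps-for-complex-plane} (the differential \emph{and} the continuation map) is not available by a formal K\"{u}nneth argument and would have to be recomputed from scratch. There are also admissibility problems with your data: the product Hamiltonian $\sum_{i=1}^{\dimM} H^{s\PiNumber+1}_{\operatorname{quadratic}}(z_i)$ is not linear at infinity on $\ComplexNumbers^{\dimM}$ (on the convex end there are points with $R=\sum_i\Modulus{z_i}^2$ large but some $\Modulus{z_i}$ bounded, where the function is not of the form $\slope R+\mu$), so it is not Floer data in this paper's framework; compactness can be rescued coordinate-wise with product almost complex structures, but product data are typically non-regular, and a generic perturbation destroys exactly the decoupling your count relies on. As written, your argument establishes at best $0_{\ComplexNumbers^{\dimM}}\mapsto \pm((r+1)\uformal)^{\dimM}\,0_{\ComplexNumbers^{\dimM}}$.

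For comparison, the paper sidesteps all of this: by \autoref{rem:minimal-fixed-sections-of-clutching-bundle} the only holomorphic section contributing to $\EQuantumSeidelMap_{\Lifted{\CircleAction}}(0_{\ComplexNumbers^{\dimM}})$ is the minimal fixed section at the origin, so the evaluation reduces to a purely Morse-theoretic operator $A^p_\phi$ counting equivariant flowlines through the fixed point. That operator satisfies an honest product formula \eqref{eqn:intersection-with-fixed-point-map-compatible-with-product} for the diagonal action on a product (a statement about Borel constructions proved by standard homotopies), and the one-dimensional factor is exactly what \autoref{thm:equivariant-quantum-seidel-map-for-complex-plane} already gives, signs included. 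If you wish to keep your approach, you would need to prove a genuine chain-level K\"{u}nneth-type statement for equivariant Floer theory with a shared $\InfiniteSphere$-flowline (including regularity of, or controlled comparison with, product data) together with a sign argument replacing the $\epsilon^2$ trick; alternatively, adopt the reduction to fixed sections, which is what makes the product factorisation legitimate.
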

        
        \begin{proof}
            A similar approach to the calculation for $\ComplexNumbers$, using the spectral sequences from \cite[Corollary~7.2]{mclean_mckay_2018} to deduce the differential, yields the desired formula, however only up to sign.
            Instead, we will derive the $\ComplexNumbers^\dimM$ case directly from \autoref{thm:equivariant-quantum-seidel-map-for-complex-plane}.
            
            By \autoref{rem:minimal-fixed-sections-of-clutching-bundle}, the only holomorphic section of the clutching bundle is the minimal fixed section at $0_{\ComplexNumbers^\dimM}$.
            It follows that $\EQuantumSeidelMap_{\Lifted{\CircleAction}} (0_{\ComplexNumbers^\dimM})$ may be characterised by intersecting equivariant flowlines with the fixed point $0_{\ComplexNumbers^\dimM}$.
            Thus $\EQuantumSeidelMap_{\Lifted{\CircleAction}} (0_{\ComplexNumbers^\dimM}) = A^{0_{\ComplexNumbers^\dimM}}_{\CircleAction^{-(r+1)}}(0_{\ComplexNumbers^\dimM})$, where we define the map $A$ below.
            The theorem immediately follows from \eqref{eqn:intersection-with-fixed-point-map-compatible-with-product}, which allows us to express $A^{0_{\ComplexNumbers^\dimM}}_{\CircleAction^{-(r+1)}}(0_{\ComplexNumbers^\dimM})$ as the $\dimM$-th power of $A^{0_{\ComplexNumbers}}_{\CircleAction^{-(r+1)}}(0_{\ComplexNumbers})$, and the computation in $\ComplexNumbers$ from \autoref{thm:equivariant-quantum-seidel-map-for-complex-plane}.
            
            Let $X$ be a manifold with a smooth circle action $\phi$ and fixed point $p \in X$.
            Equip $X$ with an equivariant Morse function.
            The map $A^p_\phi : \ECohomology^\ArbitraryIndex_\phi(X) \to \ECohomology^{\ArbitraryIndex + \Dimension(X)}_\phi(X)$ counts equivariant $s$-dependent negative gradient trajectories that intersect $p$ at $s=0$.
            Notice that, since $p$ is a fixed point, this intersection condition is independent of the representative of the equivariant trajectory.
            
            Given two such manifolds $X$ and $Y$ with circle actions $\phi_X$ and $\phi_Y$ and fixed points $p_X$ and $p_Y$ respectively, consider the following diagram.
            \begin{equation}
                \label{eqn:diagram-for-fibre-product-of-product}
                \begin{tikzcd}
                    \InfiniteSphere \times_\Circle (X \times Y) \arrow[d, "\pi_Y"] \arrow[r, "\pi_X"] & \InfiniteSphere \times_\Circle X \arrow[d] \\
                    \InfiniteSphere \times_\Circle Y \arrow[r]                     & \InfiniteSphere / \Circle          
                \end{tikzcd}
            \end{equation}
            Standard homotopies yield the equation
            \begin{equation}
                \label{eqn:intersection-with-fixed-point-map-compatible-with-product}
                A^{(p_X, p_Y)}_{(\phi_X, \phi_Y)} \ComposedWith (\pi_X \PullBack \CupProduct \pi_Y \PullBack) = (\pi_X \PullBack A^{p_X}_{\phi_X}) \CupProduct (\pi_Y \PullBack A^{p_Y}_{\phi_Y}) \EndFullStop
            \end{equation}
        \end{proof}
    
    \subsection{Projective space}
    
        \label{sec:example-of-projective-space}
    
        The complex projective space $\Projective^\dimM$ is a closed monotone K\"{a}hler manifold with the Fubini-Study symplectic form $\SymplecticForm\FubiniStudy$ and the Fubini-Study metric.
        Its Novikov ring is $\NovikovRing = \Integers [\NovVariable, \NovVariable\Inverse]$, where $\NovVariable$ is a formal variable of degree $2(\dimM + 1)$.
        The standard Morse function on $\Projective^\dimM$ is the function $\MorseFunction_{\Projective^\dimM} ([z_0:\cdots:z_\dimM]) = \sum_{k=0}^\dimM k \Modulus{z_k}^2$.
        The critical points of $\MorseFunction_{\Projective^\dimM}$ are the unit vectors\footnote{
            Give $\UnstableManifold(e_k)$ the orientation that comes naturally from the complex structure.
        } $e_k$, each with Morse index $\MorseIndex(e_k; \MorseFunction_{\Projective^\dimM}) = 2k$.
        The Hamiltonian circle action $\CircleAction$ given by $\CircleElement \cdot [z_0: z_1:\cdots:z_\dimM] = [z_0: \ExponentialNumber^{2 \PiNumber \ImaginaryNumber \CircleElement} z_1 : \cdots : \ExponentialNumber^{2 \PiNumber \ImaginaryNumber \CircleElement} z_\dimM]$ preserves the metric and the function $\MorseFunction_{\Projective^\dimM}$.
        As per \autoref{sec:example-of-complex-plane}, this means we can form a canonical equivariant Morse function $\MorseFunction_{\Projective^\dimM}\Eqnt$ from $\MorseFunction_{\Projective^\dimM}$.
        
        We use the Morse functions $\MorseFunction_{\Projective^\dimM}$ and $\MorseFunction_{\Projective_\dimM}\Eqnt$ to describe a basis for the various cohomologies below.
        In \eqref{eqn:projective-space-all-cohomologies-and-products}, we give module isomorphisms\footnote{
            As for $\ComplexNumbers$, we have $\uformal \CupProduct (\spherecriticalpoint_l, e_k) = (\spherecriticalpoint_{l+1}, e_k)$, so we use the shorthand $\uformal^l e_k$ for the equivariant critical point $(\spherecriticalpoint_l, e_k)$.
        } to each of the cohomologies, and describe the corresponding products\footnote{
            Projective space is \define{equivariantly formal}, which means there is a $\Integers [\uformal]$-module isomorphism $\ECohomology_{\CircleAction^{-r}}^\ArbitraryIndex(\Projective^\dimM) \cong \Integers [\uformal] \Tensor \Cohomology^\ArbitraryIndex(\Projective^\dimM)$.
            The decomposition is not natural, however.
            As we see in \eqref{eqn:projective-space-all-cohomologies-and-products}, even the equivariant cup product does not respect this decomposition.
        } (which are determined by the given information).
        The global minimum $e_0$ is the unit for all products.
        In the following, $r$ is any nonnegative integer.
        \begin{equation}
            \label{eqn:projective-space-all-cohomologies-and-products}
            \begin{aligned}
                \Cohomology^\ArbitraryIndex(\Projective^\dimM) \cong \Integers \langle e_k \rangle_{k=0}^\dimM,&
                    & e_1 \CupProduct e_k &= \left\{ \begin{array}{l}
                        e_{k+1}  \\
                        0
                    \end{array}\right. &
                    \begin{array}{l}
                        0 < k < \dimM,  \\
                        k = \dimM. 
                    \end{array}\\
                \ECohomology_{\CircleAction^{-r}}^\ArbitraryIndex(\Projective^\dimM) \cong \Integers [\uformal] \langle e_k \rangle_{k=0}^\dimM,&
                    & e_1 \underset{-r}{\CupProduct} e_k &= \left\{ \begin{array}{l}
                        e_{k+1} - r \uformal e_k \\
                        - r \uformal e_\dimM
                    \end{array}\right. &
                    \begin{array}{l}
                        0 < k < \dimM,  \\
                        k = \dimM. 
                    \end{array}\\
                \QuantumCohomology^\ArbitraryIndex(\Projective^\dimM) \cong \NovikovRing \langle e_k \rangle_{k=0}^\dimM,&
                    & e_1 \QuantumProduct e_k &= \left\{ \begin{array}{l}
                        e_{k+1} \\
                        \NovVariable e_0
                    \end{array}\right. &
                    \begin{array}{l}
                        0 < k < \dimM,  \\
                        k = \dimM. 
                    \end{array}\\
                \EQuantumCohomology_{\CircleAction^{-r}}^\ArbitraryIndex(\Projective^\dimM) \cong \NovikovRing \GradedCompletedTensorProduct \Integers [\uformal] \langle e_k \rangle_{k=0}^\dimM,&
                    & e_1 \underset{-r}{\QuantumProduct} e_k &= \left\{ \begin{array}{l}
                        e_{k+1} - r \uformal e_k \\
                        \NovVariable e_0 - r \uformal e_\dimM 
                    \end{array}\right. &
                    \begin{array}{l}
                        0 < k < \dimM,  \\
                        k = \dimM. 
                    \end{array}
            \end{aligned}
        \end{equation}
        
        Let $\Lifted{\CircleAction}$ be the unique lift of $\CircleAction$ to $\ContractibleLoopSpaceWithFillings{\Projective^\dimM}$ which fixes the point $(\Circle \mapsto e_0, \Disc \mapsto e_0)$.
        It has Maslov index $2\dimM$.
        For $r \ge 0$, the equivariant quantum Seidel map $\EQuantumSeidelMap_{\Lifted{\CircleAction}} : \EQuantumCohomology_{\CircleAction^{-r}}^\ArbitraryIndex(\Projective^\dimM) \to \EQuantumCohomology_{\CircleAction^{-(r+1)}}^{\ArbitraryIndex + 2 \dimM}(\Projective^\dimM)$ is given by
        \begin{equation}
            \label{eqn:equivariant-quantum-seidel-map-for-projective-space}
            \left\{ 
                \begin{aligned}
                    e_0 & \mapsto \sum_{l=0}^\dimM (r+1)^{\dimM - l} \uformal^{\dimM - l} \ e_l \\
                    e_k & \mapsto \sum_{l=0}^{k - 1} (r+1)^{k-1-l} \NovVariable \uformal^{k-1-l} \ e_l \EndFullStop
                \end{aligned}
            \right.
        \end{equation}
        
        \begin{proof}
            Our method of determining the equivariant coefficients in \eqref{eqn:projective-space-all-cohomologies-and-products} and \eqref{eqn:equivariant-quantum-seidel-map-for-projective-space} is to find certain coefficients directly and to deduce the remaining coefficients by repeated application of the intertwining formula \eqref{eqn:quantum-intertwining-seidel-equivariant}.
            We demonstrate our method for $\Projective^2$.
            
            The non-equivariant products and non-equivariant quantum Seidel maps are known for $\Projective^2$ (for example, by \cite[(5.13)]{mcduff_topological_2006}), hence we can write the equivariant quantum products, using unknown integer coefficients, as
            \definecolor{colorForVanishingCoefficients}{gray}{0.5}
            \begin{align}
                e_1 \underset{-r}{\QuantumProduct} e_1 &= e_2 + \alpha_r \uformal e_1 \mathbin{\textcolor{colorForVanishingCoefficients}{+}} \textcolor{colorForVanishingCoefficients}{ \beta_r \uformal^2 e_0} \\
                e_1 \underset{-r}{\QuantumProduct} e_2 &= q e_0 + \gamma_r \uformal e_2  \mathbin{\textcolor{colorForVanishingCoefficients}{+}} \textcolor{colorForVanishingCoefficients}{ \delta_r \uformal^2 e_1 + \epsilon_r \uformal^3 e_0}
            \end{align}
            and the equivariant quantum Seidel map $\EQuantumSeidelMap_{\Lifted{\CircleAction}} : \EQuantumCohomology_{\CircleAction^{-r}}^\ArbitraryIndex(\Projective^2) \to \EQuantumCohomology_{\CircleAction^{-(r+1)}}^{\ArbitraryIndex + 4}(\Projective^2)$ as
            \begin{equation}
                \label{eqn:equivariant-quantum-seidel-map-for-projective-space-with-unknown-coefficients}
                \left\{ 
                    \begin{aligned}
                        e_0 &\mapsto e_2 + A_r \uformal e_1 + B_r \uformal^2 e_0 \\
                        e_1 &\mapsto q e_0  \mathbin{\textcolor{colorForVanishingCoefficients}{+}} \textcolor{colorForVanishingCoefficients}{ C_r \uformal e_2 + D_r \uformal^2 e_1 + E_r \uformal^3 e_0} \\
                        e_2 &\mapsto q e_1 + F_r q \uformal e_0  \mathbin{\textcolor{colorForVanishingCoefficients}{+}} \textcolor{colorForVanishingCoefficients}{ G_r \uformal^2 e_2 + H_r \uformal^3 e_1 + I_r \uformal^4 e_0} \EndFullStop
                    \end{aligned}
                \right.
            \end{equation}
            
            By \autoref{rem:minimal-fixed-sections-of-clutching-bundle}, the only holomorphic section which contributes a $q^0$ term in \eqref{eqn:equivariant-quantum-seidel-map-for-projective-space-with-unknown-coefficients} is the minimal fixed section at $e_0$.
            By using only small perturbations of the invariant Morse function $\MorseFunction_{\Projective^2}$, we immediately deduce $\beta_r ,\delta_r ,\epsilon_r = 0$ and $C_r , D_r , E_r , G_r , H_r , I_r = 0$ (hence fading these terms above) because otherwise the function would increase along its negative gradient trajectories.
            Moreover, the coefficient $B_r$ is a local count of equivariant trajectories that intersect the fixed point $e_0$, and is therefore $(r+1)^2$ since this is the same count as in \autoref{thm:equivariant-quantum-seidel-map-complex-vector-space}.
            Finally, since the Borel space $\InfiniteSphere \times_\Circle \Projective^2$ decomposes as $\InfiniteComplexProjectiveSpace \times \Projective^2$ in the $r=0$ case, we have $\alpha_0, \gamma_0 = 0$.
            
            We apply the intertwining relation \eqref{sec:intertwining-relation-equivariant-quantum-seidel-map} with\footnote{
                There is a degree-2 equivariant cohomology class $\bm{\alpha}$ in the clutching bundle which restricts to $e_1$ at both poles.
            } $\bm{\alpha} = e_1$ and $\bm{x} = e_k$.
            By \autoref{remark:degree-two-weighted-map} and an algebraic topology calculation, the $\bm{\alpha}$-weighted equivariant quantum Seidel map counts the fixed section at $e_0$ with weight 0 and any section corresponding to $q^1$ with weight 1.
            With $\bm{x} = e_0$, we get
            \begin{align}
                \label{intertwining-formula-application-e0}
                0 &= \EQuantumSeidelMap_{\Lifted{\CircleAction}} (e_0 \underset{-r}{\QuantumProduct} e_1) - \EQuantumSeidelMap_{\Lifted{\CircleAction}} (e_0) \underset{-(r+1)}{\QuantumProduct} e_1 - \uformal \EQuantumSeidelMap_{\Lifted{\CircleAction}, e_1} (e_0) \nonumber \\
                &=  \EQuantumSeidelMap_{\Lifted{\CircleAction}} (e_1) - (e_2 + A_r \uformal e_1 + (r+1)^2 \uformal^2 e_0) \underset{-(r+1)}{\QuantumProduct} e_1 - 0 \nonumber \\
                &= q e_0 - (q e_0 + \gamma_{r+1} \uformal e_2) - A_r \uformal(e_2 + \alpha_{r+1} \uformal e_1) - (r+1)^2 \uformal^2 e_1 \nonumber \\
                &= -(\gamma_{r+1} + A_r) \uformal e_2 - (A_r \alpha_{r+1} + (r+1)^2) \uformal^2 e_1 \EndComma
            \intertext{with $\bm{x} = e_1$, we get}
                \label{intertwining-formula-application-e1}
                0 &= \EQuantumSeidelMap_{\Lifted{\CircleAction}} (e_1 \underset{-r}{\QuantumProduct} e_1) - \EQuantumSeidelMap_{\Lifted{\CircleAction}} (e_1) \underset{-(r+1)}{\QuantumProduct} e_1 - \uformal \EQuantumSeidelMap_{\Lifted{\CircleAction}, e_1} (e_1) \nonumber \\
                &= \EQuantumSeidelMap_{\Lifted{\CircleAction}} (e_2 + \alpha_r \uformal e_1) - q e_0 \underset{-(r+1)}{\QuantumProduct} e_1 - \uformal (q e_0) \nonumber \\
                &= (q e_1 + F_r q \uformal e_0) + \alpha_r \uformal (q e_0) - q e_1 - q \uformal e_0 \nonumber \\
                &= (F_r + \alpha_r - 1) q \uformal e_0 \EndComma
            \intertext{and with $\bm{x} = e_2$, we get}
                \label{intertwining-formula-application-e2}
                0 &= \EQuantumSeidelMap_{\Lifted{\CircleAction}} (e_2 \underset{-r}{\QuantumProduct} e_1) - \EQuantumSeidelMap_{\Lifted{\CircleAction}} (e_2) \underset{-(r+1)}{\QuantumProduct} e_1 - \uformal \EQuantumSeidelMap_{\Lifted{\CircleAction}, e_1} (e_2) \nonumber \\
                &= \EQuantumSeidelMap_{\Lifted{\CircleAction}} (q e_0 + \gamma_r \uformal e_2) - (q e_1 + F_r q \uformal e_0) \underset{-(r+1)}{\QuantumProduct} e_1 - \uformal (q e_1 + F_r q \uformal e_0) \nonumber \\
                &= \begin{multlined}[t]
                    q(e_2 + A_r \uformal e_1 + (r+1)^2 \uformal^2 e_0) + \gamma_r \uformal (q e_1 + F_r \uformal q e_0) \\
                    - q (e_2 + \alpha_{r+1} \uformal e_1) - F_r q \uformal e_1 - q \uformal e_1 - F_r q \uformal^2 e_0
                \end{multlined}
                \nonumber \\
                &= (A_r + \gamma_r - \alpha_{r+1} - F_r - 1) q \uformal e_1 + ((r+1)^2 + \gamma_r F_r - F_r) q \uformal^2 e_0 \EndFullStop
            \end{align}
            The coefficients of \eqref{intertwining-formula-application-e0}, \eqref{intertwining-formula-application-e1} and  \eqref{intertwining-formula-application-e2} yield the following silmultaneous equations.
            \begin{align}
                \label{silmultaneous-equation-gamma-A}
                    \gamma_{r+1} &= -A_r \\
                \label{silmultaneous-equation-A-alpha}
                    A_r \alpha_{r+1} &= - (r+1)^2 \\
                \label{silmultaneous-equation-F}
                    F_r &= - \alpha_r + 1 \\
                \label{silmultaneous-equation-A-alpha-F}
                    \alpha_{r+1} - A_r &= \gamma_r - 1 - F_r \\
                \label{silmultaneous-equation-F-2}
                    (\gamma_r - 1) F_r &= - (r+1)^2
            \end{align}
            By induction, set $\alpha_r, \gamma_r = -r$.
            Either \eqref{silmultaneous-equation-F} or \eqref{silmultaneous-equation-F-2} yields $F_r = r+1$.
            The unique solution to \eqref{silmultaneous-equation-A-alpha} and \eqref{silmultaneous-equation-A-alpha-F} is $\alpha_{r+1} = -(r+1)$ and $A_r = r+1$.
            Finally, \eqref{silmultaneous-equation-gamma-A} yields $\gamma_{r+1} = -(r+1)$.
            This proves the induction hypothesis $\alpha_{r+1}, \gamma_{r+1} = -(r+1)$ and hence completes the proof.
        \end{proof}
        
        \begin{remark}
            [Inverse action]
            \label{rem:example-projective-space-inverse-of-circle-action-equivariant-quantum-seidel-map}
            The inverse circle action $\CircleAction \Inverse$ is another Hamiltonian circle action for which we can compute the equivariant quantum Seidel map.
            We have $\EQuantumSeidelMap_{\Lifted{\CircleAction}} \EQuantumSeidelMap_{\Lifted{\CircleAction}\Inverse} = \EQuantumSeidelMap_{\Lifted{\CircleAction}\Lifted{\CircleAction} \Inverse} = \Identity$, so we can deduce the map $\EQuantumSeidelMap_{\Lifted{\CircleAction}\Inverse}$ by inverting \eqref{eqn:equivariant-quantum-seidel-map-for-projective-space}.
            Thus the map $\EQuantumSeidelMap_{\Lifted{\CircleAction}\Inverse} :\EQuantumCohomology_{\CircleAction^{-(r+1)}}^{\ArbitraryIndex}(\Projective^\dimM) \to \EQuantumCohomology_{\CircleAction^{-r}}^{\ArbitraryIndex - 2 \dimM}(\Projective^\dimM)$ is given by
            \begin{equation}
                \left\{ 
                    \begin{aligned}
                        e_0 & \mapsto \NovVariable \Inverse \ e_1, \\
                        e_k & \mapsto -(r+1) \uformal \NovVariable \Inverse \ e_k + \NovVariable \Inverse \ e_{k+1}, \qquad 1 \le k < \dimM \\
                        e_\dimM & \mapsto -(r+1) \uformal \NovVariable \Inverse \ e_\dimM + e_0.
                    \end{aligned}
                \right.
            \end{equation}
            The assignment $e_0 \mapsto \NovVariable \Inverse \ e_1$ may be deduced directly via minimal fixed sections using \autoref{rem:minimal-fixed-sections-of-clutching-bundle}.
            Here, $e_1$ is the Poincar\'{e} dual of the subset $\Set{z_0 = 0}$, which is the minimal locus of $-\CircleHam$.
            No sections other than these minimal fixed sections can contribute to $\EQuantumSeidelMap_{\Lifted{\CircleAction}\Inverse}(e_0)$ for degree reasons.
        \end{remark}
        
    \subsection{Tautological line bundle on projective space}
    
        \label{sec:example-of-tautological-line-bundle-on-projective-space}
        
        The total space $\tautLB{n}$ of the tautological line bundle over projective space $\Projective^n$ is one of the negative line bundles studied by Ritter \cite{ritter_floer_2014}.
        The elements of $\tautLB{n}$ are of the form $([\mathbf{z}], \mathbf{v}) = ([z_0:\ldots:z_n], (v_0, \ldots, v_n)) \in \Projective^n \times \ComplexNumbers^{n+1}$, where $\mathbf{z}$ and $\mathbf{v}$ are linearly dependent.
        Denote by $\ZeroSection$ the image of the zero section, giving $Z = \Set{\mathbf{v} = 0} \cong \Projective^n$.
        
        There is a symplectic form $\SymplecticForm$ on $\tautLB{n}$ such that $(\tautLB{n}, \SymplecticForm)$ is a monotone convex symplectic manifold whose fibres and whose submanifold $Z$ are symplectic submanifolds \cite[Section~7]{ritter_floer_2014}.
        Its Novikov ring is $\NovikovRing = \Integers [\NovVariable, \NovVariable\Inverse]$, where $\NovVariable$ is a formal variable of degree $2n$.
        
        Let $\CircleAction$ be the linear Hamiltonian circle action given by $\CircleElement \cdot ([\mathbf{z}], \mathbf{v}) = ([\mathbf{z}], \ExponentialNumber^{2 \PiNumber \ImaginaryNumber \CircleElement}\mathbf{v})$.
        It rotates the fibres and its fixed point set is $\ZeroSection$.
        Set $\Lifted{\CircleAction}$ to be the unique lift of $\CircleAction$ which fixes the points $(\Circle \mapsto ([\mathbf{z}], 0), \Disc \mapsto ([\mathbf{z}], 0))$.
        The Maslov index of $\Lifted{\CircleAction}$ is 2.
        
        The invariant Morse function given by $\MorseFunction (([\mathbf{z}], \mathbf{v})) = \sum_{k=0}^\dimM k \Modulus{z_k}^2 + \Modulus{v_k}^2$ has critical points at each of the unit vectors in $\ZeroSection$.
        Denote the $k$-th such critical point\footnote{
            Orient $\UnstableManifold(e_k)$ according to the natural complex structure.
        } by $e_k$.
        It has Morse index $\MorseIndex(e_k; \MorseFunction) = 2k$.
        Give $\tautLB{n}$ the metric which is the restriction of the standard metric on $\ComplexNumbers^{n+1} \times \ComplexNumbers^{n+1}$, so that we recover the Fubini-Study metric along the zero section.
        Thus the Morse cohomology of $\tautLB{n}$ is
        \begin{align}
            \Cohomology^\ArbitraryIndex(\tautLB{n}) \cong \Integers \langle e_k \rangle_{k=0}^n,&
            & e_1 \CupProduct e_k &= \left\{ \begin{array}{l}
                e_{k+1}  \\
                0
            \end{array}\right. &
            \begin{array}{l}
                k < n,  \\
                k = n. 
            \end{array}
        \end{align}
        Since $\tautLB{n}$ equivariantly retracts onto $\ZeroSection$ (with the trivial circle action), its equivariant cohomology $\ECohomology_{\CircleAction^r}^\ArbitraryIndex(\tautLB{n})$ is ring isomorphic to $\Integers [\uformal] \Tensor_\Integers \Cohomology^\ArbitraryIndex (\tautLB{n})$.
        
        The quantum cohomology is
        \begin{align}
            \QuantumCohomology^\ArbitraryIndex(\tautLB{n}) \cong \NovikovRing \langle e_k \rangle_{k=0}^n,&
                    & e_1 \QuantumProduct e_k &= \left\{ \begin{array}{l}
                        e_{k+1} \\
                        - \NovVariable e_1
                    \end{array}\right. &
                    \begin{array}{l}
                        k < n,  \\
                        k = n.
                    \end{array}
        \end{align}
        The quantum product is deduced via \eqref{eqn:quantum-intertwining-seidel} from the the quantum Seidel map which Ritter computed \cite[Lemma~60]{ritter_floer_2014}:
        \begin{equation}
            \label{eqn:quantum-seidel-map-tautological-line-bundle}
            \QuantumSeidelMap_{\Lifted{\CircleAction}}(e_k) = \begin{cases}
                -e_{k+1} & k < n , \\
                \NovVariable e_1 & k = n.
            \end{cases}
        \end{equation}
        
        The equivariant quantum cohomology is $\NovikovRing \GradedCompletedTensorProduct \Integers [\uformal] \langle e_k \rangle_{k=0}^n$.
        The equivariant quantum product and equivariant quantum Seidel maps are given by the following theorems.
        
        \begin{theorem}
            \label{thm:tautological-line-bundle-equivariant-quantum-product}
            For $r \ge 0$, the equivariant quantum product on $\EQuantumCohomology^\ArbitraryIndex_{\CircleAction^{-r}}(\tautLB{n})$ is given by
            \begin{equation}
                \label{eqn:equivariant-quantum-product-on-tautological-line-bundle}
                e_1 \underset{-r}{\QuantumProduct} e_k = \begin{cases}
                    e_{k+1} & k < n \EndComma \\
                    - \NovVariable e_1 + r \uformal \NovVariable e_0 & k = n \EndFullStop
                \end{cases}
            \end{equation}
        \end{theorem}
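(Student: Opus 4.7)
The plan is to derive \eqref{eqn:equivariant-quantum-product-on-tautological-line-bundle} by combining the equivariant quantum Seidel map \eqref{eqn:equivariant-quantum-seidel-map-tautological-line-bundle-introduction}, computed in a separate step, with the intertwining relation of \autoref{sec:intertwining-relation-equivariant-quantum-seidel-map}, mirroring the argument for $\Projective^\dimM$ in \autoref{sec:example-of-projective-space}. Because $\tautLB{n}$ equivariantly retracts onto $\ZeroSection$ with the trivial $\Circle$-action, we have $\EQuantumCohomology_{\CircleAction^{-r}}^\ArbitraryIndex(\tautLB{n}) \cong \NovikovRing \GradedCompletedTensorProduct \Integers[\uformal]\langle e_0, \ldots, e_n\rangle$, and modulo $\uformal$ the equivariant quantum product reduces to Ritter's non-equivariant quantum product. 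Constrained by the grading, I would therefore write
\begin{equation*}
    e_1 \underset{-r}{\QuantumProduct} e_k = (e_1 \QuantumProduct e_k) + \sum_{j \ge 1,\, l} a^{(r)}_{j,k,l}\, \uformal^j e_l + (\text{permitted $\NovVariable$-corrections})
\end{equation*}
with finitely many unknown integer coefficients $a^{(r)}_{j,k,l}$ in each total degree.

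Next I would apply the intertwining relation with $\bm{x} = e_k$ and $\bm{\alpha}$ an equivariant class on the clutching bundle restricting to $e_1$ at both poles; such a class exists because $\CircleAction$ fixes $\ZeroSection$ pointwise, so $e_1 \in \Cohomology^2(\ZeroSection)$ extends consistently across the equator. Substituting the explicit formula for $\EQuantumSeidelMap_{\Lifted{\CircleAction}}$ into
\begin{equation*}
    \EQuantumSeidelMap_{\Lifted{\CircleAction}}(e_k \underset{-r}{\QuantumProduct} e_1) - \EQuantumSeidelMap_{\Lifted{\CircleAction}}(e_k) \underset{-(r+1)}{\QuantumProduct} e_1 = \uformal \CupProduct \EQuantumSeidelMap_{\Lifted{\CircleAction}, e_1}(e_k)
\end{equation*}
yields, for each $k \in \Set{1, \ldots, n}$, a linear system in the unknowns $a^{(r)}_{j,k,l}$. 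The base case $r = 0$ is forced by the Borel splitting $\InfiniteSphere \times_\Circle \tautLB{n} \cong \InfiniteComplexProjectiveSpace \times \tautLB{n}$, which kills all $\uformal$-corrections, and an induction on $r$ then pins the remaining coefficients down uniquely.

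The hardest step will be evaluating $\EQuantumSeidelMap_{\Lifted{\CircleAction}, e_1}(e_n)$. By \autoref{remark:degree-two-weighted-map}, this counts the sections contributing to $\EQuantumSeidelMap_{\Lifted{\CircleAction}}(e_n)$ with each section $u$ weighted by $[e_1](u \PushForward \FundamentalClass{\Sphere})$; the minimal fixed sections are constant and contribute zero weight, so only the section classes producing the quantum corrections $\NovVariable e_1$ and $\uformal \NovVariable e_0$ in $\EQuantumSeidelMap_{\Lifted{\CircleAction}}(e_n)$ can participate. Identifying these classes explicitly, using the identity $\FirstChernClass(\VerticalTangentSpace \ClutchingBundle)(\SectionClass_{\Lifted{\CircleAction}}) = -\MaslovIndex(\Lifted{\CircleAction}) = -2$ together with Ritter's enumeration of holomorphic sections of this clutching bundle, is the delicate point; once their $[e_1]$-weights are computed, the intertwining system closes and produces exactly the coefficients asserted in \eqref{eqn:equivariant-quantum-product-on-tautological-line-bundle}.
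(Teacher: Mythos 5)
Your proposal is correct and is essentially the paper's own argument: the paper proves \autoref{thm:tautological-line-bundle-equivariant-quantum-product} and \autoref{thm:tautological-line-bundle-equivariant-seidel-map} together by the same undetermined-coefficient scheme used for $\Projective^\dimM$ (kill most coefficients via a small perturbation of the invariant Morse function, fix the $(r+1)\uformal$ terms by the local $\ComplexNumbers$ computation in the fibre, then apply the intertwining relation with a degree-2 class restricting to $e_1$ at both poles to $e_{n-1}$ and $e_n$). Your repackaging — taking the Seidel map formula \eqref{eqn:equivariant-quantum-seidel-map-on-tautological-line-bundle} as input, using the $r=0$ Borel splitting as base case, and inducting on $r$ through the linear system (solvable since the Seidel matrix has nonzero determinant), with the weighted term $\EQuantumSeidelMap_{\Lifted{\CircleAction}, e_1}$ handled via \autoref{remark:degree-two-weighted-map} exactly as the paper does — is only a cosmetic variation, not a different route.
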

        
        \begin{theorem}
            \label{thm:tautological-line-bundle-equivariant-seidel-map}
            For $r \ge 0$, the equivariant quantum Seidel map
            \begin{equation}
                \label{eqn:equivariant-seidel-map-with-domain-tautological-line-bundle}
                \EQuantumSeidelMap_{\Lifted{\CircleAction}} : \EQuantumCohomology^\ArbitraryIndex_{\CircleAction^{-r}}(\tautLB{n}) \to \EQuantumCohomology^{\ArbitraryIndex + 2}_{\CircleAction^{-(r+1)}}(\tautLB{n})
            \end{equation}
            is given by
            \begin{equation}
                \label{eqn:equivariant-quantum-seidel-map-on-tautological-line-bundle}
                e_k \mapsto \begin{cases}
                    -e_{k+1} + (r+1) \uformal e_k & k < n \EndComma \\
                    \NovVariable e_1 + (r+1) \uformal e_n - (r+1) \uformal \NovVariable e_0 & k = n \EndFullStop
                \end{cases}
            \end{equation}
        \end{theorem}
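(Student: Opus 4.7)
The plan is to follow the strategy used for $\Projective^\dimM$ in \autoref{eg:projective-space}: write the equivariant Seidel map \eqref{eqn:equivariant-quantum-seidel-map-on-tautological-line-bundle} and the equivariant product \eqref{eqn:equivariant-quantum-product-on-tautological-line-bundle} with undetermined integer coefficients—heavily restricted by grading, since $\tautLB{n}$ equivariantly retracts onto $\ZeroSection \cong \Projective^n$ and $\NovVariable$ has degree $2n$—pin down a few coefficients directly, and then invoke the intertwining relation \eqref{eqn:quantum-intertwining-seidel-equivariant} to force the rest by induction on $r$. As in the projective space case, I would treat the equivariant product \autoref{thm:tautological-line-bundle-equivariant-quantum-product} and the equivariant Seidel map simultaneously, because their unknowns are entangled by the intertwining system.

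First, I would lock down the $\uformal^0$-coefficients using Ritter's \eqref{eqn:quantum-seidel-map-tautological-line-bundle} and the classical cup product on $\ZeroSection$. The minimum locus of $\CircleHam$ is exactly $\ZeroSection$ and every point of $\ZeroSection$ is $\CircleAction$-fixed, so \autoref{rem:minimal-fixed-sections-of-clutching-bundle} identifies the minimal fixed sections with an entire $\Projective^n$-family. By the fibre-product argument used in the proof of \autoref{thm:equivariant-quantum-seidel-map-complex-vector-space}, the equivariant count of these constant sections factors into the single-fibre count of \autoref{thm:equivariant-quantum-seidel-map-for-complex-plane} fibred over equivariant Morse flowlines on $\ZeroSection$; this produces the coefficient $(r+1)$ of $\uformal e_k$ in \eqref{eqn:equivariant-quantum-seidel-map-on-tautological-line-bundle} and fixes the rest of the $\NovVariable^0$-portion. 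I would then apply \eqref{eqn:quantum-intertwining-seidel-equivariant} with $\bm{\alpha} \in \ECohomology^2_\ClutchingAction(\ClutchingBundle)$ the class restricting to $e_1$ on both polar fibres, and with $\bm{x} = e_k$ for $k = 0, 1, \ldots, n$, matching coefficients of each monomial $\NovVariable^a \uformal^b e_l$ as in \eqref{intertwining-formula-application-e0}--\eqref{intertwining-formula-application-e2}. By \autoref{remark:degree-two-weighted-map}, the weighted map $\EQuantumSeidelMap_{\Lifted{\CircleAction}, \bm{\alpha}}$ on the right-hand side is an honest section count weighted by a homological pairing, so it can be evaluated directly once the $\InterestingSpheres$-class of each contributing section is known. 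An induction on $r$ then propagates \eqref{eqn:equivariant-quantum-product-on-tautological-line-bundle} and \eqref{eqn:equivariant-quantum-seidel-map-on-tautological-line-bundle} to every $r \ge 0$.

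The hard part will be the $\NovVariable^1$ terms at $k=n$—namely $\NovVariable e_1$ and $-(r+1)\uformal\NovVariable e_0$ in $\EQuantumSeidelMap_{\Lifted{\CircleAction}}(e_n)$, and the $r\uformal\NovVariable e_0$ in $e_1 \underset{-r}{\QuantumProduct} e_n$. These record sections of class $\SectionClass_{\Lifted{\CircleAction}} + A$ with $A$ a positive multiple of a line class in $\ZeroSection$, and they have no analogue in the $\Projective^\dimM$ example. They are not pinned down by the direct geometric arguments above and must be forced by the intertwining system. The delicate point is verifying that $\bm{\alpha} = e_1$ yields enough equations to rigidify every new $\NovVariable^1$-coefficient—equivalently, that the resulting linear system has trivial kernel on these unknowns. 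I expect this to reduce to the relation $e_1^{\QuantumProduct (n+1)} = -\NovVariable e_1 + r\uformal\NovVariable e_0$ being enforced by applying \eqref{eqn:quantum-intertwining-seidel-equivariant} with $\bm{x}=e_{n-1}$ and $\bm{x}=e_n$ in tandem, so that the two equations between them fix both the $\NovVariable e_1$ and $\uformal\NovVariable e_0$ unknowns at each inductive step.
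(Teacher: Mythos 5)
Your proposal follows essentially the same route as the paper: the paper likewise writes the equivariant product and Seidel map with unknown coefficients, eliminates most terms via a small perturbation of the invariant Morse function, obtains the $(r+1)\uformal e_k$ coefficients from the fact that the fibre locally resembles $\ComplexNumbers$ (the computation of \autoref{thm:equivariant-quantum-seidel-map-for-complex-plane}), and then applies the intertwining relation with $\bm{\alpha}=e_1$ at $\bm{x}=e_{n-1}$ and $\bm{x}=e_n$, inductively in $r$, to fix the remaining coefficients. The only slight imprecision is that the $\NovVariable e_1$ coefficient of $\EQuantumSeidelMap_{\Lifted{\CircleAction}}(e_n)$ is already determined by Ritter's non-equivariant formula (your own first step), so only the two $\uformal\NovVariable e_0$ coefficients genuinely need the intertwining system, exactly the ``two remaining coefficients'' of the paper's proof.
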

        
        \begin{proof}
            [Proof of \autoref{thm:tautological-line-bundle-equivariant-quantum-product} and \autoref{thm:tautological-line-bundle-equivariant-seidel-map}]
            We use exactly the same method as that we used in \autoref{sec:example-of-projective-space}.
            Write the equivariant quantum products and equivariant quantum Seidel maps using variables for the unknown coefficients.
            By using only a small perturbation of the Morse function, deduce that all coefficients are zero apart from those which occur above.
            The coefficients of the $\uformal e_k$ terms and the $\uformal e_n$ term in \eqref{eqn:equivariant-quantum-seidel-map-on-tautological-line-bundle} are all $(r+1)$ because the fibre locally resembles $\ComplexNumbers$.
            Apply the intertwining formula \eqref{eqn:quantum-intertwining-seidel-equivariant} to the elements $e_{n-1}$ and $e_n$ to deduce the two remaining coefficients.
        \end{proof}
        
        \subsubsection{Deducing equivariant symplectic cohomology}
        
            \label{sec:deducing-equivariant-symplectic-cohomology-tautological-line-bundle}
        
            The symplectic cohomology is the limit of the quantum Seidel map \eqref{eqn:quantum-seidel-map-tautological-line-bundle}.
            We have $\QuantumSeidelMap_{\Lifted{\CircleAction}} (e_n + \NovVariable e_0) = 0$, and $\QuantumSeidelMap_{\Lifted{\CircleAction}}$ is an isomorphism after quotienting by $e_n + \NovVariable e_0$.
            Thus we get $\NovikovRing$-algebra and $\NovikovRing$-module isomorphisms
            \begin{equation}
                \label{eqn:symplectic-cohomology-on-tautological-line-bundle}
                \SymplecticCohomology^\ArbitraryIndex(\tautLB{n}) \underset{\text{alg.}}{\cong} \frac{\Integers [\NovVariable, \NovVariable\Inverse] \langle e_k \rangle_{k=0}^n}{(e_n + \NovVariable e_0)} \underset{\text{mod.}}{\cong} \bigoplus_{k=0}^{n-1} \Integers [\NovVariable, \NovVariable\Inverse] \langle e_k \rangle \EndFullStop
            \end{equation}
            In particular, in every even degree it has one copy of $\Integers$.
            
            The equivariant quantum Seidel map $\EQuantumSeidelMap_{\Lifted{\CircleAction}}$ from \eqref{eqn:equivariant-seidel-map-with-domain-tautological-line-bundle} is injective, and its determinant is $\Determinant \EQuantumSeidelMap_{\Lifted{\CircleAction}} = (r+1)^{n+1} \uformal^{n+1}$.
            To find the direct limit of the maps $\EQuantumSeidelMap_{\Lifted{\CircleAction}}$, we must use a different strategy.
            
            \newcommand{\Localise}{\text{local}}
            We localise the ring $\NovikovRing \GradedCompletedTensorProduct \Integers [\uformal]$ so the determinants are invertible.
            We denote this localisation by $\NovikovRing \GradedCompletedTensorProduct \Integers [\uformal] _\Localise$.
            The degree-$l$ elements in $\NovikovRing \GradedCompletedTensorProduct \Integers [\uformal] _\Localise$ are given as per the graded completed tensor product \eqref{eqn:homogeneous-elements-of-graded-completed-tensor-product}, except finitely-many negative powers of $\uformal$ are permitted, and we tensor with $\RationalNumbers$.
            Denote by $\EQuantumCohomology^\ArbitraryIndex _{\CircleAction^0}(\tautLB{n}) _{\Localise}$ the tensor product $\EQuantumCohomology^\ArbitraryIndex _{\CircleAction^0} (\tautLB{n}) \Tensor \left( \NovikovRing \GradedCompletedTensorProduct \Integers [\uformal] _\Localise \right)$.
            
            Consider the following commutative diagram.
            \begin{equation}
                \begin{tikzcd}[row sep=large]
                    \EQuantumCohomology^\ArbitraryIndex _{\CircleAction^0} (\tautLB{n}) \arrow[d, hook] \arrow[r, "\EQuantumSeidelMap_{\Lifted{\CircleAction}}"] 
                    & \EQuantumCohomology^{\ArbitraryIndex +2} _{\CircleAction^{-1}} (\tautLB{n}) \arrow[ld, dashed, "\EQuantumSeidelMap_{\Lifted{\CircleAction}} \Inverse"] \arrow[r, "\EQuantumSeidelMap_{\Lifted{\CircleAction}}"] 
                    & \EQuantumCohomology^{\ArbitraryIndex +2} _{\CircleAction^{-2}} (\tautLB{n}) \arrow[lld, dashed, bend left, "\EQuantumSeidelMap_{\Lifted{\CircleAction}} \Inverse \ComposedWith \EQuantumSeidelMap_{\Lifted{\CircleAction}} \Inverse"]\\
                    \EQuantumCohomology^\ArbitraryIndex _{\CircleAction^0}(\tautLB{n}) _{\Localise} & & 
                \end{tikzcd}
            \end{equation}
            The direct limit of the maps $\EQuantumSeidelMap_{\Lifted{\CircleAction}}$ is the image of the (injective) dashed maps above.
            This gives
            \begin{equation}
                    \ESymplecticCohomology^\ArbitraryIndex _{\CircleAction^0} (\tautLB{n}) \cong \bigcup_{p=0}^\Infinity \Set{\operatorname{image}\left(
                    (\EQuantumSeidelMap_{\Lifted{\CircleAction}} \Inverse)^p : \EQuantumCohomology^{\ArbitraryIndex + 2p}_{\CircleAction^{-p}} \to \EQuantumCohomology^\ArbitraryIndex _{\CircleAction^0\CommaSpace \Localise}
                    \right)} \EndFullStop
            \end{equation}
            We immediately deduce that $\ESymplecticCohomology^\ArbitraryIndex _{\CircleAction^0} (\tautLB{n})$ is not a finitely-generated $\NovikovRing \GradedCompletedTensorProduct \Integers [\uformal]$-module because this is a strictly increasing chain of submodules.
            Moreover, we can follow the element $e_n$ under the maps $\EQuantumSeidelMap_{\Lifted{\CircleAction}}$ to get
            \begin{equation}
            \begin{multlined}
                e_n \mapsto \NovVariable e_1 + \SmallO(\uformal) \mapsto \pm \NovVariable e_2 + \SmallO(\uformal) \mapsto \cdots \mapsto \pm \NovVariable e_n + \SmallO(\uformal) \\ \mapsto \pm \NovVariable^2 e_1 + \SmallO(\uformal) \mapsto \cdots \EndComma
            \end{multlined}
            \end{equation}
            which implies that $e_n$ is not divisible by $\uformal$ in the direct limit (for none of the images are divisible by $\uformal$).
            Therefore $\ESymplecticCohomology^\ArbitraryIndex _{\CircleAction^0} (\tautLB{n})$ is a proper submodule of the localisation.
        
        \subsubsection{Finding generators}
        
            Recall the \define{adjugate} (or \define{adjoint}) of a nonsingular matrix $A$ is the unique matrix $A^\ast$ such that $A ^\ast A = A A^\ast = \Determinant(A) \Identity$, so that if the inverse of $A$ exists, it is $A \Inverse = \frac{1}{\Determinant(A)} A^\ast$.
            Therefore, to find the image of the inverses $(\EQuantumSeidelMap_{\Lifted{\CircleAction}} \Inverse)^p$, we find the image of the adjugates (which is a submodule of $\EQuantumCohomology^\ArbitraryIndex _{\CircleAction^0} (\tautLB{n})$ without localisation), and then divide these elements by the determinants.
            
            We have $\Determinant(\EQuantumSeidelMap_{\Lifted{\CircleAction}} ^p) = \prod_{r=0}^{p-1} ((r+1)^{n+1} \uformal^{n+1})$.
            We characterise the image of the adjugate below.
            
            Using \eqref{eqn:equivariant-quantum-seidel-map-on-tautological-line-bundle}, we have $\EQuantumSeidelMap_{\Lifted{\CircleAction}} (e_n + \NovVariable e_0) = (r+1) \uformal e_n$.
            Thus, with respect to the ordered basis $\langle e_n + \NovVariable e_0, e_1, e_2, \ldots, e_{n-1}, e_n \rangle$, the map $\EQuantumSeidelMap_{\Lifted{\CircleAction}}$ is given by the following matrix.
            \begin{equation}
                \label{eqn:matrix-of-equivariant-quantum-seidel-map-tautological-line-bundle}
                A_r =
                \begin{pmatrix}
                    0 & 0 & 0 & \cdots & 0 & -(r+1) \uformal \\
                    0 & (r+1) \uformal & 0 & \cdots & 0 &\NovVariable \\
                    0 & -1 & (r+1) \uformal & & 0 & 0 \\
                    \vdots & \ddots & \ddots & \ddots & & \vdots \\
                    0 & 0 & 0 & \ddots & (r+1) \uformal & 0 \\
                    (r+1) \uformal & 0 & 0 & & -1 & 2(r+1) \uformal
                \end{pmatrix}
            \end{equation}
            Note that if we permute the first and last column, the matrix \eqref{eqn:matrix-of-equivariant-quantum-seidel-map-tautological-line-bundle} becomes lower triangular.
            Set $x_k^p = A_0^\ast \cdots A_{p-1}^\ast e_k$ for $k = 1, \ldots, n$, and set $x_0^p = A_0^\ast \cdots A_{p-1}^\ast (e_n + \NovVariable e_0)$.
            Thus the image of $(\EQuantumSeidelMap_{\Lifted{\CircleAction}} ^\ast)^p$ above is the span of $\Set{x_0^p, \ldots, x_n^p}$.
            Using \eqref{eqn:matrix-of-equivariant-quantum-seidel-map-tautological-line-bundle}, we can set up a recursive formula for the $x_k^p$, which starts like
            \begin{equation}
                \label{eqn:system-of-recursive-relations-between-generators-tautological-line-bundle}
                \left\{
                \begin{array}{l}
                    x_n^{p+1} = (p+1)^{n} \uformal^{n} x_0^p \\
                    x_{n-1}^{p+1} = (p+1)^{n} \uformal^{n} x_{n-1}^p + (p+1)^{n-1} \uformal^{n-1} x_0^p \\
                    \ldots
                \end{array}
                \right.
            \end{equation}
            
            \begin{example}
                [$n=1$]
                For $\tautLB{1}$, the system \eqref{eqn:system-of-recursive-relations-between-generators-tautological-line-bundle} becomes
                \begin{equation}
                    \label{eqn:system-of-recursive-relations-between-generators-tautological-line-bundle-n-1}
                    \left\{
                \begin{array}{l}
                    x_1^{p+1} = (p+1) \uformal x_0^p \\
                    x_0^{p+1} = (\NovVariable + 2(p+1) \uformal) x_0^p - (p+1) \uformal x_1^p
                \end{array}
                \right.
                \end{equation}
                and has solution $x_0^p = \NovVariable^{p-1}((\NovVariable + p (p+1) \uformal)e_0 - \uformal e_1) + \SmallO(\uformal^2)$.
                Since $x_1^{p+1} = (p+1) \uformal x_0^p$, one copy of $(p+1)\uformal$ cancels upon division by the determinant.
                This yields the presentation
                \begin{equation}
                    \ESymplecticCohomology^\ArbitraryIndex _{\CircleAction^0} (\tautLB{1}) \cong \NovikovRing \GradedCompletedTensorProduct \Integers [\uformal] \left\langle \frac{x_0^p}{1^2 \cdots p^2 (p+1) \uformal^{2p + 1}} \right\rangle_{p=1}^\Infinity \EndFullStop
                \end{equation}
            \end{example}
            
            \begin{remark}
                [Possible nicer presentation]
                The author was unable to establish whether there is an element $X \in \ESymplecticCohomology^\ArbitraryIndex _{\CircleAction^0} (\tautLB{n})$ which is divisible by every power of $\uformal$, or indeed by every determinant.
                If so, this would produce an isomorphism
                \begin{equation}
                    \ESymplecticCohomology^\ArbitraryIndex _{\CircleAction^0} (\tautLB{1}) \cong \NovikovRing \GradedCompletedTensorProduct \Integers [\uformal] \oplus \NovikovRing \GradedCompletedTensorProduct \Integers [\uformal] _\Localise
                \end{equation}
                in the $n=1$ case, and similar isomorphisms for $n > 1$.
            \end{remark}

\renewcommand{\doitext}{}
\bibliography{main}

\begin{thebibliography}{CGM20}
\expandafter\ifx\csname url\endcsname\relax
  \def\url#1{\texttt{#1}}\fi
\expandafter\ifx\csname doi\endcsname\relax
  \def\doi#1{\burlalt{doi:#1}{http://dx.doi.org/#1}}\fi
\expandafter\ifx\csname
  urlprefix\endcsname\relax\def\urlprefix{\textsc{url}:}\fi
\expandafter\ifx\csname href\endcsname\relax
  \def\href#1#2{#2}\fi
\expandafter\ifx\csname burlalt\endcsname\relax
  \def\burlalt#1#2{\href{#2}{#1}}\fi

\bibitem[AD14]{audin_morse_2014}
M.~Audin and M.~Damian.
\newblock {\em Morse {Theory} and {Floer} {Homology}}.
\newblock Universitext, {Virtual} {Series} on {Symplectic} {Geometry}.
  Springer-Verlag, London, 2014.
\newblock \textsc{doi}:\doi{10.1007/978-1-4471-5496-9}.
\newblock \textsc{isbn}:\texttt{978-1-4471-5495-2}.

\bibitem[BMO11]{braverman_quantum_2011}
Alexander Braverman, Davesh Maulik, and Andrei Okounkov.
\newblock Quantum cohomology of the {Springer} resolution.
\newblock {\em Advances in Mathematics}, 227(1):421--458, 2011.
\newblock \textsc{doi}:\doi{10.1016/j.aim.2011.01.021}.

\bibitem[BO17]{bourgeois_s1-equivariant_2017}
F.~Bourgeois and A.~Oancea.
\newblock {$S^1$}-{Equivariant} {Symplectic} {Homology} and {Linearized}
  {Contact} {Homology}.
\newblock {\em International Mathematics Research Notices},
  2017(13):3849--3937, July 2017.
\newblock \textsc{doi}:\doi{10.1093/imrn/rnw029}.

\bibitem[BR20]{benedetti_invariance_2019}
G.~Benedetti and A.~Ritter.
\newblock Invariance of symplectic cohomology and twisted cotangent bundles
  over surfaces.
\newblock {\em International Journal of Mathematics}, 31(9):2050070, 56, 2020.
\newblock \textsc{doi}:\doi{10.1142/S0129167X20500706}.

\bibitem[CGM20]{cristofaro-gardiner_action_2019}
D.~Cristofaro-Gardiner and M.~Mazzucchelli.
\newblock The action spectrum characterizes closed contact 3-manifolds all of
  whose {Reeb} orbits are closed.
\newblock {\em Commentarii Mathematici Helvetici}, 95(3):461--481, 2020.
\newblock \textsc{doi}:\doi{10.4171/CMH/493}.

\bibitem[Cie02]{cieliebak_handle_2002}
K.~Cieliebak.
\newblock Handle attaching in symplectic homology and the {Chord} {Conjecture}.
\newblock {\em Journal of the European Mathematical Society}, 4(2):115--142,
  June 2002.
\newblock \textsc{doi}:\doi{10.1007/s100970100036}.

\bibitem[FHS95]{floer_transversality_1995}
A.~Floer, H.~Hofer, and D.~Salamon.
\newblock Transversality in elliptic {Morse} theory for the symplectic action.
\newblock {\em Duke Mathematical Journal}, 80(1):251--292, October 1995.
\newblock \textsc{doi}:\doi{10.1215/S0012-7094-95-08010-7}.

\bibitem[Gon06]{gonzalez_quantum_2006}
E.~Gonzalez.
\newblock Quantum cohomology and {$S^1$-actions} with isolated fixed points.
\newblock {\em Transactions of the American Mathematical Society},
  358(7):2927--2948, 2006.
\newblock \textsc{doi}:\doi{10.1090/S0002-9947-06-04038-4}.

\bibitem[Gut17]{gutt_positive_2017}
J.~Gutt.
\newblock The positive equivariant symplectic homology as an invariant for some
  contact manifolds.
\newblock {\em Journal of Symplectic Geometry}, 15(4):1019--1069, 2017.
\newblock \textsc{doi}:\doi{10.4310/JSG.2017.v15.n4.a3}.

\bibitem[Gut18]{gutt_lecture_2018}
J.~Gutt.
\newblock Lecture notes on {$S^1$}-equivariant symplectic homology.
\newblock Lecture notes, 2018.
\newblock
  \urlprefix\url{https://drive.google.com/file/d/1ZMSUyB4eyuKPR-xpNuYZwAHEgCTy9AW9}.

\bibitem[Hor03]{hori_mirror_2003}
K.~Hori, editor.
\newblock {\em Mirror symmetry}, volume~1 of {\em Clay mathematics monographs}.
\newblock American Mathematical Society; Clay Mathematics Institute,
  Providence, RI; Cambridge, MA, 2003.
\newblock \textsc{isbn}:\texttt{978-0-8218-2955-4}.

\bibitem[HS95]{hofer_floer_1995}
H.~Hofer and D.~Salamon.
\newblock Floer homology and {Novikov} rings.
\newblock In H.~Hofer, C.~Taubes, A.~Weinstein, and E.~Zehnder, editors, {\em
  The {Floer} {Memorial} {Volume}}, Progress in {Mathematics}, pages 483--524.
  Birkhäuser Basel, Basel, 1995.
\newblock \textsc{doi}:\doi{10.1007/978-3-0348-9217-9_20}.
\newblock \textsc{isbn}:\texttt{978-3-0348-9217-9}.

\bibitem[HS17]{hwang_symplectic_2017}
T.~Hwang and D.~Y. Suh.
\newblock Symplectic capacities from {Hamiltonian} circle actions.
\newblock {\em Journal of Symplectic Geometry}, 15(3):785--802, September 2017.
\newblock \textsc{doi}:\doi{10.4310/JSG.2017.v15.n3.a6}.

\bibitem[Iri17]{iritani_shift_2017}
Hiroshi Iritani.
\newblock Shift operators and toric mirror theorem.
\newblock {\em Geometry \& Topology}, 21(1):315--343, February 2017.
\newblock \textsc{doi}:\doi{10.2140/gt.2017.21.315}.

\bibitem[LJ]{liebenschutz-jones_flatness_in-progress}
T.~Liebenschutz-Jones.
\newblock A flat connection on equivariant {Floer} cohomology.
\newblock Work in progress.

\bibitem[Mih06]{mihalcea_equivariant_2006}
L.C. Mihalcea.
\newblock Equivariant quantum {Schubert} calculus.
\newblock {\em Advances in Mathematics}, 203(1):1--33, 2006.
\newblock \textsc{doi}:\doi{10.1016/j.aim.2005.04.002}.

\bibitem[MO19]{maulik_quantum_2012}
D.~Maulik and A.~Okounkov.
\newblock Quantum groups and quantum cohomology.
\newblock {\em Astérisque}, 408:1--212, 2019.
\newblock \textsc{doi}:\doi{10.24033/ast.1074}.

\bibitem[MR18]{mclean_mckay_2018}
M.~McLean and A.~Ritter.
\newblock The {McKay} correspondence via {Floer} theory.
\newblock Preprint, February 2018,
  \textsc{arXiv:}\burlalt{1802.01534}{http://arxiv.org/abs/1802.01534}.

\bibitem[MS98]{mcduff_introduction_1998}
D.~McDuff and D.~Salamon.
\newblock {\em Introduction to symplectic topology}.
\newblock Oxford mathematical monographs. Clarendon Press, Oxford, 2nd ed.
  edition, 1998.
\newblock \textsc{isbn}:\texttt{978-0-19-850451-1}.

\bibitem[MT06]{mcduff_topological_2006}
D.~McDuff and S.~Tolman.
\newblock Topological properties of {Hamiltonian} circle actions.
\newblock {\em International Mathematics Research Papers}, 2006, January 2006.
\newblock \textsc{doi}:\doi{10.1155/IMRP/2006/72826}.

\bibitem[Oan04]{oancea_survey_2004}
A.~Oancea.
\newblock A survey of {Floer} homology for manifolds with contact type boundary
  or symplectic homology.
\newblock In {\em Symplectic geometry and {Floer} homology.}, volume~7 of {\em
  Ensaios {Mat}.}, pages 51--91. Soc. Brasil. Mat., Rio de Janeiro, 2004.
\newblock
  \urlprefix\url{https://www.sbm.org.br/docs/ensaios-volumes/em_7b.pdf}.

\bibitem[Pol01]{polterovich_geometry_2001}
L.~Polterovich.
\newblock {\em The geometry of the group of symplectic diffeomorphism}.
\newblock Lectures in mathematics {ETH} {Zürich}. Birkhauser Verlag, Basel,
  2001.
\newblock \textsc{doi}:\doi{10.1007/978-3-0348-8299-6}.
\newblock \textsc{isbn}:\texttt{978-3-7643-6432-8}.

\bibitem[PSS96]{piunikhin_symplectic_1996}
S.~Piunikhin, D.~Salamon, and M.~Schwarz.
\newblock Symplectic {Floer}-{Donaldson} theory and quantum cohomology.
\newblock In {\em Contact and symplectic geometry ({Cambridge}, 1994)},
  volume~8 of {\em Publ. {Newton} {Inst}.}, pages 171--200. Cambridge Univ.
  Press, Cambridge, 1996.
\newblock \textsc{isbn}:\texttt{978-05-21570-86-2}.

\bibitem[Rit13]{ritter_topological_2013}
A.~Ritter.
\newblock Topological quantum field theory structure on symplectic cohomology.
\newblock {\em Journal of Topology}, 6(2):391--489, June 2013.
\newblock \textsc{doi}:\doi{10.1112/jtopol/jts038}.

\bibitem[Rit14]{ritter_floer_2014}
A.~Ritter.
\newblock Floer theory for negative line bundles via {Gromov}–{Witten}
  invariants.
\newblock {\em Advances in Mathematics}, 262:1035--1106, September 2014.
\newblock \textsc{doi}:\doi{10.1016/j.aim.2014.06.009}.

\bibitem[Rit16]{ritter_circle-actions_2016}
A.~Ritter.
\newblock Circle-actions, quantum cohomology, and the {Fukaya} category of
  {Fano} toric varieties.
\newblock {\em Geometry \& Topology}, 20(4):1941--2052, September 2016.
\newblock \textsc{doi}:\doi{10.2140/gt.2016.20.1941}.

\bibitem[RV14]{rot_functoriality_2014}
T.~O. Rot and R.~C. A.~M. Vandervorst.
\newblock Functoriality and duality in {Morse}–{Conley}–{Floer} homology.
\newblock {\em Journal of Fixed Point Theory and Applications}, 16(1):437--476,
  December 2014.
\newblock \textsc{doi}:\doi{10.1007/s11784-015-0223-6}.

\bibitem[Sal97]{salamon_lectures_1997}
D.~Salamon.
\newblock Lectures on {Floer} homology, December 1997.
\newblock
  \urlprefix\url{https://people.math.ethz.ch/~salamon/PREPRINTS/floer.pdf}.

\bibitem[Sei97]{seidel_$_1997}
P.~Seidel.
\newblock $ \pi_1$ of {Symplectic} {Automorphism} {Groups} and {Invertibles} in
  {Quantum} {Homology} {Rings}.
\newblock {\em Geometric \& Functional Analysis GAFA}, 7(6):1046--1096,
  December 1997.
\newblock \textsc{doi}:\doi{10.1007/s000390050037}.

\bibitem[Sei08]{seidel_biased_2007}
P.~Seidel.
\newblock A biased view of symplectic cohomology.
\newblock In {\em Current developments in mathematics, 2006}, pages 211--253.
  Int. Press, Somerville, MA, 2008.
\newblock \textsc{doi}:\doi{10.4310/CDM.2006.v2006.n1.a4}.

\bibitem[Sei18]{seidel_connections_2016}
P.~Seidel.
\newblock Connections on equivariant {Hamiltonian} {Floer} cohomology.
\newblock {\em Commentarii Mathematici Helvetici}, 93(3):587--644, September
  2018.
\newblock \textsc{doi}:\doi{10.4171/CMH/445}.

\bibitem[Sil01]{silva_lectures_2001}
A.~C.~D. Silva.
\newblock {\em Lectures on symplectic geometry}.
\newblock Lecture notes in mathematics ({Springer}-{Verlag}) ; 1764. Springer,
  Berlin ; London, 2001.
\newblock \textsc{doi}:\doi{10.1007/978-3-540-45330-7}.
\newblock \textsc{isbn}:\texttt{978-3-540-42195-5}.

\bibitem[Vit96]{viterbo_functors_1996}
C.~Viterbo.
\newblock Functors and {Computations} in {Floer} homology with {Applications}
  {Part} {II}, October 1996,
  \textsc{arXiv:}\burlalt{1805.01316}{http://arxiv.org/abs/1805.01316}.

\bibitem[Vit99]{viterbo_functors_1999}
C.~Viterbo.
\newblock Functors and {Computations} in {Floer} {Homology} with
  {Applications}, {I}.
\newblock {\em Geometric \& Functional Analysis GAFA}, 9(5):985--1033, December
  1999.
\newblock \textsc{doi}:\doi{10.1007/s000390050106}.

\bibitem[Zha16]{zhao_periodic_2016}
J.~Zhao.
\newblock {\em Periodic {Symplectic} {Cohomologies} and {Obstructions} to
  {Exact} {Lagrangian} {Immersions}}.
\newblock Ph.{D}., Columbia University, United States -- New York, 2016.
\newblock \urlprefix\url{https://search.proquest.com/docview/1793940413}.

\bibitem[Zha19]{zhao_periodic_2014}
J.~Zhao.
\newblock Periodic symplectic cohomologies.
\newblock {\em Journal of Symplectic Geometry}, 17(5):1513--1578, September
  2019.
\newblock \textsc{doi}:\doi{10.4310/JSG.2019.v17.n5.a9}.

\end{thebibliography}
\bibliographystyle{halpha}

\end{document}